\theoremstyle{plain}
\newtheorem{proposition}{Proposition}[section]
\newtheorem{theorem}[proposition]{Theorem}
\newtheorem{lemma}[proposition]{Lemma}
\newtheorem{corollary}[proposition]{Corollary}
\newtheorem*{claim*}{Claim}
\theoremstyle{definition}
\newtheorem{definition}[proposition]{Definition}
\newtheorem*{convention}{Convention}
\theoremstyle{remark}
\newtheorem{remark}[proposition]{Remark}
\newtheorem{example}[proposition]{Example}
\DeclareMathOperator{\Conv}{Conv}
\DeclareMathOperator{\Aut}{Aut}
\DeclareMathOperator{\Ker}{Ker}
\DeclareMathOperator{\proj}{proj}
\newcommand{\tld}{\widetilde}
\DeclareMathOperator{\Ch}{Ch}
\DeclareMathOperator{\Res}{Res}
\DeclareMathOperator{\GL}{GL}
\DeclareMathOperator{\SL}{SL}
\DeclareMathOperator{\PGL}{PGL}
\DeclareMathOperator{\Isom}{Isom}
\DeclareMathOperator{\pr}{pr}
\newcommand{\RR}{\mathbf{R}}                          
\newcommand{\CC}{\mathbf{C}}                          
\newcommand{\NN}{\mathbf{N}}                          
\newcommand{\ZZ}{\mathbf{Z}}                          
\DeclareMathOperator{\Prob}{Prob}
\renewcommand{\Im}{\mathrm{Im}}
\DeclareMathOperator{\lk}{lk}
\newcommand{\calB}{\mathcal{B}}
\newcommand{\scrF}{\mathscr F}
\newcommand{\scrW}{\mathscr W}
\newcommand{\op}{\mathrm{op}}
\newcommand{\pmop}{ {\pm,\mathrm{op}}}
\newcommand{\EE}{\mathbf E}
\newcommand{\IR}{\mathcal I}
\newcommand{\JR}{\mathcal J}
\newcommand{\newcomment}[4]{%
\newcounter{#2counter}
\expandafter\newcommand\csname #1\endcsname[1]{%
\refstepcounter{#2counter}%
{\color{#4}(#3\arabic{#2counter})}\marginpar{\scriptsize\raggedright\textbf{\color{#4}(#2 \arabic{#2counter}):} ##1}%
}}
\begin{document}

\title{Normal Subgroup Theorem for groups acting on $\tilde A_2$-buildings}

\author{Uri Bader, Alex Furman, Jean L\'ecureux}
\maketitle

\begin{abstract}
    Let $\Gamma$ be a group acting with finite stabilizers and finite fundamental domain on a building of type $\tilde A_2$. We prove that any non-trivial normal subgroup of $\Gamma$ is of finite index in $\Gamma$.
\end{abstract}

\tableofcontents

\section*{Introduction}

The main examples of affine buildings come from Bruhat-Tits theory: if $k$ is a local field, and $\mathbf G$ a $k$-simple algebraic group, then Bruhat and Tits associated to $\mathbf G$ an affine building on which $\mathbf G(k)$ acts. In particular, cocompact lattices in $\mathbf G(k)$ also act on $X$, properly and cocompactly. In higher rank, such lattices are all arithmetic, by  the celebrated work of Margulis and Venkataramana \cites{Margulis,Venka}. In particular, their algebraic structure is fairly well understood. For example, one can produce many finite index normal subgroups by taking congruence kernels. To clarify the terminology, in the absence of an ambient algebraic group, we say that a group $\Gamma$ acting on a simplicial complex $X$ is a \emph{cocompact} (or uniform) \emph{lattice} in $X$  if it acts with finite stabilizer and a compact fundamental domain.

A fundamental result of Tits (see the last corollary of \cite{Tits_cours84} as well as the book \cite{WeissBook}) is that all irreducible affine buildings of dimension at least 3 are Bruhat-Tits buildings. On the other hand, buildings of dimension 1 are just trees, and their cocompact lattices are virtually free.

The remaining case of buildings of dimension 2 turns out to be quite interesting. First, there are reducible buildings of dimension 2, namely, products of trees. In that case, the celebrated work of Burger and Mozes \cite{BurgerMozes_lattices} produces examples of lattices in products of two trees which are simple (and in particular, non-linear). Then, there are irreducible buildings of dimension 2, of three possible types ($\tilde A_2$, $\tilde C_2$ or $\tilde G_2$), buildings of type $\tilde A_2$ being the simplest and most studied examples. 
In this setting, it is already nontrivial to produce examples with cocompact lattices which are not Bruhat-Tits buildings. In the sequel, we will call these buildings \emph{exotic}.
The first construction appeared in \cite{Ronan_triangle} and \cite{Tits_Andrews}, where Ronan and Tits managed to construct the first examples of such buildings. There have been many other constructions since then. We refer to \cite[\S1.3]{BCL} for a more thorough treatment of the literature. In particular, it has been proved that there exist exotic lattices in 2-dimensional buildings of arbitrarily large thickness; in some cases (panel-regular lattices) it has even been proven that most buildings are not Bruhat-Tits (this was conjectured in \cite{Witzel} and proved in \cite{Radu}). 

Another potential source of examples are \emph{Galois lattices}: starting from a Bruhat-Tits building $X$, associated to an algebraic group $\mathbf G(k)$, the full automorphism group of $X$ is not $\mathbf G(k)$, but an extension of $\Aut(\mathbf G)(k)$ by a subgroup of $\Aut(k)$, which is infinite in positive characteristic (see \cite{BorelTits} and \cite[Proposition 9.1]{BCL}). Therefore there may exist a lattice $\Gamma$ in $\Aut(X)$ which has infinite image in $\Aut(k)$: this is what we call a Galois lattice, and it may or may not be cocompact. So far it is not known whether Galois lattices exist or not in higher rank.

These "exotic" buildings of type $\tilde A_2$ and their lattices remain largely mysterious, although they have been studied by many authors. For example, there has been a large amount of work devoted to proving property (T) and its variations, starting with the paper of Cartwright-M\l otkowski-Steger \cite{CartMlotkT}, then followed by Pansu \cite{Pansu} and \.{Z}uk's criterion \cite{Zuk} in increasing generality and in particular for any cocompact lattice, with no thickness assumption. Using a different approach it was generalized to arbitrary affine buildings by I. Oppenheim \cite{oppenheim}. Recently it was also proved that these lattices also have Lafforgue's strong property (T) \cite{StrongT}. 

In \cite{BCL} the authors started to study these lattices with a different point of view, namely, by adapting and applying Margulis' methods about lattices in higher rank simple Lie groups. It was proven there that cocompact lattices in exotic buildings, as well as Galois lattices, are fairly different from their arithmetic counterparts: they are always non-linear. This paper can be seen as a continuation of this theme. Indeed, we are able to prove the Normal Subgroup Theorem:

\begin{theorem}\label{mainthm}
Let $\Gamma$ be a cocompact lattice of a building of type $\tilde A_2$. Then any non-trivial normal subgroup of $\Gamma$ is of finite index.
\end{theorem}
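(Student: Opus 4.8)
The plan is to follow the Margulis strategy for the Normal Subgroup Theorem (NST), adapted to the building setting along the lines of \cite{BCL}. The NST for higher-rank lattices has two halves, and both are needed here. Given a non-trivial normal subgroup $N \trianglelefteq \Gamma$, one wants to show $\Gamma/N$ is finite. Margulis' insight is to split this into: (i) \emph{$\Gamma/N$ is amenable}, and (ii) \emph{$\Gamma/N$ has property (T)} — and an amenable group with property (T) is finite (indeed compact, hence finite if discrete/countable). For these lattices property (T) is already known (Cartwright--M\l otkowski--Steger \cite{CartMlotkT}, \.Zuk \cite{Zuk}, and strong (T) \cite{StrongT}), and property (T) passes to quotients, so (ii) is free. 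Thus the entire burden is on step (i): \textbf{every proper quotient $\Gamma/N$ of a cocompact lattice $\Gamma$ in an $\tilde A_2$-building is amenable.}

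\textbf{For step (i), I would run the boundary-theory argument of Margulis via Furstenberg maps, in the ``intrinsic'' version developed in \cite{BCL}.} Since $\Gamma$ acts on the $\tilde A_2$-building $X$, it acts on the spherical building at infinity, and on the space of chambers $\partial X$ (the ``full'' boundary), as well as on the two ``panel'' boundaries $\partial_1 X, \partial_2 X$ corresponding to the two types of vertices/panels; there is a $\Gamma$-equivariant measure class on each coming from a $\mu$-stationary measure for a suitable generating measure $\mu$ on $\Gamma$ (via a random walk / Poisson boundary construction, using that $\Gamma$ is a cocompact lattice so it has finite covolume and a genuine Furstenberg--Poisson boundary). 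The key structural fact — the analogue of $G/P$ for a rank-2 group — is that $\partial X$ fibers $\Gamma$-equivariantly over $\partial_1 X$ and over $\partial_2 X$, with fibers that are themselves boundaries (trees at infinity); this is the ``building'' incarnation of the fact that a minimal parabolic sits inside two maximal parabolics in $A_2$. Now suppose $N \trianglelefteq \Gamma$ is non-trivial; one shows the $N$-action on each panel boundary is either trivial-on-measure-class (i.e. $N$ acts with an invariant measure) or not, and a contraction/mixing argument — using that elements of $N$, being normal, have conjugates going to infinity in all directions, together with the $\tilde A_2$ geometry — forces the quotient action of $\Gamma/N$ on the relevant boundary to admit an invariant measure. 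Amenability of $\Gamma/N$ is then extracted from the existence of a $\Gamma/N$-equivariant measurable map to a space with a $\Gamma/N$-invariant measure that ``sees everything'', exactly as in Margulis / Zimmer: a group acting on its own Poisson boundary with an invariant measure on a sufficiently large quotient is amenable.

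\textbf{The main obstacle, and where the $\tilde A_2$-specific work lies, is the ``two-panel'' intertwining step} — the analogue of Margulis' use of the fact that in higher rank the minimal parabolic is the intersection of the maximal ones, and that the Weyl group is generated by the two reflections. Concretely: one must show that if $\Gamma/N$ fixes a measure on $\partial_1 X$ \emph{and} the action on the fiber direction is suitably controlled, then it fixes a measure on the full boundary $\partial X$; this requires a gate/projection analysis in the $\tilde A_2$-building (using the retractions and the combinatorial convexity of the building, and the tree structure of the links) to propagate invariance across the two types of panels. A second delicate point is producing, in the purely building-theoretic (possibly non-linear, exotic) setting, a good enough boundary theory: we do not have a semisimple Lie group acting, so the ``ambient group'' $G = \overline{\Gamma}$ inside $\Aut(X)$ need not be nice, and one must work directly with $\Gamma$, its random walk, and the $\Aut(X)$-action on $\partial X$ — this is precisely the technical framework set up in \cite{BCL}, and I would invoke their Furstenberg-map and boundary results wholesale rather than rebuild them. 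The rank-one phenomena (where NST fails — lattices in trees are virtually free) must be avoided, and the argument genuinely uses that $\tilde A_2$ has rank $2$: it is the two independent panel directions, and the fact that $N$'s normal closure forces contraction in \emph{both}, that defeats the measure and yields amenability.
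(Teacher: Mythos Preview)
Your high-level framework is exactly right: Margulis' two-step strategy, with property (T) for $\Gamma/N$ coming for free from \cite{CartMlotkT} (or \cite{Zuk}, \cite{StrongT}), so that the entire content is proving that $\Gamma/N$ is amenable when $N\neq\{1\}$. The paper does precisely this.

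However, there is a genuine gap in your amenability step. The paper does \emph{not} argue by producing a $\Gamma/N$-invariant measure on the boundary, nor by analyzing the dynamics of elements of $N$ (``conjugates of $N$ going to infinity in all directions''). Instead, the core is the \emph{Factor Theorem} (Theorem~\ref{thm:factor}): every $\Gamma$-equivariant measurable factor of $(\Delta,[\mu^o_\Delta])$ is isomorphic to $\Delta$, $\Delta_+$, $\Delta_-$, or a point. Given this, the conclusion is quick: if $\Gamma/N$ is non-amenable, take a compact convex $\Gamma/N$-space $K$ with no fixed point; amenability of the $\Gamma$-action on $\Delta$ gives a $\Gamma$-map $\Delta\to K$; the Factor Theorem forces this to factor through $\Delta_\pm$ (it cannot be a point since $K$ has no fixed point); since $N$ acts trivially on $K$, every $g\in N$ fixes a.e.\ point of $\Delta_\pm$, and a short argument (Lemma~\ref{lem:fixedpointae}) shows this forces $g=\mathrm{id}$.

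What you are missing is the mechanism for the Factor Theorem in the absence of an ambient algebraic group. The decisive ingredient is the \emph{projectivity group} $\Pi$ acting on each residue $\Res(u)$: this is a purely incidence-geometric object, defined via perspectivities $T_u\to T_v$ between panel trees, and it is always $3$-transitive on $\Res(u)$ (Lemma~\ref{lem:3transitive}) --- even for exotic buildings with tiny automorphism group. The paper shows (Theorem~\ref{thm:convTuvproj}) that for any $p\in\Pi$ and a.e.\ $(u,v)\in\Delta_{\pmop}$ there is a sequence $(\gamma_n)\subset\Gamma$ such that $\gamma_n\cdot f$ weak-$*$ converges to $f\circ p\circ\proj_v\circ\proj_u$ for every $f\in L^\infty(\Delta)$. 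This ``approximation of projectivities by $\Gamma$'' is what replaces the Weyl-group/parabolic calculus you allude to, and proving it requires substantial new machinery: the singular flow $\scrW$ from \cite{BCL} (ergodicity under $\Gamma\times S_\Upsilon$) to produce the contracting sequences, and a new \emph{detecting flow} $\IR=\Isom(Y,X)$ together with the Martingale Convergence Theorem to upgrade pointwise convergence on $\Delta$ to weak-$*$ convergence in $L^\infty$. None of this is in \cite{BCL}; that paper supplies the singular flow and its ergodicity, but not the Factor Theorem, so ``invoking \cite{BCL} wholesale'' does not close the gap. Your ``propagate invariance across the two types of panels'' and ``gate/projection analysis'' gesture in the right direction but do not substitute for the projectivity-group argument, which is where the rank-$2$ structure is actually exploited.
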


In the case when the building is Bruhat-Tits, and $\Gamma<\mathbf G(k)$, this theorem is due to Margulis \cite{Margulis}. Therefore our main contribution is about exotic buildings, and possibly also to Galois lattices.  
This theorem was announced by Shalom and Steger in 2008, but the proofs have not appeared in print, and we do not know how large is the overlap between our approach and theirs.
 Our motivation for writing this proof is two-fold. First, it is a first step towards proving stronger results: the Stuck-Zimmer theorem \cite{StuckZimmer} or character rigidity, in the spirit of \cite{Peterson}.

Another generalization of Margulis' Normal subgroup Theorem in a more geometric setting (for lattices in products) has been obtained by Bader and Shalom \cite{BaderShalom}, greatly generalizing previous work of Burger and Mozes \cite{BurgerMozes_lattices}. Their proof techniques are quite different, but in both cases they led to the proof of simplicity of some groups (either lattices in product of trees or Kac-Moody groups \cite{CapraceRemy}). 
Extending the parallelism with these situations, our second motivation stems from the following corollary.

\begin{corollary}
    Let $\Gamma$ be a cocompact lattice of a building of type $\tilde A_2$. Then either $\Gamma$ is virtually simple, or it is residually finite.
\end{corollary}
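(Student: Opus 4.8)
The plan is to derive the corollary formally from Theorem~\ref{mainthm} --- which asserts precisely that the (evidently infinite) group $\Gamma$ is just infinite --- together with two standard facts: that $\Gamma$ is finitely generated (it acts cocompactly on the connected building, so this follows e.g.\ from the Milnor--\v{S}varc lemma), and that, the $\tilde A_2$ building being irreducible as a \cat space, no finite-index subgroup of $\Gamma$ is isomorphic to a direct product of two infinite groups. Let $R\trianglelefteq\Gamma$ be the finite residual, the intersection of all finite-index normal subgroups; it is characteristic, and $\Gamma/R$ is residually finite. If $R=\{1\}$ then $\Gamma$ is residually finite and we are done, so assume $R\neq\{1\}$. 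Then $R$ is a non-trivial normal subgroup, so Theorem~\ref{mainthm} gives $[\Gamma:R]<\infty$, and it remains to prove that $R$ is simple --- for then $\Gamma$ is virtually simple.

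First, $R$ has no proper finite-index subgroup: if $\Lambda\lneq R$ had finite index in $R$, hence in $\Gamma$, then the normal core of $\Lambda$ in $\Gamma$ would be a finite-index normal subgroup of $\Gamma$ contained in $\Lambda\subsetneq R$, contradicting the definition of $R$. Now suppose, towards a contradiction, that $R$ is not simple. As $R$ is finitely generated, it has a maximal proper normal subgroup $N$, necessarily non-trivial; the simple quotient $S:=R/N$ is infinite --- otherwise $N$ would be a proper finite-index subgroup of $R$ --- and in particular non-abelian. Moreover $N$ is not normal in $\Gamma$: otherwise, being non-trivial, it would have finite index by Theorem~\ref{mainthm}, hence equal $R$. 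Thus $N$ has $k\ge 2$ conjugates $N=N_1,\dots,N_k$ in $\Gamma$, with $k<\infty$ since $[\Gamma:R]<\infty$; each is again a maximal proper normal subgroup of $R$ (conjugation by an element of $\Gamma$ is an automorphism of $R$), with $R/N_i\cong S$. The subgroup $\bigcap_i N_i$ is normal in $\Gamma$, so by Theorem~\ref{mainthm} it is trivial or of finite index; the latter would make it of finite index in $R$, hence equal to $R$, which is impossible as $\bigcap_i N_i\subseteq N\subsetneq R$. So $\bigcap_i N_i=\{1\}$, and $R$ embeds as a subdirect product of $\prod_{i=1}^{k} R/N_i\cong S^k$. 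By the classical description of subdirect products of non-abelian simple groups (repeated application of Goursat's lemma), $R\cong S^m$ for some $1\le m\le k$. If $m\ge 2$, then the finite-index subgroup $R$ of $\Gamma$ is a direct product of two infinite groups, contradicting the second standard fact above; and if $m=1$, then $R\cong S$ is simple, contradicting the choice of $N$. Either way we reach a contradiction, so $R$ is simple.

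I expect the main obstacle to be the second standard fact: that $\Gamma$ has no finite-index subgroup of the form $H_1\times H_2$ with both factors infinite. This is the only place where the geometry of the building must enter. I would deduce it from the irreducibility of the $\tilde A_2$ building together with the splitting phenomenon for \cat spaces: a group which acts properly and cocompactly on a geodesically complete \cat space and decomposes as a direct product of two infinite subgroups forces the space to split as a nontrivial product of two unbounded \cat spaces --- applied to the relevant finite-index subgroup of $\Gamma$ and the building, on which it still acts properly and cocompactly. Since the $\tilde A_2$ building is irreducible, this is impossible. This geometric input is in the spirit of the framework of~\cite{BCL} and should be extractable from it.
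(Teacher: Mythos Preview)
Your argument is correct, but it is considerably more elaborate than necessary. The paper's proof exploits a feature of the situation that you overlook: the finite residual $R$, having finite index in $\Gamma$, is \emph{itself} a cocompact lattice of the same $\tilde A_2$ building, and therefore Theorem~\ref{mainthm} applies directly to $R$. Any non-trivial normal subgroup $M\trianglelefteq R$ then has finite index in $R$, hence in $\Gamma$; since $R$ is the intersection of all finite-index subgroups of $\Gamma$, this forces $M\supseteq R$, so $M=R$ and $R$ is simple. This is the entire argument.

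By contrast, you invoke Zorn's lemma to produce a maximal normal subgroup, the Goursat-type classification of subdirect products of non-abelian simple groups, and a \cat splitting theorem to rule out a direct product decomposition of $R$. All of this is valid --- the $\tilde A_2$ building is indeed a proper, geodesically complete, irreducible \cat space, and the splitting theorem you allude to (due in various forms to Monod and Caprace--Monod) does apply --- but it is machinery that the problem does not require. The paper's route is shorter and uses nothing beyond the main theorem applied twice. Your route has the mild advantage of being a general template for deducing ``virtually simple or residually finite'' from ``just infinite'' under a no-splitting hypothesis, but here that generality is not needed: the hereditary nature of the hypothesis (being a cocompact lattice in the building passes to finite-index subgroups) makes the direct reapplication of Theorem~\ref{mainthm} available.
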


\begin{proof}
    If $\Gamma$ is not residually finite, then the intersection $N$ of all finite-index subgroups of $\Gamma$ is not trivial. But $N$ is a normal subgroup of $\Gamma$, so it is of finite index, and therefore is also a cocompact lattice of the same building. Applying Theorem \ref{mainthm} to $N$, we get that every non-trivial normal subgroup of $N$ is of finite index in $N$, and therefore also in $\Gamma$, so that it contains $N$. Hence $N$ is a finite index simple subgroup of $\Gamma$.
\end{proof}

If $\Gamma$ is a lattice in an algebraic group over a local field, then (as mentioned above) it is clear that $\Gamma$ is residually finite. On the other hand, if $\Gamma$ is a cocompact lattice in an exotic building, then the situation is very much unclear. It was conjectured in \cite[Conjecture 1.5]{BCL} that the first conclusion always holds ($\Gamma$ is virtually simple), but so far there has been very little progress in that direction. However, recently, Thomas Titz Mite and Stefan Witzel have been able to produce cocompact lattices in buildings of type $\tilde C_2$ which are not residually finite \cite{MiteWitzel}. Although the proof needs to be adapted in non-trivial ways, it is very likely that the analogue of Theorem \ref{mainthm} also holds for $\tilde C_2$ buildings, thus providing new  examples of infinite simple groups. We note that these building lattices are never quasi-isometric if they act on different buildings (see for example \cite{KramerWeiss}), so that it would provide infinitely many different QI classes of finitely presented groups. On the other hand, rigidity in Measure Equivalence for these groups is not known at the moment, although it probably should be expected.

\medskip

The main steps of the proof of Theorem \ref{mainthm} follow the beautiful strategy of Margulis. If $N$ is a non-trivial normal subgroup of $\Gamma$, then we first prove that $\Gamma/N$ has property (T) (we get this for free as $\Gamma$ itself has property (T)), and then that $\Gamma/N$ is amenable, which is the hard part. To do that, we introduce the boundary $\Delta$ of the building, which is an amenable $\Gamma$-space. The space $\Delta$ is the set of chambers of a spherical building (of rank 2, hence of dimension 1, \textit{i.e} a graph), and is equipped with two $\Gamma$-equivariant maps $\Delta\to \Delta_+$ and $\Delta\to \Delta_-$, where $\Delta_+$ and $\Delta_-$ are the two sets of vertices of different types. The space $\Delta$ is endowed with a natural measure class, which is $\Aut(X)$-invariant. As in Margulis' proof, the amenability of $\Gamma/N$ will follow from a Factor Theorem. In our case this Factor Theorem translates as follows.

\begin{theorem}\label{thm:factor}
    Any $\Gamma$-equivariant measurable factor $\Delta\to Z$ is (measurably) isomorphic to either $\Delta\to \Delta_+$, $\Delta\to \Delta_-$, the identity map or a point.
\end{theorem}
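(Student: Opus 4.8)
Here is how I would prove Theorem~\ref{thm:factor}.

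The plan is to recast the statement as a classification of the conditional measures of the factor, and to match the possible answers with the residues of the spherical building $\bd X$ at infinity. So fix a $\Gamma$-equivariant factor $q\colon\Delta\to Z$, write $\mu$ for the distinguished $\Aut(X)$-invariant measure class on $\Delta$, set $\nu=q_*\mu$, and disintegrate $\mu=\int\mu_z\,\d\nu(z)$ over $q$. Recall that a chamber of $\bd X$ is an incident point--line pair of the projective plane $\bd X$, so the pair of projections identifies $\Delta$ with the incidence relation inside $\Delta_+\times\Delta_-$; recall also that two chambers of $\bd X$ have a well-defined relative position in the Weyl group $W=S_3$, that a generic pair is opposite, and that the four residues of $\bd X$ through a fixed chamber --- the chamber itself, the two panels through it, and the whole building --- are matched, via the generic relative position within them, with the four self-inverse elements of $W$. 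The claim I want to prove is that for $\nu$-a.e.\ $z$ the measure $\mu_z$ is the restriction of the class $\mu$ to a single residue $R_z$ of $\bd X$, whose type $J\in\{\emptyset,\{s_1\},\{s_2\},\{s_1,s_2\}\}$ is independent of $z$; equivalently, that the equivalence relation $\{(c,c'):q(c)=q(c')\}$ is one of the four geometric relations ``equal'', ``share a vertex of type $+$'', ``share a vertex of type $-$'', ``arbitrary''.

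Two resources are available. First, $\Delta$ is an amenable and ergodic $\Gamma$-space (amenability is recorded above; ergodicity of $\Gamma$ on $\Delta$, hence on its factors, and on $\Delta_\pm$, is standard for boundaries of buildings), so every $\Gamma$-invariant measurable function on $\Delta$ or on a factor of it is essentially constant. Second, because $\Gamma$ acts cocompactly on $X$ it contains an abundance of hyperbolic isometries translating along regular geodesics, and such a $\gamma$ acts on $\Delta$ with north--south type dynamics: along suitable powers $\gamma^n$ the pushforward $\gamma^n\lambda$ of any probability measure $\lambda$ on $\Delta$ converges to a measure supported on a single residue of $\bd X$ --- a single chamber when $\lambda$ is generic with respect to the repelling data, and a larger residue otherwise.

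The heart of the proof is to combine these to establish the claim about the $\mu_z$. Applying the contraction above inside the $\Gamma$-equivariant field $z\mapsto\mu_z$, together with recurrence --- a stationary measure on $\Delta$ for a generating random walk on $\Gamma$ and martingale convergence of $g_1\cdots g_n\mu_z$ --- shows that each $\mu_z$ is supported on, and has full support in, a residue $R_z$ of $\bd X$, and the ergodicity of $\Gamma$ then makes the type $J$ constant. It is essential that this be done inside the affine building $X$, not merely in $\bd X$: the projective plane $\bd X$ need not be Moufang, so it has too few symmetries on its own, and what makes the contraction available is the geometry of $\tilde A_2$-buildings --- sectors, panels, and the way a minimal gallery in $X$ relates residues of $\bd X$ lying in distinct apartments. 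Turning an a priori arbitrary measurable partition of $\Delta$ into this rigid geometric one is, I expect, the main obstacle in the whole argument.

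Granting the claim, the identification of $Z$ is immediate. Each $c$ lies in the support of $\mu_{q(c)}$, so $R_{q(c)}$ is the $J$-residue of $\bd X$ containing $c$; hence the map $z\mapsto R_z$ realizes $Z$ as the space of $J$-residues of $\bd X$, and under this identification $q$ becomes the natural ``pass to the $J$-residue'' map. When $J=\emptyset$ this is the identity $\Delta\to\Delta$; when $J$ has rank one it is one of the two vertex maps $\Delta\to\Delta_+$ or $\Delta\to\Delta_-$, according to the type; and when $J=\{s_1,s_2\}$ it is the constant map to a point. These are exactly the four factors listed in the theorem, so we are done.
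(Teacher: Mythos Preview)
Your framework is sound --- classifying the fibers of $q$ as residues of a fixed type $J$ is equivalent to the paper's formulation in terms of weak-* closed $\Gamma$-invariant subalgebras of $L^\infty(\Delta)$, and your final paragraph deducing the four factors from that claim is correct. But the central step is not carried out: you assert that contraction by hyperbolic elements plus a random-walk martingale ``shows that each $\mu_z$ is supported on, and has full support in, a residue $R_z$'', and then immediately concede that this is ``the main obstacle in the whole argument''. That obstacle \emph{is} the theorem; everything else is formal.

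The paper fills exactly this gap, and not with the tools you sketch. The decisive statement (Theorem~\ref{thm:convTuvproj}) is that for every $f\in L^\infty(\Delta)$, almost every $(u,v)\in\Delta_{\pm}^{\op}$, and every projectivity $p\in\Pi$, there is a sequence $(\gamma_n)$ in $\Gamma$ with $\gamma_n f\to f\circ p\circ\proj_v\circ\proj_u$ in the weak-* topology. Two features here go beyond generic north--south dynamics: the limit is pinned down in the \emph{measurable} (weak-*) sense, not just topologically, and the transverse action can be prescribed to be any projectivity. The first is obtained via a purpose-built ``detecting flow'' and a martingale argument on its prouniform measures; the second comes from Poincar\'e recurrence on the singular flow $\scrW$ of embedded wall-trees. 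It is the $3$-transitivity of the projectivity group on $\Res(u)$ that then forces the intersection of the subalgebra with each $R_u$ to be all of $R_u$ (Lemma~\ref{lem:SubalgebraRu}), which is what rules out intermediate fibers. A random walk on $\Gamma$ alone does not obviously supply any of this: there is no ambient algebraic group whose parabolic structure you can lean on when the building is exotic, so you would still have to manufacture, inside $\Gamma$, sequences approximating arbitrary projectivities --- and that is precisely the work the paper does.
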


The proof of this Theorem is our main contribution. The general strategy to prove this theorem is to find enough elements in $\Gamma$ with some kind of contracting properties: namely, contracting almost all of $\Delta$ to the set of chambers adjacent to a given vertex $u\in\Delta_-$ (the \emph{residue} $\Res(u)$ of $u$). In fact, what we prove is that one can approximate a special subgroup of permutations $\Res(u)$ called the \emph{projectivity group}. This group is a classical object of study in incidence geometries and possesses a very rich structure, even when the building has few automorphisms. In particular, it is always 3-transitive on $\Res(u)$. 

In the classical case when $\Delta$ is the spherical building associated to $G=\SL_3(K)$ (for a field $K$), then the projectivity group is simply $\PGL_2(K)$, and its action on $\Res(u)$ is conjugated to the natural action of $\PGL_2(K)$ on the projective line. This fact, while not completely obvious, can be checked with relatively simple calculations (see \cite{Knarr} for corresponding statements in Moufang 2-dimensional buildings). In general for buildings associated to algebraic groups the projectivity group is closely related to the Levi subgroup of the parabolic group associated to the facet: see for example the recent preprint \cite{Busch}. However, we would like to emphasize that for non-Moufang buildings the projectivity group is typically very large, and does not act in any way on the building itself. This surprising fact is the heart of the proof of the main theorem of \cite{BCL} (see \cite[Theorem 4.7]{BCL}).

The statement of Theorem \ref{thm:factor} refers to a measure class on $\Delta$, which is naturally defined. Note also that the strategy sketched in the previous paragraph naturally gives  some topological convergence, and therefore it requires some more work to get a statement about the measure class. In the classical case, this is obtained through a non-trivial version of the Lebesgue Density Theorem \cite[IV.1]{Margulis}. Furthermore, also in the classical case, the proof uses various actions on $G$-homogeneous spaces, with their measured structure.

In our setting, these homogeneous spaces will be replaced by some natural sets of embeddings: for example, the set of embeddings of an apartment into the building. It is not obvious in general how to construct natural measures with good properties on such spaces. In order to do so, we were led to introduce a fairly general formalism which explains how and when one can construct \emph{prouniform} measures on the set of embeddings of a simplicial complex into another, under some condition that we call \emph{P-symmetricity}. We then check that this condition holds for all the spaces that we need. Finally, we found that it was more suited to our setting to use the Martingale Convergence Theorem rather in place of the Lebesgue Density Theorem. Furthermore, in order to attach the kind of convergence we obtain to the group action, we were led to introduce yet another space, that we call the \emph{detecting flow}, which can be thought of as a kind of geodesic flow (or Cartan flow in our case) with additionnal "antennas".


The present paper is written only for buildings of type $\tilde A_2$. We have no doubt that the results hold for more general irreducible buildings, and in fact this is currently being written by the third author with S.~Witzel. The core of the argument is in general the same, however, there are a few places where we do use the assumption that we have $\tilde A_2$ buildings. The first one is the construction of the singular flow, that we take directly from \cite{BCL}, which was written only in $\tilde A_2$ buildings. This is only a small issue, as the construction applies verbatim in the general case. We also use at a couple places (notably in Theorem \ref{thm:2minimal}) that in a spherical building of type $A_2$, two distinct vertices of the same type are opposite.  This provides mostly a shortcut and is not essential to the arguments. The most important reason why we were not able to treat the general case in one go is the more delicate construction of the detecting flow, which we introduce in Section \ref{sec:detecting}. This construction relies on some elementary constructions in convex geometry described in \S \ref{sec:crazydiamonds}, that do not generalize easily to other types.

\medskip
\textbf{Structure of the paper.} The first section (after this introduction) explains the construction of prouniform measures on sets of embeddings between simplicial complexes, and can be read completely independently of the paper. The second section describes quickly the background on buildings which is necessary to us, and then in \S 3 we check that the assumptions needed in \S1 are indeed true in the setting we want. It is also the occasion to define the various spaces and flows that we will use in the course of the proof. The proof itself starts in Section 4, where we explain how to produce the elements of $\Gamma$ with the required dynamical properties. Finally, in \S5, we conclude the proof of Theorem \ref{thm:factor} and \ref{mainthm}.

In this work we were led to introduce many different spaces. To help the reader keep track of everything, we also included a list of notations at the very end of the paper.

\medskip
\textbf{Acknowledgment:} We are thankful to Pierre-Emmanuel Caprace and Stefan Witzel for various discussions and encouragements, and to Stefan Witzel and Antoine Derimay for spotting some mistakes in an earlier version of this text. We are grateful to the referee for his or her comments, which led to many improvements of this text.

\section{Spaces of simplicial maps as measured spaces}\label{sec:measure}

In this section, our goal is to define "uniform" measures on spaces of embeddings of a simplicial complex into another. This will require some symmetricity conditions that we define below. To define these measures, we first treat the case when the simplicial complex at the source is finite, and then use some projective limit to treat the general case.

\subsection{Prouniform measures}

We consider here connected simplicial complexes.
In this section, for a connected simplicial complex $Y$ we denote by $Y^k$ the set of its $k$-dimensional simplices.
In particular, $Y^0$ is its set of vertices.
$Y^0$ is endowed with the combinatorial metric given by the 1-skeleton, that is the graph $(Y^0,Y^1)$.
For two connected simplical complexes $Y$ and $Y'$,
by isometry from $Y'$ to $Y$ we mean a simplicial map $i$ which induces an isometry $Y'^0\to Y^0$
and such that every simplex in $Y$ whose boundary is in the image of $Y'$ is itself in the image of $Y'$, and also such that every geodesic segment between vertices in $i(Y'^0)$ is entirely contained in $i(Y'^0)$.
We denote by $\text{Isom}(Y',Y)$ the set of all such isometries.
If $Y'$ is a subsimplicial complex of $Y$ and the inclusion is an isometry we say that $Y'$ is convex in $Y$.

Throughout this section we fix a locally finite connected simplical complex $X$.
Given any finite simplicial complex $Y$, we endow the countable set $(X^0)^{Y^0}$ with the counting measure and let $m_Y$ be its restriction to $\text{Isom}(Y,X)$.
Typically $(\text{Isom}(Y,X),m_Y)$ is an infinite measure space, but if $Y$ is connected, $Y'$ is nonempty and $i:Y'\to Y$ is an isometry,
then the map
$i^*:(\text{Isom}(Y,X),m_Y)\to (\text{Isom}(Y',X),m_{Y'})$ given by $\beta\mapsto \beta\circ i$ is a finite measure extension, that is
there exists a disintegration of $m_Y$ with respect to $m_{Y'}$ with finite measured fibers.
Indeed, given an isometry $\alpha:Y'\to X$, 
the set
\[ \{\beta:Y\to X\mid~\beta \mbox{ is an isometry and } \alpha=\beta\circ i\}, \]
is finite, as $Y$ is connected and $X$ is locally finite,
and for every $U\subset \text{Isom}(Y,X)$,
\[ m_Y(U)=\int_{\text{Isom}(Y',X)} 
|\{\beta\in U\mid~\beta \mbox{ is an isometry and } \alpha=\beta\circ i\}|
dm_{Y'}(\alpha). \]

\begin{definition}
Given a property of connected finite simplicial complexes $P$,
we say that $X$ is $P$-symmetric if it satisfies the following property:
\begin{itemize}
\item
For every pair of nonempty finite connected $P$ simplicial complexes $Y',Y$ and an isometry $i:Y' \to Y$, given an isometry $\alpha:Y'\to X$ the size of the set
\[ \{\beta:Y\to X\mid~\beta \mbox{ is an isometry and } \alpha=\beta\circ i\} \]
is positive and independent of $\alpha$.
\end{itemize}
In case $X$ is $P$-symmetric and $Y,Y'$ and $i$ are as above,
we denote the size of the above set by $[Y:Y']_i$ or by $[Y:Y']$, in case $Y'\subset Y$ is a convex subset and $i$ is the inclusion.
\end{definition}

\begin{example}
Let $P$ be the property {\em tree of degrees bounded by $d$}.
Then a $d$-regular tree is $P$-symmetric.
\end{example}

We fix for the rest of the section a property of finite connected simplicial complexes $P$.
We assume $P$ is such that finite segments (isomorphic to a convex subset of $\ZZ$), and in particular points, satisfy $P$. We also assume that $P$ is stable under taking convex subcomplexes. 

We also assume that our fixed simplical complex $X$ is $P$-symmetric.

Note that for a pair of finite connected simplicial complexes $Y',Y$ both having $P$ and an isometry $i:Y' \to Y$,
the push forward of $m_Y$ under the restriction map $i^*:\text{Isom}(Y,X)\to \text{Isom}(Y',X)$ satisfies 
\begin{equation}\label{eq:i*}
(i^*)_*(m_Y)=[Y:Y']_i \cdot m_{Y'}.
\end{equation}

\begin{lemma}\label{lem:[XX']}
Let $Y$ be a finite connected simplicial complex satisfying $P$.
\begin{itemize}
\item
For any isometry $i:Y\to Y$, $[Y,Y]_i=1$.
\item 
For finite connected $P$ simplicial complexes $Y'$ and $Y''$ and isometries $i:Y''\to Y'$, $j:Y'\to Y$ one has
\[ [Y,Y'']_{j\circ i}=[Y,Y']_j\cdot [Y',Y'']_i. \]
\item
For every pair of vertices $y,y'\in Y^0$, $[Y,\{y\}]=[Y,\{y'\}]$.
\end{itemize}
\end{lemma}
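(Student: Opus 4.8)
\textit{Proof plan.} The plan is to handle the three items in order; the first contains the only genuine observation, and the other two are formal consequences of \eqref{eq:i*} together with functoriality of restriction and, for the last one, connectedness of $Y$.

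For the first item, I would begin by noting that an isometry $i\colon Y\to Y$ of a finite connected complex to itself is automatically a simplicial automorphism. Indeed, the induced map $Y^0\to Y^0$ is an isometric self-embedding of a finite metric space, hence a bijection; since every vertex then lies in the image of $i$, an induction on the dimension using the defining property of an isometry (``every simplex whose boundary lies in the image lies in the image'') shows that every simplex of $Y$ is in the image, so $i$ is a bijective simplicial map whose inverse is again simplicial, i.e. $i\in\Aut(Y)$. Consequently $i^*\colon\Isom(Y,X)\to\Isom(Y,X)$, $\beta\mapsto\beta\circ i$, is a bijection, with inverse $(i^{-1})^*$; moreover it is the restriction to $\Isom(Y,X)$ of the permutation of coordinates of $(X^0)^{Y^0}$ determined by $i$, hence it preserves the counting measure and therefore $m_Y$. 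Comparing $(i^*)_*m_Y=m_Y$ with \eqref{eq:i*}, which here reads $(i^*)_*m_Y=[Y:Y]_i\cdot m_Y$, and using that $m_Y\neq 0$ because $\Isom(Y,X)\neq\vid$ (apply $P$-symmetry to a point of $Y$, which is a convex $P$-subcomplex), we conclude $[Y:Y]_i=1$.

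For the second item, I would use that restriction is functorial: $(j\circ i)^*=i^*\circ j^*$ as maps $\Isom(Y,X)\to\Isom(Y'',X)$. Pushing $m_Y$ forward and applying \eqref{eq:i*} three times gives
\[ [Y:Y'']_{j\circ i}\cdot m_{Y''}=\bigl((j\circ i)^*\bigr)_*m_Y=(i^*)_*(j^*)_*m_Y=(i^*)_*\bigl([Y:Y']_j\, m_{Y'}\bigr)=[Y:Y']_j\,[Y':Y'']_i\cdot m_{Y''}, \]
and since $m_{Y''}\neq 0$ (again $\Isom(Y'',X)\neq\vid$ by $P$-symmetry applied to a point of $Y''$) the two constants coincide. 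Equivalently, and without measures, one checks that $\gamma\mapsto(\gamma\circ j,\gamma)$ is a bijection from the fibre of $(j\circ i)^*$ over a fixed $\alpha$ onto the disjoint union, over the $[Y':Y'']_i$ isometries $\beta\colon Y'\to X$ with $\beta\circ i=\alpha$, of the fibres of $j^*$ over $\beta$, each of cardinality $[Y:Y']_j$.

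For the third item, I would reduce to a single edge. By connectedness of $Y$, pick an edge-path $y=v_0\sim v_1\sim\cdots\sim v_n=y'$; chaining equalities, it suffices to treat the case where $y$ and $y'$ are the two endpoints of an edge $e=\{y,y'\}$. The edge $e$ is a convex subcomplex of $Y$ and satisfies $P$ (it is a segment), so $\{y\}\subset e\subset Y$ and $\{y'\}\subset e\subset Y$ are chains of convex $P$-subcomplexes, and the second item gives $[Y:\{y\}]=[Y:e]\cdot[e:\{y\}]$ and $[Y:\{y'\}]=[Y:e]\cdot[e:\{y'\}]$. It then remains to see $[e:\{y\}]=[e:\{y'\}]$: precomposition with the automorphism of $e$ exchanging $y$ and $y'$ is a bijection between the isometries $e\to X$ sending $y$ to a fixed vertex and those sending $y'$ to that vertex, which is exactly the required equality. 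The only step that is not pure bookkeeping is the observation in the first item that a self-isometry of a finite complex is an automorphism (so that $i^*$ is a measure-preserving bijection); I expect no real obstacle beyond keeping the definitions of ``isometry'' and ``convex'' straight.
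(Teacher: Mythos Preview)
Your proof is correct and follows essentially the same route as the paper. The paper simply declares the first two items ``straightforward'' and leaves them to the reader; your detailed arguments for them (in particular the observation that a self-isometry of a \emph{finite} connected complex is automatically an automorphism, so that $i^*$ is a measure-preserving bijection) are exactly what is needed. For the third item the paper proceeds in one step: it takes a geodesic segment $I\subset Y$ with endpoints $y,y'$, uses the flip $\tau:I\to I$ together with items 1 and 2 to get $[I,\{y\}]=[I,I]_\tau\cdot[I,\{y\}]=[I,\{y'\}]$, and then multiplies by $[Y,I]$. Your edge-by-edge reduction along an edge-path is a minor variant of the same idea---you are applying the paper's argument to each length-one geodesic and chaining---so nothing essential is different.
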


\begin{proof}
The proof of the first two claims is straight forward and left to the reader.
In order to see the third one, we first assume that there exists $I\subset Y$  a geodesic segment with end points $y,y'$
and let $\tau:I\to I$ be the unique non-trivial isometry.
Then, using the first two claims, we have $[I,\{y\}]=1\cdot [I,\{y\}]=[I,I]_\tau \cdot [I,\{y\}]=[I,\{y'\}]$, thus
\[ [Y,\{y\}]=[Y,I] \cdot [I,\{y\}]= [Y,I] \cdot [I,\{y'\}]=[Y,\{y'\}]. \]
The result follows by convexity of $Y$.
\end{proof}

In view of the above lemma, we use the notation $[Y]$ to denote the common value $[Y,\{y\}]$ for $y\in Y_0$.
We observe that for a pair of finite connected simplicial complexes $Y',Y$ both having $P$ and an isometry $i:Y' \to Y$
we have 
\begin{equation} \label{eq:X}
[Y]=[Y,Y']_i\cdot [Y'].
\end{equation}

Fix an origin $y_0\in Y^0$ and $o\in X^0$. We define a probability measure $\mu_Y^o$ on $\Isom(Y,X)$ by declaring it to be the uniform probability  on the (finite) set $\Isom(Y,X)^o:=\{\beta\in \Isom(Y,X)\mid \beta(y_0)=o\}$.

More generally, if $i:Y'\to Y$ is an isometry, and $\alpha:Y'\to X$ is a given isometry, we write $\mu_{Y,Y'}^\alpha$ to be the uniform probability on  $\Isom(Y,X)^\alpha:=\{\beta\in \Isom(Y,X)\mid \beta\circ i = \alpha \}$. If $Y'=\{y_0\}$ of course we recover the same construction.

For every finite connected $P$ simplicial complex $Y$ we define the measure $\mu_Y$ on $\text{Isom}(Y,X)$ by  $\mu_Y=\sum\limits_{o\in X_0} \mu_Y^o$.


\begin{lemma}
For every finite connected $P$ simplicial complex $Y$ we have
\begin{equation} \label{eq:mu}
[Y]\cdot \mu_Y=m_Y.
\end{equation}
and  for every pair of finite connected 
simplicial complexes $Y',Y$ both having $P$ and an isometry $i:Y' \to Y$
we have 
\begin{equation} \label{eq:i*mu}
(i^*)_*\mu_Y=\mu_{Y'}
\end{equation}

\end{lemma}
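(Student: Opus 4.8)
The plan is to reduce both identities to elementary counting, the only substantive input being $P$-symmetry to pin down one normalizing cardinality. Recall that $m_Y$ is simply the counting measure on $\Isom(Y,X)$, and that $\mu_Y=\sum_{o\in X^0}\mu_Y^o$, where $\mu_Y^o$ is the uniform probability on the fibre $\Isom(Y,X)^o=\{\beta\in\Isom(Y,X)\mid\beta(y_0)=o\}$; these fibres are finite (as $Y$ is connected and $X$ is locally finite, which is exactly what makes the $\mu_Y^o$ well defined) and they partition $\Isom(Y,X)$.

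First I would prove \eqref{eq:mu}. Applying $P$-symmetry to the inclusion $\{y_0\}\hookrightarrow Y$ — which is legitimate since points satisfy $P$ by our standing assumption and $Y$ satisfies $P$ by hypothesis — we learn that for every $o\in X^0$ the fibre $\Isom(Y,X)^o$ has cardinality $[Y,\{y_0\}]=[Y]$, the same integer for all $o$. Hence, for any subset $U\subseteq\Isom(Y,X)$, summing over the partition yields $\mu_Y(U)=\sum_{o\in X^0}\mu_Y^o(U)=\sum_{o\in X^0}|U\cap\Isom(Y,X)^o|/[Y]=|U|/[Y]=m_Y(U)/[Y]$, that is, $[Y]\cdot\mu_Y=m_Y$.

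The identity \eqref{eq:i*mu} then follows formally, with no new idea, by chaining three facts already at hand: \eqref{eq:i*}, i.e. $(i^*)_*m_Y=[Y:Y']_i\cdot m_{Y'}$; \eqref{eq:X}, i.e. $[Y]=[Y:Y']_i\cdot[Y']$; and \eqref{eq:mu}, applied to both $Y$ and $Y'$. Indeed $[Y]\cdot(i^*)_*\mu_Y=(i^*)_*\bigl([Y]\cdot\mu_Y\bigr)=(i^*)_*m_Y=[Y:Y']_i\cdot m_{Y'}=[Y:Y']_i\cdot[Y']\cdot\mu_{Y'}=[Y]\cdot\mu_{Y'}$, and one cancels the positive finite integer $[Y]$.

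The only point requiring care — and the only place the hypotheses on $P$ and on $X$ are genuinely used — is the first step: one must know that the number of isometries $Y\to X$ sending the base vertex $y_0$ to a prescribed vertex $o$ is positive, finite, and independent of $o$, so that it coincides with the quantity $[Y]$ defined after Lemma \ref{lem:[XX']}. This is precisely what $P$-symmetry delivers through the admissible pair $(\{y_0\},Y)$; everything else is bookkeeping with the relations already established among the constants $[Y]$, $[Y']$ and $[Y:Y']_i$.
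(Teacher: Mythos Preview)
Your proof is correct and follows essentially the same approach as the paper: for \eqref{eq:mu} the paper checks on singletons that $\mu_Y(\{f\})=1/[Y]$ (using, as you do, that each fibre $\Isom(Y,X)^o$ has cardinality $[Y]$), and for \eqref{eq:i*mu} it likewise just combines \eqref{eq:i*}, \eqref{eq:X} and \eqref{eq:mu}. Your write-up is simply a more explicit rendering of the same argument.
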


\begin{proof}
Since $\Isom(Y,X)$ is countable, it suffices to prove Equation \eqref{eq:mu} on singletons. Let $f\in \Isom(Y,X)$, by construction we have $\mu_Y(f)= \mu_Y^o (f)$ where $o=f(y_0)$, and this number is by definition $\frac{1}{[Y]}$. 

Equation \eqref{eq:i*mu} follows 
by combining Equations~(\ref{eq:i*}), (\ref{eq:X}) and (\ref{eq:mu}).
\end{proof}

The following lemma is a simple double counting argument :

\begin{lemma}\label{lem:finiterestricted}

Let $Y_0,Y_1,Y_2$ be finite connected simplicial complexes having $P$, with isometries $i_k:Y_k\to Y_{k+1}$, and $\alpha_0\in\Isom(Y_0,X)$.

Then

$$\mu_{Y_2,Y_0}^{\alpha_0} = \sum_{\alpha_1\in \Isom(Y_1,X)} \mu_{Y_1,Y_0}^{\alpha_0} (\alpha_1) \cdot \mu_{Y_2,Y_1}^{\alpha_1} $$

In particular, taking $Y_0$ to be a point we get 
$$\mu_{Y_2}^{o} = \sum_{\alpha_1\in \Isom(Y_1,X)} \mu_{Y_1}^{o} (\alpha_1)\cdot \mu_{Y_2,Y_1}^{\alpha_1} $$

\end{lemma}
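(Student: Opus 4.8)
The plan is to prove this by a straightforward double counting (Fubini/disintegration) argument, using the compatibility of the $\mu$-measures with restriction maps that was established in Equation~\eqref{eq:i*mu}. Write $j_k = i_{k}\circ\cdots\circ i_0 : Y_0\to Y_{k+1}$ for the composed isometries, so that $j_0 = i_0 : Y_0\to Y_1$ and $j_1 = i_1\circ i_0 : Y_0 \to Y_2$; also write $p_k : \Isom(Y_{k+1},X)\to\Isom(Y_k,X)$ for the restriction map $\beta\mapsto\beta\circ i_k$. The measure $\mu_{Y_2,Y_0}^{\alpha_0}$ is by definition the uniform probability on the finite set $\Isom(Y_2,X)^{\alpha_0}=\{\beta : \beta\circ j_1 = \alpha_0\}$, and this set is partitioned according to the value $\alpha_1 := \beta\circ j_0 \in \Isom(Y_1,X)$ of the intermediate restriction: each such $\alpha_1$ that occurs necessarily satisfies $\alpha_1\circ i_0 = \alpha_0$, and the fiber over it is exactly $\Isom(Y_2,X)^{\alpha_1}$.

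The key quantitative input is $P$-symmetricity: by Lemma~\ref{lem:[XX']} and Equation~\eqref{eq:X}, the cardinality of $\Isom(Y_2,X)^{\alpha_1}$ equals $[Y_2:Y_1]_{i_1}$ independently of $\alpha_1$ (and is positive whenever it is attained), and similarly $|\Isom(Y_1,X)^{\alpha_0}| = [Y_1:Y_0]_{i_0}$. Hence, for a test function $\varphi$ on $\Isom(Y_2,X)$,
\[
\int \varphi \, d\mu_{Y_2,Y_0}^{\alpha_0}
= \frac{1}{[Y_2:Y_0]_{j_1}} \sum_{\beta\circ j_1 = \alpha_0} \varphi(\beta)
= \frac{1}{[Y_2:Y_0]_{j_1}} \sum_{\alpha_1\in\Isom(Y_1,X)^{\alpha_0}} \ \sum_{\beta\circ i_1 = \alpha_1} \varphi(\beta).
\]
Now $\sum_{\beta\circ i_1 = \alpha_1}\varphi(\beta) = [Y_2:Y_1]_{i_1}\int\varphi\,d\mu_{Y_2,Y_1}^{\alpha_1}$, and using the cocycle relation $[Y_2:Y_0]_{j_1} = [Y_2:Y_1]_{i_1}\cdot[Y_1:Y_0]_{i_0}$ from Lemma~\ref{lem:[XX']} together with $|\Isom(Y_1,X)^{\alpha_0}| = [Y_1:Y_0]_{i_0}$, the prefactor collapses to $\frac{1}{[Y_1:Y_0]_{i_0}}\sum_{\alpha_1\in\Isom(Y_1,X)^{\alpha_0}}$, which is precisely integration against $\mu_{Y_1,Y_0}^{\alpha_0}$. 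This yields
\[
\mu_{Y_2,Y_0}^{\alpha_0} = \sum_{\alpha_1\in\Isom(Y_1,X)}\mu_{Y_1,Y_0}^{\alpha_0}(\alpha_1)\cdot\mu_{Y_2,Y_1}^{\alpha_1},
\]
since $\mu_{Y_1,Y_0}^{\alpha_0}(\alpha_1)$ is $\frac{1}{[Y_1:Y_0]_{i_0}}$ for $\alpha_1\in\Isom(Y_1,X)^{\alpha_0}$ and $0$ otherwise. The special case where $Y_0$ is a point is immediate since then $\mu_{Y_k,Y_0}^{\alpha_0} = \mu_{Y_k}^{o}$ with $o = \alpha_0(\text{pt})$.

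There is no real obstacle here; the only thing to be careful about is bookkeeping — making sure every $\alpha_1$ appearing with nonzero weight genuinely lies in $\Isom(Y_1,X)^{\alpha_0}$ (so that the fibers are nonempty and all of equal finite size), which is where $P$-symmetricity and the hypotheses that $P$ is inherited by convex subcomplexes and holds for segments are used implicitly via Lemma~\ref{lem:[XX']}. One could equally phrase the whole argument measure-theoretically by composing the disintegration of $\mu_{Y_2}$ over $\mu_{Y_1}$ (Equation~\eqref{eq:i*mu}) with that of $\mu_{Y_1}$ over $\mu_{Y_0}$ and then conditioning on the value $\alpha_0$, but since all the sets involved are finite the counting version above is cleanest.
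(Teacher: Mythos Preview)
Your argument is correct and is essentially the same double-counting as in the paper, which simply evaluates both sides on a singleton $\{f\}$ and uses the multiplicativity $[Y_2:Y_0]=[Y_2:Y_1]\cdot[Y_1:Y_0]$ from Lemma~\ref{lem:[XX']}. One small slip: you write $\alpha_1 := \beta\circ j_0$, but $j_0=i_0:Y_0\to Y_1$ does not compose with $\beta:Y_2\to X$; you mean $\alpha_1 := \beta\circ i_1 = p_1(\beta)$, which is what the rest of your computation actually uses.
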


\begin{proof}
Clearly it suffices to prove the first equation.
Ongce again it suffices to prove the Lemma on singletons. So let $f\in \Isom(Y_2,X)^\alpha_0$, and let $\alpha'_1 = f\circ i_1$. On the left-hand side, $\mu_{Y_2,Y_0}^{\alpha_0} (f) = \frac{1}{[Y_2:Y_0]}$. On the right-hand side, each term of the sum is $0$ except when $\alpha_1=\alpha'_1$, so that we get $\mu_{Y_2,Y_1}^{\alpha'_1}(f) \mu_{Y_1,Y_0}^{\alpha_0}(\alpha'_1)$. 

But $\mu_{Y_1,Y_0}^{\alpha_0}(\alpha'_1) = \frac{1}{[Y_1:Y_0]}$ and $\mu_{Y_2,Y_1}^{\alpha'_1}(f) = \frac{1}{[Y_2:Y_1]}$ by definition. The result follows from Lemma \ref{lem:[XX']}.
\end{proof}

\begin{definition}
Given a property of connected finite simplicial complexes $P$,
a connected simplicial complex is said to be ind-$P$ if it could be presented as an ascending union of finite convex subcomplexes having property $P$.
\end{definition}

Recall that we have fixed a simplicial complex $X$ which is $P$-symmetric. Now we let $Y$ be a connected ind-$P$ simplicial complex, and note that in the category of sets
\[ \text{Isom}(Y,X) = \varprojlim \text{Isom}(Y',X), \]
where $Y'$ runs over the directed set of finite, convex subcomplex of $Y$ having property $P$ (with the inclusion), and the map associated to an inclusion $i_{Y',Y''}:Y'\to Y''$ are the restrictions $i_{Y',Y''}^*:\Isom(Y'',X)\to \Isom(Y',X)$. 

By Equation \eqref{eq:i*mu}, the map $i_{Y',Y''}^*$ is measure-preserving. Therefore, using for example Caratheodory's Extension Theorem, there exists a unique measure $\mu_Y$ on $\Isom(Y,X)$ such that, for every finite convex subcomplex $Y'\subset Y$ with property $P$, the restriction map $i_{Y',Y}^*:\Isom(Y,X)\to \Isom(Y',X)$ satisfies $(i_{Y',Y}^*)_*\mu_Y=\mu_{Y'}$.

In other words, the measure $\mu_Y$ on $\text{Isom}(Y,X)$ is defined by setting
\[ (\text{Isom}(Y,X),\mu_Y) = \varprojlim (\text{Isom}(Y',X),\mu_{Y'}), \]
where $Y'$ runs over all finite, convex subcomplexes of $Y$ having property $P$ and the limit is taken in the category of measured spaces.

\begin{definition}
The measure $\mu_Y$ is called the \emph{prouniform measure} on $\Isom(Y,X)$.
\end{definition}

Note that $\mu_Y$ depends on both $X$ and $Y$ ; since $X$ has been fixed once and for all and in order to lighten the notations we prefer not to make it appear in $\mu_Y$.

More generally, for a pair of connected ind-$P$ simplicial complexes $W,Y$ and an isometry $i:W \to Y$, and $\alpha\in \Isom(W,X)$, we can construct a relative version $\mu_{Y,W}^\alpha$ as follows.
Let $W'$ be a connected  ind-$P$ simplicial complex, with isometries $i':W\to W'$, $j:W'\to Y$ such that $i=j\circ i'$. Assume that $i'(W)$ contains all but finitely many simplices of $W'$.

\begin{lemma}
The set $\Isom(W',X)^\alpha=\{\beta\in \Isom(W',X)\mid \alpha=\beta\circ i'\}$ is finite, and its size is independent of $\alpha$.
\end{lemma}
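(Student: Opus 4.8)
The plan is to reduce the statement to the finite-complex case already treated in the section. First I would observe that since $W'$ differs from $i'(W)$ only by finitely many simplices, and since a simplicial complex is determined by its simplices together with the incidence relations, there is a finite subcomplex $F\subset W'$ containing all simplices of $W'$ not in $i'(W)$, and we may enlarge $F$ so that it is connected and contains enough of $i'(W)$ to "anchor" it; more precisely, set $F' = F\cap i'(W)$ (or rather the preimage under $i'$ of a suitable finite convex connected subcomplex of $W$ containing the intersection of $F$ with $i'(W)$). The key point is that any isometry $\beta:W'\to X$ with $\beta\circ i'=\alpha$ is already determined on $i'(W)$, hence the freedom lies entirely in extending $\beta|_{F'}$ across the finitely many extra simplices of $F$; thus $\Isom(W',X)^\alpha$ is in bijection with $\Isom(F,X)^{\alpha|_{F'}}$, i.e. the set of isometric extensions to the finite complex $F$ of the fixed isometry $\alpha\circ i'^{-1}|_{F'}:F'\to X$.

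The second step is to invoke finiteness and $P$-symmetricity. Finiteness of $\Isom(F,X)^{\alpha|_{F'}}$ is exactly the local-finiteness argument already given at the start of the section: $F$ is connected and $X$ is locally finite, so once $\beta$ is pinned down on the nonempty subcomplex $F'$ there are only finitely many ways to extend it simplex by simplex. For the independence of the size on $\alpha$, I would want $F$ and $F'$ to satisfy property $P$ and invoke the definition of $P$-symmetricity, which gives that $|\Isom(F,X)^{\gamma}| = [F:F']_{\iota}$ is a positive integer independent of the isometry $\gamma:F'\to X$. Applying this with $\gamma = \alpha\circ (i'|_{F'})^{-1}$ — which ranges over (a subset of, but by $P$-symmetricity any value of) $\Isom(F',X)$ as $\alpha$ ranges over $\Isom(W,X)$ — yields that $|\Isom(W',X)^\alpha|$ equals $[F:F']$, independent of $\alpha$.

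The main obstacle I anticipate is purely bookkeeping: arranging that the auxiliary finite complexes $F$ and $F'$ actually have property $P$, are convex where needed, and that $F'$ is nonempty and connected so that the earlier lemmas apply. The paper's standing assumptions on $P$ (stable under convex subcomplexes, satisfied by segments and points) plus the ind-$P$ hypothesis on $W$ and $W'$ should make this routine: one takes an exhaustion of $W$ by finite convex $P$-subcomplexes, picks one large enough that its image together with the finitely many extra simplices of $W'$ spans a finite convex $P$-subcomplex $F$ of $W'$, and sets $F' = F\cap i'(W)$, noting $F'$ is convex in $W'$ hence has $P$. One should also double-check that "all but finitely many simplices" genuinely forces the extension to be finite-dimensional data, i.e. that no subtlety arises from $W'$ being infinite — but since the isometry is already given on the cofinite part $i'(W)$, there is nothing left to choose there. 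A remark that the common value $|\Isom(W',X)^\alpha|$ can be denoted $[W':W]_{i'}$, consistently with the finite case and with Lemma \ref{lem:[XX']}, would be the natural way to close the argument.
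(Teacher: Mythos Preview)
Your approach is essentially the paper's own: your $F$ is the paper's $W_1$ (a finite convex $P$-subcomplex of $W'$ containing $W'\setminus i'(W)$) and your $F'$ is $W\cap W_1$; the count then comes from $P$-symmetricity applied to the inclusion $F'\hookrightarrow F$.

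One point worth sharpening: you assert a bijection $\Isom(W',X)^\alpha \cong \Isom(F,X)^{\alpha|_{F'}}$, but only the injectivity (restriction) direction is immediate. For surjectivity you must check that a given $\gamma:F\to X$ extending $\alpha|_{F'}$ really glues with $\alpha$ on $i'(W)$ to an \emph{isometry} of all of $W'$, not just a well-defined simplicial map. The paper handles this by a second appeal to $P$-symmetricity: for any larger finite convex $P$-subcomplex $W'_1\supset F$ of $W'$, one compares $[W'_1:F']=[W'_1:F]\cdot[F:F']$ with $[W'_1:F']=[W'_1:W\cap W'_1]\cdot[W\cap W'_1:F']$ and uses that $W'_1=F\cup(W\cap W'_1)$ to conclude each $\gamma$ extends uniquely and compatibly with $\alpha$. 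Your ``bookkeeping'' paragraph gestures at this, but the gluing-is-isometric step is the one place with actual content, so make it explicit.
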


\begin{proof}
We identify below $W$ with the convex subset $i'(W)$ of $W'$.
Let $W_1$ be a finite convex subcomplex  of $W'$ containing all simplices of $W'\setminus W$ and satisfying $P$. Since $W'$ is ind-$P$, this always exists. Let $\alpha_1$ be the restriction of $\alpha$ to $W \cap W_1$ (which also satisfies $P$ by assumption). Then the number of extensions of $\alpha_1$ to an isometry $\beta_1$ of $W_1$ is positive, finite and independent of $\alpha$. Using $P$-symmetry again, one can extend $\beta_1$ in a unique way to an isometry $\beta'_1\in \Isom(W'_1,X)^\alpha$, for any $W'_1\subset W'$ finite, having P and containing $W_1$, and therefore to a unique isometry $\beta\in \Isom(W',X)^\alpha$.
\end{proof}

For $W,W'$ as above, 
we equip $\Isom(W',X)^\alpha$
 with the uniform measure, which we denote $\mu_{W',X}^\alpha$.

More generally, define $\Isom(Y,X)^\alpha = \{\beta\in \Isom(Y,X)\mid \alpha=\beta\circ i\}$. Observe that again, in the category of sets, we have
\[ \text{Isom}(Y,X)^\alpha= \varprojlim \text{Isom}(W',X)^\alpha, \]
where $W'$ runs over all cofinite complexes containing $W$ as a subcomplex and being ind-P. 
We use this observation in order to introduce a measure, $\mu_{Y,W}^\alpha $, on $\text{Isom}(Y,X)^\alpha$ by setting
\[ (\text{Isom}(Y,X)^\alpha,\mu_{Y,W}^\alpha) = \varprojlim (\text{Isom}(W',X),\mu_{Y,W'}^\alpha), \]

In particular, when $W$ is reduced to a point, we define this way the "restricted" measure $\mu_Y^o$ for every $o\in X_0$. Note that again we get
$$\mu_Y=\sum_{o\in X_0} \mu_Y^o$$

\begin{definition} 
The measures $\mu_Y^\alpha$ are called \emph{restricted prouniform measures}.
\end{definition}

\begin{lemma}\label{lem:desintegration_prouniform}
Let $Y_0,Y_1,Y_2$ be connected ind-$P$ simplicial complexes, with isometries $i_k:Y_k\to Y_{k+1}$, and $\alpha_0:Y_0\to X$ be an isometry. 

Then

$$\mu_{Y_2,Y_0}^{\alpha_0} = \int_{\Isom(Y_1,X)} \mu_{Y_2,Y_1}^{\alpha_1} d\mu_{Y_1,Y_0}^{\alpha_0} (\alpha_1)$$
\end{lemma}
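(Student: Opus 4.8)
The plan is to reduce the ind-$P$ statement to the finite case (Lemma \ref{lem:finiterestricted}) by passing to the inverse limit, and to use the compatibility of the restricted prouniform measures with restriction. First I would set up notation: write $Y_1'$ for a finite convex $P$-subcomplex of $Y_1$ and, more generally, choose cofinite ind-$P$ subcomplexes $W_1'\supset Y_0$ (via $i_0$) inside $Y_1$ and $W_2'\supset Y_1$ (via $i_1$) inside $Y_2$, so that both sides of the claimed identity are defined as inverse limits over such choices. Since the two sides are measures on the same space $\Isom(Y_2,X)^{\alpha_0}=\varprojlim \Isom(W_2',X)^{\alpha_0}$, it suffices to check that they push forward to the same measure under each restriction map $\Isom(Y_2,X)^{\alpha_0}\to \Isom(W_2',X)^{\alpha_0}$, i.e.\ to verify the identity after replacing $Y_2$ by a cofinite ind-$P$ subcomplex; and then, since such a subcomplex differs from $Y_2$ by finitely many simplices, one can further replace it by a finite convex $P$-subcomplex $Y_2'$ containing the relevant data.

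The key step is then: after this reduction, apply the finite double-counting Lemma \ref{lem:finiterestricted} to the finite $P$-complexes $Y_0$ (or a finite convex piece of it containing $y_0$), $Y_1'$, $Y_2'$, where $Y_1'$ is the image of $Y_1$ inside $Y_2'$ under restriction — more precisely, $Y_1' = (\text{convex hull in } Y_2' \text{ of } i_1(Y_1)\cap Y_2')$, which has $P$ by stability under convex subcomplexes. Lemma \ref{lem:finiterestricted} gives
$$\mu_{Y_2',Y_0}^{\alpha_0} = \sum_{\alpha_1'\in \Isom(Y_1',X)} \mu_{Y_1',Y_0}^{\alpha_0}(\alpha_1')\cdot \mu_{Y_2',Y_1'}^{\alpha_1'},$$
and rewriting the sum as an integral against $\mu_{Y_1',Y_0}^{\alpha_0}$ and using \eqref{eq:i*mu} (in its relative form, i.e.\ that restriction pushes $\mu_{Y_2,Y_1}^{\alpha_1}$ forward to $\mu_{Y_2',Y_1'}^{\alpha_1'}$ when $\alpha_1$ restricts to $\alpha_1'$) identifies this with the restriction of the right-hand side of the lemma. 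Taking the inverse limit over all choices of $Y_2'$, and then over cofinite $W_2'$, yields the claim by the uniqueness part of the inverse-limit (Carathéodory) construction.

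The main obstacle I expect is a measure-theoretic bookkeeping issue rather than anything deep: the right-hand side is an integral of a family of measures $\mu_{Y_2,Y_1}^{\alpha_1}$ on the varying spaces $\Isom(Y_2,X)^{\alpha_1}$, and one must argue that $\alpha_1\mapsto \mu_{Y_2,Y_1}^{\alpha_1}$ is a genuine measurable disintegration — i.e.\ that the fibers $\Isom(Y_2,X)^{\alpha_1}$ glue measurably over $\Isom(Y_1,X)$ — before the integral even makes sense. This is handled by noting that at each finite level the disintegration is the explicit finite one from Lemma \ref{lem:finiterestricted}, which is automatically measurable (everything is countable/finite there), and that the inverse limit of measurable disintegrations is a measurable disintegration; the independence of $[Y_2':Y_1']$ from $\alpha_1'$ (i.e.\ $P$-symmetry) is exactly what makes the finite-level fibers all of the same mass, so the limiting object is well-behaved. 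Once this is in place, the rest is the double-counting of Lemma \ref{lem:finiterestricted} propagated through the projective limit, together with Lemma \ref{lem:[XX']} for the multiplicativity of the indices.
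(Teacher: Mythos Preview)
Your proposal is correct and follows the same route as the paper: reduce to the finite case and invoke Lemma~\ref{lem:finiterestricted}. The paper's proof is two sentences --- it simply observes that it suffices to test both sides against characteristic functions of cylinder sets (sets of embeddings determined by a fixed finite convex $P$-subcomplex of $Y_2$), and on such a set the identity is exactly Lemma~\ref{lem:finiterestricted} --- so your inverse-limit bookkeeping, while not wrong, is more elaborate than needed; in particular the passage ``since such a subcomplex differs from $Y_2$ by finitely many simplices, one can further replace it by a finite convex $P$-subcomplex'' is muddled (a cofinite $W_2'$ is still infinite), and your measurability worry dissolves once you note that at each finite level the disintegration is a finite sum.
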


\begin{proof}
It suffices to prove the equality against every locally constant function, and therefore against the characteristic function of a set of isometries determined by a finite convex subcomplex of $Y$. In that case this follows from Lemma \ref{lem:finiterestricted}.
\end{proof}

In particular if $Y_0$ is a point then 

$$\mu_{Y_2}^{o} = \int_{\Isom(Y_1,X)} \mu_{Y_2,Y_1}^{\alpha_1} d\mu_{Y_1}^{o} (\alpha_1)$$

Using the lemma we directly get the following.

\begin{corollary}\label{cor:measurepreserving}
Let $Y_0,Y_1,Y_2$ be connected ind-$P$ simplicial complexes, with isometries $i_k:Y_k\to Y_{k+1}$, and $\alpha_0:Y_0\to X$ be an isometry. 

Then $(i_1^*)_*\mu_{Y_2,Y_0}^{\alpha_0} = \mu_{Y_1,Y_0}^{\alpha_0}$
\end{corollary}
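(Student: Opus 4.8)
The plan is to deduce Corollary \ref{cor:measurepreserving} directly from Lemma \ref{lem:desintegration_prouniform}, viewing the latter as a disintegration statement. First I would recall what the corollary asks: we have isometries $i_1:Y_1\to Y_2$ (between connected ind-$P$ complexes), $i_0:Y_0\to Y_1$, and a fixed isometry $\alpha_0:Y_0\to X$; we must show that the pushforward of the measure $\mu_{Y_2,Y_0}^{\alpha_0}$ on $\Isom(Y_2,X)^{\alpha_0}$ under the restriction map $i_1^*:\beta\mapsto\beta\circ i_1$ equals $\mu_{Y_1,Y_0}^{\alpha_0}$ on $\Isom(Y_1,X)^{\alpha_0}$. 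The key observation is that in the integral formula
\[
\mu_{Y_2,Y_0}^{\alpha_0}=\int_{\Isom(Y_1,X)}\mu_{Y_2,Y_1}^{\alpha_1}\,d\mu_{Y_1,Y_0}^{\alpha_0}(\alpha_1),
\]
the fiber measure $\mu_{Y_2,Y_1}^{\alpha_1}$ is supported on $\Isom(Y_2,X)^{\alpha_1}=\{\beta:\beta\circ i_1=\alpha_1\}$, which is exactly the preimage $(i_1^*)^{-1}(\alpha_1)$. So the integral displays $\mu_{Y_2,Y_0}^{\alpha_0}$ as a measure disintegrated over $\mu_{Y_1,Y_0}^{\alpha_0}$ along the map $i_1^*$, and pushing forward a disintegrated measure along the map it is disintegrated over returns the base measure.

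Concretely, I would apply $(i_1^*)_*$ to both sides of the displayed equation and interchange pushforward with the integral (legitimate since pushforward is linear and continuous on measures, and the integrand depends measurably on $\alpha_1$ — or, as in the proof of Lemma \ref{lem:desintegration_prouniform}, one can just test against locally constant functions and reduce to the finite setting). On the right we get $\int_{\Isom(Y_1,X)}(i_1^*)_*\mu_{Y_2,Y_1}^{\alpha_1}\,d\mu_{Y_1,Y_0}^{\alpha_0}(\alpha_1)$. Since $\mu_{Y_2,Y_1}^{\alpha_1}$ is a probability measure concentrated on $(i_1^*)^{-1}(\alpha_1)$, its pushforward $(i_1^*)_*\mu_{Y_2,Y_1}^{\alpha_1}$ is the Dirac mass $\delta_{\alpha_1}$. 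Hence the integral becomes $\int_{\Isom(Y_1,X)}\delta_{\alpha_1}\,d\mu_{Y_1,Y_0}^{\alpha_0}(\alpha_1)=\mu_{Y_1,Y_0}^{\alpha_0}$, which is the claim.

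Alternatively, to keep everything elementary and avoid any measure-theoretic subtlety about integrating measure-valued functions, I would mimic the proof of Lemma \ref{lem:desintegration_prouniform}: it suffices to check the equality of the two measures on $\Isom(Y_1,X)$ against the characteristic function of a cylinder set $C$ determined by a finite convex $P$-subcomplex of $Y_1$; then $(i_1^*)^{-1}(C)$ is a cylinder set in $\Isom(Y_2,X)$, and the equality $\mu_{Y_2,Y_0}^{\alpha_0}((i_1^*)^{-1}(C))=\mu_{Y_1,Y_0}^{\alpha_0}(C)$ is exactly Lemma \ref{lem:finiterestricted} (double counting in the finite case), after unwinding the inverse limit definitions of the restricted prouniform measures. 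Either route is short. I do not expect a serious obstacle here; the only point requiring a little care is the justification of exchanging $(i_1^*)_*$ with the integral over $\Isom(Y_1,X)$, and the cleanest fix is precisely the reduction to locally constant test functions and the finite combinatorial identity of Lemma \ref{lem:finiterestricted}, so that is the step I would write out.
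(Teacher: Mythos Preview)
Your proof is correct and follows exactly the approach of the paper, which states only ``Using the lemma we directly get the following'' and leaves the details implicit. Your first route---pushing forward the disintegration formula and observing that $(i_1^*)_*\mu_{Y_2,Y_1}^{\alpha_1}=\delta_{\alpha_1}$---is precisely the intended one-line argument.
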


Again in the special case when $Y_0$ is a point we have that $(i_1^*)_*\mu_{Y_2}^o = \mu_{Y_1}^o$.

\begin{example}
Let $X$ be a regular tree, and $Y$ be a half-line $[0,+\infty)$. Then the set of embeddings of $Y$ into $X$ which send $0$ to $o$ can be identified to the boundary $\partial_\infty T$ ; the measure $\mu_Y^o$ is then the usual harmonic measure on the tree.
\end{example}

\begin{example}
Let $X$ be a regular tree, $Y$ be a line and $Y'\subset Y$ be a half-line. Then $\Isom(Y,X)$ is the "geodesic flow" on the tree, and $\mu_Y$ is the well-known Bowen-Margulis measure on this space. And as we saw before, $\Isom(Y',X)$ can be identified to the boundary of the tree. With this identification in mind, the restriction map is just the map which associates to a geodesic its endpoint. 
\end{example}
%
%
%

\subsection{Pushforwards and martingales}

Finally, let us end this section by some statements about spaces of measures on prouniform spaces. Let $Y,Y'$ be an ind-$P$ simplicial complexes, and $i:Y'\to Y$ be an inclusion. Then the restriction $i_*$, denoted $r$ to avoid to write down too many stars, is a map from $\Isom(Y,X)\to \Isom(Y',X)$. Again, to avoid cumbersome notations, write $I_Y=\Isom(Y,X)$. Then we get a map $r_*:L^1(I_Y,\mu_Y)\to L^1(I_{Y'},\mu_{Y'})$, which is just defined as the pushforward of a finite measure. More concretely, the map $r_*$ is an average of $f$ along the fibers of $r$.

On the other hand, we also get a map $r^*:L^1(I_{Y'})\to L^1(I_Y)$ (or more generally for any $L^p$ space, for $1\leq p \leq +\infty$), defined by $r^* f(z) = f(r(z))$.

The dual of $L^1(I_Y,\mu_Y)$ is of course $L^\infty(I_Y,\mu_Y)$, with duality given by integration along $\mu_Y$, which we denote by $\langle\cdot, \cdot\rangle$. The restriction $r:I_Y\to I_{Y'}$ defines again a pullback $r^*:L^\infty(I_{Y'})\to L^\infty(I_Y)$. 

We first record the following  observation.

\begin{lemma}\label{lem:duality}
For every $\varphi\in L^1(Y,\mu_Y)$ and $f\in L^\infty(Y',\mu_{Y'})$ we have
$$\langle \varphi,r^* f \rangle = \langle r_*\varphi,f \rangle$$
\end{lemma}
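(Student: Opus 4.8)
The plan is to unwind the definitions of $r_*$ and $r^*$ and reduce the identity to the change-of-variables (transfer) formula for pushforward measures. Recall that for $\varphi\in L^1(I_Y,\mu_Y)$ the function $r_*\varphi$ is, by construction, the density with respect to $\mu_{Y'}$ of the pushforward measure $r_*(\varphi\,\mu_Y)$. This density exists: by Corollary~\ref{cor:measurepreserving} one has $r_*\mu_Y=\mu_{Y'}$, so if $A\subset I_{Y'}$ is $\mu_{Y'}$-null then $\mu_Y(r^{-1}A)=(r_*\mu_Y)(A)=0$, whence $r_*(\varphi\,\mu_Y)(A)=\int_{r^{-1}A}\varphi\,d\mu_Y=0$; the Radon--Nikodym theorem (applicable since $\mu_{Y'}=\sum_{o\in X_0}\mu_{Y'}^o$ is $\sigma$-finite) then yields $r_*\varphi\in L^1(I_{Y'},\mu_{Y'})$ with $d\big(r_*(\varphi\,\mu_Y)\big)=(r_*\varphi)\,d\mu_{Y'}$.

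First I would observe that both pairings make sense. The function $r^*f=f\circ r$ lies in $L^\infty(I_Y,\mu_Y)$ with $\|r^*f\|_\infty=\|f\|_\infty$ — again because $r_*\mu_Y=\mu_{Y'}$ carries null sets to null sets — so $\varphi\cdot r^*f\in L^1(I_Y,\mu_Y)$, while $(r_*\varphi)\cdot f\in L^1(I_{Y'},\mu_{Y'})$ since $r_*\varphi\in L^1$ and $f\in L^\infty$. Then I would compute
\[
\langle r_*\varphi,f\rangle
=\int_{I_{Y'}} (r_*\varphi)\,f\,d\mu_{Y'}
=\int_{I_{Y'}} f\,d\big(r_*(\varphi\,\mu_Y)\big)
=\int_{I_Y} (f\circ r)\,d(\varphi\,\mu_Y)
=\int_{I_Y} \varphi\cdot r^*f\,d\mu_Y
=\langle\varphi,r^*f\rangle,
\]
where the second equality is the defining property of $r_*\varphi$, the third is the transfer formula $\int g\,d(T_*\nu)=\int (g\circ T)\,d\nu$ with $T=r$, $\nu=\varphi\,\mu_Y$, $g=f$, and the fourth uses $d(\varphi\,\mu_Y)=\varphi\,d\mu_Y$ together with the definition $r^*f=f\circ r$.

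There is essentially no real obstacle here; the only two points needing care are the well-definedness of $r_*$ on $L^1$ (the absolute-continuity argument above, which is exactly where Corollary~\ref{cor:measurepreserving} is used) and the validity of the transfer formula for the finite complex measure $\varphi\,\mu_Y$, which follows from the case of indicator functions — the very definition of the pushforward — by linearity and the usual approximation of $f\in L^\infty$ by simple functions. Alternatively, one could simply take the displayed identity as the \emph{definition} of $r_*$ by duality, making the lemma tautological; but since $r_*$ has already been introduced as the pushforward on measures, the computation above is the cleanest route.
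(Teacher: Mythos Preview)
Your proof is correct. Both arguments boil down to the same change-of-variables identity, but the packaging differs: the paper writes the disintegration $d\mu_Y(z)=\mu_Y^{r(z)}\,d\mu_{Y'}(r(z))$ explicitly, so that $r_*\varphi(\alpha)=\int\varphi\,d\mu_Y^\alpha$ appears as a fiber average, and then the computation is a direct application of Fubini along that disintegration. You instead characterize $r_*\varphi$ abstractly as the Radon--Nikodym density of $r_*(\varphi\,\mu_Y)$ with respect to $\mu_{Y'}$ and invoke the transfer formula for pushforward measures. Your route avoids any explicit reference to the fiber measures (and hence to Lemma~\ref{lem:desintegration_prouniform}), needing only $r_*\mu_Y=\mu_{Y'}$; the paper's route has the advantage of making concrete what $r_*\varphi$ actually is. Either way the lemma is essentially a tautology once $r_*$ is properly defined.
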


\begin{proof}
Write the disintegration $d\mu_Y(z)=\mu_Y^{r(z)}d\mu_{Y'}(r(z))$, where for every $\alpha\in Y'$ the measure $\mu_Y^\alpha$ is supported on $r^{-1}(\alpha)$.

Then for every $\alpha\in Y'$ we get $r_*\varphi(\alpha)=\int \varphi(x) d\mu_{Y}^\alpha(x)$ and 
\begin{align*}
\langle r_*\varphi,f \rangle &= \int_{I_{Y'}}r_*\varphi(\alpha) f(\alpha) d\mu_{Y'}(\alpha)\\
&= \int_{I_{Y'}} f(\alpha)\int_{I_Y} \varphi(z) d\mu_Y^\alpha(z) \\
&=\int_{I_Y} f(r(z)) \varphi(z) d\mu_Y(z)\\
&= \langle \varphi,r^*f \rangle
\end{align*}

\end{proof}

Our final remark for this section is that our setup is well-suited for applications of the Martingale Convergence Theorem. 
Let $Y'$ be a convex ind-$P$ subset of $Y$ and let $(Y_k)_{k\in\NN}$ be an increasing sequence of finite convex subsets of $Y$ with property $P$, such that $Y=\bigcup_{k\in \NN} Y_k$.  
Let $r_k:I_Y\to I_{Y_k}$ be the restriction maps and denote by $\calB_k$ and $\calB_\infty$ the $\sigma$-algebras on $I_Y$ pulled back from the given $\sigma$-algebras on $I_{Y_k}$ and $I_{Y'}$ by the maps $r_k$ and $r$ correspondingly.
Then $\calB_\infty=\cup_k \calB_k$ and the Martingale Convergence Theorem reads as follows.

\begin{theorem}\label{thm:martingales}
 For every $f\in L^1(\Isom(Y,X))$, for almost every $i\in\Isom(Y,X)$ we have
 $$\lim_k \EE (f\mid \calB_k)(i) = \EE(f\mid \calB_\infty)(i)$$
    The convergence also holds in $L^1$.
\end{theorem}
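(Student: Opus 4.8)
The plan is to deduce the statement directly from Doob's (Lévy's upward) martingale convergence theorem; all the real work has already been done in constructing the measures $\mu_Y$ and their disintegrations, and what remains is to match our data to the hypotheses of the classical theorem.

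First I would record that $(\scrF_k)_{k\in\NN}$ is an increasing sequence of $\sigma$-algebras. Since $Y_k\subset Y_{k+1}$, the restriction $r_k$ factors as $r_k=s_k\circ r_{k+1}$ with $s_k\colon\Isom(Y_{k+1},X)\to\Isom(Y_k,X)$ itself a restriction map, so each $\Omega_k(i)$ is a countable union of sets of the form $\Omega_{k+1}(j)$ and hence $\scrF_k\subset\scrF_{k+1}$. In the same way every $\scrF_k$ sits inside the $\sigma$-algebra generated by the $\Omega_\infty(i)$ (because, the $Y_k$ exhausting the subcomplex $Y'$ that defines $\scrF_\infty$, knowing the restriction of an embedding to $Y'$ determines its restriction to each $Y_k$), and conversely $\Omega_\infty(i)=r^{-1}r(i)=\bigcap_k\Omega_k(i)$ is a countable intersection of elements of $\bigvee_k\scrF_k$. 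Putting these together gives $\bigvee_k\scrF_k=\scrF_\infty$ modulo $\mu_Y$-null sets. This identification of the tail $\sigma$-algebra is the only point that uses the specific combinatorial structure, and it is routine once one keeps the bookkeeping straight (the finite convex $P$-subcomplexes $Y_k$ should be chosen to exhaust $Y'$, so that agreement on all of them is exactly agreement on $Y'$).

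Next I would identify the conditional expectations. By the disintegration of $\mu_Y$ over $\mu_{Y_k}$ furnished by Corollary~\ref{cor:measurepreserving} (and Lemma~\ref{lem:desintegration_prouniform}), the fiber of $r_k$ over $\alpha\in\Isom(Y_k,X)$ carries the probability measure $\mu_{Y,Y_k}^{\alpha}$, so the conditional expectation of $f\in L^1(\Isom(Y,X))$ with respect to $\scrF_k$ is precisely the fiberwise average, i.e. the pullback along $r_k$ of $(r_k)_*f$; likewise $\EE(f\mid\scrF_\infty)=r^*(r_*f)$. This is exactly the characterization of conditional expectation as averaging along the fibers of a disintegration, which is already the content of the computation in Lemma~\ref{lem:duality}.

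With these two points in place, the classical martingale convergence theorem applied to the increasing filtration $(\scrF_k)$ and to $f\in L^1$ yields $\EE(f\mid\scrF_k)\to\EE(f\mid\scrF_\infty)$ both $\mu_Y$-almost everywhere and in $L^1$, which is the assertion. I do not expect a genuine obstacle: the statement is a formal corollary of material developed above, and the only place to be slightly careful is the measure-theoretic identification $\bigvee_k\scrF_k=\scrF_\infty$ in the first step, i.e. checking that $\scrF_\infty$ is really the tail $\sigma$-algebra of the filtration rather than something strictly smaller.
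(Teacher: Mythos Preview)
Your proposal is correct and matches the paper's own treatment: the paper does not give a proof at all, introducing the statement with ``In particular the Martingale Convergence Theorem reads as follows'' and treating it as a direct translation of the classical Lévy--Doob upward theorem into the prouniform setup. Your parenthetical remark is well taken: for $\bigvee_k\scrF_k=\scrF_\infty$ to hold one indeed needs the $Y_k$ to exhaust $Y'$ rather than $Y$ (the paper's text literally says $Y=\bigcup_k Y_k$, but the application in \S\ref{sec:martingales}, where $Y=\Lambda$, $Y'$ is a boundary wall, and $Y_k=\Conv(0,\lambda_k)$ exhausts only that wall, confirms that your reading is the intended one).
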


Let us emphasize the following consequence.

\begin{lemma}\label{lem:densityL1_gen}
%
For $\mu_{Y'}$-a.e $\alpha\in I_{Y'}$, the space $\bigcup_{k\in \NN} r_k^*L^1(I_{Y_k},\mu_{Y_k}^{r_k(\alpha)})$ is norm dense in $L^1(I_Y,\mu_Y^\alpha)$.
\end{lemma}

\begin{proof}
By the previous discussion, we get that for $\phi\in L^1(I_Y)$ the sequence $(r_k^*\phi)_{k\in\NN}$ is a martingale, bounded by $\Vert \phi\Vert$. Hence it converges (both a.s. and in $L^1$) to $\phi$.

\end{proof}

\section{Buildings of type $\tilde A_2$}\label{sec:buildings}

In this section we will explain the facts that we need about $\tilde A_2$ buildings.
 We assume some familiarity with the notion, and refer the reader to the reference book \cite{AbramenkoBrown} for a more thorough treatment. 
 Other references include \cite{WeissBook} or the paper \cite{BCL} which adopted a similar point of view and notations.
 Nevertheless, we will start by recalling a few basic facts and definitions, also in order to fix the vocabulary and notations. 
 Since we will introduce a large variety of notation, we made a list gathering them all by the end of the article. 

\subsection{Generalities}

Let $\Sigma$ \index{$\Sigma$ \hfill model apartment|bb}be the tessellation of the Euclidean plane by equilateral triangles, created by three families of parallel lines. 
This is a \emph{model apartment}. Lines of $\Sigma$ are called \emph{walls}, each wall divides $\Sigma$ into two connected components, which are called \emph{half-apartments}.
Maximal simplices (\textit{i.e} triangles) are called \emph{alcoves}. 

\begin{definition}
 A \emph{building of type $\tilde A_2$} is a simplicial complex $X$, of dimension 2, covered by copies of $\Sigma$ (called \emph{apartments}), such that
 \begin{itemize}
     \item Every two simplices of $X$ are contained in some apartment
     \item If $A,A'$ are two apartments then there exists an isomorphism $A\to A'$ fixing $A\cap A'$ pointwise
     \item Every edge is contained in at least 3 triangles.
 \end{itemize}
\end{definition}

From now on we fix an $\tilde A_2$ building $X$\index{$X$ \hfill $q$-regular building of type $\tilde A_2$|bb}. We equip it with the maximal system of apartments: every embedded copy of $\Sigma$ in $X$ is an apartment. The number of triangles containing a given edge can be proven not to depend on the edge, and we will assume it is finite. This number will be written $q+1$, where $q>1$ is the \emph{thickness} of the building.

We define an alcove (resp. a wall, an half-apartment) in the building as the image of the image of an alcove (resp. a wall, an half-apartment) by an embedding of $\Sigma$ in $X$.

The following is \cite[Theorem 11.52]{AbramenkoBrown}.

\begin{theorem}\label{thm:isomappart}
Let $X$ be an $\tilde A_2$ building. Let $C\subset X$ be either convex or of nonempty interior. Assume that $C$ is isometric to a subset of an apartment. Then there exists an apartment $A\subset X$ containing $C$.
\end{theorem}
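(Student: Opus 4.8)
This is a classical result about Euclidean buildings; here is how I would organize a proof.

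\textbf{Step 1 (normalization).} In this paper an isometry is a simplicial map, so $C$ is a subcomplex of $X$ and (choosing the isometry) $C'\subseteq\Sigma$ is a subcomplex of the model apartment; since the combinatorial metric of a \cat building determines its metric realization, the inclusion $C\hookrightarrow X$ is also a \cat-isometric embedding. I would first reduce the nonempty-interior case to the convex one: if $C$ has nonempty interior it contains a closed alcove, and, being isometric to a subset of $\Sigma$, it is a union of closed alcoves; so it suffices to find an apartment containing its \cat-convex hull $\Conv(C)\supseteq C$, and a standard retraction argument (using a retraction $\rho_{A_0,c}$ onto an apartment $A_0$ through a chamber $c\subseteq C$, together with the fact that apartments are \cat-convex) shows that $\Conv(C)$ is again isometric to a subset of $\Sigma$. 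From now on assume $C$ is convex; then $C'$ is a convex subcomplex of $\Sigma$, hence an intersection of roots (half-apartments).

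\textbf{Step 2 (reduction to the finite case).} Write $C$ as the directed union of its finite convex subcomplexes $C_n$ (the convex hull of a finite vertex set is bounded, hence finite). Fix a chamber $c$ of $C$ — if $C$ has none it is a vertex or a geodesic segment, treated directly in Step 3. The set of apartments of $X$ through $c$ is compact for the topology of agreement on finite combinatorial balls around $c$, being an inverse limit of finite sets. For each $n$ the set $\mathcal A_n$ of apartments containing $C_n$ is clopen and, granting the finite case, nonempty, and $\mathcal A_{n+1}\subseteq\mathcal A_n$; hence $\bigcap_n\mathcal A_n\neq\varnothing$, and any apartment in this intersection contains $C=\bigcup_n C_n$. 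So it suffices to prove the theorem for $C$ finite and convex.

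\textbf{Step 3 (the finite convex case).} If $C$ has no chamber it is a vertex, or a geodesic segment $[x,y]$: choose chambers $c_x\ni x$, $c_y\ni y$, put them in a common apartment by the building axiom on pairs of simplices, and use that apartments are \cat-convex. If $C$ contains a chamber, I would induct on the number of chambers of $C$, maintaining a chamber set $D\subseteq C$ inside an apartment $A$ and adjoining one more chamber $d$ of $C$ adjacent to $D$ at a time: with $p=d\cap D$, $m$ the wall of $A$ through $p$, and $A=\alpha\cup(-\alpha)$, the exchange condition for buildings yields a half-apartment $\beta$ with $\partial\beta=m$, $d\subseteq\beta$, and $\alpha\cup\beta$ an apartment, so one replaces $A$ by $\alpha\cup\beta$ \emph{provided $D$ lies in the closed $\alpha$-side of $m$}. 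The crux — already nontrivial for a hexagon of six alcoves around a vertex — is to order the chambers of $C$ and choose at each step which panel to split along (possibly first moving to another apartment containing $D$) so that this one-sidedness always holds; this is where convexity of $C$ enters, through the description of convex subcomplexes of $\Sigma$ as intersections of roots. An alternative is to exhibit $C$ as the \cat-convex hull, inside a single apartment obtained from the building axioms, of finitely many ``corner'' chambers of $C$, and to conclude from the \cat-convexity of that apartment.

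\textbf{Main obstacle.} The conceptual ingredients — retractions, and compactness of the space of apartments through a chamber — are routine. The step I expect to be hardest is the combinatorics of Step 3: controlling, at each stage of the induction, the wall along which to cut so that the already-placed part stays on one side. This is essentially the content of \cite[Chapter~11]{AbramenkoBrown}.
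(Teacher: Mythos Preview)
The paper does not give its own proof of this statement: it simply records it as \cite[Theorem~11.52]{AbramenkoBrown} and moves on. So there is nothing to compare your proposal against here beyond the reference itself.

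That said, your outline is broadly the standard one, and your self-diagnosis is accurate. Steps~1 and~2 are routine: the retraction argument to pass from ``nonempty interior'' to ``convex'' is exactly how Abramenko--Brown handle it, and the compactness/inverse-limit reduction to finite $C$ is fine (note that the paper's Corollary~\ref{cor:sequenceappt} is essentially your Step~2, but derived \emph{from} Theorem~\ref{thm:isomappart}, so you are right to argue it directly rather than invoke it). Your treatment of the thin case in Step~3 is correct for type $\tilde A_2$, since a convex subcomplex of $\Sigma$ with no alcove lies in a wall and is therefore a segment.

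The genuine gap is the one you flag: in Step~3 you have not actually carried out the induction. The difficulty is real --- at each stage you must choose the panel so that the already-built set $D$ lies entirely on one side of the splitting wall, and convexity of $C$ is what makes such a choice possible, but you have not shown how. Your ``alternative'' via corner chambers is not a shortcut either: putting finitely many chambers into a common apartment is not automatic from the building axioms (which only give you two simplices at a time), so this alternative hides the same inductive difficulty. Since the paper only needs the statement as a black box, citing \cite[Theorem~11.52]{AbramenkoBrown} is the honest move; if you want a self-contained argument, you should either reproduce the relevant portion of Chapter~11 there or, in the $\tilde A_2$ case specifically, exploit the explicit description of finite convex subcomplexes of $\Sigma$ (they are hexagons/parallelograms cut out by at most six roots) to run the induction by peeling off a boundary strip of alcoves one wall at a time.
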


From this we can deduce the following well-known corollaries (see eg \cite[Exercice 5.83a)]{AbramenkoBrown}.)

\begin{corollary}\label{cor:sequenceappt}
Let $(C_n)_{n\in \NN}$ be a sequence of convex subsets of $X$, with $C_i\subset C_{i+1}$ for all $i\in \NN$. Assume that each $C_i$ is contained in an apartment. Then there exists an apartment containing $\bigcup_{n\in\NN} C_n$.
\end{corollary}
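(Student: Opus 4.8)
\textbf{Proof strategy for Corollary \ref{cor:sequenceappt}.}

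The plan is to reduce the statement to Theorem \ref{thm:isomappart} by a compactness/diagonal argument, using the local finiteness of $X$. First I would observe that the union $C_\infty := \bigcup_{n} C_n$ is itself convex (an increasing union of convex sets is convex, since any two points lie in a common $C_n$, hence so does the geodesic between them) and, being a union of subsets isometric to subsets of an apartment, it is isometric to a subset of $\Sigma$: indeed, by Theorem \ref{thm:isomappart} each $C_n$ sits inside an apartment $A_n$, and one can transport everything into the model apartment $\Sigma$. The key point to make precise here is that the isometric embeddings $C_n \hookrightarrow \Sigma$ can be chosen compatibly, i.e. so that the embedding of $C_{n+1}$ restricts to that of $C_n$; this requires choosing a base simplex $\sigma_0 \subset C_0$, normalizing each embedding on $\sigma_0$, and using the fact that an isometry between subsets of $\Sigma$ containing a common alcove is unique (a Coxeter complex is determined by one alcove), so that the restriction of a chosen embedding of $C_{n+1}$ agrees with the chosen embedding of $C_n$ after composing with an automorphism of $\Sigma$ fixing $\sigma_0$. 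Taking the direct limit of these embeddings realizes $C_\infty$ as (isometric to) a convex subset of $\Sigma$.

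Next I would apply Theorem \ref{thm:isomappart} directly to $C_\infty$: it is convex and isometric to a subset of an apartment, so there exists an apartment $A \subset X$ with $C_\infty \subset A$. In fact this already finishes the proof, \emph{provided} Theorem \ref{thm:isomappart} is applicable to subsets that are not necessarily finite — and as stated it is (the hypotheses are only convexity and being isometric to a subset of an apartment, with no finiteness assumption). So the real content is just the verification that $C_\infty$ satisfies those two hypotheses, which is the paragraph above.

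If instead one wanted to avoid appealing to Theorem \ref{thm:isomappart} for an infinite set, the alternative is the diagonal argument: by Theorem \ref{thm:isomappart} pick for each $n$ an apartment $A_n \supseteq C_n$; since $X$ is locally finite and each $A_n$ is determined by, say, an alcove together with finitely many combinatorial choices, and all the $A_n$ contain the fixed alcove $\sigma_0 \subset C_0$, one extracts (by König's lemma / a diagonal extraction over an exhaustion of $\Sigma$ by finite balls) a subsequence along which the $A_n$ "converge" to an apartment $A$; one then checks $C_n \subset A$ for every $n$ because $C_n \subset A_m$ for all large $m$ and the $A_m$ eventually agree with $A$ on any fixed finite ball. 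I expect the main obstacle to be purely bookkeeping: making the compatibility of the embeddings into $\Sigma$ (or the convergence of the $A_n$) genuinely precise, in particular invoking the uniqueness of isometries of Coxeter complexes fixing an alcove to pin down the limit. Modulo that, the statement is an immediate consequence of Theorem \ref{thm:isomappart}.
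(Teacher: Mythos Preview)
Your proposal is correct; the paper itself does not supply a proof but simply records the statement as a well-known consequence of Theorem~\ref{thm:isomappart}, with a pointer to \cite[Exercise 5.83(a)]{AbramenkoBrown}. Both routes you sketch are standard ways to do that exercise.

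One simplification for your first approach: you do not need to fix a base alcove or invoke any uniqueness statement to get the compatible embeddings $C_n\hookrightarrow\Sigma$. Just use the second building axiom directly. Pick apartments $A_n\supseteq C_n$ and an isomorphism $\psi_0:A_0\to\Sigma$. Inductively, the building axiom gives an isomorphism $\rho_n:A_{n+1}\to A_n$ fixing $A_n\cap A_{n+1}$ (hence $C_n$) pointwise; set $\psi_{n+1}=\psi_n\circ\rho_n$. Then $\psi_{n+1}|_{C_n}=\psi_n|_{C_n}$ by construction, so the $\psi_n|_{C_n}$ glue to an isometric embedding of $C_\infty$ into $\Sigma$. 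This sidesteps the worry about whether $C_0$ contains an alcove, and makes the ``bookkeeping'' you flagged essentially trivial. After that, as you say, Theorem~\ref{thm:isomappart} applies verbatim to the convex set $C_\infty$.
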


\begin{corollary}\label{cor:root+alcove}
Let $\alpha$ be an half-apartment contained in $X$, and $c$ be an alcove with on edge on the wall $\partial\alpha$. There exists an apartment containing $\alpha\cup c$.
\end{corollary}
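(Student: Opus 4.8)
The goal is to find a single apartment containing $\alpha\cup c$, where $\alpha$ is a half-apartment and $c$ an alcove sharing an edge $e$ with the wall $\partial\alpha$. The plan is to build an increasing exhaustion of $\alpha\cup c$ by convex subsets, each of which is isometric to a subset of an apartment, and then apply Corollary~\ref{cor:sequenceappt}. The only non-formal point is to verify at each finite stage that the relevant finite subcomplex embeds isometrically (combinatorially) into the model apartment $\Sigma$; the passage to the infinite union is then automatic.

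First I would set up the exhaustion. Write $\alpha$ as an increasing union $\alpha=\bigcup_n B_n$ of finite convex subcomplexes, for instance intersections of $\alpha$ with larger and larger combinatorial balls, arranged so that the edge $e\subset\partial\alpha$ lies in $B_0$. Set $C_n=B_n\cup c$. Each $C_n$ is convex in $X$: it is the union of the convex set $B_n$ with the alcove $c$, which are glued along the common edge $e$ on the boundary wall, and one checks directly that geodesics between a point of $c$ and a point of $B_n$ pass through $e$ and stay in $C_n$ (this uses that $\partial\alpha$ is a wall, so $e$ separates the interior of $c$ from $B_n$ and realizes the "gate" for projections). Moreover $C_n\subset C_{n+1}$.

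Next I would check that each $C_n$ is isometric to a subset of an apartment, equivalently embeds combinatorially into $\Sigma$. Since $B_n\subset\alpha$, it embeds isometrically into $\Sigma$ as part of a half-apartment; the boundary wall of that half-apartment contains the image of $e$. In $\Sigma$ there is a unique alcove on the far side of that wall sharing the image of $e$, and attaching it gives an isometric copy of $C_n$ inside $\Sigma$ — here one must verify that the metric on $C_n$ (combinatorial distance in $X$) agrees with the distance in this planar model, which again follows from the fact that every path from $c$ to $B_n$ in $X$ crosses $e$, so distances are additive through the gate $e$ exactly as they are in the plane. By Theorem~\ref{thm:isomappart} (applied to the convex set $C_n$, or alternatively observing $C_n$ has nonempty interior since it contains the alcove $c$), there is an apartment $A_n\subset X$ with $C_n\subset A_n$. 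Finally, Corollary~\ref{cor:sequenceappt} applied to the chain $(C_n)$ yields an apartment $A$ containing $\bigcup_n C_n=\alpha\cup c$, as desired.

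The main obstacle is the verification that $C_n$ (or even just $\alpha\cup c$ directly) is isometric to a subset of an apartment — i.e.\ that gluing one extra alcove across the bounding wall does not distort combinatorial distances. This is where the hypothesis that $\partial\alpha$ is a genuine \emph{wall} of the building (rather than an arbitrary convex wall-like subset) is essential: it guarantees that the edge $e$ is a gate through which all geodesics from $c$ into $\alpha$ must pass, making the distance function on $\alpha\cup c$ literally the planar one. Everything else — the exhaustion, convexity of the $C_n$, and the limiting argument — is routine once this point is secured.
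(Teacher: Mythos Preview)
The paper gives no proof of this corollary at all: it is listed, together with Corollary~\ref{cor:sequenceappt}, as a ``well-known'' consequence of Theorem~\ref{thm:isomappart}, with a pointer to Exercise~5.83(a) in Abramenko--Brown. So there is no argument of the paper to compare against, only the implicit indication that one should apply Theorem~\ref{thm:isomappart} directly.

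Two remarks on your write-up. First, the exhaustion by finite pieces and the appeal to Corollary~\ref{cor:sequenceappt} are unnecessary: $\alpha\cup c$ already contains the alcove $c$, hence has nonempty interior, so Theorem~\ref{thm:isomappart} applies to $\alpha\cup c$ in one shot once you know it is isometric to a subset of an apartment. The finite chain buys nothing.

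Second, and more seriously, your ``gate'' justification for that isometry is not correct as written. The assertion that ``every path from $c$ to $B_n$ in $X$ crosses $e$'' is false in a thick building: the off-wall vertex $v$ of $c$ has many neighbours in $X$ other than the two endpoints of $e$, and geodesics from $v$ into $\alpha$ need not pass through $e$ --- indeed, even in the model apartment $\Sigma$ a geodesic from $v_0$ to a point of $\alpha_0$ can run below the wall rather than through $e$. What one actually needs is the equality $d_X(v,w)=d_A(v',w)$ for every $w\in\alpha$, where $A\supset\alpha$ is an apartment and $v'$ is the off-wall vertex of the alcove of $A$ across $e$; this is obtained not by a vertex-gate property but by the usual retraction argument (retract onto $A$ centred at the alcove of $\alpha$ sharing the panel $e$, and use that the combinatorial type $\sigma(v,w)$ is apartment-independent), which is the content of the Abramenko--Brown exercise. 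Your identification of this step as the crux is right; the justification needs to be replaced.
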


The vertices of $\Sigma$ can be colored uniquely (up to renumerotation) by three colors in such a way that any two adjacent vertices are of a different colors. These colors are also called \emph{types}. Using the second axiom one can check that this coloration extends to the whole building, so that each vertex of $X$ has a given type. An automorphism of $\Sigma$, or of $X$, is called \emph{type-preserving} if it preserves the type. If it does not, then it induces a permutation of the set of types, and it is called \emph{type-rotating} if this permutation is cyclic.

Fix an origin vertex $0\in \Sigma$. Then the walls passing through $0$ divide $\Sigma$ into 6 connected components, which are called \emph{sectors} or \emph{Weyl chambers}. We choose one of them and call it the \emph{positive Weyl chamber} of $\Sigma$, and denote it by $\Lambda$\index{$\Lambda$ \hfill a fixed sector in $\Sigma$|bb}. For every pairs of vertices $x,y\in \Sigma$, there is a unique type-rotating permutation which sends $x$ to $0$ and $y$ to an element of $\Lambda$. This element is called the \emph{combinatorial distance} and denoted $\sigma(x,y)$\index{$\sigma:X\times X\to \Lambda$ \hfill the combinatorial distance|bb}. More generally, if $x,y\in X$ then there is an apartment containing $x$ and $y$, and we can define $\sigma(x,y)$ as the combinatorial distance between $x$ and $y$ in this apartment. This does not depend on the choice of the apartment.
For $\lambda\in \Lambda$, and a vertex $x\in X$, the set 
$$V_\lambda(x)= \{y\in X \mid \sigma(x,y) = \lambda\}$$
is the \emph{combinatorial sphere} of radius $\lambda$ centered at $x$.

The \emph{convex hull} of a set $F\subset X$ of vertices of $X$ is the convex hull in the 1-skeleton of $X$. It is denoted $\Conv(F)$\index{$\Conv$ \hfill convex hull in the 1-skeleton of $X$|bb}. If $F$ is contained in an apartment, then $\Conv(F)$ is the intersection of all apartments containing $F$. Note that if $x,y$ are two vertices of $X$ then the isometry class of $\Conv(x,y)$ only depends on $\sigma(x,y)$, by convexity of sectors.

We define the \emph{length} of an element $\lambda\in\Lambda$ as the distance between $o$ and $\lambda$ in the 1-skeleton of $\Sigma$. We denote this length $\ell(\lambda)$\index{$\ell$ \hfill length function on $\Lambda$|bb}.

\subsection{Boundaries}

Let us now define the boundary of $X$ and its structure as a spherical building. We have defined a sector in $\Sigma$, and as usual we define a sector in $X$ as the image of a sector in $\Sigma$ by any embedding in $\Isom(\Sigma,X)$. A sector is \emph{based} at some vertex of $X$, which is the image of $0$ by this embedding. Two sectors are \emph{equivalent} if their intersection contains another sector. Equivalence classes of sectors are called \emph{chambers at infinity}, or sometimes just chambers. The set of all chambers is denoted by $\Delta$.

\begin{lemma}
For each chamber $C\in \Delta$, and vertex $x\in X$, there is a unique sector in the class of $C$ which is based at $x$. This sector is denoted $Q(x,C)$.\index{$Q(x,C)$ \hfill sector in the class of $C$ based at $x$|bb}
\end{lemma}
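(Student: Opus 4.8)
The plan is to reduce everything to the following standard \emph{asymptotic} fact, which I will call $(\ast)$ and discuss last: for any sector $S$ based at a vertex $y$ and any vertex $x\in X$, there is an apartment $A$ containing $x$ together with a subsector $S'\subseteq S$. Granting $(\ast)$, existence goes as follows. Fix a sector $S$ representing $C$, based at some $y$, and choose $A$ and $S'\subseteq S$ with $S'\subseteq A$ as in $(\ast)$. Identify $A$ with the model apartment $\Sigma$; then $S'$ is a translate of one of the six Weyl chambers through the origin, and inside $A$ there is exactly one sector $Q$ based at $x$ which is a translate of $S'$ (the Weyl chambers through the origin are all equivalent under $\Aut(\Sigma)$, so $Q$ is genuinely a sector of $X$ based at $x$). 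Since in the Euclidean plane $\Sigma$ the intersection of two translates of the same open Weyl-chamber cone is again such a translate, $Q\cap S'$ is a sector; hence $Q$ is equivalent to $S'$, and as $S'$ is a subsector of $S$ it is equivalent to $S$ (recall that equivalence of sectors is transitive: two subsectors of a common sector meet, inside the ambient apartment, in a sector, by the same Euclidean computation). Therefore $Q$ lies in the class $C$ and is based at $x$.

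For uniqueness, suppose $Q_1,Q_2$ are sectors based at $x$ both belonging to $C$. By definition of the equivalence relation, $Q_1\cap Q_2$ contains a sector $T$. Pick apartments $A_i\supseteq Q_i$ (apartments are convex in $X$); since a Weyl chamber contained in a Weyl chamber of an apartment must be a parallel subsector of it, $T$ is a subsector of both $Q_1$ and $Q_2$. I claim $Q_i=\Conv(\{x\}\cup T)$ for $i=1,2$, which forces $Q_1=Q_2$. Working inside $A_i\cong\Sigma$: since $T\subseteq Q_i$ and $Q_i$ is based at $x$, the apex of $T$ lies in $Q_i$, i.e. dominates $x$ in the Weyl-chamber order; an elementary computation with cones in $\Sigma$ then shows that every point $p\in Q_i$ lies on a geodesic from $x$ to some point of $T$ (choose $t\in T$ far enough out so that $t$ dominates $p$). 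Hence $\Conv_{A_i}(\{x\}\cup T)=Q_i$. As $\{x\}\cup T$ is contained in the apartment $A_i$, its convex hull in $X$ is the intersection of all apartments containing it (recalled in the text preceding Theorem~\ref{thm:isomappart}), so in particular it lies in $A_i$ and agrees there with $\Conv_{A_i}(\{x\}\cup T)=Q_i$. Thus $\Conv(\{x\}\cup T)$ is independent of $i$, and $Q_1=Q_2$.

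\textbf{Main obstacle.} The crux is $(\ast)$. It is classical (see \cite[\S11.8]{AbramenkoBrown}), and in the present framework it is exactly what Corollaries~\ref{cor:sequenceappt} and~\ref{cor:root+alcove} are designed to provide: one argues by induction on the combinatorial distance $d(x,y)$, the point being that when $x$ fails to lie in an apartment already containing a subsector of $S$, a single application of Corollary~\ref{cor:root+alcove} enlarges such an apartment across one wall so as to absorb a neighbour of $x$ that is closer to the sector, after which Corollary~\ref{cor:sequenceappt} (or directly Theorem~\ref{thm:isomappart}) recovers a subsector inside the new apartment; iterating this finitely many times along a geodesic from $y$ to $x$ yields the desired apartment. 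Alternatively one produces $A$ as a nested union of the convex hulls $\Conv(\{x\}\cup\{z_n\})$ for vertices $z_n$ marching to infinity along the central ray of $S$ — each such hull is contained in an apartment since $\{x,z_n\}$ is — extracting a coherent limit by local finiteness of $X$ and Corollary~\ref{cor:sequenceappt}. This wall-crossing step is the only place where the building axioms do genuine work; the remainder of the argument is Euclidean geometry of Weyl-chamber cones in $\Sigma$ together with the fact that convex hulls of subsets of an apartment are computed inside that apartment.
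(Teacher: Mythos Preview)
The paper states this lemma without proof; it is a classical fact about affine buildings (see e.g.\ \cite[Theorem~11.63]{AbramenkoBrown}), quoted here only to fix the notation $Q(x,C)$. Your argument is correct and is essentially the standard one: existence via the asymptotic fact $(\ast)$ that any point can be placed in an apartment together with a subsector of a given sector, and uniqueness via the observation that a sector based at $x$ is recovered as the convex hull of $x$ together with any of its subsectors. Your sketch of $(\ast)$ using the wall-crossing Corollary~\ref{cor:root+alcove} and the limiting Corollary~\ref{cor:sequenceappt} is also the standard route, and indeed these corollaries are stated in the paper precisely as the toolkit for such arguments. There is nothing to compare against; you have supplied what the paper leaves to the literature.
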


We define, for $x,z\in X$ two vertices, the set
$$\Omega_x(z)=\{C\in \Delta\mid z\in Q(x,C)\}$$
\index{$\Omega_x(z)$ \hfill shadow of $z$ from $x$ in $\Delta$|bb}
When $z$ varies, these sets form a basis for a topology of $\Delta$. This topology does not depend on $x$, and turns $\Delta$ into a compact metrizable space. The group $\Aut(X)$ acts on $\Delta$ by homeomorphisms. A chamber is in the \emph{boundary} of some apartment $A$ if it is the class of some sector in $A$. A basic fact is that any two chambers of $X$ are contained in the boundary of some common apartment.

A \emph{singular ray} of $X$ is a half-line which is contained in a wall of $X$. Two singular rays $\rho_1,\rho_2$ are \emph{equivalent} if there exists an apartment which contains two sub-rays $\rho'_1\subset\rho_1$ and $\rho'_2\subset \rho_2$ and such that $\rho'_1$ and $\rho'_2$ are parallel. The \emph{vertices at infinity} of $X$ are the equivalence classes of singular rays. In fact, there are two types of singular rays, depending on the cyclic permutation of the types of vertices in the ray. We call the two types $+$ and $-$. The set of vertices at infinity of type $+$ is denoted $\Delta_+$, and similarly the set of vertices at infinity of type $-$ is denoted $\Delta_-$. \index{$\Delta_+,\Delta_-$ \hfill set of vertices at infinity of a given type|bb}
For each $u\in \Delta_+\cup\Delta_-$, and every vertex $x\in X$, there is a unique singular ray in the class of $u$ starting from $x$, and we will denote it $[xu)$. Once again, this defines a topology on $\Delta_-$ and $\Delta_+$, two vertices at infinity $u,u'$ being close if the rays $[xu)$ and $[xu')$ share a long initial segment. 

Each sector is bounded by two singular rays. This descends to the corresponding equivalence classes and allow us to define two maps $\pr_+:\Delta\to \Delta_+$ and $\pr_-:\Delta\to \Delta_-$\index{$\pr_+:\Delta\to \Delta_+$, $\pr_-:\Delta\to \Delta_-$ \hfill natural projections|bb}. We also say that a chamber $C$ \emph{contains} the two vertices at infinity $\pr_+(C)$ and $\pr_-(C)$. These two maps are continuous and equivariant with respect to the group of type-rotating automorphisms of $X$. For $u\in \Delta_+$ (resp. $\Delta_-$), the \emph{residue} of $u$, denoted $\Res(u)$, is the fiber $\pr_+^{-1}(u)$ (resp. $\pr_-^{-1}(u)$). \index{$\Res(u)$ \hfill residue of $u$|bb}

It is easily checked that this defines a bipartite graph structure, with set of vertices $\Delta_+\cup \Delta_-$, and with set of edges $\Delta$ (an edge $C$ being attached to a vertex $u$ if $C$ contains $u$). This graph has diameter 3, and is in fact a spherical building of type $A_2$. In particular, for each vertex at infinity $u$, one can define a \emph{projection} $\proj_u:\Delta\to \Res(u)$\index{$\proj_u$ \hfill projection on $\Res(u)$|bb}, which associates to a chamber $C\in \Delta$ the unique chamber $\proj_u(C)\in \Res(u)$ which is at minimal distance from $C$. This projection is continuous.

Two chambers $C,C'$ are \emph{opposite} if they are at maximal distance, that is, at distance 3. We denote $\Delta_\op$\index{$\Delta_\op$\hfill pair of opposite chambers|bb} the set of pairs $(C,C')$ of opposite chambers. Two vertices $u\in \Delta_-$ and $v\in \Delta_+$ are \emph{opposite} if there exists $C\in \Delta$ containing $u$ and $C'\in \Delta$ containing $v$ such that $C$ and $C'$ are opposite. We denote $\Delta_\pmop$\index{$\Delta_\pmop$\hfill set of pairs $(u,v)\in\Delta_-\times \Delta_+$ which are opposite |bb} the set of all pairs  $(u,v)\in \Delta_-\times \Delta_+$ which are opposite. The sets $\Delta_\op$ and $\Delta_\pmop$ are open subsets of $\Delta^2$ and $\Delta_-\times \Delta_+$ respectively.

\subsection{Wall-trees and projectivities}\label{sub:proj}

For $(u,v)\in \Delta_\pmop$, the \emph{interval} between $u$ and $v$ is the union of all apartments in $X$ containing $u$ and $v$ in their boundary. It is denoted $I(u,v)$.\index{$I(u,v)$\hfill interval between $(u,v)\in\Delta_{\pmop}$|bb} A \emph{singular line} from $u$ to $v$ is a wall of such an apartment consisting of the union of two singular rays $[xu)\cup [xv)$ for some $x$.

This interval, up to isometry, does not depend on $u$ and $v$, and has the following structure. If $\ell$, $\ell'$ are two singular lines between $u$ and $v$, then $\ell$, $\ell'$ are parallel (in some apartment), hence there is a well-defined distance $d(\ell,\ell')$ (which does not depend on the apartment chosen).  The set of all such geodesic, equipped with this distance, is in fact a $(q+1)$-regular tree. It is denoted $T_{u,v}$.\index{$T_{u,v}$\hfill wall-tree associated to $(u,v)\in\Delta_\pmop$|bb}

The \emph{model wall-tree} is a model for the simplicial complex $I(u,v)$: as a CAT(0) space,  it is isometric to a product $T\times \RR$ (where $T$ is a $(q+1)$ regular tree), but the simplicial structure does not come from this product decomposition: if $\gamma$ is a geodesic ray in $T$, then the product $\gamma\times \RR$ is isomorphic to the model apartment $\Sigma$. This model wall-tree is denoted $\Upsilon$.\index{$\Upsilon$\hfill model wall-tree |bb}

The automorphism group of this simplicial complex $\Aut(\Upsilon)$  is a subgroup of $\Aut(T)\times \Isom(\RR)$, hence is endowed with a projection $p_T:\Aut(\Upsilon)\to \Aut(T)$ (which is surjective),\index{$p_T:\Aut(\Upsilon)\to \Aut(T)$\hfill natural projection|bb} and $p_\RR:\Aut(\Upsilon)\to \Aut(\RR)$\index{$p_R:\Aut(\Upsilon)\to \Aut(\RR)$\hfill natural projection|bb}. Let $\Aut(\Upsilon)^0$ be the index 2 subgroup such that $p_\RR(\Aut(\Upsilon)^0)$ is formed only by translations. We denote by $S_\Upsilon$ the group $\Ker(p_T)\cap \Aut(\Upsilon)^0$ ; it is a subgroup isomorphic to $\ZZ$, which we call the group of \emph{translations} of $\Upsilon$.\index{$S_\Upsilon$\hfill $\Ker(p_T)\cap \Aut(\Upsilon)^0\simeq \ZZ$ acting on $\Upsilon$|bb}

For $u\in \Delta_-$ (or in $\Delta_+$), one can also define a \emph{panel tree}, by the following construction: if $\rho,\rho'$ are two singular rays pointing towards $u$, then, up to deleting finite segments, they are parallel in some apartment, so again there is a well-defined distance $d(\rho,\rho')$. This defines a metric space, which depends on $u$, and will be denoted $T_u$. There is a natural map $T_{u,v}\to T_u$, associating to a line $\ell$ from $u$ to $v$ the class of a ray $\rho\subset \ell$ pointing towards $u$. This map is in fact an isomorphism, so that $T_u$ is again an $(q+1)$-regular tree.\index{$T_u$ \hfill panel-tree associated to $u$|bb}

\begin{remark}
The terminology wall-tree/panel-tree comes from the book of Weiss \cite{WeissBook}, even though the spaces are defined with a different point of view there. Another terminology, with yet another point of view, can be found in the work of G. Rousseau (see the book \cite{RousseauBook}), where the wall-trees (resp. panel-trees) are special cases of an \emph{inner façade} (resp. a \emph{façade}).
\end{remark}

Let $C$ be a chamber in the residue of $u$. Any sector $Q$ representing $C$ is a union of geodesic rays pointing towards $u$, hence defines a subset of $T_u$, which is in fact itself a geodesic ray of $T_u$. If $Q'$ also represents $C$, then $Q\cap Q'$ contains a subsector, therefore the corresponding geodesic rays in $T_u$ have an infinite intersection, so they define a point in the boundary of $T_u$. This defines a natural map $\phi_u:\Res(u)\to \partial_\infty T_u$.

The following Lemma is well-known, see for example \cite[Lemma 4.2]{RemyTrojan} for a proof.

\begin{lemma}\label{lem:bdTu}
For every $u\in \Delta_-$ or $u\in \Delta_+$, the map $\phi_u:\Res(u)\to \partial_\infty T_u$ is a homeomorphism.
\end{lemma}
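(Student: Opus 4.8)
The plan is to show that $\phi_u$ is a continuous bijection between compact Hausdorff spaces, which then automatically makes it a homeomorphism. On the source side, $\Res(u)=\pr_-^{-1}(u)$ (or $\pr_+^{-1}(u)$) is closed in the compact metrizable space $\Delta$, hence compact; on the target side, $\partial_\infty T_u$ is metrizable, hence Hausdorff, being the boundary of a locally finite tree. I will treat $u\in\Delta_-$, the other type being symmetric. First I would fix a base vertex $x\in X$ and let $v_x\in T_u$ be the vertex represented by the singular ray $[xu)$. For $C\in\Res(u)$ the sector $Q(x,C)$ has $[xu)$ as its bounding ray towards $u$, so the geodesic ray of $T_u$ attached to $Q(x,C)$ in the construction preceding the statement issues from $v_x$; call it $r_C$, so that $\phi_u(C)=\partial_\infty r_C$. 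Since the endpoint map from geodesic rays of $T_u$ issuing from $v_x$ onto $\partial_\infty T_u$ is a bijection, everything reduces to proving that $C\mapsto r_C$ is a bijection onto that set of rays and that it intertwines the natural topologies.

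For surjectivity I would start from a geodesic ray $r$ of $T_u$ issuing from $v_x$, with successive vertices $\rho_0=[xu),\rho_1,\rho_2,\dots$ (each $\rho_i$ a class of singular ray towards $u$), and, choosing compatible representatives, form the increasing chain of convex subsets $P_k=\Conv(\rho_0\cup\cdots\cup\rho_k)$. The key point is that each $P_k$ is isometric to a subset of an apartment: consecutive $\rho_i,\rho_{i+1}$ bound a flat strip since $d(\rho_i,\rho_{i+1})=1$, and because $r$ is geodesic these strips fit together coherently. Corollary \ref{cor:sequenceappt} then produces an apartment $A_r\supseteq\bigcup_k P_k$; in $A_r$ one has $u\in\partial_\infty A_r$, and exactly one of the two sectors of $A_r$ based at $x$ with $[xu)$ as a bounding ray contains $\rho_1,\rho_2,\dots$. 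Its class $C\in\Res(u)$ satisfies $Q(x,C)=Q$ and $r_C=r$. For injectivity I would argue that if $\phi_u(C)=\phi_u(C')$ then $r_C$ and $r_{C'}$ are asymptotic rays of the tree $T_u$, hence eventually coincide; unwinding, $Q(x,C)$ and $Q(x,C')$ contain representatives of a common cofinal family of singular rays towards $u$, so by Theorem \ref{thm:isomappart} and Corollary \ref{cor:sequenceappt} they admit subsectors inside a single apartment, where they represent the same Weyl chamber; hence $C=C'$.

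For continuity I would compare the two standard bases: neighbourhoods of $C$ in $\Res(u)$ are traces of shadows $\Omega_x(z)$, and neighbourhoods of $\phi_u(C)$ in $\partial_\infty T_u$ are the ends whose $v_x$-ray passes through a prescribed vertex. The dictionary above shows that $Q(x,C)$ and $Q(x,C')$ agreeing on a large convex region is equivalent to $r_C$ and $r_{C'}$ sharing a correspondingly long initial segment, which gives continuity — in fact openness — of $\phi_u$; alternatively, continuity alone suffices once combined with the compactness observation. I expect the main obstacle to be exactly this dictionary between geometry near the vertex at infinity $u$ and combinatorics of the panel tree: concretely, (a) verifying that the chain $(P_k)$ really does lie in apartments so that Corollary \ref{cor:sequenceappt} applies, and (b) the injectivity step, i.e. that a chamber of $\Res(u)$ is determined by the set of singular rays towards $u$ contained in a sector representing it. Once these are settled, matching the shadow topologies is routine bookkeeping.
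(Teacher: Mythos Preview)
The paper does not actually give a proof of this lemma: it simply records it as well known and refers to \cite[Lemma 4.2]{RemyTrojan}. So there is no in-paper argument to compare against, and your sketch is already more than what the paper provides.

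Your outline is sound and would go through. Two small sharpenings are worth noting. First, in the injectivity step you say $r_C$ and $r_{C'}$ are asymptotic ``hence eventually coincide''; in fact both rays issue from the same vertex $v_x$ by your own setup, so they are \emph{equal}, not merely eventually equal. This immediately gives $Q(x,C)=Q(x,C')$ (a sector based at $x$ with bounding ray $[xu)$ is precisely the union of the singular rays towards $u$ it contains), and you avoid the appeal to Theorem~\ref{thm:isomappart} and Corollary~\ref{cor:sequenceappt} here. Second, for surjectivity your obstacle (a) is genuine but easy once formulated correctly: adjacent vertices of $T_u$ are parallel classes at distance~$1$, so successive $\rho_i,\rho_{i+1}$ bound a flat strip of width $\tfrac{\sqrt3}{2}$, and the geodesic condition in $T_u$ says exactly that the strip on $\rho_{i-1},\rho_i$ and the strip on $\rho_i,\rho_{i+1}$ lie on opposite sides of $\rho_i$; their union is then a strip of width~$2$, and inductively $P_k$ is a half-strip, hence flat, so Corollary~\ref{cor:sequenceappt} applies.
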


If $(u,v)\in \Delta_\pmop$ then from the previous discussion we have  isomorphisms $T_u\to T_{u,v}$ and $T_{u,v}\to T_v$. Their composition is an isomorphism $T_u\to T_v$ called a \emph{perspectivity} and denoted $[u;v]$. If $w$ is another vertex opposite $u$, then the composition $[v;w]\circ [u;v]$ is denote $[u;v;w]$, and we can define similar notations with more vertices.
 
  If we have a sequence of vertices $u_0,u_1,\dots,u_n$ such that $u_i$ is opposite $u_{i+1}$ for every $i$, and $u_n$ is opposite $u_0$, then we can define the \emph{projectivity} 
$[u=u_0;u_1;u_2;\dots; u_n;u]$. The \emph{projectivity group} is the group generated by such projectivities. Up to conjugation, it does not depend on the choice of $u$. Choosing an identification of $T_u$ with the $q$-regular tree $T$, this provides an action of the projectivity group on $T$ (which, again, does not depend on $u$ up to conjugation. Its closure in $\Aut(T)$ (equipped with the compact open topology) is denoted $\Pi$. By construction, $\Pi$ comes with a natural action on every tree $T_u$ and $T_{u,v}$. Using Lemma \ref{lem:bdTu} we see that $\Pi$ also acts on $\Res(u)$.\index{$\Pi$\hfill closure of the projectivity group |bb}

The following lemma is classical, see for example \cite[Proposition 2.4]{CameronBook}

\begin{lemma}\label{lem:3transitive}
The natural action of $\Pi$ on $\partial T$ is 3-transitive.
\end{lemma}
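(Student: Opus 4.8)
The plan is to reduce the statement to a combinatorial fact about projectivities in the spherical building of type $A_2$ at infinity, namely that the projectivity group already contains enough elements to move any ordered triple of points of $\partial T_u$ to any other. Since $\phi_u\colon\Res(u)\to\partial_\infty T_u$ is a homeomorphism (Lemma \ref{lem:bdTu}) and $\Pi$ acts on $\Res(u)$ through this identification, it suffices to show that the projectivity group acts $3$-transitively on $\Res(u)$. This is the content of the classical result cited (\cite[Proposition 2.4]{CameronBook}); the job here is to recall why it holds in the generalized $A_2$ setting, where the building may have very few automorphisms.

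First I would fix $u\in\Delta_-$ and identify $\Res(u)$ with the set of chambers of the rank-one residue, i.e. with a projective line $\mathbf{P}$ over the (possibly non-associative) coordinatizing structure of the $A_2$-building; three distinct chambers $C_1,C_2,C_3\in\Res(u)$ correspond to three distinct points of $\mathbf{P}$. The key mechanism is that a perspectivity $[u;v]\colon T_u\to T_v$, read through the homeomorphisms $\phi_u,\phi_v$, is the map $\Res(u)\to\Res(v)$ sending a chamber $C$ to $\proj_v(C)$, and this is precisely a perspectivity of projective lines in the spherical building. The crucial point, which I would isolate as the main step, is an \emph{existence/genericity} statement: given any two distinct chambers $C, C'\in\Res(u)$, there exists a vertex $v\in\Delta_+$ opposite $u$ such that the round-trip projectivity $[u;v;u']\cdots$ (a suitable composition returning to $\Res(u)$) fixes a prescribed chamber and moves $C$ to $C'$ — equivalently, that one has enough freedom in choosing the intermediate opposite vertices. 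Concretely, one shows: (a) the projectivity group contains the full "little projective group" generated by elations/homologies of $\mathbf{P}$ arising from the panels of the residue, and (b) this little projective group is already sharply (or at least) $3$-transitive on $\mathbf{P}$.

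For step (a), I would argue building-theoretically: pick a panel $\pi$ adjacent to $\Res(u)$ inside the $A_2$-building $\Delta$, choose two vertices opposite $u$ "through" $\pi$, and compute the resulting projectivity $[u;v;w;u]$; by the structure of rank-two buildings this is an elementary collineation of $\mathbf{P}$ fixing at least one point and acting freely elsewhere, and as the panels vary one generates a group acting transitively on ordered pairs of distinct points, then — using one more generator fixing two points — transitively on triples. Here I would invoke Theorem \ref{thm:isomappart} and Corollary \ref{cor:root+alcove} to guarantee that the apartments realizing the required opposite pairs actually exist inside $X$, so that all the perspectivities in the composition are defined. For step (b), sharp $3$-transitivity of the group generated by such collineations on a projective line is classical (it is essentially the statement that $\mathrm{PGL}_2$ of the coordinatizing structure is $3$-transitive), and I would simply cite it.

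The main obstacle I anticipate is step (a): making sure that the combinatorial freedom in choosing the intermediate opposite vertices $v, w,\dots$ is genuinely available in an arbitrary (possibly exotic) $A_2$-building, rather than only in a Bruhat–Tits one. This is exactly where one must use the axioms of buildings (existence of apartments through prescribed configurations, Theorem \ref{thm:isomappart} and its corollaries) rather than any algebraic model, and where the identification of perspectivities of panel trees with perspectivities of projective lines via $\phi_u$ has to be checked to be compatible with composition. Once that dictionary is in place, the $3$-transitivity is a purely projective-geometric fact and the proof concludes by transporting it through $\phi_u$ to $\partial T$.
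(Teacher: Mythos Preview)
The paper does not actually prove this lemma: it simply cites it as classical, referring to \cite[Proposition 2.4]{CameronBook}. So there is no ``paper's own proof'' to compare against; your proposal is an attempt to reconstruct the classical argument, which is reasonable in spirit.

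That said, there is a genuine gap in your step (b). You write that sharp $3$-transitivity ``is essentially the statement that $\mathrm{PGL}_2$ of the coordinatizing structure is $3$-transitive''. But you yourself noted earlier that the spherical building at infinity may be an \emph{exotic} (non-Desarguesian, non-Moufang) projective plane, coordinatized only by a planar ternary ring; in that generality there is no $\mathrm{PGL}_2$ to speak of, and the little projective group need not be of that form. The $3$-transitivity of the projectivity group of a line in an arbitrary projective plane is true, but it is a purely incidence-geometric fact (one shows directly that given triples $(p_1,p_2,p_3)$ and $(q_1,q_2,q_3)$ on a line $\ell$, one can route through two auxiliary lines and choose centers of perspectivity to match them up one coordinate at a time). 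Invoking an algebraic group here would beg exactly the question the paper is trying to avoid, since the whole point is that exotic $\tilde A_2$-buildings need not come from algebraic groups.

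A smaller point: your appeals to Theorem \ref{thm:isomappart} and Corollary \ref{cor:root+alcove} are misplaced. Those are statements about the affine building $X$, whereas the perspectivities $[u;v]$ and the projectivity group are defined entirely in the spherical building $\Delta$ at infinity (a projective plane). The existence of the required opposite vertices and apartments is guaranteed by the axioms of the rank-$2$ spherical building $\Delta$ itself; no affine machinery is needed.
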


\begin{example}
    In the Bruhat-Tits case when $X$ is the building of $G=\PGL_3(K)$, for $K$ a local field, then $\Delta$ is the set of flags of the projective plane over $K$. Then the associated group of projectivities is $\PGL_2(K)$ (and the action on $T$ is conjugated to the natural action on the Bruhat-Tits tree of $\PGL_2(K)$).

    To see this, note first that a perspectivity $[u;v;w]$ is equal to the restriction (to $\Res(u)$) of some unipotent element (fixing $v$). Hence every projectivity, which is a product of such perspectivities, is the restriction of some element of $G$. So the group of projectivities at say $u$ is contained in the subgroup of $\Aut(T_u)$ induced by the stabilizer of $u$ in $G$. This group is isomorphic to $\PGL_2(K)$.

    On the other hand, the action of $\PGL_2(K)$ on $\partial_\infty T_u$ (which is the projective line over $K$) is sharply 3-transitive.  By Lemma \ref{lem:3transitive} it follows that the projectivity group is $\PGL_2(K)$.
\end{example}

\section{Symmetricity of buildings} \label{sec:SoB}

Our goal in this section is to explain how the setup of prouniform measures developed in \S\ref{sec:measure} can be used in the context of buildings. More precisely, our goal is to define prouniform measures on spaces of the form $\Isom(Y,X)$, for several different $Y$s. Starting from the end, we will have to consider the case when $Y$ is a sector: then we recover a classical construction of "harmonic measures" on the boundary. Another space we will use is $\Isom(\Sigma, X)$, called the \emph{Cartan flow}, where we recover a previous construction of \cite{BCL}. The technically most challenging space is the \emph{singular flow}, introduced also in \cite{BCL}, but which is not really a prouniform measure. However, we will be led to use a kind of intermediate space, the \emph{detecting flow}, which is $\Isom(M,X)$ for $M$ some specific subspace of a wall-tree. However, in order to be able to define the prouniform measure, we must check the appropriate symmetricity, which is what we start by doing.

\subsection{Convex subset of wall trees}

Our first goal
is to prove the following theorem. Let us say that a finite simplicial complex has property $P_\Upsilon$  if it is isometric to a convex subset of a model wall tree $\Upsilon$ in a $q$-regular $\tilde A_2$ building.\index{$P_\Upsilon$\hfill be a convex subset of a wall-tree|bb}

\begin{theorem}\label{thm:Psymetric}
A building of type $\tilde{A_2}$ is $P_\Upsilon$-symmetric.
\end{theorem}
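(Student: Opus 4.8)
The goal is to show that for any two finite connected simplicial complexes $Y', Y$ both having property $P_\Upsilon$, any isometry $i : Y' \to Y$, and any isometry $\alpha : Y' \to X$, the number of isometries $\beta : Y \to X$ extending $\alpha$ along $i$ is positive and independent of $\alpha$. Since $P_\Upsilon$ means ``isometric to a convex subset of a model wall-tree $\Upsilon$,'' we are really counting ways to extend an embedding of a convex piece of a wall-tree inside $X$ to a larger convex piece. The plan is to reduce to two building blocks: (1) extending by a single vertex at combinatorial distance 1 from the existing complex (a ``local'' extension, adding one neighbour), and (2) the transitivity/homogeneity needed to see the count does not depend on $\alpha$.

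First I would set up the reduction. Given $i : Y' \hookrightarrow Y$ (it suffices to treat inclusions of convex subcomplexes, by the usual argument, since $P_\Upsilon$ is stable under convex subcomplexes and any isometry factors), build a finite chain $Y' = Z_0 \subset Z_1 \subset \cdots \subset Z_n = Y$ where each $Z_{j+1}$ is obtained from $Z_j$ by adjoining a single vertex $y_{j+1}$ adjacent to $Z_j$, together with all simplices of $Y$ spanned on $\{y_{j+1}\} \cup Z_j$; each $Z_j$ is again convex in $\Upsilon$ hence has $P_\Upsilon$. By multiplicativity of the extension count along such a chain (a straightforward fibered counting argument, exactly as in Lemma~\ref{lem:[XX']}), it is enough to prove positivity and $\alpha$-independence for a one-vertex extension $Z \subset Z \cup \{y\}$.

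For the one-vertex step, the key geometric input is the wall-tree structure from \S\ref{sub:proj} together with Theorem~\ref{thm:isomappart} and its corollaries. A convex subset of a wall-tree $\Upsilon \cong I(u,v)$ is, combinatorially, a ``sub-slab'': it looks like $S \times \RR$-part where $S$ is a subtree of the $(q+1)$-regular tree $T$, cut down to a convex (in $\Upsilon$) region. Adjoining one vertex $y$ adjacent to $Z$ corresponds either to extending in the $\RR$-direction (elongating a wall — here the count is governed by how many alcoves sit on a given half-apartment, controlled by Corollary~\ref{cor:root+alcove}, giving exactly $q$ choices or $1$ choice depending on whether we are at a ``reflex'' vertex), or to branching in the tree direction (adding a new wall parallel to an existing one at unit tree-distance — the number of such walls through a given configuration is again $q$ by the thickness axiom and Corollary~\ref{cor:sequenceappt}). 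In every case the number of extensions is a fixed function of the local combinatorial type of how $y$ attaches to $Z$ (which simplices of $\Upsilon$ it completes), and crucially \emph{not} of the particular embedding $\alpha$: this is because $X$, being a building, is homogeneous enough that the link of any vertex, and more generally the set of apartments/half-apartments through any convex configuration isometric to a piece of $\Sigma$, has a size depending only on the isometry type of that configuration — this is precisely the content of Theorem~\ref{thm:isomappart} (existence of an apartment through any convex subset isometric to a subset of an apartment) combined with the thickness being a global constant $q$.

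The main obstacle I expect is \textbf{bookkeeping the ``local combinatorial type'' of a one-vertex extension inside a wall-tree and checking the count is genuinely $\alpha$-independent in each case}. Concretely, one must enumerate the possible ways a new vertex can be adjacent to a convex subcomplex of $\Upsilon$ — there are a few cases (extending a half-apartment longitudinally, branching transversally, filling in at a corner) — and for each, produce the exact count ($1$ or $q$) by exhibiting the relevant apartment via Corollary~\ref{cor:root+alcove} or Corollary~\ref{cor:sequenceappt} and using that the extension lives inside a single apartment up to the final choice of which of the $q+1$ alcoves (or half-apartments) to use, with one forced by $\alpha$. Once the one-vertex counts are pinned down as functions of the attachment type only, positivity is immediate (each count is $\geq 1$) and $\alpha$-independence follows, and then the chain argument finishes the proof. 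A secondary technical point is to make sure that at each stage the intermediate complex $Z_j$ really is convex in a wall-tree (so that $P_\Upsilon$ is preserved and the counting lemmas apply); this should follow from convexity of $Y$ in $\Upsilon$ together with choosing the vertices $y_j$ in an order compatible with a nested exhaustion of $Y$ by convex subsets.
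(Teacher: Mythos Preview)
Your overall strategy---reduce the inclusion $Y'\hookrightarrow Y$ to a chain of elementary steps and count extensions at each step---is indeed what the paper does. The genuine gap is in your choice of elementary step.

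You propose adding one vertex at a time and assert that each intermediate $Z_j$ remains convex in $\Upsilon$. This is false in general, and it is not a secondary technicality but the heart of the matter. The paper proves (Proposition~\ref{prop:cvxWT}) that every finite convex subcomplex of $\Upsilon$ is a \emph{crazy diamond} $Y(T,x,y,s,t)$: it sits over a finite subtree $T$, and over each tree-vertex $z$ the fibre is a segment of a prescribed length $l(z)=s-t-d(x,z)-d(y,z)$. Now take $Z$ a crazy diamond with $l(z)\geq 4$ at some $z$, and let $Y$ be its type-$B_z$ extension (grafting a new tree-leaf $z'$ adjacent to $z$). Over $z'$, the diamond $Y$ has a fibre of length $l(z)-2\geq 2$, hence at least two new vertices. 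If you adjoin only one of them, the resulting $Z_1$ has a single vertex over $z'$ but the full fibre of length $l(z)$ over $z$; for $Z_1$ to be a crazy diamond one would need the fibre-length $0$ over $z'$ and hence $2$ over $z$, contradicting $l(z)\geq 4$. So $Z_1$ is not a crazy diamond, hence by Proposition~\ref{prop:cvxWT} not convex in $\Upsilon$, and your chain cannot be built. The same obstruction appears for type-$C$ and type-$F$ extensions when the relevant subtree has more than one edge (Lemma~\ref{lem:extFC}).

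The paper's remedy is precisely to replace single-vertex additions by the coarser \emph{elementary extensions} of types $C$, $F$, $B_z$ (plus degenerate variants), each of which may add several alcoves at once but is designed to land in another crazy diamond (Lemma~\ref{lem:seqextensions}). The counts are then computed case by case (Lemmas~\ref{lem:extFC}, \ref{lem:extBz}, \ref{lem:extdegenerate}). Notably, the count for a $B_z$-extension is $q-v+1$, where $v$ is the valency of $z$ in the tree $T$---a datum invisible from the purely local attachment picture you sketch. This dependence illustrates why the classification of convex pieces is not merely bookkeeping but the structural input that makes the counting well-posed and $\alpha$-independent.
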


This Theorem will be proved in the rest of the section. Using the results of Section 
 \ref{sec:measure}, we get the following Proposition as a result of Theorem \ref{thm:Psymetric}.

\begin{remark}
    For more general 2-dimensional buildings one can define easily wall-trees and panel-trees in the same way. However, the proof of $P_\Upsilon$-symmetricity that we give here really relies on the assumption that we have $\tilde A_2$ buildings; we do not know if the corresponding statement hold in general.
\end{remark}

\begin{proposition}
Let $\Upsilon$ be a model wall-tree as in \S\ref{sub:proj}. Then for every (finite or infinite) convex subset $Z$ of $\Upsilon$ there exists a measure $\mu_Z$ on the set of simplicial isometries $\Isom(Z,X)$ satisfying
\begin{itemize}
\item For $Z=\{\ast\}$ this is the counting measure on $X \simeq \Isom(\{\ast\},X)$
\item For a simplicial isometry $Z_1 \to Z_2$, the restriction map $\Isom(Z_2,X) \to \Isom(Z_1,X)$ is measure preserving.
\end{itemize}

In particular, $\mu_Z$ is invariant under $Aut(Z) \times Aut(X).$
\end{proposition}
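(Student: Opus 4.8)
The statement is a direct corollary of the machinery of Section~\ref{sec:measure} once we know that $X$ is $P_\Upsilon$-symmetric, i.e.\ Theorem~\ref{thm:Psymetric}. So the plan has two clearly separated parts: first invoke Theorem~\ref{thm:Psymetric}, and second, check that the class of simplicial complexes $P_\Upsilon$ satisfies the two standing hypotheses imposed in \S\ref{sec:measure} (namely that finite segments and points have property $P_\Upsilon$, and that $P_\Upsilon$ is stable under passing to convex subcomplexes), after which the construction of prouniform measures applies verbatim.

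\begin{proof}
By Theorem~\ref{thm:Psymetric}, the building $X$ is $P_\Upsilon$-symmetric. Moreover the property $P_\Upsilon$ satisfies the two hypotheses required in \S\ref{sec:measure}: any finite segment isometric to a convex subset of $\ZZ$ embeds isometrically as a portion of a geodesic ray $\gamma$ in the tree factor, hence as $\{pt\}\times I \subset \gamma\times\RR\subset\Upsilon$, so it (and in particular a point) has property $P_\Upsilon$; and a convex subcomplex of a convex subset of $\Upsilon$ is again convex in $\Upsilon$, so $P_\Upsilon$ is stable under taking convex subcomplexes. Fix a model wall-tree $\Upsilon$. For a \emph{finite} convex subcomplex $Z\subset\Upsilon$, the property $P_\Upsilon$ is by definition exactly ``isometric to a finite convex subset of a model wall-tree'', so $Z$ has property $P_\Upsilon$, and the measure $\mu_Z$ on $\Isom(Z,X)$ is the one constructed after Lemma~\ref{lem:[XX']}; by Equation~\eqref{eq:mu} it is a nonzero multiple of the counting-type measure $m_Z$, and for $Z=\{\ast\}$ one has $[\{\ast\}]=1$, so $\mu_{\{\ast\}}=m_{\{\ast\}}$ is precisely the counting measure on $X\simeq\Isom(\{\ast\},X)$. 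For an \emph{infinite} convex subset $Z\subset\Upsilon$, we write $Z=\bigcup_k Z_k$ as an ascending union of finite convex subcomplexes (possible since $\Upsilon$ is locally finite and countable), each of which has property $P_\Upsilon$; thus $Z$ is ind-$P_\Upsilon$, and $\mu_Z$ is the prouniform measure defined as the inverse limit $\varprojlim(\Isom(Z_k,X),\mu_{Z_k})$, whose existence and uniqueness were established via Carathéodory's extension theorem in \S\ref{sec:measure}.

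The two bulleted properties are now immediate. The first, for $Z=\{\ast\}$, was just noted. For the second, let $j\colon Z_1\to Z_2$ be a simplicial isometry between convex subsets of $\Upsilon$ and let $j^*\colon\Isom(Z_2,X)\to\Isom(Z_1,X)$ be the restriction map. If $Z_2$ is finite, then so is $Z_1$ (it is convex in $\Upsilon$, being the image of a convex set under an isometry, hence has $P_\Upsilon$), and $(j^*)_*\mu_{Z_2}=\mu_{Z_1}$ is exactly Equation~\eqref{eq:i*mu}. If $Z_2$ is infinite, write $Z_2=\bigcup_k Z_{2,k}$ with $Z_{2,k}$ finite convex subcomplexes chosen so that each $j(Z_1)\cap Z_{2,k}$ is also convex; then $\mu_{Z_2}=\varprojlim\mu_{Z_{2,k}}$ by construction, restriction to $\Isom(Z_1,X)$ factors through the finite levels, and the finite case together with the compatibility of the inverse system gives $(j^*)_*\mu_{Z_2}=\mu_{Z_1}$. (Alternatively, this is the special case of Corollary~\ref{cor:measurepreserving} with $Y_0$ a point, summed over $X_0$.)

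Finally, the invariance under $\Aut(Z)\times\Aut(X)$: an element $(g,h)\in\Aut(Z)\times\Aut(X)$ acts on $\Isom(Z,X)$ by $\beta\mapsto h\circ\beta\circ g^{-1}$. The uniqueness clause in the definition of $\mu_Z$ characterizes it as the unique measure on $\Isom(Z,X)$ pushing forward to $\mu_{Z'}$ under restriction to every finite convex subcomplex $Z'\subset Z$ with property $P_\Upsilon$. Since $(g,h)$ permutes such subcomplexes (carrying $Z'$ to $h$-image constraints that are again handled by the $P_\Upsilon$-symmetry, which makes all the fiber sizes $[Z'':Z']$ depend only on the isometry type), the pushforward $(g,h)_*\mu_Z$ satisfies the same defining property, hence equals $\mu_Z$. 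Concretely: on the finite level, $\mu_{Z'}$ is a scalar multiple of the measure $m_{Z'}$ induced from the counting measure on $(X^0)^{Z'^0}$, which is manifestly invariant under precomposition by automorphisms of $Z'$ and postcomposition by automorphisms of $X$; passing to the inverse limit preserves this invariance.
\end{proof}
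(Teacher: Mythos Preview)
Your proof is correct and follows exactly the approach the paper intends: the paper does not give a proof of this Proposition at all, but simply presents it as an immediate consequence of Theorem~\ref{thm:Psymetric} together with the general machinery of \S\ref{sec:measure}, and you have carefully unpacked precisely that. One very minor quibble: your embedding of a finite segment as ``$\{pt\}\times I\subset\gamma\times\RR$'' is a bit imprecise given that the simplicial structure on $\Upsilon$ is not the product structure; it is cleaner to say that a finite segment embeds as a portion of a wall in an apartment $\Sigma\subset\Upsilon$ (or to invoke the degenerate crazy diamonds of \S\ref{sec:SoB}), but the conclusion is of course correct.
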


This proposition will be used for various spaces $Z$ in \S \ref{sec:flows} below.

\begin{remark}
    Consider the property $P_X$ of being isometric to a convex subset of an $\tilde A_2$-building.
    Then it is not true that an $\tilde A_2$-building (even a cocompact one) is necessarily $P_X$-symmetric. To construct an example, remember that there exists an exotic building $X$ (with a cocompact lattice) in which there are two different isometry types of balls of radius 2, but only one of radius 1. Hence if $B_1\subset B_2$ is an inclusion of a ball of radius 1 in a ball of radius 2 (of one of these two types) and $\varphi:B_1\to X$ is an embedding, then depending on $\varphi$ there is either 0 or some positive number of extensions of $\varphi$ to an embedding of $B_2$.
\end{remark}

\subsubsection{Crazy diamonds}\label{sec:crazydiamonds}

In order to prove Theorem \ref{thm:Psymetric}, we will give a combinatorial description of convex subsets of wall trees.

\begin{definition}
A \emph{decorated tree} is a 
 finite connected tree $T$, with two distinguished vertices $x,y$ and two integers $s,t\in \ZZ$, such that
 \begin{itemize}
 \item $s-t=d(x,y) \mod 2$.
 \item For every $z\in T$, $s-t-d(x,z)-d(y,z)\geq 0$.
 \end{itemize}
\end{definition}

\begin{remark}
\begin{itemize}
\item In what follows only the quantity $s-t$ is of interest. In particular we can always assume $s,t\geq 0$, or even $t=0$.
\item Let $l(z)=s-t-d(y,z)-d(x,z)$. Then $l(z)$ is even for every $z\in T$.
\end{itemize}
\end{remark}

\begin{definition}
Let $(T,x,y,s,t)$ be a decorated tree. For $z\in T$, let $c(z)=s-d(x,z)$ and $f(z)=t+d(y,z)$. Note that by assumption we have $c(z)-f(z)\geq 0$. 

The \emph{crazy diamond} associated to a decorated tree is the subset of $\Upsilon$ defined as 
$$Y=Y(T,x,y,s,t)=\{ (z,u) | u \in [f(z),c(z)] \}.$$
\index{$Y(T,x,y,s,t)$\hfill crazy diamond associated to a decorated tree|bb}

The space $Y$ is endowed with a metric as follows : each edge of the tree has length $\frac{\sqrt 3}2$,the metric on $\RR$ is the usual metric, and the metric on $X$ is the $\ell^2$-product metric. With this metric, it is isomorphic to a subset of $\Upsilon$, and in fact it is isometric to a convex subcomplex of $\Upsilon$. We equip it with the associated tessellation.
\end{definition}

\begin{example}
Here is an example. The tree $T$ is the line with 5 vertices drawn at the bottom. Here we took $s=6$ and $t=-1$.

\begin{center}

\definecolor{uuuuuu}{rgb}{0.26666666666666666,0.26666666666666666,0.26666666666666666}
\definecolor{xdxdff}{rgb}{0.49019607843137253,0.49019607843137253,1.}
\begin{tikzpicture}[line cap=round,line join=round,>=triangle 45,x=1cm,y=1cm]
\clip(-3.063649411742506,-2.6488525728334564) rectangle (4.903566579885092,3.793603116948162);
\draw (0.,0.)-- (0.,3.);
\draw(0.,0.)-- (2.598076211353316,1.5);
\draw (0.,3.)-- (2.598076211353316,1.5);
\draw (0.,3.)-- (-1.7320508075688772,2.);
\draw (-1.7320508075688772,2.)-- (-1.7320508075688772,1.);
\draw (-1.7320508075688772,1.)-- (0.,0.);
\draw  (0.8660254037844386,0.5)-- (0.8660254037844387,2.5);
\draw (1.7320508075688772,1.)-- (1.7320508075688772,2.);
\draw  (-0.8660254037844387,0.5)-- (-0.8660254037844387,2.5);
\draw  (-1.7320508075688772,2.)-- (0.8660254037844386,0.5);
\draw  (1.7320508075688772,1.)-- (-0.8660254037844387,2.5);
\draw  (0.8660254037844387,2.5)-- (-1.7320508075688772,1.);
\draw  (-0.8660254037844387,-0.5)-- (-0.8660254037844387,2.5);
\draw  (-1.7320508075688772,0.)-- (-0.8660254037844387,-0.5);
\draw (-1.7320508075688772,1.)-- (-1.7320508075688772,0.);
\draw (-0.8660254037844387,-0.5)-- (0.,0.);
\draw (-1.7320508075688772,0.)-- (1.7320508075688774,2.);
\draw  (-1.7320508075688772,-2.0107256100017876)-- (2.598076211353316,-1.9934519475164805);
\begin{scriptsize}
\draw [fill=uuuuuu] (-1.7320508075688772,-2.0107256100017876) circle (2pt);
\draw [fill=uuuuuu] (2.598076211353316,-1.9934519475164805) circle (2pt);
\draw [fill=uuuuuu] (-0.8660254037844386,-2.0072708775047268) circle (2.0pt);
\draw[color=uuuuuu] (-0.7009577728460465,-1.75) node {$y$};
\draw [fill=uuuuuu] (0.,-2.003816145007665) circle (2.0pt);
\draw[color=uuuuuu] (0.12323698490853234,-1.75) node {$x$};
\draw [fill=uuuuuu] (0.8660254037844386,-2.0003614125106037) circle (2.0pt);
\draw [fill=uuuuuu] (1.7320508075688772,-1.996906680013542) circle (2.0pt);
\end{scriptsize}
\end{tikzpicture}

\end{center}

\end{example}

The projection $Y\to T$ given by the first coordinate will be denoted $\pr$.

Let $X$ be a $q$-regular $\tilde A_2$ building, and let $\mathcal T$ be a $(q+1)$-regular tree. Recall that $\Upsilon$ is a model wall-tree of $Y$. It is equipped with a projection $\pi:\Upsilon\to \mathcal T$.

\begin{definition}
 A crazy diamond is called \emph{degenerate} if it is of one of the following forms :
 \begin{itemize}
     \item $Y$ is of the form $Y(\{x\},x,x,s,s)$, that is, $Y$ is a point
     \item $Y$ is of the form $Y(\{x\},x,x,s,t)$ with $s-t\geq 0$. In that case we say that $Y$ is \emph{vertical}. 
     \item $Y$ is of the form $Y(T,x,y,s,t)$ where $T$ is a segment whose extremities are $x$ and $y$, and $s-t=d(x,y)$, in which case we say that $Y$ is \emph{horizontal}.
 \end{itemize}
\end{definition}

\begin{proposition}\label{prop:cvxWT}
Every finite convex subcomplex of $\Upsilon$ is isometric to some crazy diamond.
\end{proposition}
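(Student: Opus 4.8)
The plan is to use the product decomposition $\Upsilon\cong T\times\RR$, study a finite convex subcomplex $Y\subset\Upsilon$ fibrewise over the tree factor, and identify its two boundary profiles with translates of distance functions on a subtree. Write $\pi\colon\Upsilon\to T$ for the projection to the tree factor and set $T'=\pi(Y)$. Since $Y$ is finite, $T'$ is finite, and since geodesics of a CAT(0) product project to geodesics of each factor, $\pi$ maps geodesics of $Y$ to geodesics of $T$, so $T'$ is geodesically closed, i.e., a subtree. For a vertex $z$ of $T'$ the fibre $Y_z:=Y\cap\pi^{-1}(z)$ is the intersection of the convex set $Y$ with the wall $\pi^{-1}(z)\cong\RR$, hence a nonempty subsegment $\{z\}\times[f(z),c(z)]$; this produces functions $f\le c$ on the vertices of $T'$. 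I will also use that in $\Upsilon$ the vertices along a wall $\pi^{-1}(z)$ sit at a parity of heights depending on $z$, which is reversed when one passes to an adjacent wall (the columns over adjacent vertices of $T$ are staggered).

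Next I analyse the strip over an edge $e=zz'$ of $T'$. The part $R$ of $Y$ lying over $e$ is a convex subcomplex of the triangulated strip $\pi^{-1}(e)$ containing $Y_z$ and $Y_{z'}$. Inspecting the triangles of $\Upsilon$ that can occur in $R$ shows that none of them has a vertex over $z'$ strictly above $c(z)+1$ or strictly below $f(z)-1$ (and symmetrically in $z$), so $|c(z)-c(z')|\le1$ and $|f(z)-f(z')|\le1$; the parity observation promotes these to equalities. Hence $[(z,c(z)),(z',c(z'))]$ and $[(z,f(z)),(z',f(z'))]$ are edges of $\Upsilon$, $R$ equals $\Conv(Y_z\cup Y_{z'})$, and it is precisely the union of the triangles of $\Upsilon$ lying between those two edges. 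Reassembling the strips, a finite convex subcomplex of $\Upsilon$ is completely determined by the data $(T',f,c)$.

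To pin down $f$ and $c$, fix vertices $z,z''$ of $T'$ and extend the geodesic $[z,z'']$ of $T'$ to a bi-infinite geodesic $\gamma$ of $T$; then $A:=\gamma\times\RR$ is an apartment of $\Upsilon$, $Y\cap A$ is a convex subset of the Euclidean plane $A$, and its fibres over $\gamma$ are the $Y_w$, so $c$ is concave and $f$ is convex along $[z,z'']$. Together with the $\pm1$ step behaviour this means that along any geodesic of $T'$ the function $c$ increases and then decreases with a unique maximum, and $f$ decreases and then increases with a unique minimum. Let $x$ be a vertex of $T'$ at which $c$ attains its maximum $s$, and $y$ one at which $f$ attains its minimum $t$. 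For any $z\in T'$, the restriction of $c$ to $[x,z]$ is unimodal, and if its maximum were not at $x$ it would exceed $c(x)$; so $c$ decreases by $1$ at each step of $[x,z]$, giving $c(z)=s-d(x,z)$, and symmetrically $f(z)=t+d(y,z)$. The inequality $c\ge f$ reads $s-t-d(x,z)-d(y,z)\ge0$, and the fixed parity of heights along each column gives $s-t\equiv d(x,y)\pmod2$, so $(T',x,y,s,t)$ is a decorated tree. Since $Y$ and the crazy diamond $Y(T',x,y,s,t)$ are both convex subcomplexes of $\Upsilon$ with these same profiles $f,c$ over $T'$, the previous paragraph shows they coincide, and in particular $Y$ is isometric to a crazy diamond (the degenerate cases --- $T'$ a point, or a segment with $s-t=d(x,y)$ --- are included automatically).

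The step I expect to be the main obstacle is the strip analysis: enumerating which triangles of $\Upsilon$ can lie over an edge of $T'$ and extracting from this both the exact ``$\pm1$'' statement and the ``determined by its fibres'' statement, while keeping the bookkeeping independent of whichever precise convention is fixed for the simplicial structure on $\Upsilon\cong T\times\RR$ --- and, if ``convex subcomplex'' is read via the combinatorial $1$-skeleton metric rather than the CAT(0) metric, checking that the two readings agree here.
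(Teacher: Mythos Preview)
Your proof is correct and follows essentially the same route as the paper: project to the tree, read off the fibrewise top and bottom profiles $c,f$, show the $\pm1$ step property and convexity/concavity along geodesics via apartments, and deduce that $c$ and $f$ are translated distance functions. The only cosmetic differences are that the paper packages the height using a horofunction rather than the product coordinate, and isolates the step ``convex integer function on a tree with $\pm1$ increments is $z\mapsto t+d(z,y)$'' as a separate lemma (your unimodality argument is exactly its proof); the paper also asserts the ``determined by its fibres'' step without the strip analysis you flag as the delicate point.
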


Recall that a function on a tree is convex if and only if it is convex in restriction to every geodesic segment.

\begin{lemma}\label{lem:fonT}
Let $T$ be a finite connected tree, and $f:T\to \ZZ$ be a convex function such that if $x$ is adjacent to $y$ then $f(x)=f(y)\pm 1$.
Then there exists $t\in \ZZ$ and $x\in T$ such that $f(z)=t+d(z,x)$ for every vertex $z\in T$.
\end{lemma}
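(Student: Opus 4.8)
The function $f$ should be the distance from its (necessarily unique) minimum, so the plan is to pick a minimizer, call it $x$, set $t=f(x)$, and verify directly that $f(z)=t+d(x,z)$ for every vertex $z$ by walking along the geodesic from $x$ to $z$ and using convexity to control the increments of $f$.

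\textbf{Step 1: Choose the basepoint.} Since $T$ is finite, $f$ attains a minimum; let $x\in T$ be a vertex with $f(x)$ minimal and put $t=f(x)$. I will show this $x$ and this $t$ work.

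\textbf{Step 2: Analyze $f$ along a geodesic.} Fix an arbitrary vertex $z\in T$ and let $x=z_0,z_1,\dots,z_k=z$ be the vertices of the unique geodesic segment of $T$ joining $x$ to $z$, so $k=d(x,z)$. Set $\delta_i=f(z_i)-f(z_{i-1})$ for $1\le i\le k$. By hypothesis each $\delta_i\in\{-1,+1\}$ (consecutive vertices differ by exactly one). By convexity of $f$ restricted to this geodesic segment, the increments $\delta_i$ are non-decreasing in $i$ (convexity on a segment of $\ZZ$ is exactly the statement that successive differences do not decrease). Finally, $\delta_1=f(z_1)-f(x)\ge 0$ because $f(x)$ is the global minimum, and combined with $\delta_1\in\{-1,+1\}$ this forces $\delta_1=+1$. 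A non-decreasing sequence in $\{-1,+1\}$ starting at $+1$ is constantly $+1$, so $\delta_i=+1$ for all $i$, whence
$$f(z)=f(z_k)=f(z_0)+\sum_{i=1}^k\delta_i=t+k=t+d(x,z).$$

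\textbf{Step 3: Conclude.} Since $z$ was arbitrary, $f(z)=t+d(z,x)$ for every vertex $z\in T$, which is the claim; in particular $x$ is the unique minimizer, since $f(x')=t$ forces $d(x,x')=0$. There is no real obstacle here: the only point requiring care is the translation of ``convex on every geodesic segment'' into ``successive differences are non-decreasing,'' after which the argument that a non-decreasing $\pm1$-valued sequence starting with $+1$ stays $+1$ closes the proof immediately.
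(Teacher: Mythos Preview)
Your proof is correct and follows essentially the same approach as the paper: both choose a minimizer $x$, set $t=f(x)$, walk along the geodesic from $x$ to an arbitrary vertex $z$, and use convexity together with the $\pm1$-increment condition to conclude that the increments are all $+1$. Your write-up is in fact slightly more explicit than the paper's about why convexity forces the increments to be non-decreasing.
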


\begin{proof}
The set of points where $f$ attains its minimum is non-empty (since $T$ is finite) and convex (by convexity of $f$). Since there are no adjacent vertices with the same value, $f$ is minimal at exactly one point $x$. Let $f(x)=t$. If $x'$ is adjacent to $x$ then we must have $f(x')=f(x)+1=t+d(x',x)$. Now if $z\in T$ then considering the geodesic segment $x,x_1,\dots,z$ we see (by convexity) that $f$ must be increasing along this segment, hence that $f(z)=t+d(z,x)$.
\end{proof}

\begin{proof}[Proof of Proposition \ref{prop:cvxWT}]
Let $Y$ be a finite convex subset of $\Upsilon$. Recall $\pi:\Upsilon\to \mathcal T$ is the natural projection, and let $T=\pi(Y)$. Note that $T$ is a finite, connected subtree of $\mathcal T$. Since $Y$ is a convex subcomplex, $Y$ is completely determined by the sets $\pi^{-1}(x)\cap Y$, for $x$ a vertex in  $T$, which are closed segments.  

Let $u\in \partial_\infty\Upsilon$ be an endpoint of a geodesic line $\ell\subset\Upsilon$ such that $\pi(\ell)$ is just a point.
Let $h$ be the horofunction on $X$ based at $u$ (relative to an arbitrary origin $o\in \Upsilon$). For $x\in T$, define $f(x)=\min \{ 2h(p)\mid p\in \pi^{-1}(x)\cap Y\}$ and $c(x)=\max \{ 2h(p)\mid p\in \pi^{-1}(x)\cap Y\}$. Note that by convexity of $Y$, if $x,y\in T$ are adjacent vertices, and if $p\in \pi^{-1}(x)\cap Y$ is a vertex of $Y$, then $\pi^{-1}(y)\cap Y$ contains a vertex $q$ which is adjacent to $p$, hence such that $h(q)=h(p)\pm \frac 12$. Hence, we get that $f(y)=f(x)\pm 1$ and $c(y)=c(x)\pm 1$. Note furthermore that, by construction of $\Upsilon$, if $p,q\in \Upsilon$ are vertices, then $2(h(p)-h(q))$ is of the same parity as $d(\pi(p),\pi(q))$.

If $\ell$ is a geodesic segment in $T$, then $\ell$ is contained in a geodesic bi-infinite line (still denoted $\ell$) in $\mathscr T$. Then $A=\pi^{-1}(\ell)\subset \Upsilon$ is an apartment of $X$. Hence its intersection with $Y$ is a convex subset. Since this intersection is precisely $\{p \mid f(\pi(p))\leq 2h(p)\leq c(\pi(p))\}$, it follows that $f$ is convex and $c$ is concave in restriction to $s$. Since this is true for every segment, $f$ is convex on $T$. Hence by Lemma \ref{lem:fonT}, there exists $y\in T$ and $t\in \ZZ$ such that $f(z)=t+d(z,y)$. Similarly, $-c$ is convex, so there exists $s\in \ZZ$ and $x\in T$ such that $c(z)=s-d(y,z)$. Note also that $f(y)-c(x)=t-s$ is of the same parity as $d(y,x)$ by the above remark.

 By construction it is clear that $Y$ is isometric to the crazy diamond $Y(T,x,y,s,t)$.
\end{proof}

\begin{remark}
The same argument also proves that every convex subset of a crazy diamond is again a crazy diamond.
\end{remark}

\subsubsection{Diamond extensions}

Let $Y=Y(T,x,y,s,t)$ be a crazy diamond. In what follows, for $z\in T$, we note $c(z)=s-d(x,z)$, $f(z)=t+d(y,z)$ and $l(z)=c(z)-f(z)$. When we have to consider another crazy diamond $Y'$, we will denote the same quantities with primes if they refer to $Y'$.

Now we are interested in an isometric  inclusion $Y\subset Y'$ of crazy diamonds.   Note that if $T\subset T'$, and $c(z)\leq c'(z)$ and $f(z)\geq f'(z)$ for every $z\in T$ then we get that $$\{ (z,u)\in T\times\RR | u \in [f(z),c(z)] \}\subset\{ (z,u) \in T'\times \RR| u \in [f'(z),c'(z)] \}.$$
Hence $Y\subset Y'$. 

Every embedding is in fact of this form :

\begin{lemma}\label{lem:extstd}
Let $Y\subset Y'$ be an isometric inclusion of crazy diamonds. Let $Y'=Y(T',x',y',s',t')$. Then there exists a subtree $T\subset T'$, $x,y\in T$ and $s,t\in \ZZ$ such that $Y=Y(T,x,y,s,t)$ and the inclusion $Y\subset Y'$ is as described above.

\end{lemma}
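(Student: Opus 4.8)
The plan is to recover the decorating data of $Y$ directly from the inclusion $Y\subset Y'$, using the combinatorial invariants $c,f$ on the tree together with the horofunction trick from the proof of Proposition \ref{prop:cvxWT}. First I would set $T=\pr'(Y)\subset T'$, where $\pr':Y'\to T'$ is projection onto the first coordinate; since $Y$ is convex in $\Upsilon$ and $\pr'$ is a simplicial projection onto a tree, $T$ is a connected finite subtree, and $Y$ is determined by the family of vertical segments $Y\cap (\{z\}\times\RR)$ for $z\in T$, each of which is a closed interval. Next, fix the vertical end $u\in\partial_\infty\Upsilon$ and its horofunction $h$ as in the proof of Proposition \ref{prop:cvxWT}, normalized so that $2h$ agrees on $Y'$ with the standard second-coordinate function, i.e. $2h(z,w)=w$; then for $z\in T$ the interval $Y\cap(\{z\}\times\RR)$ equals $[f(z),c(z)]$ where $f(z)=\min\{w:(z,w)\in Y\}$ and $c(z)=\max\{w:(z,w)\in Y\}$.

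The core of the argument is then exactly the analysis already carried out for Proposition \ref{prop:cvxWT}: convexity of $Y$ in $\Upsilon$ forces, for adjacent $z,z'\in T$, the existence of adjacent vertices of $Y$ in the two fibers, hence $f(z')=f(z)\pm1$ and $c(z')=c(z)\pm1$; and for any geodesic segment of $T$, intersecting $Y$ with the corresponding apartment $A=\pi^{-1}(\ell)$ shows $f$ convex and $c$ concave along that segment, hence $f$ convex and $-c$ convex on all of $T$. Applying Lemma \ref{lem:fonT} to $f$ gives $y\in T$, $t\in\ZZ$ with $f(z)=t+d(z,y)$; applying it to $-c$ gives $x\in T$, $s\in\ZZ$ with $c(z)=s-d(x,z)$. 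The parity condition $s-t\equiv d(x,y)\bmod 2$ and the inequality $c(z)-f(z)\ge 0$ hold because they hold inside $Y$ (they are inherited from $Y$ being a genuine convex subcomplex of $\Upsilon$), so $(T,x,y,s,t)$ is a legitimate decorated tree and $Y=Y(T,x,y,s,t)$ by construction.

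Finally I would check that the inclusion has the claimed ``standard'' shape: by the normalization $2h(z,w)=w$ the second coordinate of $Y'$ restricts to that of $Y$, the chosen base end $u$ is common to both, and the formulas $c'(z)=s'-d(x',z)$, $f'(z)=t'+d(y',z)$ for $z\in T$ hold by definition; since $Y\subset Y'$ we have $f'(z)\le f(z)$ and $c'(z)\ge c(z)$ for all $z\in T$ and $T\subset T'$, which is precisely the description preceding the lemma. The main obstacle is bookkeeping rather than conceptual: one must make sure the horofunction (equivalently, the $\RR$-coordinate) is normalized consistently for $Y$ and $Y'$ so that ``the'' second coordinate is well defined on the ambient $\Upsilon$, and that the two distinguished ends/directions used to read off $c$ and $f$ are the same for the sub-diamond as for the super-diamond; once that normalization is pinned down, everything else is a direct transcription of the proof of Proposition \ref{prop:cvxWT}.
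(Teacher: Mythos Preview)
Your proposal is correct and follows exactly the paper's approach: embed $Y'$ (hence $Y$) into $\Upsilon$, set $T=\pi(Y)\subset T'=\pi(Y')$, and rerun the proof of Proposition~\ref{prop:cvxWT} with the same horofunction to recover $(x,y,s,t)$ and the inequalities $f'\le f$, $c'\ge c$. The paper's own proof is just a two-line pointer back to that proposition; your version spells out the bookkeeping (in particular the shared normalization of the $\RR$-coordinate) that the paper leaves implicit.
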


\begin{proof}
Embed $Y'$ (and therefore $Y$) into $\Upsilon$. Going through the proof of Proposition \ref{prop:cvxWT}, we see that $T'=\pi(Y')$ and $T=\pi(Y)$, so that clearly $T\subset T'$, and by convexity of $Y$, $T$ is also convex and therefore is a subtree of $T'$. Similarly by the construction given in the proof of Proposition \ref{prop:cvxWT} we see that $f'(z)\leq f(z)$ and $c'(z)\geq c(z)$ for every $z\in T$.
\end{proof}

We will decompose this inclusion into a sequence of elementary extensions, defined as follows.

\begin{definition}
Let $Y=Y(T,x,y,s,t)$ be a crazy diamond.
\begin{itemize}
\item An \emph{elementary extension of type $C$} of $Y$ is a crazy diamond $Y'=Y(T,x',y,s+1,t)$, where $x'$ is a vertex adjacent to $x$.
\item An \emph{elementary extension of type $F$} of $Y$ is a crazy diamond $Y'=Y(T,x,y',s,t+1)$ where $y'$ is a vertex adjacent to $y$.
\item An \emph{elementary extension of type $B_z$} of $Y$ is a crazy diamond $Y'=Y(T',x,y,s,t)$ where $T'$ is obtained by attaching an edge at a vertex $z\in T$ such that $l(z)> 0$.
\end{itemize}
In each case, realizing $Y$ as a subset of $T\times \RR$ and $Y'$ as a subset of $T'\times \RR$, we see that there is a natural inclusion $Y\subset Y'$.
\end{definition}

Note that in an extension of type $C$ we have $$c'(z)=\begin{cases}
 c(z) &\textrm{ if } d(x,z)<d(x',z)\\
 c(z)+1 &\textrm{ if not}.
 \end{cases}
$$

In the case when $Y$ is degenerate, some of the extensions above are not possible (since $T$ is too small), but there are also some other possibilities which appear:

\begin{definition}
\begin{itemize}
    \item If $Y=Y(\{x\},x,x,s,t)$ is vertical, or is a point, an \emph{elementary extension of type $C_v$} (resp. \emph{of type $F_v$} is the crazy diamond $Y'=Y(\{x\},x,x,s+2,t)$ (resp. $Y'=Y(\{x\},x,x,s,t+2)$
    \item If $Y=Y(T,x,y,s,t)$ is horizontal, or a point, an \emph{elementary extension of type $B^c$} (resp. of type $B^f$) is of the form $Y'=Y(T',x',y,s+1,t)$ (resp. $Y(T',x,y',s,t+1)$) where $T'$ is obtained by attaching an edge from $x$ to $x'$ (resp. $T'$ is obtained by attaching an edge from $y$ to $y'$).
\end{itemize}
\end{definition}

Note that an elementary extension of type $C_v$ produces a vertical crazy diamond, while an elementary extension of type $B^f$ or $B^c$ produces an horizontal crazy diamond.

\begin{lemma}\label{lem:seqextensions}
Let $Y\subset Y'$ be an isometric inclusion of crazy diamonds, and assume that $Y$ is not degenerate. Then there is a finite sequence $Y=Y_0,Y_1,\dots,Y_n=Y'$ of crazy diamonds such that $Y_i\subset Y_{i+1}$ is an elementary extension. 
\end{lemma}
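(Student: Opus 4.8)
The plan is to use Lemma~\ref{lem:extstd} to reduce the problem to the ``standard'' situation $T\subset T'$, $x,y\in T\subset T'$, $f'(z)\le f(z)$, $c'(z)\ge c(z)$ for all $z\in T$, where $Y=Y(T,x,y,s,t)$ and $Y'=Y(T',x',y',s',t')$. Since $Y$ is non-degenerate, both $l\equiv c-f$ is positive somewhere and $T$ is not a single point lying on the ``diagonal'', so there is room to perform the elementary extensions of types $C$, $F$ and $B_z$ without falling into the degenerate cases, and the issue is purely one of finding a path in the poset of intermediate crazy diamonds from $Y$ to $Y'$ each of whose steps is elementary.

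First I would observe that it suffices to treat the two basic moves separately: enlarging the ``height data'' $(s,t)$ while keeping the tree fixed, and enlarging the tree while keeping $(s,t)$ fixed; a general inclusion factors as $Y=Y(T,x,y,s,t)\subset Y(T,x'',y'',s',t')\subset Y(T',x',y',s',t')=Y'$, provided I first check that $Y(T,x'',y'',s',t')$ is a legitimate crazy diamond, i.e. that $x'',y''\in T$ can be chosen with $c''(z)=s'-d(x'',z)\ge c(z)$, $f''(z)=t'+d(y'',z)\le f(z)$ and $l''\ge 0$ on $T$ --- here I would take $x''=x'$ and $y''=y'$ and use that on $T$ the functions $c'$ and $f'$ are, by the proof of Proposition~\ref{prop:cvxWT}, of the form $s'-d(x',\cdot)$ and $t'+d(y',\cdot)$ restricted from $T'$, and that $c'\ge c$, $f'\le f$ on $T$ by Lemma~\ref{lem:extstd}. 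For the height move, since $T$ is fixed and $s$ increases to $s'$ one unit at a time I walk $x$ toward $x'$ along the geodesic in $T'$ (it passes through $T$ as far as it needs to once $c$ has grown enough); each unit increment of $s$ together with one step of $x$ toward $x'$ is an elementary extension of type $C$, by the displayed formula for $c'$ in that definition, and non-degeneracy guarantees we never need a $C_v$ move. Symmetrically for $t\nearrow t'$ using type $F$ extensions moving $y$ toward $y'$.

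For the tree move, with $(s,t)$ now already equal to $(s',t')$ and $x,y$ already equal to $x',y'$, I enlarge $T$ to $T'$ by adding one edge at a time, always adding an edge incident to a vertex already present; this is possible since $T$ and $T'$ are both subtrees of $\mathscr T$ with $T\subset T'$ connected. The only constraint for such an addition to be an elementary extension of type $B_z$ is that the vertex $z\in T$ at which we attach satisfies $l(z)>0$. If at some stage every leaf-attachment point $z$ of the remaining edges had $l(z)=0$, then since $l$ is even, $\ge 0$, and changes by $0$ or $\pm2$ along edges, $l$ would have to be $\equiv 0$ on the geodesic from $z$ back into the ``thick'' part of $Y$; tracing this one sees that $\{l=0\}$ is a subtree containing such a $z$, and a point $w$ of $T$ maximizing $l$ (which exists and has $l(w)>0$ by non-degeneracy, possibly after the height moves --- this is exactly where I must be careful to do the $C$ and $F$ moves \emph{first} so that the final $l$, equal to $l'|_T$, is already positive at the interior of $Y'$) lies off $\{l=0\}$; then I reorder the edge additions so that I attach edges along the path toward any $w$ with $l(w)>0$ before attaching the ones hanging off the zero-locus, which makes every step legal.

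The main obstacle is precisely this bookkeeping about the function $l$: I must arrange the order of the elementary extensions so that no step is forced to attach a tree-edge at a vertex where $l$ vanishes, and so that no height step is forced into a $C_v$/$F_v$/$B^c$/$B^f$ situation reserved for degenerate diamonds. The clean way to do this, which I would adopt, is to prove two auxiliary claims: (i) if $Y\subset Y'$ is an inclusion of crazy diamonds with the \emph{same} tree and $Y$ non-degenerate, then it factors into type-$C$ and type-$F$ elementary extensions; and (ii) if $Y\subset Y'$ is an inclusion with the same height data and $Y$ non-degenerate, then $T'$ can be built from $T$ by successively attaching edges at vertices where $l>0$ --- for which I induct on $\#T'-\#T$, at each step picking a leaf $e$ of $T'$ not in $T$ whose neighbour $z$ lies on the geodesic in $T'$ from $e$ to a fixed vertex $w\in T$ with $l(w)>0$, and checking $l(z)>0$ because $l$ is concave-ish ($l(z)\ge l(w)-d(z,w)\cdot 0$ fails in general, so instead: along that geodesic $l$ cannot drop below the value it would need to be negative before reaching $e$, using $l'\ge0$ on all of $T'$ and $l'|_T=l$). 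Composing (i) then (ii), after the preliminary reduction to standard form, yields the lemma.
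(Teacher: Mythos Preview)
Your two auxiliary claims (i) and (ii) are correct, and your instinct that the key point is ``after the height adjustment, $l=l'|_T$, hence every attachment vertex, being internal in $T'$, has $l>0$'' is exactly right. The genuine gap is in the reduction to (i) and (ii). You propose the factorization
\[
Y(T,x,y,s,t)\ \subset\ Y(T,x'',y'',s',t')\ \subset\ Y(T',x',y',s',t')
\]
with $x''=x'$, $y''=y'$. But $x',y'$ are vertices of $T'$ and need not lie in $T$, so the middle term is in general undefined. If you repair this by taking $x''$ to be the projection of $x'$ onto $T$ and replacing $s'$ by $s'':=s'-d(x',x'')$ (so that $c''=c'|_T$), then the middle term becomes $Y(T,x'',y'',s'',t'')$, and the second inclusion is no longer an instance of claim~(ii): the height data $(x'',y'',s'',t'')$ and $(x',y',s',t')$ differ, and after a $B_z$ move the newly added vertex $z'$ may satisfy $l''(z')<l'(z')$ (indeed $l''(z')$ can drop to $0$ or below while $l'(z')>0$). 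Concretely, with $T=\{a,b\}$, $T'=\{a,b,c\}$ a path, $x'=c$, one already sees that after projecting to $x''=b$ the $c$-vertex has the wrong ceiling. A clean two–step factorization ``all heights, then all trees'' (or the reverse order) does not exist in general.

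The paper handles this by interleaving: adjust $(x,y,s,t)$ so that $c_i=c'$ and $f_i=f'$ on the current tree, perform the available $B_z$ moves (legal because the attachment vertices are internal in $T'$, hence $l'>0$ there), then readjust heights on the enlarged tree, and repeat. Your elaborate ordering argument for the $B_z$ steps (the discussion of the locus $\{l=0\}$ and paths toward a maximizer of $l$) is unnecessary once you observe that for non-degenerate $Y'$ every internal vertex of $T'$ has $l'\ge 2$; but that observation only helps after each height readjustment, which is precisely the interleaving you tried to avoid.
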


\begin{proof}

As in Lemma \ref{lem:extstd}, we can and shall assume that $T\subset T'$. Note that if $z\in T'$ has valency $>1$ then $l'(z')\geq 1$.
Also note that since $c'(x)\geq c(x)$, we have $s'\geq s+d(x,x')\geq s$.



First assume that $Y$ is non-degenerate.
 Our sequence of extensions will proceed as follows. The notations relative to $Y_i$ will be the same as for $Y$ but with index $i$. We use $i$ as a temporary variable and change it after each step.

 By doing two successive extensions of type $C$, we can replace $s$ by $s+2$ without changing $x$. We start by this move, doing $d$ extensions of type $C$, in order to get that $s_i=c_i(x)=c'(x)$. Note that $c'(x)-c(x)$ must be even since $Y$ is a subcomplex of $Y'$.  
  Then we do in a similar way the right number of extensions of type $F$, in order to get that $t_{i}=f_{i}(y)=f'(y)$. It follows that $c_i(z)=c'(z)$ as soon as $d(z,x)<d(z,x')$. By doing an extension of type $C$ again, moving $x$ to the vertex $z_0$ adjacent to $x$ and closer to $x'$, we get that  $c_i(z)=c'(z)$ for $z$ satisfying $d(z,x)<d(z,x')+1$. Repeating the same thing as many times as necessary, we can get that $c_i(z)=c'(z)$ for every $z\in T$. Similarly we can obtain $f_i(z)=f'(z)$ for every $z\in T$. 
  
  If $z\in T$ is of valency $>1$ in $T'$, then $l_i(z)=l'(z)\geq 1$. Therefore we can perform an extension of type $B_z$, in order to add as many edges as necessary. Repeating the above steps, we see that we can get that $T_i=T'$, $c_i=c'$ and $f_i=f'$, which concludes the argument.

Then let us assume that $Y$ is vertical (but not a point). If $Y'$ is non-degenerate, then there is an alcove in $Y'$ which is adjacent to some panel of $Y$; let $Y_1$ be the convex hull of $Y'$ and this alcove. Then $Y\subset Y_1$ is an elementary extension of type $B$, and by the previous argument there exists a finite sequence of elementary extensions from $Y_1$ to $Y'$.
If $Y'$ is also degenerate, then it must be vertical, then it is just a segment containing $Y$, and one can pass from $Y$ to $Y'$ by adding edges to this segment, that is, by a finite sequence of elementary extensions of type $C_v$ or $F_v$.

Now let us assume that $Y$ is horizontal (and not a point). If $Y'$ is non-degenerate, again there is an alcove in $Y'$ which is adjacent to $Y$, and the convex hull $Y_1$ of this alcove and $Y$ is an elementary extension of $Y$ of type either $C$ or $F$.  If $Y'$ is degenerate, then it must be horizontal, and then again one can pass from $Y'$ to $Y$ by a sequence of elementary extensions of type $B^f$ or $B^c$.

Finally, if $Y$ is a point, then there exists a segment in $Y'$ containing $Y$, and  the inclusion of $Y$ in this segment is
an elementary extension of type either $C_v$, $F_v$, $B^f$ or $B^c$ depending on the position of this segment in $Y'$. In any case applying the previous argument we can construct a sequence of elementary extensions from $Y$ to $Y'$.
\end{proof}

Let $X$ be an $\tilde A_2$ building of finite thickness $q+1$. Assume that $i:Y\to Y'$ is an isometric embedding of crazy diamonds. Let $\rho:Y\to X$ be an isometric embedding.

%

\begin{definition}
An apartment $A\subset X$ is said \emph{compatible with $\rho$} if it intersects $\rho(Y)$ and such that
\begin{itemize}
\item $\pr(\rho(Y)\cap T)$ is a segment between two leaves of $T$, and
\item for every $z\in T$,
 $\rho(\pr^{-1}(z))\cap A$ is either empty or the full segment of length
 $l(z)$.
\end{itemize} 
\end{definition}

\begin{lemma}\label{lem:compatible}
Any two chambers in $\rho(Y)$ are contained in some compatible apartment.
\end{lemma}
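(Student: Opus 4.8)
The plan is to reduce the statement to the case of two chambers both based "in the tree direction" inside a common apartment of $\Upsilon$, and then invoke the extension theory for apartments in $X$ developed in Section~\ref{sec:buildings}. First I would set up notation: write $Y = Y(T,x,y,s,t)$, realize $\rho(Y)$ inside $X$, and recall that the projection $\pr:Y\to T$ onto the tree factor has fibers which are vertical segments (of length $l(z)$ over $z\in T$). An apartment $A\subset X$ compatible with $\rho$ is, by definition, one whose intersection with $\rho(Y)$ is cut out over a geodesic segment $\gamma\subset T$ joining two leaves, and which contains the full vertical fiber over each $z\in\gamma$. Geometrically, $\pr^{-1}(\gamma)$ is a convex subcomplex of $Y$ isometric to a convex subset of the model apartment $\Sigma$ (since $\gamma\times\RR$ is a copy of $\Sigma$ inside $\Upsilon$), so $\rho(\pr^{-1}(\gamma))$ is a convex subset of $X$ isometric to a subset of an apartment; by Theorem~\ref{thm:isomappart} it extends to an honest apartment of $X$, and that apartment is compatible with $\rho$ essentially by construction.

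The key step is therefore: given two chambers (i.e.\ alcoves) $c_1,c_2\subset\rho(Y)$, find a geodesic segment $\gamma\subset T$ between two leaves of $T$ such that $\pr(c_1)$ and $\pr(c_2)$ both lie on $\gamma$ and the vertical fibers over $\pr(c_1),\pr(c_2)$ are contained in $\pr^{-1}(\gamma)\cap\rho(Y)$ in the strong sense (the full fiber). Since $c_1,c_2$ are alcoves of $Y$, their tree-projections $\pr(c_1),\pr(c_2)$ are either vertices or edges of $T$; either way the convex hull in the tree $T$ of $\pr(c_1)\cup\pr(c_2)$ is a segment, which I extend to a segment $\gamma$ running between two leaves of the finite tree $T$ (always possible: just prolong each end until you hit a leaf). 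Over each vertex $z\in\gamma$ the set $\rho(\pr^{-1}(z))\cap\rho(Y)$ is by definition of the crazy diamond the \emph{entire} segment $[f(z),c(z)]$, so the second bullet of the compatibility definition is automatic once we take $A$ to be an apartment of $X$ containing the convex set $\rho(\pr^{-1}(\gamma))$. Since $c_1,c_2$ project into $\gamma$, they lie in $\pr^{-1}(\gamma)$, hence in $A$.

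So the proof runs: (1) reduce to finding $\gamma$; (2) take $\gamma\subset T$ the geodesic between two leaves through $\pr(c_1)$ and $\pr(c_2)$ — noting $\pr(c_i)$ is a simplex of $T$ so its endpoints are tree-vertices and the convex hull of the four (or fewer) endpoints is a segment, which we then extend to leaf-endpoints; (3) observe $Z:=\rho(\pr^{-1}(\gamma))$ is convex in $X$ (as $\pr^{-1}(\gamma)$ is convex in $Y$ and $\rho$ is an isometric embedding) and isometric to a subset of $\Sigma$; (4) apply Theorem~\ref{thm:isomappart} to get an apartment $A\supset Z$; (5) check $A$ is compatible with $\rho$: the first bullet holds since $A\cap\rho(Y)\supseteq Z$ and, because $A$ is a single apartment while $Y$ fibers over the tree $T$, $\pr(A\cap\rho(Y))$ cannot contain a branch point with three directions, forcing it to be exactly $\gamma$ (here one uses that $Z$ already contains full fibers over $\gamma$, so $A\cap\rho(Y)=Z$); the second bullet holds because $A\supseteq Z$ already contains the full vertical fiber over each $z\in\gamma$ and $A\cap\rho(\pr^{-1}(z))$ cannot be larger than that fiber; (6) finally $c_1,c_2\subset Z\subset A$.

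The main obstacle I expect is step~(5), precisely the verification that the apartment $A$ produced by Theorem~\ref{thm:isomappart} does not accidentally meet $\rho(Y)$ outside $Z=\rho(\pr^{-1}(\gamma))$ — i.e.\ that $A\cap\rho(Y)$ is \emph{exactly} $Z$ rather than something larger, which is what makes $\pr(A\cap\rho(Y))$ equal to the segment $\gamma$. This requires a small local argument in the wall-tree $\Upsilon$: an apartment of $X$ meeting $\rho(Y)\subset\Upsilon$ must, inside $\Upsilon$, be of the form $\pi^{-1}(\text{line})$ for a bi-infinite geodesic line in the ambient tree (as in the proof of Proposition~\ref{prop:cvxWT}), and its trace on $\rho(Y)$ is $\rho(\pr^{-1}(\text{that line}\cap T))$; since that line $\cap\, T$ is a geodesic segment containing $\gamma$ and, by maximality of $\gamma$ (leaf-to-leaf), cannot strictly contain $\gamma$, equality follows. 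One should double-check the degenerate cases of $Y$ (point, vertical, horizontal) where $T$ is trivial or a segment, but in all of them $\gamma$ and $A$ are produced by the same recipe, so the argument goes through verbatim.
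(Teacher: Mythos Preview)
Your proposal is correct and follows exactly the paper's approach: pick a leaf-to-leaf geodesic $\gamma\subset T$ through the tree-projections of the two alcoves, observe that $\pr^{-1}(\gamma)$ is isometric to a convex subset of $\Sigma$, and apply Theorem~\ref{thm:isomappart}. The paper's proof is much terser and simply declares the resulting apartment compatible without your step~(5); your worry there is legitimate but slightly mis-framed (the apartment $A$ lives in $X$, not in $\Upsilon$) --- the clean way to see $A\cap\rho(Y)=Z$ is that this intersection is convex, hence a crazy diamond by Proposition~\ref{prop:cvxWT}, and being contained in an apartment forces its underlying tree to be a segment, which must equal the already leaf-to-leaf $\gamma$.
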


\begin{proof}
Let $\ell$ be a geodesic segment between two leaves of $T$. Then $\pr^{-1}(\ell)\subset Y$ is isometric to a subset of $\RR^2$. Hence
 by Theorem \ref{thm:isomappart}, it is contained in an apartment of $X$, which is therefore a compatible apartment.
Now any two edges of $T$ are contained in some segment between two leaves. Hence any two chambers of $X$ are contained in some maximal rhombus of $Y$, and therefore their image by $\rho$ is in a compatible apartment.
\end{proof}

In view of Lemma \ref{lem:seqextensions}, in order to prove Theorem \ref{thm:Psymetric}, it suffices to prove it when $Y\subset Y'$ is an elementary extension. 

\begin{lemma}\label{lem:extFC}
Assume that $Y\subset Y'$ is an elementary extension of type $F$ or $C$. Let $\rho:Y\to X$ be an isometric embedding. If $Y$ is non-degenerate, then there are $q$ possible extensions of $\rho$ to $\rho'\in \Isom(Y', X)$. If $Y$ is degenerate, then it is a segment and there are $q+1$ possible extensions.
\end{lemma}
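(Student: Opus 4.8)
The statement concerns counting extensions of an isometric embedding $\rho:Y\to X$ along an elementary extension $Y\subset Y'$ of type $C$ (the case $F$ being symmetric, by swapping the roles of $x$ and $y$, or of the functions $c$ and $f$). Recall that such an extension is $Y'=Y(T,x',y,s+1,t)$ where $x'$ is adjacent to $x$ in $T$; equivalently, passing from $Y$ to $Y'$ enlarges the vertical segments over exactly those $z\in T$ with $d(x,z)\ge d(x',z)$ (i.e.\ $z$ on the $x'$-side of the edge $\{x,x'\}$), adding one cell of length $\tfrac{\sqrt3}2$ at the top (the $c$-end) of $\pr^{-1}(z)$ for each such $z$. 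So the new part of $Y'$ is a "comb" of cells glued along the portion of the $c$-boundary of $Y$ lying over that half of $T$.

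First I would fix a compatible apartment $A$ containing enough of $\rho(Y)$ to control the relevant boundary (Lemma \ref{lem:compatible}): concretely, take the segment $\ell$ in $T$ to be a geodesic between two leaves that passes through the edge $\{x,x'\}$, so that $\pr^{-1}(\ell)$ is a genuine (flat) strip and $A\supset \rho(\pr^{-1}(\ell))$. Inside $A$, the $c$-boundary of $\rho(Y)$ over the $x'$-side of $\ell$ is a singular segment, and extending $\rho$ amounts to choosing, compatibly, a cell beyond each edge/vertex of that boundary. The key local fact is that over each such vertex $z$ the link in $X$ of the appropriate panel is a projective plane of order $q$, so there are $q$ ways to attach the new cell off the wall defined by $A$ — but these choices over neighboring $z$ are not independent: once the attachment is made over one vertex, convexity and the building axioms (via Theorem \ref{thm:isomappart} and Corollary \ref{cor:root+alcove}) force the attachment over adjacent vertices, so that the whole comb is determined by a single choice. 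This is why the answer is $q$ and not $q^{\#}$.

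Concretely, the plan is: (1) reduce to the strip $\pr^{-1}(\ell)$, where the new cells form a half-apartment-like comb attached along one wall of the apartment $A$; (2) observe that $Y$ being non-degenerate guarantees there is at least one alcove of $Y$ adjacent to the $c$-boundary over $\ell$ — this alcove, together with the comb, is forced to lie in a single apartment by Corollary \ref{cor:root+alcove} (an alcove with an edge on a wall extends, with a half-apartment on the far side, to an apartment); hence the comb is the restriction of a half-apartment, and choosing it is the same as choosing a half-apartment bounded by the given wall and on the side away from $\rho(Y)$; (3) count: the half-apartments of $X$ bounded by a fixed wall $w$ and distinct from the (at least) two already "used" near $\rho(Y)$ are in bijection with... more carefully, the alcoves of $X$ having a fixed edge $e\subset w$ and not in $A$ number $q-1$, but the relevant count for extending past the full $c$-boundary is governed by the link of a vertex, a projective plane of order $q$, minus the line already present, giving $q$ lines through the relevant point, i.e.\ $q$ choices. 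In the degenerate case $Y$ is a single wall-segment (horizontal, or vertical, or a point), there is no constraining alcove, the "comb" degenerates to a single cell attached to an edge $e$, and the number of alcoves of $X$ containing $e$ is exactly $q+1$; since $Y\subset Y'$ just adds one such alcove's worth of cell, all $q+1$ are available, giving $q+1$ extensions.

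The main obstacle I expect is step (2): making precise the claim that the whole comb of new cells is \emph{rigidly determined} by a single attachment. The subtlety is that a priori one could imagine making independent, incompatible choices over different vertices $z$ of $\ell$; ruling this out requires knowing that the convex hull in $X$ of $\rho(Y)$ together with \emph{one} new cell already contains the entire new comb. For non-degenerate $Y$ this should follow because that convex hull contains an alcove of $\rho(Y)$ adjacent to the wall, and then Corollary \ref{cor:root+alcove} / Theorem \ref{thm:isomappart} promotes "alcove $+$ adjacent wall" to a full apartment $A'$, inside which the comb — being a convex subcomplex isometric to a piece of $\Sigma$ adjacent to the relevant wall — is uniquely located; so $A'\cap(\text{comb region})$ is forced, and as $A'$ is determined by the one chosen cell, so is everything. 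The bookkeeping of \emph{which} link is a projective plane and \emph{why} exactly the already-present line must be excluded (or not, in the degenerate case) is where the parity/non-degeneracy hypotheses enter, and is the part that needs the most care; the actual cardinality count ($q$ lines through a point of $\mathrm{PG}(2,q)$ other than a fixed one, resp.\ $q+1$ alcoves on an edge) is then immediate from the definition of thickness.
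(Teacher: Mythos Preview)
Your strategy is the same as the paper's: a single choice of new alcove (over the edge $\{x,x'\}$) determines the whole extension by convexity/apartment arguments, and the count comes from thickness. Two points where your sketch needs cleaning up before it becomes a proof:

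\textbf{The count.} The relevant number is simply the number of alcoves of $X$ containing the panel $\rho(\sigma_e)$ (where $\sigma_e$ is the panel of $Y$ between the new alcove $d_e$ and the alcove $c_e$ of $Y$ just below it), which is $q+1$. In the non-degenerate case exactly one of these, namely $\rho(c_e)$, is already in $\rho(Y)$, leaving $q$ choices; in the degenerate case $Y$ is a horizontal segment (this is the only degenerate shape admitting a type $C$ extension, since $T$ must have a neighbor of $x$), so $Y$ contains no alcove on $\sigma_e$ and all $q+1$ choices are allowed. The detour through links of vertices and $\mathrm{PG}(2,q)$ is neither needed nor the right picture.

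\textbf{Branching.} You cannot reduce to a single strip $\pr^{-1}(\ell)$: the new alcoves sit over \emph{every} edge of the subtree $T_{x'}=\{z:d(z,x')<d(z,x)\}$, which may branch. The paper handles this by running the rigidity argument edge by edge: for each $e'\in E$, find a compatible apartment containing both the chosen $D$ and $\rho(c_{e'})$ (using Lemma~\ref{lem:compatible} and Theorem~\ref{thm:isomappart}), then read off the unique $D_{e'}$ in that apartment; well-definedness follows because any two such apartments share the convex hull of $D$ and $\rho(c_{e'})$. Your single application of Corollary~\ref{cor:root+alcove} only covers one branch.
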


\begin{proof}
We argue for an extension of type $C$, the other one being similar.

Let $e$ be the edge between $x$ and $x'$, and let $E$ be the set of edges of the subtree $T_{x'}=\{z\in T\mid d(z,x')<d(z,x)\}$. We first remark that $Y'$ is obtained from $Y$ by adding one alcove for each edge in $\{e\}\cup E$. For $e'\in \{e\}\cup E$, let $d_{e'}$ be this alcove, and $c_{e'}$ the alcove of $Y$ which is adjacent to $d_{e'}$ (which exists since $\ell(x)>0$, as $Y$ is non-degenerate), and $\sigma_{e'}$ be the panel between $c_{e'}$ and $d_{e'}$. Note that in $Y'$ the convex hull of $d_e$ and $Y$ is equal to $Y'$. 

Let $C=\rho(c_e)$. There are $q$ alcoves in $Y$ which are adjacent to $\rho(\sigma_e)$ and distinct from $C$. Let $D$ be such a alcove. We claim that there exists a unique $\rho':Y'\to X$ such that $\rho'(d_e)=D$. 

Indeed, the uniqueness follows from the fact that $X'$ is the convex hull of $d_e$ and $Y$. Let us argue for the existence. Let $e'\in E$. By Lemma \ref{lem:compatible} there exists a compatible apartment $A_0$ containing $C$ and $\rho(c_{e'})$. Furthermore $D\cup (\rho(Y)\cap A_0)$ is isometric to a subset of an apartment, hence is contained in an apartment by Theorem \ref{thm:isomappart}. Therefore, there exists a
 compatible apartment $A$ containing $\rho(d_e)$ and $\rho(c_{e'})$. In the apartment $A$, there is a unique alcove $D_{e'}$ which is adjacent to $\rho(\sigma_{e'})$ and distinct from $\rho(c_{e'})$. 

 We define $\rho'(d_{e'})=D_{e'}$. This is well-defined : indeed, if $A'$ is another compatible apartment satisfying the same conditions, then $A'$ contains the convex hull of $\rho(d_e)$ and $\rho(c_{e'})$, hence also $D_{e'}$. Therefore $D_{e'}$ does not depend on the choice of the apartment. Using again Lemma \ref{lem:compatible}, we see that the distance between two chambers can be calculated in a compatible apartment. It follows that $\rho'$ is again isometric. 
 
 Therefore, each of the $q$ possible choices of $D$ gives a map $\rho'$, which concludes the proof in the case when $Y$ is non-degenerate.

 If $Y$ is degenerate, then for an extension of type $C$ to be possible, it must be a horizontal segment in $Y'$. In that case, the same argument applies, but there are in fact $(q+1)$ possible choices of $D$.

\end{proof}

\begin{lemma}\label{lem:extBz}
Assume that $Y\subset Y'$ is an elementary extension of type $B_z$, where $z$ is a vertex of valency $v$ in $T$. Then there are $q-v+1$ possible extensions of $\rho:Y\to X$ to $\rho':Y'\to X$ if $q-v+1\geq 0$, and $0$ if not. 
\end{lemma}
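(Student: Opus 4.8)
Write $N:=\overline{Y'\setminus Y}$ for the new part; it is the strip of $\Upsilon$ lying above the new edge $e=\{z,z'\}$, and it meets $Y$ exactly along the singular segment $S:=\pi^{-1}(z)\cap Y$, which has positive length since $l(z)>0$ and hence contains at least one panel (edge) $\sigma_0$. The standing hypothesis is that some isometric embedding $\rho:Y\to X$ exists; in particular each of the $v$ edges of $T$ at $z$ carries exactly one alcove of $Y$ with $\sigma_0$ on its boundary, these $v$ alcoves are pairwise distinct (their third vertices lie on the $v$ pairwise disjoint walls $\pi^{-1}(w_i)$), and by injectivity of $\rho$ the panel $\rho(\sigma_0)$ lies in exactly $v$ alcoves of $\rho(Y)$. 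Since $\rho(\sigma_0)$ lies in exactly $q+1$ alcoves of $X$, this forces $v\le q+1$ (so if $q-v+1<0$ there is nothing to prove), and there are exactly $q-v+1$ alcoves $D$ of $X$ with $\rho(\sigma_0)\subset D$ and $D\not\subset\rho(Y)$. Writing $d_0$ for the unique alcove of $N$ containing $\sigma_0$, the plan is to show that $\rho'\mapsto\rho'(d_0)$ is a bijection from the set of isometric extensions $\rho':Y'\to X$ of $\rho$ onto this set of $q-v+1$ alcoves; the Lemma then follows immediately, and the degenerate cases require no separate treatment (if $Y$ is vertical then $T=\{z\}$, $v=0$, $S=Y$ and there are $q+1$ strips that can be glued along it, while a horizontal $Y$ or a point has $l(w)=0$ at every vertex, so no extension of type $B_z$ exists at all).

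For surjectivity (existence), I would fix such a $D$ and build $\rho'|_N$ by propagating from $d_0$, in the spirit of the proof of Lemma~\ref{lem:extFC}. Pick one of the $v$ strips of $Y$ at $z$, say the strip $N_1$ above the edge $e_1=\{z,w_1\}$ (in the vertical case $v=0$ one argues directly, using only $S$). Then $N_1\cup S\cup d_0$ is a convex subcomplex of a wall-apartment of $\Upsilon$, hence isometric to a convex subset of $\Sigma$; using Theorem~\ref{thm:isomappart} one extends $\rho(N_1\cup S)$ inside an apartment of $X$ to a half-apartment $\alpha$ with $\rho(S)\subset\partial\alpha$ and $\rho(N_1)\subset\alpha$. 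Since $D$ is not the alcove of $\alpha$ at $\rho(\sigma_0)$, Corollary~\ref{cor:root+alcove} produces an apartment $A\supset\alpha\cup D$; inside $A$ there is a unique isometric placement of the strip $N$ extending $\rho|_S$ and sending $d_0$ to $D$ (a half-apartment has no nontrivial automorphism fixing its boundary wall), and one takes $\rho'|_N$ to be this placement. One then checks that $\rho'$ does not depend on the auxiliary choices: two such apartments agree on $\rho(S)\cup D$, and marching alcove by alcove along the ladder $N$ using a chain of overlapping compatible apartments (Lemma~\ref{lem:compatible}) one sees that the image of every alcove of $N$ is forced by $\rho|_S$ and $D$; finally one verifies that the resulting simplicial map $\rho':Y'\to X$ is an isometry, again by computing distances inside compatible apartments.

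For injectivity (uniqueness), the point is that $Y'$, and in particular $N_1\cup S\cup N$, is a convex subcomplex of a wall-apartment of $\Upsilon$ (Corollary~\ref{cor:sequenceappt}), so any isometric extension of $\rho$ is determined on $N$ by its restrictions to $N_1$, $S$ and $d_0$, i.e.\ by $\rho$ together with $\rho'(d_0)$. Combining this with the previous paragraph gives the asserted bijection.

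The main obstacle is the coherence step: showing that the single local choice $\rho'(d_0)=D$ forces a well-defined isometric embedding of the \emph{entire} strip $N$. Here the interface between the old complex and the new cells is a whole singular segment sitting in a wall, rather than a single corner as in Lemma~\ref{lem:extFC}, so the chain-of-apartments argument has to run along the length of $N$, and one must check both that the convex hulls involved are large enough to pin down each successive alcove and that no ``folding back'' onto $\rho(Y)$ occurs. The latter rests on the saturation clause in the definition of isometric embedding: it forces the third vertex of any admissible $D$ to lie outside $\rho(Y)$, which is what makes $\rho(N_1\cup S)\cup D$ isometric to a convex subset of $\Sigma$ and hence legitimately an input to Theorem~\ref{thm:isomappart}.
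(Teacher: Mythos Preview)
Your proposal is correct and follows essentially the same approach as the paper's proof: both fix a panel $\sigma_0$ on the interface segment $\pr^{-1}(z)$, count the $q+1-v$ alcoves of $X$ adjacent to $\rho(\sigma_0)$ but not in $\rho(Y)$, and then show the assignment $\rho'\mapsto \rho'(d_0)$ is a bijection by the same mechanism (uniqueness from $Y'=\Conv(c_{\sigma_0}\cup Y)$, existence by extending to a half-apartment bounded by the wall through $\rho(\sigma_0)$ and then invoking Corollary~\ref{cor:root+alcove} to absorb $D$, with compatible apartments handling the coherence check). The paper's writeup is slightly more streamlined in that it uses Lemma~\ref{lem:compatible} directly to obtain the half-apartment rather than going through Theorem~\ref{thm:isomappart}, but the argument is the same.
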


(Note that if $Y$ is vertical, then the lemma still holds, with $v=0$).

\begin{proof}
Let $P$ be the set of panels in $\pr^{-1}(z)\subset Y$. By assumption $l(z)>0$, so that $P$ is not empty. Let $l=l(z)$.
For $\sigma\in P$, there is exactly one alcove in $Y'$ adjacent to $\sigma$ but which is not in $Y$. Let $c_\sigma$ be this alcove. Note that for every $\sigma\in P$, we have that $\Conv(c_\sigma\cup Y)=Y'$.

Let $\rho:Y\to X$ be an isometric embedding. Let $\sigma\in P$. Let $d$ be an alcove adjacent to $\rho(\sigma)$ and not contained in $Y$. Then we claim that there exists exactly one map $\rho':Y'\to X$ such that $\rho'(c_\sigma)= d$. The result follows immediately since there are $q+1$ alcoves in $Y$ adjacent to $\sigma$ and $v$ of them are contained in $\rho(Y)$.

So let us prove the claim. The uniqueness is clear since $Y'=\Conv(c_\sigma\cup Y)$. The existence follows from a similar argument as above. Let $e$ be the new edge in $Y'$, and let $A$ be a compatible apartment for $\rho$ containing $\rho(\sigma)$ (and therefore $\rho(\pr^{-1}(z))$). The wall of $A$ containing $\rho(\sigma)$ divides $A$ into two half-spaces $\alpha$ and $\beta$. Then $\alpha\cup d$ is contained in some apartment $A_1$. In $A_1$ we see that the convex hull of $\rho(Y)\cap A_1$ and $d$ is formed by adding the $2l+1$ alcoves not in $\alpha$ and sharing either an edge or a vertex with some panel $\rho(\sigma')$, for some $\sigma'\in P$.

 Note that this description is independent on the choice of $A$ or $\alpha$ (indeed, it is just given by the convex hull of $d$ and all panels $\rho(\sigma')$, for $\sigma'\in P$).  Since every alcove of $\rho(Y)$ is contained in a compatible apartment containing also $\rho(\sigma)$, it follows that the convex hull of $\rho(Y)$ and $d$ is formed by adding exactly these $2l+1$ alcoves. So we can extend $\rho'$ in the obvious way to the $2l+1$ alcoves that are in $Y'\setminus Y$. Since any alcove in $\rho(Y)$ is contained in a compatible apartment containing $\rho(\pr^{-1}(z))$, we see that the extension $\rho'$ is indeed isometric. 
\end{proof}

\begin{lemma}\label{lem:extdegenerate}
    Let $Y\subset Y'$ be an elementary extension of type $F_v$ or $C_v$ or $B^c$ or $B^f$, and let $\rho\in \Isom(Y,X)$  If $Y$ is not a point, then there are $q^2$ possible extensions of $\rho$ to $\rho'\in \Isom(Y', X)$. If $Y$ is a point, then there are $q^2+q+1$ such extensions.
\end{lemma}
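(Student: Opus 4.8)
I would handle the four types $C_v,F_v,B^c,B^f$ uniformly, reducing each to the single problem: \emph{prolong a singular segment by one edge}. The first step is to realise $Y$, together with the unique edge of $Y'$ not already in $Y$, inside the model wall tree $\Upsilon$. A degenerate crazy diamond that is not a point is one--dimensional: a vertical one is a segment of a fibre $\{\xi\}\times\RR$ of $\Upsilon$ and a horizontal one is a segment of a wall transverse to the fibres, so in either case it is a segment of a wall of $\Upsilon$. Running through the definitions, one checks that in all four cases $Y'$ is obtained from $Y$ by gluing one further edge $e$ of that wall at a prescribed extremity of $Y$ — the increments $s\mapsto s+2$ in $C_v,F_v$ produce a single edge because, by the computation in the proof of Proposition~\ref{prop:cvxWT}, consecutive vertices of a vertical segment differ by $2$ in the relevant horofunction, and the new tree edge in $B^c,B^f$ is a single edge of the horizontal segment. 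If $Y$ is a point, then $Y'$ is a single edge whose type in $\Upsilon$ is fixed by the choice of extension, and $\rho$ is just the choice of a vertex $v=\rho(y_0)\in X^0$.

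The point case is then immediate: an extension of $\rho$ is exactly the choice of an edge of $X$ containing $v$ and of the prescribed type, and every such choice is a legitimate element of $\Isom(Y',X)$ since a lone edge carries no further constraint. The number of edges at $v$ of a fixed type is the number of neighbours of $v$ of the corresponding type, i.e.\ the number of vertices of one part of $\lk(v)$; as $\lk(v)$ is the incidence graph of a projective plane of order $q$ this equals $q^2+q+1$, which is the claimed count.

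Assume now $Y$ is not a point, so $Y$ carries at least one edge. The plan is first to observe that $\rho(Y)$ is a genuine wall segment of $X$: the two extremities of $Y$ are at combinatorial distance $\sigma$ equal to an element of $\Lambda$ on the boundary of the sector, so their convex hull is one--dimensional, hence equal to the wall segment joining them, and $\rho(Y)$, being a geodesic between them, must coincide with it. Let $p$ be the extremity being extended, $e_0$ the edge of $\rho(Y)$ at $\rho(p)$, and $\rho(p')$ its other vertex. An extension appends an edge $[\rho(p),w]$; for the resulting $\rho'$ to lie in $\Isom(Y',X)$, the same convex--hull argument applied to $Y'$ forces $\rho'(Y')$ to be again a wall segment, i.e.\ $\rho(p)$ must lie on the geodesic from $\rho(p')$ to $w$, which at the level of $\lk(\rho(p))$ means the direction of $[\rho(p),w]$ is opposite — at distance $3$, the diameter — to that of $e_0$. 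Conversely, every such $w$ works: the prolonged path is locally geodesic at $\rho(p)$ by the opposition and geodesic elsewhere because $\rho(Y)$ was, hence a wall segment, which by Theorem~\ref{thm:isomappart} lies in an apartment in which all relevant distances are read off; so $\rho'$ is an isometric embedding, and it inherits the fullness condition. In the incidence graph of a projective plane of order $q$, the number of vertices at distance $3$ from a fixed vertex is $q^2$ (e.g.\ the lines missing a given point), which gives the $q^2$ extensions.

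The step I expect to be the main obstacle is precisely the rigidity statement that an admissible extension is forced to carry $Y'$ onto a genuine wall segment — equivalently, that $\rho$ and $\rho'$ preserve the Weyl distance between extremities, so that the convex hull is one--dimensional. This is what pins the count at $q^2$ rather than at the larger number a bare graph--isometry would permit (the surplus being the $q+1$ directions at distance $1$ from $e_0$ in $\lk(\rho(p))$, which together with the admissible $q^2$ exhaust the $q^2+q+1$ directions of the right type — exactly the point case). Since the diamonds here are one--dimensional, the device $Y'=\Conv(\text{a new alcove}\cup Y)$ used in Lemmas~\ref{lem:extFC} and~\ref{lem:extBz} is not available, so this must be obtained directly, placing singular segments and their prolongations inside apartments via Theorem~\ref{thm:isomappart} and Corollary~\ref{cor:root+alcove}. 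Once it is settled, checking that all four types and the point case fall under the uniform argument is routine.
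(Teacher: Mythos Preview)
Your proposal is correct and follows essentially the same line as the paper's proof, which is extremely terse: the paper simply says that for a point one counts the degree in the $1$-skeleton, and for a segment the new edge must be opposite the old one in the link of the extremal vertex, giving $q^2$. Your expansion---reducing all four types to ``prolong a wall segment by one edge'', then using the convex-hull/apartment argument (via Theorem~\ref{thm:isomappart}) to force opposition in the link---is exactly the justification the paper omits, and the obstacle you flag is precisely the one-line assertion the paper takes for granted.
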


\begin{proof}
    If $Y$ is a point, then $Y\subset Y'$ is just the inclusion of a point in an edge, and this is just counting the degree of vertices in the 1-skeleton of $X$. 

    If $Y$ is a segment, then any of these elementary extensions amount to adding an edge at an extremal vertex. In the link of this vertex, this edge must be opposite the image of $Y$. There are $q^2$ such edges.
\end{proof}

\begin{proof}[Proof of Theorem \ref{thm:Psymetric}]
Let $Y,Y'$ be  crazy diamonds and $i:Y\to Y'$ be an isometry. Let $\rho:Y\to X$ be an isometry. By Lemma \ref{lem:seqextensions}, there exists a sequence of crazy diamonds $Y=Y_0,Y_1,\dots, Y_n=Y$, such that $i_k:Y_k\subset Y_{k+1}$ is an elementary extension. The number of maps $\rho':Y'\to X$ such that $\rho'\circ i=\rho$ is the product over $k$ of the numbers of maps $\rho_{k+1}$ such that $\rho_{k+1}\circ i_k=\rho_k$. By Lemma \ref{lem:extFC}, Lemma \ref{lem:extBz} and \ref{lem:extdegenerate}, this number only depends on the inclusion $Y_i\subset Y_{i+1}$. Therefore the desired number only depends on the inclusion $Y\subset Y'$.
\end{proof}

\subsection{Flows and measures}\label{sec:flows}

In this section, we revisit various constructions of measures and dynamical systems associated to buildings, in the light of the construction of prouniform measures.

\begin{convention}
In this text, the index of the measure will be an indication of the space on which this measure is defined. The exponent will usually denote some origin or restriction.

We also will use the letter $\mu$ only for (restricted or not) prouniform measures.
\end{convention}

\subsubsection{Harmonic measures and the Cartan Flow}

Let us start again with  $X$  an $\tilde A_2$ building, and $\Sigma$  a model $\tilde A_2$ apartment.  Recall that the property $P_\Upsilon$ was defined to be "being a finite convex subset of the wall-tree $\Upsilon$". Note that every convex subset of $\Upsilon$ is ind-$P_\Upsilon$ (as  the convex hull of a finite set in $\Upsilon$ is finite). 

Recall that we have fixed an origin $0\in \Sigma$, and a sector $\Lambda\subset \Sigma$ based at $0$. Note that $\Sigma$ and $\Lambda$ are ind-$P_\Upsilon$.
Let $o\in X$. The set $\Delta$ of chambers  in the boundary of $X$ can be identified with the set of sectors based at $o$. Therefore we can define the restricted prouniform measure $\mu_\Lambda^o$ on $\Isom(\Lambda,X)^o\simeq \Delta$.  In the following of the text we will denote this measure $\mu_\Delta^o$. \index{$\mu_\Delta^o$ \hfill restricted prouniform measure on $\Delta$|bb}This measure has been studied for a long time, previous definitions can be found in  \cite{Parkinson} or more recently \cite[Proposition 6.1]{RemyTrojan}.  

Recall that we have a natural map $\pr_+:\Delta\to\Delta_+$, associating to a chamber its vertex of type $+$. This map is $\Aut(X)$-equivariant and continuous. Similarly we have $\pr_-:\Delta\to \Delta_-$. We also define the pushforward measures $\mu_+^o =(\pr_+)_* \mu_\Delta^o\in \Prob(\Delta_+)$ and $\mu_-^o =(\pr_-)_*\mu_\Delta^o\in \Prob(\Delta_-)$. \index{$\mu_+^o,\mu_-^o$\hfill pushforward of $\mu_\Delta^o$ by $\pr_+$, $\pr_-$|bb}

\begin{remark}
The measures $\mu_+^o$ and $\mu_-^o$ are not prouniform measures \textit{per se}. However, one could define a slightly different notion a prouniform measures, taking into account the type and considering only type-preserving simplicial isometries. In this sense, identifying $\Delta_+$ with the set of rays of type $+$ starting from $o$, one could check that we would indeed get the measure $\mu_+^o$. This point of view should be very important if one tries to extend the constructions we make here to the setting of buildings of other types, e.g $\tilde{C}_2$ and $\tilde{G}_2$. We decided not to elaborate on this further in this paper, in order to keep things simpler.
\end{remark}

The measure $\mu_\Delta^o $ depends of course on the choice of the point $o$. However, its class does not. More precisely, one can calculate the Radon-Nikodym derivative as follows. Let $x,y\in X$ be two vertices. For $\omega\in\Omega$, the sectors $Q(x,\omega)$ and $Q(y,\omega)$ intersect, by definition, so let $z\in Q(x,\omega)\cap Q(y,\omega)$. Let $\lambda=\sigma(x,z)$ and $\lambda'=\sigma(y,z)$. Then we define the \emph{horofunction} associated to $\omega$ by $h_\omega(x,y)= \lambda-\lambda'$. It is easily seen that this quantity does not depend on the choice of $z$.

Also recall that, for $\lambda\in\Lambda$, $\ell(\lambda)$ denotes the length of $\lambda$.  The following proposition is proven for example in \cite[Theorem 3.17]{Parkinson} (in the general setting of affine buildings).

\begin{proposition}\label{prop:changebasepoint}
For every vertices $x,y\in X$,
\begin{enumerate}
\item the measures $\mu_\Delta^x$ and $\mu_\Delta^y$ are absolutely continuous with respect to  each other
\item 
for every $\omega\in \Delta$, $$\frac{d\mu_\Delta^x}{d\mu_\Delta^y}(\omega)  = q^{2\ell(h_\omega(x,y))}$$
\end{enumerate}
\end{proposition}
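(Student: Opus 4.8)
The plan is to compute the Radon--Nikodym derivative by reducing to a finite-level computation, using the fact that $\mu_\Delta^o$ is the restricted prouniform measure $\mu_\Lambda^o$ on $\Isom(\Lambda,X)^o\simeq\Delta$, and that prouniform measures are built as inverse limits of uniform measures on finite sets. First I would note that, since both $x$ and $y$ lie in a common apartment with the relevant sectors, and since the convex hull of $\{x,y\}$ together with a finite piece of a sector is a finite convex subcomplex of some wall-tree, it suffices to understand how the uniform measure on $\Isom(\Lambda_n,X)^o$ transforms when we change the basepoint, where $\Lambda_n$ is the (finite, convex, $P_\Upsilon$) truncation of the sector $\Lambda$ at length $n$. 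Concretely, a chamber $\omega\in\Delta$ is represented by the sectors $Q(x,\omega)$ and $Q(y,\omega)$ based at $x$ and $y$; the cylinder sets $\Omega_x(z)$ (resp.\ $\Omega_y(z)$) form the relevant $\sigma$-algebras, and by Lemma~\ref{lem:desintegration_prouniform} and Corollary~\ref{cor:measurepreserving} the measure $\mu_\Delta^x$ is determined by its values on these cylinders.

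The key step is a counting computation at finite level. For a vertex $z$ with $\sigma(x,z)=\lambda$, the measure $\mu_\Delta^x(\Omega_x(z))$ is, by the very definition of the restricted prouniform measure as an inverse limit of uniform measures, equal to $[\Conv(x,z):\{x\}]^{-1}$ up to a combinatorial factor counting how $\Omega_x(z)$ sits inside $\Isom$ of a truncated sector; and by the $P_\Upsilon$-symmetry established in Theorem~\ref{thm:Psymetric} together with Lemma~\ref{lem:[XX']}, this depends only on $\lambda=\sigma(x,z)$. The standard fact (this is exactly the content of \cite[Theorem 3.17]{Parkinson}) is that this measure is $q^{-2\ell(\lambda)}$, i.e.\ the number of chambers in $V_\lambda(x)$ grows like $q^{2\ell(\lambda)}$ and the prouniform measure distributes mass uniformly over them. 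Given $\omega\in\Delta$, pick $z\in Q(x,\omega)\cap Q(y,\omega)$ with $\sigma(x,z)=\lambda$, $\sigma(y,z)=\lambda'$; then for $z$ deep enough in both sectors, $\Omega_x(z)=\Omega_y(z)$ as subsets of $\Delta$ (they both equal the set of chambers whose representative sector based at $x$, resp.\ $y$, passes through $z$, and these agree on a neighbourhood of $\omega$ once $z$ is far out). Comparing $\mu_\Delta^x(\Omega_x(z))=q^{-2\ell(\lambda)}$ with $\mu_\Delta^y(\Omega_y(z))=q^{-2\ell(\lambda')}$ on this common cylinder gives
\[
\frac{d\mu_\Delta^x}{d\mu_\Delta^y}(\omega)=\lim_{z}\frac{\mu_\Delta^x(\Omega_x(z))}{\mu_\Delta^y(\Omega_y(z))}=q^{2(\ell(\lambda)-\ell(\lambda'))}=q^{2\ell(h_\omega(x,y))},
\]
where the first equality is the martingale/density statement (Lemma~\ref{lem:densityL1_gen} or the differentiation of the measure along the decreasing family of cylinders $\Omega_x(z)$), and we use that $h_\omega(x,y)=\lambda-\lambda'$ by definition, noting $\ell(h_\omega(x,y))=\ell(\lambda)-\ell(\lambda')$ since $z$ is simultaneously on the sector rays from $x$ and from $y$ towards $\omega$ so the lengths add correctly. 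Absolute continuity in both directions is immediate since the derivative is everywhere finite and positive.

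The main obstacle I expect is \emph{not} the abstract inverse-limit formalism but the honest combinatorial count of $|V_\lambda(x)|$ (equivalently, of the cardinality of the fibers $[\Lambda_n:\{0\}]$ in the notation of \S\ref{sec:measure}), i.e.\ verifying that a ball-of-radius-$\lambda$ style count in an $\tilde A_2$ building really is $q^{2\ell(\lambda)}$ up to the correct multiplicative constants coming from the Weyl-chamber combinatorics. This is where one genuinely uses the type-$\tilde A_2$ structure (the shape of $\Lambda$, the two wall directions bounding a sector, and how $\Isom$ of a truncated sector decomposes via elementary extensions of types $C$, $F$, $B_z$ as in Lemmas~\ref{lem:extFC}--\ref{lem:extdegenerate}). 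Since this is precisely \cite[Theorem 3.17]{Parkinson}, I would cite it for the numerical value and only spell out the bookkeeping that translates Parkinson's statement into the language of $\mu_\Delta^o$ as the restricted prouniform measure. The rest — the coincidence $\Omega_x(z)=\Omega_y(z)$ far out, and the passage from cylinder ratios to the Radon--Nikodym derivative via martingale convergence — is routine given the machinery already set up in \S\ref{sec:measure}.
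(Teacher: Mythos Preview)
The paper does not actually prove this proposition: it simply cites \cite[Theorem~3.17]{Parkinson}. Your sketch is therefore already more detailed than what the paper provides, and it ultimately rests on the same reference for the crucial counting step, so the approaches coincide.

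Two small caveats worth flagging in your write-up. First, the formula $\mu_\Delta^x(\Omega_x(z))=q^{-2\ell(\lambda)}$ is not literally correct: $|V_\lambda(x)|$ carries a multiplicative constant depending on whether $\lambda$ is regular or singular (something like $(q^2+q+1)(q+1)q^{2\ell(\lambda)-3}$ in the regular case). What matters is that for $z$ deep enough both $\lambda=\sigma(x,z)$ and $\lambda'=\sigma(y,z)$ are regular, so the constants agree and cancel in the ratio. Second, the identity $\Omega_x(z)=\Omega_y(z)$ for $z$ far out is itself a nontrivial combinatorial fact (the paper invokes \cite[Theorem~3.6]{Parkinson} for exactly this elsewhere, in the proof of Lemma~\ref{lem:projmp}); you should cite it rather than call it routine. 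Finally, double-check the sign in your displayed chain of equalities: with $\mu_\Delta^x(\Omega_x(z))\sim q^{-2\ell(\lambda)}$ the ratio $\mu_\Delta^x/\mu_\Delta^y$ comes out as $q^{2(\ell(\lambda')-\ell(\lambda))}$, not $q^{2(\ell(\lambda)-\ell(\lambda'))}$, so make sure your convention for $h_\omega(x,y)$ and the linearity of $\ell$ on the relevant cone are consistent with the stated formula.
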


The set $\Delta\times \Delta$ contains a dense open subset $\Delta_\op$ formed by the set of pairs of chambers which are opposite. This set has full $\mu^o_\Delta \times \mu^o_\Delta$-measure (see \cite[Proposition 6.8]{BCL}). In \cite[\S6.2]{BCL} we constructed a $\Aut(X)$-invariant measure on $\Delta\times \Delta$, denoted $m$ there (and that we will denote $m_\op$ here). This measure was given by the following formula:
$$dm(C,C')= q^{-2\ell(\beta_o(C,C'))}d\mu_\Delta^o(C) d\mu_\Delta^o(C')$$
(where $\beta_o(C,C')$ is some kind of Gromov product on $\Delta_\op$, which satisfies in particular that $\beta_o(C,C')=0$  if and only if $o$ is in the apartment containing $C$ and $C'$).
Using \cite[Lemma 6.9]{BCL} one proves that this formula does not depend on the choice of the origin $o$, and that it gives indeed an $\Aut(X)$-invariant Radon measure. Similarly in \cite[Definition 6.14]{BCL} we defined a measure $m_\pm$ on the set $\Delta_\pmop\subset \Delta_-\times\Delta_+$ of opposite vertices, which is in the class of $\mu_-^o \times \mu_+^ o$. The natural map $\pr_-\times \pr_+:\Delta_\op\to \Delta_\pmop$ is measure-preserving.
\index{$m_\op,m_\pm$\hfill invariant measures on $\Delta_\op$ and $\Delta_\pmop$|bb}

We define the \emph{Cartan flow} as $\scrF = \Isom(\Sigma, X)$, \index{$(\scrF,\mu_{\scrF})$ \hfill Cartan flow with its prouniforma measure|bb}equipped with the prouniform measure, denoted $\mu_\scrF$\index{$\mu_\scrF$\hfill prouniform measure on $\scrF$|bb}. This is again a measure which has been considered before, although maybe it is not so well-known (see \cite{Pansu} for a previous construction). The space $\scrF$ is equipped with commuting actions of $\Aut(\Sigma)$ (which is virtually $\mathbb Z^2$) and $\Aut(X)$, which are measure-preserving.

Note however that in \cite{BCL}  we considered a slightly different construction of the measure. Indeed, identifying $\scrF$ with $\Delta_\op\times \Sigma$, we equipped $\scrF$ with the product measure $\zeta_\scrF:=m_\op\times \lambda$, where $\lambda$ is the counting measure on $\Sigma$.

In order to apply the results of \cite{BCL} we are led to checking the following:

\begin{lemma}
The measure $\zeta_\scrF$ on $\scrF$ defined (and denoted $\zeta'$) in \cite[Theorem 6.21]{BCL} is equal to the prouniform measure $\mu_\scrF$.
\end{lemma}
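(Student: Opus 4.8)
The plan is to show that the two measures $\mu_\scrF$ and $\zeta_\scrF = m_\op \times \lambda$ on $\scrF = \Isom(\Sigma,X)$ agree by exploiting the fact that both are $\Aut(X)$-invariant and project (in a suitable sense) to the same measure on $\Delta$, and then invoking uniqueness of the disintegration. First I would recall the two natural restriction maps on $\scrF$. The inclusion $\Lambda \hookrightarrow \Sigma$ of the positive sector is an isometry, so the restriction $r_+ : \scrF \to \Isom(\Lambda,X)$ is, by Corollary~\ref{cor:measurepreserving}, measure-preserving from $\mu_\scrF$ to $\mu_\Delta = \sum_{o} \mu_\Delta^o$. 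Picking also a sector $\Lambda^-$ pointing in an opposite direction (say through $0$), one gets a second restriction $r_- : \scrF \to \Isom(\Lambda^-,X) \simeq \Delta$, again measure-preserving for the prouniform measure. Combined, $(r_-, r_+) : \scrF \to \Delta_\op \times \Sigma$ (the apartment coordinate being recorded by the position of $0$ in $X$, equivalently which sector-pair representative is realized) is the identification used in \cite{BCL}; under it $\mu_\scrF$ pushes to some $\Aut(X)$-invariant measure in the class of $m_\op \times \lambda$.

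The key steps, in order, are: (1) fix an origin $o \in X$ and disintegrate both measures over the "base point" map $\scrF \to X^0$ sending an isometry $\beta$ to $\beta(0)$; by $\Aut(X)$-invariance (both measures are $\Aut(X)$-invariant, $\mu_\scrF$ by the Proposition preceding this lemma, $\zeta_\scrF$ by the construction of $m_\op$) it suffices to compare the two conditional measures on the fiber $\scrF^o := \{\beta : \beta(0) = o\}$. (2) Identify $\scrF^o$ with $\Delta_\op$ (a sector-pair through $o$ determines a unique apartment and hence an isometry $\Sigma \to X$ based at $o$, up to the finite action of $\Aut(\Sigma)$ fixing $0$). (3) On $\scrF^o$, the prouniform measure $\mu_\scrF^o$ is, by definition and Corollary~\ref{cor:measurepreserving} again, characterized by its pushforwards under the restrictions to the two sectors $\Lambda, \Lambda^-$ based at $0$: these pushforwards are $\mu_\Delta^o$ in each coordinate, with the disintegration dictated by the local counting numbers $[Y:Y']$ computed in Section~\ref{sec:SoB}. (4) Compute that same disintegration for $m_\op$ using the formula $dm_\op(C,C') = q^{-2\ell(\beta_o(C,C'))} d\mu_\Delta^o(C)\, d\mu_\Delta^o(C')$ from \cite[\S6.2]{BCL}, and check the two Radon--Nikodym factors coincide. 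This last check is essentially the observation that $\beta_o(C,C') = 0$ exactly when $o$ lies on the apartment spanned by $C,C'$, which is exactly the combinatorial condition that an isometry $\Sigma \to X$ realizing $(C,C')$ can be based at $o$, so that the "weight" $q^{-2\ell(\beta_o)}$ matches the ratio $[Y]/([\text{cone at }o])$ appearing in the prouniform normalization.

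Concretely I would argue as follows. Since $\Isom(\Sigma,X)$ is a projective limit of $\Isom(Y,X)$ over finite convex $Y \subset \Sigma$, it suffices to test the equality $\mu_\scrF = \zeta_\scrF$ against functions pulled back from finite pieces. For a finite convex $Y \subset \Sigma$ containing $0$, the pushforward of $\mu_\scrF$ to $\Isom(Y,X)$ is $\mu_Y$, and by Equation~\eqref{eq:mu} together with Lemma~\ref{lem:finiterestricted} it decomposes over $\Isom(\Lambda \cap Y, X) \times \Isom(\Lambda^- \cap Y, X)$ with the explicit uniform weights $[Y : (\Lambda\cup\Lambda^-)\cap Y]^{-1}$ on each fiber. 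On the other side, the pushforward of $m_\op \times \lambda$ to the same finite model is computed in \cite{BCL} (this is exactly the content leading to their Theorem~6.21), and gives the same fiberwise-uniform structure with weight $q^{-2\ell(\beta)}$ times the product measure. The identity then reduces to the numerical equality of these weights on each finite $Y$, which is a bookkeeping computation using the extension counts $q$, $q-v+1$, $q^2$, etc.\ from Lemmas~\ref{lem:extFC}--\ref{lem:extdegenerate}.

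The main obstacle I expect is step (4): matching the \emph{metric} normalization $q^{-2\ell(\beta_o)}$ coming from the Gromov-product description of $m_\op$ with the \emph{combinatorial} normalization $[Y]^{-1}$ coming from the prouniform construction. Both are "uniform on fibers of the restriction to a sector," so the content is entirely in the base measure on $\Delta_\op$, and one must show the single scalar density agrees. This is where one genuinely uses the structure of crazy diamonds: the relevant finite convex subsets $Y \subset \Sigma$ are exactly the non-degenerate crazy diamonds with $\pr(Y)$ a segment, their extension counts were computed in \S\ref{sec:SoB} to be powers of $q$, and those powers must be reconciled with the exponent $2\ell(h_\omega(x,y))$ in Proposition~\ref{prop:changebasepoint} and with $2\ell(\beta_o)$. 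Once the base-point dependence of both sides is seen to be governed by the same horofunction/length data, the two measures have the same class and the same $\Aut(X)$-invariant representative, hence coincide.
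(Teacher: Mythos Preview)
Your overall architecture is right and matches the paper's: disintegrate both measures over the base-point map $\scrF\to X^0$, $\beta\mapsto\beta(0)$, use $\Aut(X)$-invariance (and for $\zeta_\scrF$ also invariance under the $\Sigma$-translations) to reduce to a single fiber $\scrF^o$, and then identify $\scrF^o$ with the set $\Delta_\op^o$ of opposite pairs $(C,C')$ whose apartment contains $o$.

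Where you overcomplicate things is your step~(4) and the ``main obstacle'' paragraph. You do notice that $\beta_o(C,C')=0$ for $(C,C')\in\Delta_\op^o$, but you do not draw the consequence: on the fiber $\scrF^o$, the density $q^{-2\ell(\beta_o)}$ is identically~$1$, so $\zeta_\scrF$ restricted to $\scrF^o$ is \emph{exactly} the restriction of $\mu_\Delta^o\times\mu_\Delta^o$. There is therefore no ``matching of the metric normalization $q^{-2\ell(\beta_o)}$ with the combinatorial normalization $[Y]^{-1}$'' to perform, and the crazy-diamond extension counts from \S\ref{sec:SoB} are not used at all here. The paper's argument is instead the following one-line check: for $y$ in the interior of $\Lambda$, the cylinder $\scrF^{\phi^y}=\{\phi:\phi_{|\Conv(y,-y)}=\phi^y\}$ is sent by the identification $\Psi:\scrF^o\to\Delta_\op^o$ to the product $\Omega_o(\phi^y(y))\times\Omega_o(\phi^y(-y))$ (here one uses Theorem~\ref{thm:isomappart} to see the map is onto). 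Hence its $(\mu_\Delta^o\times\mu_\Delta^o)$-measure depends only on $\sigma(o,y)$, which is precisely the defining property of the restricted prouniform measure $\mu_\scrF^o$.

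So your proposal is not wrong, but the anticipated difficulty is illusory; once you restrict to the fiber over $o$ the Gromov-product weight vanishes and the comparison is immediate from the product structure of shadows, with no bookkeeping of extension counts required.
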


\begin{proof}
Write $\scrF$ as a disjoint union of the sets $\scrF^x$ for $x\in X$, where $\scrF^x$ is the set of embeddings $\phi:\Sigma\to X$ such that $\phi(0)=x$. All the sets $\scrF^x$ are compact subsets of $\scrF$, and by construction of $\mu_\scrF$ we have $\mu_\scrF(\scrF^x)=1$ for every $x$. Using the invariance of $\zeta_\scrF$ by translations we also see that these sets have all the same $\zeta_\scrF$ measure, which is therefore positive.

Therefore, it remains to see that $\mu_\scrF=\zeta_\scrF$ in restriction to $\scrF^x$. To understand these measures, first note that there is a homeomorphism $\Psi:\scrF^x\to\Delta^{x}_\op\subset \Delta^2$ from $\scrF^x$ to the set $\Delta^x_\op$ formed by pairs of  chambers at infinity $(C,C')$ with $C$ opposite $C'$ and $x$ in an apartment containing $C$ and $C'$. For $(C,C')\in \Delta^x_\op$ we have $\beta_x(C,C')=0$, so by construction, the pushforward of $\zeta_{\scrF}$ by $\Psi$ is the restriction of the measure $\mu^x_\Delta\times \mu^x_\Delta$ to $\scrF^x$.

Recall that $\Lambda\subset \Sigma$ is our choice of a sector based at $0$. For $y$ in the interior of  $\Lambda$, denote $\Sigma^y=\Conv(y,-y)$. Note that $\Sigma$ is the union of all the finite convex subsets $\Sigma^y$. For an embedding $\phi^y:\Sigma^y\to X$ such that $\phi^y(0)=x$, denote $\scrF^{\phi^y}=\Isom(\Sigma,X)^{\phi^y}$ the subset of $\scrF$ formed by all $\phi:\Sigma\to X$ such that $\phi_{|\Sigma^y}=\phi^y$. By construction, $\mu_\scrF(\scrF^{\phi^y})$ depends only on $y$ and not on the choice of $\phi^y$. Since $\Sigma$ is the union of all $\Sigma^y$, this characterizes $\mu_\scrF^x$, so that it suffices to prove that $(\zeta_\scrF)_{|\scrF^x}$ satisfies the same property.

Now note that  defining $\phi^y$ amounts to choosing $y_1:=\phi^y(y)$ and  $y_2:=\phi^y(-y)$. Hence $\Psi(\scrF^{\phi^y})$ is the set of pairs of chambers $(C,C')$ in $\Delta^{x}_\op$ such that $C\in \Omega_x(y_1)$ and $C'\in \Omega_x(y_2)$.  Conversely, if $C\in \Omega_x(y_1)$ and $C'\in \Omega_x(y_2)$, then using Theorem \ref{thm:isomappart} one can check that there exists an apartment containing $x,C$ and $C'$, hence $(C,C')\in \Delta_\op^x$. Therefore we have $\Psi(\scrF^{\phi^y})=\Omega_x(y_1)\times \Omega_x(y_2)$, hence $\Psi_*\zeta_\scrF(\scrF^{\phi^y})=\mu^x_\Delta(\Omega_x(y_1))\times\mu_\Delta^x(\Omega_x(y_2))$, which only depends on $\sigma(x,y)$. This concludes the proof.
\end{proof}

\begin{example}
Let $G=\PGL_3(K)$, where $K$ is a local field, and let $X$ be the associated Bruhat-Tits building. Then $G$ acts strongly transitively on $X$, so in particular acts transitively on the set $\scrF$. Hence the Cartan flow is just a $G$-orbit, so it is identified to $M\backslash G$ where $M$ is the stabilizer of an element of $\scrF$, that is, the pointwise stabilizer of some apartment. It is equipped with an action of $G$ (on the right), and an action of the normalizer of $M$ (on the left). As it is clear that the action of $G$ stabilizes the prouniform measure, it follows that (up to normalization) the prouniform measure is the Haar measure on $M\backslash G$.

More concretely, if $F$ is the apartment assoicated to the canonical basis of $K^3$, the pointwise stabilizer of $F$ is the group of diagonal matrices with diagonal entries of norm $\leq 1$. Then the group $A$ of diagonal matrices acts by translations on $F$, and the natural action of $A/M\simeq \ZZ^2$ on $M\backslash G$ is the action by precomposing by translations on $\scrF=\Isom(\Sigma,X)$.
\end{example}

%
%

\subsubsection{Martingales}\label{sec:martingales}

Our goal in this section is to reinterpret the Martingale Convergence Theorem (stated in Theorem \ref{thm:martingales}) and draw consequences in our particular setting.

Fix $o\in X$. For $v\in\Delta_+$, let $\mu_v^o\in \Prob(\Delta)$ \index{$\mu_v^o$\hfill prouniform measure on $\Res(v)$|bb} be the restricted prouniform measure on the subset of $\Isom(\Lambda,X)$ which sends the boundary wall of type $+$ of $\Lambda$ to the ray $[ov)$. Clearly $\mu_v^o$ is supported on $\Res(v)$. Furthermore by Lemma \ref{lem:desintegration_prouniform} the measure $\mu_v^o$ for $v\in\Delta_+$ are also obtained as a disintegration of $\mu_\Delta^o$ along a fiber of the map $\Delta\to\Delta_+$.

Let $\lambda \in \Lambda$. 
Consider the equivalence relation $\sim_\lambda$ on $\Ch(\Delta)$, defined as follows : $C\sim_\lambda C'$ if and only if $Q(o,C)\cap Q(o,C')$ contains a point in $V_\lambda(o)$. Let $\calB_\lambda$ be the $\sigma$-algebra consisting of Borel sets which are $\sim_\lambda$-invariant. Note that if $\lambda'\geq \lambda$ then $\sim_{\lambda'}$ is a refinement of $\sim_\lambda$, and therefore $\calB_{\lambda'} \supset  \calB_\lambda$. For $C\in \Delta$, let $\lambda(C)$ be the point in $V_\lambda(o)\cap Q(o,C)$, and write  $\Omega_\lambda(C)=\Omega_o(\lambda(C))$. If $\lambda$ is on the wall of type $+$ of $\Lambda$, then $\lambda(C)$ only depends on $v=\pr_+(C)$, and we can also write $\Omega_\lambda(C)=\Omega_\lambda(v)$. By definition we have, for $f\in L^\infty(\Delta)$, 

$$\EE(f\mid \calB_\lambda)(C)=\frac{1}{\mu_\Delta^o(\Omega_\lambda(C))}\int_{\Omega_\lambda(C)} f(x)d\mu_\Delta^o(x)$$

Let $\calB_n$ be the subalgebra $\calB_{\lambda_n}$ where $\lambda_n$ is the point on the wall of type $+$ of $\Lambda$ at distance $n$ from $0$.\index{$\lambda_n$ \hfill points on the wall of type $+$ of $\Lambda$|bb} It is clear that $\calB_{n+1}\supset \calB_n$. Let $\calB_\infty$ be the algebra generated by all the $\calB_n$. By construction it is also the algebra obtained as a pullback from the factor $\Delta\to\Delta_+$. Therefore, we get for every $C\in\Ch(\Delta)$, denoting $v=\pr_+(C)$, that for $f\in L^\infty(\Delta)$ 
$$\EE(f\mid \calB_{\infty})(C)= \int_{\Res(v)} f(\xi) d\mu_v^o(C)  $$

For $v\in \Delta_+$, denote $\lambda_n(v)$ the $n$-th vertex on the ray $[ov)$ and note that $\Omega_{\lambda_{n+1}}(v)\subset \Omega_{\lambda_n}(v)$. Let $E_n(v)=\Omega_{\lambda_n}(v)\setminus \Omega_{\lambda_{n+1}}(v)$.

More generally, for $m\geq 0$, let $W_m$ be the half-wall of $\Lambda$ parallel to the wall of type $+$ and starting at distance $m$ from $0$. Let $\lambda_{n,m}$ be the vertex of $W_m$ at distance $n$ from the origin of $W_m$.

Let $\calB_{m,\infty}$ be the $\sigma$-algebra generated by all the $\{\calB_{\lambda_{m,n}}\}_{n\in\NN}$.
We denote $\Omega_{m,\infty}(\xi)=\bigcap_{n\in\NN}\Omega_{\lambda_{m,n}}(\xi)$ and note that $\Omega_{m,\infty}(\xi)$ can also be seen as the set of all $\xi'$ such that $Q(o,\xi')$ contains the "strip" $\Conv([o,v),\{\lambda_{m,n}(\xi)\}_{n\in \NN})=\Conv([o,v),\lambda_{m,0}(\xi)) $ (where $v=\pr_+(\xi)$, see Figure \ref{fig:strip}). In particular $\Omega_{m,\infty}(\xi) \subset \Res(v)$ , and $\mu_v^o (\Omega_{m,\infty}(\xi))>0$.\index{$\Omega_{m,\infty}(\xi)$\hfill intersection of shadows |bb}

\begin{figure}[h]

\includegraphics[width=12cm]{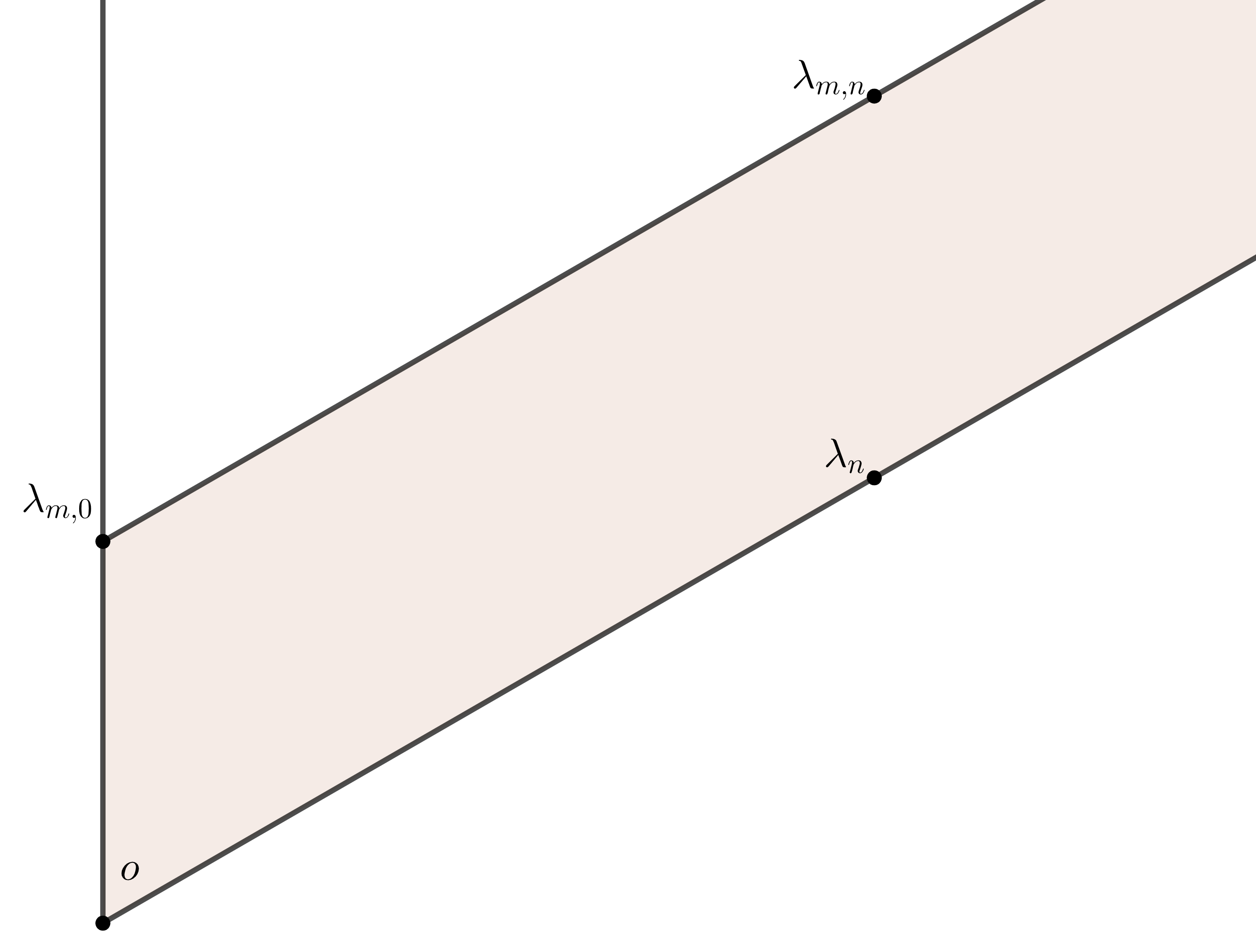}

\caption{The convex hull of $[0,v)$ and $\lambda_{m,n}$}
\label{fig:strip}
\end{figure}

In that setting, Theorem \ref{thm:martingales} reads as follows:

\begin{lemma}\label{lem:martingale}
    Let $f\in L^\infty(\Delta)$. For almost every $\xi\in\Delta$ we have
    $$\lim_{n\to +\infty}\frac{1}{\mu_\Delta^o(\Omega_{\lambda_{m,n}}(\xi))} \int_{\Omega_{\lambda_{m,n}}(\xi)} f(\eta)d\mu_\Delta^o(\eta)=  \frac{1}{\mu_v^o(\Omega_{m,\infty)}(\xi)}\int_{\Omega_{m,\infty}(\xi)} f(\eta) d\mu_v ^o(\eta)$$
\end{lemma}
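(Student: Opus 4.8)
The plan is to recognize Lemma~\ref{lem:martingale} as a direct application of the Martingale Convergence Theorem (Theorem~\ref{thm:martingales}) once the objects appearing in the statement are matched with the abstract setup of \S\ref{sec:measure}. First I would fix $m\geq 0$ and set $Y=\Lambda$ (which is ind-$P_\Upsilon$), $Y'=W_m\subset\Lambda$ the half-wall parallel to the wall of type $+$ starting at distance $m$ from $0$, and $Y_n=\Conv(\lambda_{m,0},\dots,\lambda_{m,n}, (\text{the wall of type }+\text{ up to level }n))$ — more precisely any increasing exhaustion of $\Lambda$ by finite convex $P_\Upsilon$-subcomplexes whose restriction $\sigma$-algebras generate exactly the $\sigma$-algebras $\mathcal F_{\lambda_{m,n}}$. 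The key translation, already carried out in \S\ref{sec:martingales} for $W_0$, is that under the identification $\Isom(\Lambda,X)^o\simeq\Delta$, the conditional expectation $\EE(f\mid\mathcal F_{\lambda_{m,n}})$ is exactly the fiberwise average $(r_n)_*f$ of $f$ over the shadow $\Omega_{\lambda_{m,n}}(\xi)$, and that $\EE(f\mid\mathcal F_\infty)$ is the average of $f$ over $\Omega_{m,\infty}(\xi)\subset\Res(v)$ against $\mu_v^o$. This is because $\mu_\Delta^o$ disintegrates along the map $\Delta\to\Delta_+$ with fibers the measures $\mu_v^o$ (Lemma~\ref{lem:desintegration_prouniform}), and the $\sigma$-algebra generated by the $\mathcal F_{\lambda_{m,n}}$, $n\in\NN$, is the pullback of the $\Isom(W_m\cup\partial_+\Lambda,X)$-algebra, which coincides with $\scrF_\infty$ in the notation of Theorem~\ref{thm:martingales}.

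The main steps, in order, are: (1) spell out the exhaustion $Y=\bigcup_n Y_n$ by finite convex $P_\Upsilon$-subcomplexes and check $r_n^{-1}(r_n(\xi))=\Omega_{\lambda_{m,n}}(\xi)$, so that $\mathcal F_{\lambda_{m,n}}=\scrF_n$; (2) identify the limit $\sigma$-algebra $\scrF_\infty$ generated by the $\scrF_n$ with the pullback of $\Isom(W_m\cup\partial_+\Lambda,X)$, whose atoms are the $\Omega_{m,\infty}(\xi)$, and note $\mu_v^o(\Omega_{m,\infty}(\xi))>0$ so the right-hand side makes sense; (3) record the identities $\EE(f\mid\scrF_n)(\xi)=\frac{1}{\mu_\Delta^o(\Omega_{\lambda_{m,n}}(\xi))}\int_{\Omega_{\lambda_{m,n}}(\xi)}f\,d\mu_\Delta^o$ and $\EE(f\mid\scrF_\infty)(\xi)=\frac{1}{\mu_v^o(\Omega_{m,\infty}(\xi))}\int_{\Omega_{m,\infty}(\xi)}f\,d\mu_v^o$, using the disintegration of $\mu_\Delta^o$ over $\Delta_+$; (4) invoke Theorem~\ref{thm:martingales} to conclude the almost-everywhere convergence. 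Since $f\in L^\infty(\Delta)\subset L^1(\Delta,\mu_\Delta^o)$ (as $\mu_\Delta^o$ is a probability measure), the hypothesis of Theorem~\ref{thm:martingales} is satisfied.

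The only genuinely non-routine point — the one I would spend the most care on — is step (2): verifying that the increasing union of the $\sigma$-algebras $\mathcal F_{\lambda_{m,n}}$ is \emph{exactly} the pullback of the factor $\Isom(W_m\cup\partial_+\Lambda,X)$ (equivalently, that its atoms are precisely the sets $\Omega_{m,\infty}(\xi)$, the fibers of $Q(o,\cdot)\mapsto$ the strip $\Conv([o,v),\lambda_{m,0}(\xi))$). This requires knowing that a sector $Q(o,\xi)$ contains the half-wall $\{\lambda_{m,n}(\xi)\}_{n\in\NN}$ if and only if it contains each finite initial segment, which is clear, but also that the exhaustion $\bigcup_n Y_n$ of $\Lambda$ chosen in step (1) actually has $\bigcap_n Y_n^{\,\mathrm{res}} $-algebra equal to the $W_m\cup\partial_+\Lambda$-algebra rather than something larger — i.e.\ that adding levels of $W_m$ together with the growing boundary wall eventually ``sees'' every simplex of $\Lambda$ except those in $W_m\cup\partial_+\Lambda$ only in the limit. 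Concretely: $\Lambda=\bigcup_n Y_n$ where $Y_n$ is the finite convex hull of $W_m$ truncated at level $n$ and the type-$+$ wall truncated at level $n$ (a crazy diamond, hence $P_\Upsilon$), and $\bigcap_n$ of the complements of the $Y_n$ inside $\Lambda$ is $W_m\cup\partial_+\Lambda$ as a convex ind-$P_\Upsilon$ subcomplex; this is an elementary but slightly fiddly combinatorial check on the geometry of the sector $\Lambda$. Once this is in place, everything else is bookkeeping already set up in \S\ref{sec:measure} and the start of \S\ref{sec:martingales}.
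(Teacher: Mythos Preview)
Your approach is correct and is exactly what the paper does: the paper offers no separate proof of Lemma~\ref{lem:martingale} at all, simply introducing it with ``In that setting, Theorem~\ref{thm:martingales} reads as follows'', so the entire content is the Martingale Convergence Theorem together with the identification of the two conditional expectations that you spell out.

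One small simplification: you need not force the situation into the literal $Y,\,Y',\,(Y_k)$ framework of Theorem~\ref{thm:martingales}. As you yourself notice, you cannot simultaneously have $\bigcup_n Y_n=\Lambda$ and $r_n^{-1}(r_n(\xi))=\Omega_{\lambda_{m,n}}(\xi)$, since the latter forces $Y_n=\Conv(0,\lambda_{m,n})$ and then $\bigcup_n Y_n$ is only the strip, not all of $\Lambda$. The clean way (implicit in the paper) is to work directly on the probability space $(\Delta,\mu_\Delta^o)$ with the increasing filtration $(\mathcal F_{\lambda_{m,n}})_{n}$: the atoms of $\mathcal F_{\lambda_{m,n}}$ are the $\Omega_{\lambda_{m,n}}(\xi)$, so the left-hand side is $\EE(f\mid\mathcal F_{\lambda_{m,n}})(\xi)$, and by definition $\scrF_{m,\infty}=\sigma\bigl(\bigcup_n\mathcal F_{\lambda_{m,n}}\bigr)$ has atoms $\Omega_{m,\infty}(\xi)=\bigcap_n\Omega_{\lambda_{m,n}}(\xi)$. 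The identification of $\EE(f\mid\scrF_{m,\infty})$ with the $\mu_v^o$-average on the right-hand side is exactly the two-step prouniform disintegration (Lemma~\ref{lem:desintegration_prouniform}) you invoke: first disintegrate $\mu_\Delta^o$ over $\Delta\to\Delta_+$ to get $\mu_v^o$, then observe that conditioning $\mu_v^o$ on the positive-measure event $\Omega_{m,\infty}(\xi)$ is the normalized restriction. Your step~(2) worry is thus already handled by the definition of $\scrF_{m,\infty}$ and requires no combinatorial check on $\Lambda$.
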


For $\xi\in \Delta$, let $E_{m,n}(\xi)=\Omega_{\lambda_{m,n}}(\xi))\setminus \Omega_{\lambda_{m,n+1}}(\xi))$. \index{$E_n(v)$,$E_{m,n}(v)$\hfill difference of shadows |bb}
Note that 
\[ \mu_\Delta^o(\Omega_{\lambda_{m,n+1}}(\xi))=\frac{1}{q^2}\mu_\Delta^o(\Omega_{\lambda_{m,n}}(\xi))). \]
Indeed, if $\eta$ is such that $\lambda_{m,n}(\xi)\in Q(o,\eta)$ then there are $q^2$ possibilities for the choice of $\lambda_{m,n+1}(\eta)$.

\begin{lemma}\label{lem:MartingaleEnm}

    Let $f\in L^\infty(\Delta)$. For every $m\in\NN$ and $\mu_\Delta^o$-almost every $\xi$ we have
   
    $$\lim_{n\to +\infty} \frac{1}{\mu_\Delta^o(E_{m,n}(\xi))}\int_{E_{m,n}(\xi)} f(\eta)d\mu_\Delta^o(\eta) 
    = \frac{1}{\mu_u^o(\Omega_{m,\infty}(\xi))}\int_{\Omega_{m,\infty}(\xi)} f(\eta) d\mu_v^o(\eta)$$

    In particular, 
    for $\mu_+^o$-almost every $v$ we have

$$\lim_{n\to +\infty} \frac{1}{\mu_\Delta^o(E_n(v))}\int_{E_n(v)} f(C)d\mu_\Delta^o(C)=\int_{\Res(v)} f(C) d\mu^o_{v}(C)$$
\end{lemma}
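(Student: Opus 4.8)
The plan is to deduce the statement about the "annuli" $E_{m,n}(\xi)$ from the martingale statement of Lemma \ref{lem:martingale} by a telescoping/subtraction argument. First I would fix $m$ and abbreviate $\Omega_n(\xi)=\Omega_{\lambda_{m,n}}(\xi)$, $E_n(\xi)=E_{m,n}(\xi)=\Omega_n(\xi)\setminus\Omega_{n+1}(\xi)$, and $a_n(\xi)=\mu_\Delta^o(\Omega_n(\xi))$. The key structural fact, already noted just before the lemma, is that $a_{n+1}(\xi)=\tfrac1{q^2}a_n(\xi)$, hence $\mu_\Delta^o(E_n(\xi))=a_n(\xi)-a_{n+1}(\xi)=(1-\tfrac1{q^2})a_n(\xi)$. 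Writing $S_n(\xi)=\int_{\Omega_n(\xi)}f\,d\mu_\Delta^o$, we have $\int_{E_n(\xi)}f\,d\mu_\Delta^o=S_n(\xi)-S_{n+1}(\xi)$, so the normalized average over $E_n(\xi)$ equals
\[
\frac{S_n(\xi)-S_{n+1}(\xi)}{a_n(\xi)-a_{n+1}(\xi)}
=\frac{1}{1-q^{-2}}\left(\frac{S_n(\xi)}{a_n(\xi)}-q^{-2}\,\frac{S_{n+1}(\xi)}{a_{n+1}(\xi)}\cdot\frac{a_{n+1}(\xi)}{a_n(\xi)}\cdot q^{2}\right),
\]
and since $a_{n+1}(\xi)/a_n(\xi)=q^{-2}$ this simplifies to $\frac{1}{1-q^{-2}}\bigl(M_n(\xi)-q^{-2}M_{n+1}(\xi)\bigr)$ where $M_n(\xi)=S_n(\xi)/a_n(\xi)$ is exactly the martingale average $\EE(f\mid\mathcal F_{\lambda_{m,n}})(\xi)$ appearing in Lemma \ref{lem:martingale}.

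Next, Lemma \ref{lem:martingale} gives that for $\mu_\Delta^o$-a.e.\ $\xi$, $M_n(\xi)\to L(\xi):=\frac{1}{\mu_v^o(\Omega_{m,\infty}(\xi))}\int_{\Omega_{m,\infty}(\xi)}f\,d\mu_v^o$ (with $v=\pr_+(\xi)$), and the same limit holds for $M_{n+1}(\xi)$. Therefore, for a.e.\ $\xi$,
\[
\lim_{n\to\infty}\frac{1}{\mu_\Delta^o(E_{m,n}(\xi))}\int_{E_{m,n}(\xi)}f\,d\mu_\Delta^o
=\frac{1}{1-q^{-2}}\bigl(L(\xi)-q^{-2}L(\xi)\bigr)=L(\xi),
\]
which is precisely the first displayed equation of the lemma. (I note in passing that the $\mu_u^o$ in the statement should read $\mu_v^o$.) For the "in particular" statement one takes $m=0$: then $\lambda_{0,n}=\lambda_n$ lies on the wall of type $+$ of $\Lambda$, so $\lambda_n(\eta)$ depends only on $v=\pr_+(\eta)$, the strip $\Conv([ov),\lambda_{0,0}(\xi))=[ov)$ degenerates to the ray itself, and $\Omega_{0,\infty}(\xi)=\bigcap_n\Omega_{\lambda_n}(v)=\Res(v)$. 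Hence $L(\xi)=\frac{1}{\mu_v^o(\Res(v))}\int_{\Res(v)}f\,d\mu_v^o=\int_{\Res(v)}f\,d\mu_v^o$ since $\mu_v^o$ is a probability measure on $\Res(v)$. Finally, since the a.e.\ convergence in $\xi$ is with respect to $\mu_\Delta^o$ and the limit factors through $\pr_+$, pushing forward along $\pr_+$ (whose pushforward of $\mu_\Delta^o$ is $\mu_+^o$) gives the conclusion for $\mu_+^o$-a.e.\ $v$.

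The only genuinely delicate point is bookkeeping: one must make sure that the martingale statement of Lemma \ref{lem:martingale} applies simultaneously to the index $n$ and $n+1$ (trivial, a subsequence of a convergent sequence) and that the identity $a_{n+1}(\xi)=q^{-2}a_n(\xi)$ holds for the relevant $\xi$ (it holds whenever $\Omega_n(\xi)$ is nonempty, i.e.\ always, by the counting argument already given in the text: there are $q^2$ choices of $\lambda_{m,n+1}(\eta)$ extending a given $\lambda_{m,n}(\eta)\in Q(o,\eta)$). Everything else is the elementary algebraic manipulation of telescoping sums indicated above, so I do not anticipate any real obstacle; the content is entirely carried by Lemma \ref{lem:martingale}, itself an instance of Theorem \ref{thm:martingales}.
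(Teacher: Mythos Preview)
Your proof is correct and follows essentially the same approach as the paper: both use the identity $\mu_\Delta^o(\Omega_{m,n+1}(\xi))=q^{-2}\mu_\Delta^o(\Omega_{m,n}(\xi))$ to rewrite the average over $E_{m,n}(\xi)$ as the linear combination $\frac{q^2}{q^2-1}M_n(\xi)-\frac{1}{q^2-1}M_{n+1}(\xi)$ of martingale averages, and then apply Lemma~\ref{lem:martingale} to both terms. Your observation that $\mu_u^o$ in the statement should be $\mu_v^o$ is also correct.
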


\begin{proof}
The second statement follows from the first by taking $m=0$ (using also the fact that $\pr:\Delta\to\Delta_+$ is measure-preserving).

So let us prove the first statement.
Fix $\xi$ such that Lemma \ref{lem:martingale} holds.
Write $\Omega_{m,n}=\Omega_{\lambda_{m,n}}(\xi)$ and $E_{m,n}=E_{m,n}(\xi)$. 
   By the above estimate we have $\mu(E_{m,n})=(1-\frac{1}{q^2})\mu(\Omega_{m,n})$.

    It follows that
     \begin{align*}
         \frac{1}{\mu_\Delta^o(E_{m,n})}\int_{E_{m,n}} f(C)d\mu_\Delta^o(C) &= \frac{q^2}{q^2-1}\cdot  \frac{1}{\mu_\Delta^o (\Omega_{m,n})} 
         \int_{E_n} f(C)d\mu_\Delta^o(C)\\
         &=\frac{q^2}{q^2-1}\cdot  \frac{1}{\mu_\Delta^o (\Omega_{m,n})} \left(
         \int_{\Omega_{m,n}} f(C)d\mu_\Delta^o(C)-\int_{\Omega_{m,n+1}} f(C)d\mu_\Delta^o(C)\right)\\
         &= 
             \frac{q^2}{q^2-1}\cdot \frac{1}{\mu_\Delta^o (\Omega_{m,n})}\int_{\Omega_{m,n}} f(C)d\mu_\Delta^o(C) 
             \\  & \qquad \qquad- \frac{1}{q^2-1}\cdot \frac{1}{\mu_{\Delta}^o(\Omega_{m,n+1})}\int_{\Omega_{m,n+1}} f(C)d\mu_\Delta^o(C)
     \end{align*}
     which converges by assumption to $$(\frac{q^2}{q^2-1}-\frac{1}{q^2-1})\frac 1{\mu_v^o(\Omega_{m,\infty}(\xi))}\int_{\Omega_{m,\infty}(\xi)} f(\eta) d\mu_u ^o(\eta)=\frac 1{\mu_u^o(\Omega_{m,\infty}(\xi))}\int_{\Omega_{m,\infty}(\xi)} f(\eta) d\mu_v ^o(\eta).$$
\end{proof}

Finally, let us record the following remark.

\begin{lemma}\label{lem:Enmintersection}
    Let $\xi\in \Delta$ and $v=\pr_+(\xi)$. We have $\Omega_{m,n}(\xi)\cap E_n(v)= E_{m,n}(\xi)$.
\end{lemma}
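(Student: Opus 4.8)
The statement $\Omega_{m,n}(\xi)\cap E_n(v)= E_{m,n}(\xi)$ is a purely combinatorial identity about shadows, and the natural way to prove it is to unwind all the definitions and compare which sectors $Q(o,\eta)$ satisfy the defining incidence conditions on both sides. Recall $v=\pr_+(\xi)$, that $\lambda_n = \lambda_n(v)$ is the $n$-th vertex of the ray $[ov)$ (the wall of type $+$ of $\Lambda$), that $\lambda_{m,n} = \lambda_{m,n}(\xi)$ is the $n$-th vertex of the half-wall $W_m$ associated to $\xi$, that $E_n(v) = \Omega_{\lambda_n}(v)\setminus\Omega_{\lambda_{n+1}}(v)$ and $E_{m,n}(\xi) = \Omega_{\lambda_{m,n}}(\xi)\setminus\Omega_{\lambda_{m,n+1}}(\xi)$. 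So an element $\eta\in\Delta$ lies in $\Omega_{m,n}(\xi)\cap E_n(v)$ iff $\lambda_{m,n}(\xi)\in Q(o,\eta)$, $\lambda_n(v)\in Q(o,\eta)$ and $\lambda_{n+1}(v)\notin Q(o,\eta)$; it lies in $E_{m,n}(\xi)$ iff $\lambda_{m,n}(\xi)\in Q(o,\eta)$ and $\lambda_{m,n+1}(\xi)\notin Q(o,\eta)$.

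The key geometric input is the following observation about the strip $\Conv([ov),\lambda_{m,0}(\xi))$ that already appears in the text preceding Lemma~\ref{lem:martingale}: the vertex $\lambda_{m,n}(\xi)$ is precisely the vertex of $W_m$ ``sitting above'' $\lambda_n(v)$, i.e.\ $\lambda_{m,n}(\xi)$ and $\lambda_n(v)$ span (together with the relevant vertices of $\Lambda$) a convex subcomplex of the sector $Q(o,\xi)$ whose projection to the axis direction is controlled by $n$, and $\Conv(\{o\}\cup\{\lambda_{m,n}(\xi)\}) \supseteq [o,\lambda_n(v)]$. Concretely, in the model sector $\Lambda$, if one has a sector $Q$ based at $o$ containing $\lambda_{m,n}(\xi)$, then $Q$ automatically contains $\lambda_n(v)$ (because the convex hull of $o$ and $\lambda_{m,n}(\xi)$ contains the segment $[o,\lambda_n(v)]$, as $W_m$ is parallel to the $+$-wall at combinatorial ``height'' $n$ along the wall direction). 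This gives the inclusion $\Omega_{\lambda_{m,n}}(\xi)\subseteq \Omega_{\lambda_n}(v)$, and more: a sector containing $\lambda_{m,n}(\xi)$ fails to contain $\lambda_{m,n+1}(\xi)$ iff it fails to contain $\lambda_{n+1}(v)$. The first step is thus to establish carefully, using convexity of sectors and Theorem~\ref{thm:isomappart} (or just working inside the apartment $Q(o,\xi)$), the two set-theoretic facts: (i) $\Omega_{\lambda_{m,n}}(\xi)\subseteq \Omega_{\lambda_n}(v)$, and (ii) for $\eta$ with $\lambda_{m,n}(\xi)\in Q(o,\eta)$, one has $\lambda_{m,n+1}(\xi)\in Q(o,\eta) \iff \lambda_{n+1}(v)\in Q(o,\eta)$.

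Granting (i) and (ii), the identity follows by a short Boolean manipulation: for $\eta\in\Omega_{m,n}(\xi)$ we have $\lambda_n(v)\in Q(o,\eta)$ automatically by (i), so the extra condition ``$\eta\in E_n(v)$'' reduces to ``$\lambda_{n+1}(v)\notin Q(o,\eta)$'', which by (ii) is equivalent to ``$\lambda_{m,n+1}(\xi)\notin Q(o,\eta)$'', i.e.\ to ``$\eta\in E_{m,n}(\xi)$''; conversely any $\eta\in E_{m,n}(\xi)$ satisfies $\lambda_{m,n}(\xi)\in Q(o,\eta)$ hence $\eta\in\Omega_{m,n}(\xi)$ and, by (i)–(ii) again, $\eta\in E_n(v)$. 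I would write this out as a chain of ``iff''s on the defining conditions rather than manipulating the sets abstractly, since it makes the role of (i) transparent.

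The main obstacle is really step (i)–(ii): one must be precise about what ``the vertex $\lambda_{m,n}(\xi)$'' is and why containing it in a sector forces containment of $\lambda_n(v)$ and controls containment of the next vertex. The cleanest route is to work entirely inside the fixed sector $Q(o,\xi)$, which is isometric to $\Lambda$: identify $\lambda_n(v)$ with the point at distance $n$ along the $+$-wall of $\Lambda$ and $\lambda_{m,n}(\xi)$ with the point of the parallel half-wall $W_m$ at combinatorial coordinate $n$; then $[o,\lambda_n(v)]\subset\Conv(o,\lambda_{m,n}(\xi))$ is an explicit fact in the tessellated plane, and since any sector $Q(o,\eta)$ containing $\lambda_{m,n}(\xi)$ is convex and contains $o$, it contains this convex hull, hence $\lambda_n(v)$; the ``next vertex'' statement (ii) is the analogous planar fact that adding $\lambda_{m,n+1}(\xi)$ to $\Conv(o,\lambda_{m,n}(\xi))$ is ``the same move'' as adding $\lambda_{n+1}(v)$, in the sense that the minimal sector based at $o$ through $\lambda_{m,n}(\xi)$ and through $\lambda_{n+1}(v)$ also contains $\lambda_{m,n+1}(\xi)$ and vice versa. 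Once this planar picture is nailed down the rest is immediate, so I would allocate essentially all the writing to a clean statement and proof of (i) and (ii), probably with reference to Figure~\ref{fig:strip}.
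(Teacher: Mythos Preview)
Your approach is essentially the same as the paper's. The paper argues the two inclusions directly: for $\Omega_{m,n}(\xi)\cap E_n(v)\subset E_{m,n}(\xi)$ it uses that $\lambda_{n+1}(v)\in\Conv(o,\lambda_{m,n+1}(\xi))$ (so $\Omega_{m,n+1}(\xi)\subset\Omega_{n+1}(v)$, which is the $\Rightarrow$ half of your (ii)); for the reverse inclusion it uses that $\lambda_n(v)\in\Conv(o,\lambda_{m,n}(\xi))$ (your (i)) together with $\lambda_{m,n+1}(\xi)\in\Conv(\lambda_{n+1}(v),\lambda_{m,n}(\xi))$ (the $\Leftarrow$ half of your (ii)). Your organisation into the two sub-claims (i) and (ii) followed by a Boolean manipulation is just a repackaging of exactly these convex-hull containments in $Q(o,\xi)\simeq\Lambda$.
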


\begin{proof}
    If $x\in \Omega_{m,n+1}(\xi)$ then $\Conv(o,\lambda_{m,n+1}(\xi))\subset Q(o,x)$ and therefore $\lambda_{n+1}(v)\in Q(o,x)$, so that $x\not \in E_n$. Hence we have the inclusion $\Omega_{m,n}(\xi)\cap E_n(v)\subset E_{m,n}(\xi)$.

    Conversely, if $x\in E_{m,n}(\xi)$ then $x\in \Omega_{m,n}(\xi)\supset \Omega_n(u)$ by definition, and as $\lambda_{m,n+1}(\xi)\in \Conv(\lambda_{n+1}(v),\lambda_{m,n}(\xi))$ we have $x\not\in \Omega_{n+1}(v)$. 
\end{proof}

\subsubsection{Singular flow}

Recall our definition of the model wall-tree $\Upsilon$, which is isometric to $T\times\RR$ (where $T$ is a $(q+1)$-regular tree).
We also define the space of \emph{marked wall trees} as $\tilde \scrW = \Isom(\Upsilon,X)$. Note that $\Aut(\Upsilon)$ acts on $\tilde\scrW$ (by precomposition), and $\Aut(\Upsilon)$ contains the subgroup $S_\Upsilon$ isomorphic to $\ZZ$ (acting by translation on the factor $\RR$ of the decomposition $T\times \RR$). Furthermore, we have a canonical projection $p_T:\Aut(\Upsilon)\to \Aut(T)$, which is surjective. An element of $\tilde\scrW$ has two \emph{endpoints} in $\Delta_{\pmop}$, which we define as the image of the two endpoints of a line in the $\ZZ$ factor of $\Upsilon$.

 Unfortunately, it turns out that the prouniform measure on this space is not adapted to what we can do ; in fact we need to consider a smaller subset. 
The \emph{singular flow} $\scrW$ is a subset of $\tilde \scrW$, which is defined as one (arbitrary) equivalence class of $\tilde \scrW$, for an equivalence relation defined in \cite[Definition 5.17]{BCL}. Informally, one can think of $\scrW$ as the closure of some $\Gamma\times p_T^{-1}(\Pi)$-orbit. 
(Recall that $\Pi$ is the closure of the projectivity group, as indicated in the list of notations.) 

More precisely, identifying $\Upsilon$ with $T\times \RR$, we say that $y,y'\in \Isom(\Upsilon,X)$ are $\sim_+$-equivalent if there is $T>0$ such that for $t>T$, for every $x\in T$ we have $y(x,t)=y'(x,t)$. Similarly we can define the $\sim_-$ equivalence. We define the final equivalence relation $\simeq$ on $\tilde \scrW$ as the equivalence relation generated by the two equivalence relations $\sim_-$, $\sim_+$, and the $S_\Upsilon\times \Gamma$-orbit relation.

This construction is easily seen to be related to the definition of perspectivities. More precisely, if $y\in \tilde W$, then $y$ has two endpoints, let us call them $(u,v)\in\Delta_{\pmop}$. If $u'\in \Delta_-$ is opposite $v$ then there is a unique $y'$ such that $y\sim_+ y'$ and $y'$ has endpoints $u'$ and $v$, which can be seen as the perspectivity $[u,v,u']$ applied to $y$. 
Hence  if $y,y'\in \tilde\scrW$ are equivalent by a sequence of consecutive $\sim_+$ and $\sim_-$ relations, that is, $y\sim_+ y_1\sim_- y_2\dots \sim_+y'$ for some $y_1,y_2,\dots$, then they differ by some sequence of perspectivities. In particular, if $y$ and $y'$ furthermore have the same endpoints, then they differ by an element of the projectivity group.

Now we fix an arbitrary $y_0\in\tilde \scrW$, and declare the \emph{singular flow} to be the $\simeq$-equivalence class of $y_0$. (The rest of the argument does not depend on the choice of $y_0$, as another choice would give a dynamical system conjugated to that one).

\medskip 

In particular, by construction, the space $\scrW$ is
\begin{itemize}
\item Invariant by the action of $S_\Upsilon$,
\item Invariant by the action of $p_T^{-1}(\Pi)$, 
\item Invariant by $\Gamma$.
\end{itemize}

\medskip

Note that there is a natural map $\pi:\scrW\to \Delta_\pmop$, associating to an embedding the image of the two endpoints of the $\RR$ factor. By construction, for every $(u,v)\in \Delta_{\pmop}$ the fiber $\pi^{-1}(u,v)$ is in bijection with a closed group $\tilde \Pi<\Aut(T)$ which contains $\Pi$.

 Lemma 5.20 of  \cite{BCL} provides the following basic topological  properties of $\scrW$:

\begin{proposition}\label{prop:singular_fibers}
    The space $\scrW$ is closed in $\tilde{\scrW}$, and is $\tilde\Pi \times \Gamma$-invariant. The group $\tilde\Pi$ is a closed subgroup of $\Aut(\Upsilon)$. 

The action of $\Gamma$ on $\mathscr W$ is proper and cocompact and the $\tilde \Pi$ action is proper and free.
\end{proposition}

\medskip 

Using the Proposition, and the map $\pi:\scrW\to \Delta_\pmop$, we can identify $\scrW$ to $\Delta_\pmop\times\tilde \Pi$. Under the identification, we define a measure $\zeta_{\scrW}$ as the product measure of $m_\pm$ by the Haar measure on $\tilde \Pi$ (see \cite[Theorem 6.21]{BCL} for more details). 

In the end, the important features of $\zeta_{\scrW}$ that we will use are that:

\begin{itemize}
\item The map $\pi$ is measure-class preserving 
\item The disintegration of $\zeta$ along the map $\pi$ is the Haar measure on $\tilde \Pi$.
\end{itemize}\index{$(\scrW,\zeta_{\scrW}$) \hfill singular flow |bb}

 More generally, the measure $\zeta_\scrW$ is also such that all the natural maps given in the commutative diagram \cite[5.4 (3)]{BCL} are measure-class preserving.
 
\begin{remark}
Let us emphasize that the measure $\zeta_\scrW$ is \emph{not} in general the prouniform measure on $\Isom(\Upsilon,X)$ ! The reason we need a different measure is because the measure $\zeta_\scrW$ will be ergodic under the action of $\Gamma\times S_\Upsilon$, while the measure $\mu_\scrW$ may not. This will be crucial in our proof of Proposition \ref{prop:Poincare2}.
\end{remark}

\begin{example}
In the case when $X$ is the Bruhat-Tits building of $G=\PGL_3(K)$, for $K$ a local field, this seemingly weird equivalence relation is needed for the following reason: our goal is to retrieve only one $G$-orbit in the space $\Isom(\Upsilon,X)$. 
Thus in the classical case the singular flow is in fact contained in
$M\backslash G$, where $M$ is the pointwise stabilizer of a wall-tree. 

To see this, note that the $\sim_+$ and $\sim_-$ equivalence are generated by unipotent elements (in the sense that if $\phi,\phi'\in\Isom(\Upsilon,X)$ then there exists an unipotent $u\in G$ such that $u\phi=\phi'$), and the action by $S_\Upsilon$ is also given by a conjugate of some diagonal matrices in $G$. Hence the singular flow is contained in a $G$-orbit. 

Conversely, we can identify $\Delta_\pmop$ to $G/Q_1\times G/Q_2$, where $Q_1$ and $Q_2$ are two opposite maximal parabolic subgroups. This intersection is measurably isomorphic to $G/L$ where $L=Q_1\cap Q_2$ is a Levi subgroup. The map $\scrW\to G/Q_1\times G/Q_2$ is measure-class preserving (when $G/Q_1\times G/Q_2=\Delta_\pmop$ is equipped with the Haar measure class, which is equivalent to $m_\pm$). Furthermore, the fibers are identified with the orbit of a group which contains $p_T^{-1}(\Pi)$. Since this group is identified the Levi subgroup $L$, we see that the fibers are also  $L$-invariant. Hence the measure $\zeta_\scrW$ on $\scrW$ is equivalent to the Haar measure on $G/M$ (and similarly $\scrW$ is $G$-equivariantly homeomorphic to $G/M$).

In formulas, fixing a basis $(e_1,e_2,e_3)$ of $K^3$, and taking the wall-tree associated to the two points at infinity defined by the plane whose basis is $(e_1,e_2)$ and the line defined by $e_3$, we see that we have
$$M=\left\{
\begin{pmatrix}
    u& 0&0\\
    0&u&0\\
    0&0&v
\end{pmatrix}\mid u,v \textrm{ of norm $\leq 1$}\right\}$$ 

 The space $M\backslash G$ is
equipped with an action of $G$ (hence $\Gamma$) on the right, and on the left by the group (which normalizes $M$) of matrices of the form
$$\begin{pmatrix}
    A & 0\\0 & \lambda
\end{pmatrix}$$
for some $A\in\GL_2(K)$ and $\lambda\in K\setminus\{0\}$. This is consistent with the identification of the projectivity group with $\PGL_2(K)$ (see \cite{Knarr}).

\end{example}

\section{Approximation of the singular flow}

In this section, we use the singular flow $\scrW$ defined in Section \ref{sec:flows} to construct sequences of singular hyperbolic elements, with a prescribed transverse action. We then prove that this sequence of elements have a nice dynamical action on the boundary $\Delta$.

\subsection{The argument for the tree}

Before diving into the details of the proof, we will start by describing the full argument in an easier case, namely, the case of a tree. Many elements of the proof are similar to the case of the building, so we will not give full details,
as our main purpose here is to illustrate the ideas. Later, when we will deal with the building case, we will be more cautious with the details.

Let $T$ be a locally finite regular tree and $\partial T$ its boundary. Let $\Gamma$ be a group acting properly and cocompactly on $T$. We endow $\partial T$ with the restricted prouniform measure $\mu_{\partial T}^o$ on $\Isom([0,+\infty),T)$ (where $o\in T$ is an arbitrary origin).

The main goal in this section is the proof of the following.

\begin{proposition}\label{prop:tree}
    For every $f\in L^\infty(\partial T,\mu_{\partial T}^o)$, and almost every $\xi\in \partial T$, there exists a sequence $(\gamma_n)\in \Gamma^\NN$ such that $\gamma_n\cdot f$ converges to (the constant function) $f(\xi)$ in weak-* topology.
\end{proposition}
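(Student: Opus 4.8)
The plan is to exploit the martingale/density machinery from \S\ref{sec:measure} together with the dynamics of $\Gamma$ on the tree $T$. Fix $f\in L^\infty(\partial T,\mu_{\partial T}^o)$ and think of $\partial T$ as $\Isom([0,+\infty),T)^o$. The first step is to invoke the Martingale Convergence Theorem (Theorem \ref{thm:martingales}), or rather its consequence Lemma \ref{lem:densityL1_gen}, in the following guise: for $\mu_{\partial T}^o$-almost every $\xi$, the averages of $f$ over the shadows $\Omega_n(\xi)$ (the set of boundary points whose geodesic from $o$ agrees with $\xi$ for the first $n$ steps) converge to $f(\xi)$. So there is a full-measure set of ``density points'' of $f$, and we fix such a $\xi$.

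The second step is to produce the elements $\gamma_n\in\Gamma$. Because $\Gamma$ acts cocompactly on $T$, there are only finitely many $\Gamma$-orbits of edges, hence finitely many ``types'' of vertices along the ray $[o\xi)$. Let $x_n$ be the $n$-th vertex on $[o\xi)$. By cocompactness and properness one can find, for each $n$, an element $g_n\in\Gamma$ with $g_n\cdot x_n$ staying in a fixed compact (finite) set of vertices, and moreover — after passing to a subsequence using finiteness of orbit-types — arrange that $g_n$ maps the ``incoming'' direction at $x_n$ (the edge towards $o$) to a fixed direction and the ``local picture'' at $x_n$ to a fixed local picture. The key geometric point is then that $g_n$ carries the shadow $\Omega_n(\xi)\subset\partial T$ \emph{onto} a fixed shadow $\Omega_1(\eta)$ (a neighborhood of some fixed boundary point $\eta$ at $o$), because an isometry of $T$ takes shadows to shadows and the combinatorial type has been pinned down. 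Here one uses Proposition \ref{prop:changebasepoint}-type computations (or rather the fact that the prouniform measure is $\Aut(T)$-quasi-invariant with controlled Radon--Nikodym derivative) to see that $(g_n)_*\mu_{\partial T}^o$ restricted to $\Omega_1(\eta)$ is, up to a bounded density, the normalized restriction of $\mu_{\partial T}^o$ there.

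The third step combines these: for a test function $\psi\in L^1(\partial T)$ (or $C(\partial T)$),
$$\langle \gamma_n\cdot f,\psi\rangle=\int f(\gamma_n^{-1}z)\,\psi(z)\,d\mu_{\partial T}^o(z)=\int_{\Omega_n(\xi)} f(w)\,\psi(g_n w)\,\frac{d(g_n^{-1})_*\mu}{d\mu}(w)\,d\mu(w)+(\text{error off }\Omega_1(\eta)).$$
One chooses $\gamma_n=g_n^{-1}$ (or $g_n$, depending on which side one wants the shadow to shrink on) so that the mass concentrates on $\Omega_n(\xi)$, whose $\mu$-measure goes to $0$; on this shrinking set $f$ is close to $f(\xi)$ by the density step, while $\psi(g_n\cdot)$ and the Radon--Nikodym factor, after renormalizing by $\mu(\Omega_n(\xi))$, average against a fixed probability measure. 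A short computation then gives $\langle\gamma_n\cdot f,\psi\rangle\to f(\xi)\int\psi\,d\nu$ for a fixed probability measure $\nu$; a further diagonal adjustment (composing with a fixed finite set of group elements, or translating $\eta$) upgrades this to $\langle\gamma_n\cdot f,\psi\rangle\to f(\xi)\int\psi\,d\mu_{\partial T}^o$, i.e. weak-$*$ convergence to the constant $f(\xi)$. By separability of $L^1$ it suffices to test against countably many $\psi$, so the exceptional null set does not depend on $\psi$.

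The main obstacle, I expect, is the second step: extracting group elements $g_n$ that not only move $x_n$ into a compact set but genuinely \emph{align} the local combinatorial data so that shadows are mapped exactly onto a \emph{fixed} shadow (rather than merely to a shadow of bounded but varying shape). This requires a careful pigeonhole on the finitely many local configurations along the ray and, crucially, that along $\mu_{\partial T}^o$-a.e.\ ray each configuration recurs infinitely often (an ergodicity/recurrence input for the geodesic flow, which is where the prouniform = Bowen--Margulis measure identification and its ergodicity under $\Gamma$ enter). Controlling the Radon--Nikodym derivatives uniformly along the subsequence is a secondary technical point, handled by the explicit formula of Proposition \ref{prop:changebasepoint}.
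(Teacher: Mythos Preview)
Your approach correctly isolates the two main ingredients --- Lebesgue/martingale density points and group elements carrying small shadows to a fixed region --- but the way you combine them leaves a genuine gap. Your elements $g_n$ map $\Omega_n(\xi)$ bijectively onto a \emph{fixed proper} subset $U\subset\partial T$ (the branch at $p$ away from $q$), and the change-of-variables computation does give $\int_U(\gamma_n f-f(\xi))\,\psi\,d\mu_{\partial T}^o\to 0$. The difficulty is the complement: on $U^c$, which has fixed positive $\mu_{\partial T}^o$-measure, $g_n^{-1}z$ lands in $\Omega_n(\xi)^c$, a set of measure tending to $1$ on which $f$ is arbitrary; so $\int_{U^c}f(g_n^{-1}z)\,\psi(z)\,d\mu_{\partial T}^o(z)$ has no reason to approach $f(\xi)\int_{U^c}\psi$. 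Your phrase ``the mass concentrates on $\Omega_n(\xi)$'' is therefore not correct --- only the $U$-portion of the mass does. The proposed ``further diagonal adjustment'' cannot repair this: composing with extra group elements or moving $p,q$ only moves $U$ around but never makes $\mu_{\partial T}^o(U^c)\to 0$, and iterating the construction always leaves a fixed-measure piece uncontrolled.

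The paper closes this gap by a genuinely different device. Instead of a single ray, it uses ergodicity of the geodesic flow to produce, for almost every \emph{pair} $(\eta,\xi)$, a sequence $(\gamma_n)$ that acts as a translation of length $N\to\infty$ on arbitrarily long segments of the axis $(\eta,\xi)$. It then introduces the detecting flow $\JR=\mathrm{Isom}(M,T)$ (the comb $M$) with restricted prouniform measure $\mu_{\xi,\eta}^o$ and antenna maps $\pi_k:\JR\to\partial T$. The key identity is $\gamma_n\,(\pi_0)_*\mu_{\xi,\eta}^o=(\pi_N)_*\mu_{\xi,\eta}^o$, valid because $\gamma_n$ shifts the base segment while the antennas carry independent prouniform mass. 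Since $(\pi_N)_*\mu_{\xi,\eta}^o$ is exactly the normalised restriction of $\mu_{\partial T}^o$ to the sphere $E_N(\xi)$, the martingale step gives $\langle\gamma_n\pi_0^*f,\varphi\rangle\to f(\xi)\langle 1,\varphi\rangle$ for $\varphi$ in a dense class, hence weak-$*$ convergence of $\gamma_n f$ in $L^\infty(\partial T,(\pi_0)_*\mu_{\xi,\eta}^o)$. Finally, letting $o$ range over the whole axis makes the supports of $(\pi_0)_*\mu_{\xi,\eta}^o$ exhaust $\partial T\setminus\{\eta,\xi\}$; this exhaustion is precisely what replaces your missing control on $U^c$.
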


\subsubsection{A recurrence argument}

Consider the geodesic flow $\mathscr G$ on the tree $T$: it is the set of parametrized geodesics on $T$, equipped with the prouniform measure $\mu_{\mathscr G}$ and the action of the shift $S_{\mathscr G}$. It is classical, and not hard to see, that the action of $\Gamma$ on $\mathscr G/S_{\mathscr G}$ is ergodic, or equivalently that the action of $S_{\mathscr G}$ on $\Gamma\backslash \mathscr G$ is ergodic. 

The first step of our proof is to find a contracting sequence in every direction in $\Gamma$. Note that since $\mu_{\partial T}^o$ is non-atomic, $(\mu_{\partial T}^o)^2$-almost surely two points $(\eta,\xi)\in\partial T\times \partial T$ are distinct, hence linked by a geodesic (denoted $(\eta,\xi)$).

Say that a sequence $(\gamma_n)$ of automorphisms of $T$ is $(\eta,\xi)$-contracting if for every finite segment $I$ contained in the geodesic $(\eta,\xi)$, for $n$ large enough $\gamma_n$ acts on $I$ by translating it towards $\xi$ by a length at least $n$.

\begin{lemma}\label{lem:treesec}
    For $(\mu_{\partial T}^o)^2$-almost every $(\eta,\xi)\in\partial T\times \partial T$ there exists 
    a $(\eta,\xi$)-contracting sequence $(\gamma_n)_{n\in\NN}$ of elements of $\Gamma$.
\end{lemma}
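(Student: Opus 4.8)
The plan is to use the ergodicity of the geodesic flow modulo the deck action to produce recurrence, and then convert recurrence into the desired contraction.

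\medskip

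\textbf{Setup and strategy.} First I would fix a compact fundamental domain $K$ for the action of $\Gamma$ on $\mathscr G$ (more precisely, a Borel fundamental domain of finite $\mu_{\mathscr G}$-measure for $\Gamma\backslash\mathscr G$; since $\Gamma$ acts properly cocompactly on $T$ such a domain exists and has positive, finite measure). The Cartan/shift flow $S_{\mathscr G}$ descends to an action on $\Gamma\backslash\mathscr G$, which is ergodic. By the Poincaré recurrence theorem applied to $S_{\mathscr G}$ on $\Gamma\backslash\mathscr G$ (which is a finite measure space), almost every point returns to the image of $K$ infinitely often; lifting this back up to $\mathscr G$, for almost every geodesic $g\in\mathscr G$ there exist $n_k\to+\infty$ and elements $\gamma_k\in\Gamma$ such that $S_{\mathscr G}^{n_k}g$ lies in $\gamma_k\cdot K$, i.e. $\gamma_k^{-1}S_{\mathscr G}^{n_k}g\in K$. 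The point of landing in the \emph{fixed} compact set $K$ is that it controls $\gamma_k^{-1}$ on a long initial segment of the shifted geodesic.

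\medskip

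\textbf{From recurrence to contraction.} Now identify $\mathscr G$ with (a full-measure subset of) $\partial T\times\partial T\times\ZZ$, or rather with pairs $(\eta,\xi)$ of distinct endpoints together with a parametrization, so that the map $\mathscr G\to\partial T\times\partial T$, $g\mapsto(g(-\infty),g(+\infty))$, pushes $\mu_{\mathscr G}$ to a measure in the class of $\mu_{\partial T}^o\times\mu_{\partial T}^o$ (this is exactly the analogue of what is recorded for the building case, and for the tree it is the Bowen--Margulis description already mentioned in the excerpt). So a.e. $(\eta,\xi)$ is realized as the pair of endpoints of a.e. $g$, and it suffices to produce an $(\eta,\xi)$-contracting sequence for such $g$. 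Take $g$ with $g(+\infty)=\xi$, $g(-\infty)=\eta$, parametrized so that $g(0)$ is, say, the projection of $o$ onto the geodesic $(\eta,\xi)$, and apply the recurrence above: we get $n_k\to\infty$ and $\gamma_k\in\Gamma$ with $\gamma_k^{-1}S^{n_k}g\in K$. Unwinding, $\gamma_k^{-1}$ maps the geodesic $g$, shifted so that its basepoint is $g(n_k)$ (a point on $(\eta,\xi)$ at distance $n_k$ towards $\xi$ from $g(0)$), into the compact set $K$; equivalently $\gamma_k$ maps some uniformly-bounded-diameter neighborhood of a fixed basepoint to a neighborhood of $g(n_k)$, carrying the incoming direction to $\xi$ and the outgoing to $\eta$. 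Hence for any fixed finite segment $I\subset(\eta,\xi)$, once $n_k$ exceeds the (bounded) ``width'' of $K$ plus the distance from $I$ to $g(0)$, the element $\gamma_k$ translates $I$ along $(\eta,\xi)$ towards $\xi$ by at least $n_k-O(1)\to\infty$. Passing to a subsequence and relabelling $\gamma_n$ gives the $(\eta,\xi)$-contracting sequence required. Note I also want the sequence to work for the \emph{same} $\xi$ and a.e. $\eta$ simultaneously, which follows from Fubini applied to the full-measure subset of $\partial T\times\partial T$ just produced.

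\medskip

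\textbf{Main obstacle.} The only subtle point is the bookkeeping that turns ``$\gamma_k^{-1}S^{n_k}g\in K$'' into the precise combinatorial statement ``$\gamma_k$ translates $I$ towards $\xi$ by length $\geq n$'': one must be careful that landing in a compact set of geodesics pins down the element $\gamma_k$ only up to the stabilizer of a bounded set and up to the action of $S_{\mathscr G}$, so the translation length is $n_k$ only up to an additive error bounded in terms of $K$, and one must check this error does not depend on $k$. Since $\Gamma$ acts properly, stabilizers of bounded sets are finite, so this is harmless; everything reduces to choosing $K$ once and for all and tracking the diameter of its support in $T$. This is exactly the kind of routine-but-careful argument that the authors say they will spell out in detail in the building case, so here I would present it quickly.
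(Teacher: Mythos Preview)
Your argument has a genuine gap at the step ``Hence for any fixed finite segment $I\subset(\eta,\xi)$ \ldots\ the element $\gamma_k$ translates $I$ along $(\eta,\xi)$''. Recurrence to a fundamental domain $K$ gives $\gamma_k^{-1}S^{n_k}g\in K$, which says that $\gamma_k$ carries the geodesic $h_k:=\gamma_k^{-1}S^{n_k}g$ onto $(\eta,\xi)$, matching $h_k(0)$ with $g(n_k)$. But the underlying line of $h_k$ is $(\gamma_k^{-1}\eta,\gamma_k^{-1}\xi)$, which is in general a \emph{different} geodesic for each $k$; there is no reason $\gamma_k$ should send any point of $(\eta,\xi)$ back into $(\eta,\xi)$, let alone act on $I$ by a translation. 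The definition of ``$(\eta,\xi)$-contracting'' requires exactly that $\gamma_n(I)\subset(\eta,\xi)$, so your $\gamma_k$ need not be contracting at all.

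The paper avoids this by recurring not to a fundamental domain but to the set $E_I$ of parametrized geodesics passing through the \emph{given} segment $I$ at a fixed position. If $g\in E_I$ and $\gamma^{-1}S^Ng\in E_I$ (equivalently $\gamma S^{-N}$ fixes $I$), then both $g$ and $\gamma^{-1}g$ contain $I$, hence $I$ and $\gamma I$ both lie on $(\eta,\xi)$ and $\gamma$ genuinely translates $I$ along it by $N$. This is exactly the mechanism spelled out in the building case (Lemma~\ref{lem:construction_contracting}). The price is that recurrence now only gives the conclusion for a.e.\ geodesic through the \emph{fixed} segment $I$, so a second step using ergodicity is needed to get the statement for almost every $(\eta,\xi)$; the paper flags this explicitly. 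Your fundamental-domain approach could be repaired by a pigeonhole step (infinitely many $h_k$ agree on a long window, and then $\gamma_k\gamma_j^{-1}$ does translate along $(\eta,\xi)$), but that is not what you wrote, and once you do that you have essentially reproduced the paper's recurrence to a fixed positive-measure set rather than to $K$.
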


The analogous statement of the building is proved in Proposition \ref{prop:Poincare} below. The argument is roughly the same in both cases, except that Proposition \ref{prop:Poincare} uses the singular flow on the building instead of the geodesic flow on the tree.

\begin{proof}
    Let $I$ be a finite segment. Then the shift $S_{\mathscr G}$ on $\Gamma\backslash \mathscr G$ preserves (by cocompactness of $\Gamma$) a probability measure in the class of the pushforward of $\mu_{\mathscr G}$. Hence it is recurrent. If $I$ is a finite segment, then the set of geodesics containing $I$ has positive measure, thus is $S_{\mathscr G}$-recurrent, which means that there is a sequence $\gamma_n$ such that $\gamma_n S_{\mathscr G}^{-N}$ fixes $I$ (for some $N>n$).
    
    This almost gives the sequence $\gamma_n$ that we want, except that we do not have any control over the endpoints $\xi$ and $\eta$. To get the Lemma for almost every $\xi$ and $\eta$ one needs to use the ergodicity of $S_{\mathscr G}$. 
\end{proof}

Note that for such a sequence $(\gamma_n)$ we have that $\gamma_n\xi'$ converges to $\xi$, for every point $\xi'\neq \eta$. In particular if $f$ is continuous on $\partial T$ then $\gamma_n f$ converges to $f(\xi)$ pointwise. Of course it is far from sufficient to get Proposition \ref{prop:tree} as we only have a measurable function which is only almost everywhere defined.

\subsubsection{Martingales and Lebesgue differentiation}

The second step of our argument is to apply some sort of disintegration. This is the analogous of what is done in \S\ref{sec:martingales} above.

 For $o\in T$ fixed, and $x\in T$ denote by $\Omega(x)\subset\partial T$ the shadow of $x$ seen from $o$, that is, the set of endpoints $\xi\in\partial T$ for which $x$ is on the geodesic ray $[o\xi)$. For
 $n\in \NN$, $\xi\in \partial T$,  denote by $\Omega_n(\xi)$ the shadow of the $n$th vertex of $[o\xi)$.

 The sets $\Omega_n(\xi)$ can be seen to be balls around $\xi$, for a natural metric on $\partial T$. Hence the following is a direct application of the Lebesgue Differentiation Theorem. In that case, one can also easily apply the Martingale Convergence Theorem to get the same result.
 
 \begin{lemma}\label{lem:treemart}
For every $f\in L^\infty(\partial T,\mu_{\partial T}^o)$, for $\mu_{\partial T}^o$-almost every point $\xi$ we have
$$\lim_n \frac{1}{\mu_{\partial T}^o(\Omega_n(\xi))}\int_{\Omega_n(\xi)} f d\mu_{\partial T}^o = f(\xi)$$
 \end{lemma}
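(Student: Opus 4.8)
The plan is to prove Lemma \ref{lem:treemart} as the statement of the Lebesgue Differentiation Theorem (or, equivalently, the Martingale Convergence Theorem) specialized to the measured space $(\partial T, \mu_{\partial T}^o)$ together with the decreasing sequence of shadows $\Omega_n(\xi)$. There are two natural routes, and I would take the martingale one since it fits the framework of \S\ref{sec:measure} most cleanly and avoids having to verify a Vitali-type covering/doubling property by hand.

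First I would set up the filtration. For each $n$, the sets $\{\Omega_n(\xi)\}_{\xi \in \partial T}$ form a finite partition of $\partial T$ (the shadow of the $n$-th vertex of $[o\xi)$ depends only on that vertex, and there are finitely many vertices at distance $n$ from $o$ since $T$ is locally finite). Let $\scrF_n$ be the finite $\sigma$-algebra generated by this partition; since $\Omega_{n+1}(\xi) \subset \Omega_n(\xi)$, we have $\scrF_n \subset \scrF_{n+1}$, so $(\scrF_n)$ is a filtration. In fact $\scrF_n$ is exactly the pullback under the restriction map $\Isom([0,+\infty),T)^o \to \Isom([0,n],T)^o$ of the (discrete) $\sigma$-algebra of the latter finite set, so this is literally the setup preceding Theorem \ref{thm:martingales} with $Y = [0,+\infty)$, $Y_k = [0,k]$, and $Y' = \{0\}$ (so $\scrF_\infty$ is the trivial $\sigma$-algebra). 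Next I would identify the conditional expectation: for $f \in L^\infty \subset L^1$, by definition of conditional expectation on a finite $\sigma$-algebra, $\EE(f \mid \scrF_n)(\xi) = \frac{1}{\mu_{\partial T}^o(\Omega_n(\xi))}\int_{\Omega_n(\xi)} f\, d\mu_{\partial T}^o$, which is exactly the left-hand side of the claimed limit. (The denominator is positive $\mu_{\partial T}^o$-a.e., since the complement of $\bigcup_n\Omega_n$ over vertices carrying positive measure is null.)

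Then I would apply Theorem \ref{thm:martingales} (Martingale Convergence): $\EE(f \mid \scrF_n) \to \EE(f \mid \scrF_\infty)$ both a.e. and in $L^1$. Here $\scrF_\infty$, the $\sigma$-algebra generated by all $\scrF_n$, separates points of $\partial T$ up to null sets — indeed $\bigcap_n \Omega_n(\xi) = \{\xi\}$ — so $\scrF_\infty$ is (mod null sets) the full Borel $\sigma$-algebra of $\partial T$, whence $\EE(f \mid \scrF_\infty) = f$ a.e. Combining the two displays gives, for $\mu_{\partial T}^o$-a.e.\ $\xi$,
\[
\lim_n \frac{1}{\mu_{\partial T}^o(\Omega_n(\xi))}\int_{\Omega_n(\xi)} f\, d\mu_{\partial T}^o = f(\xi),
\]
which is the assertion. (Alternatively, since the $\Omega_n(\xi)$ are precisely the metric balls around $\xi$ for the visual metric on $\partial T$ and $\mu_{\partial T}^o$ is a doubling measure for this metric, Lebesgue differentiation applies directly; but the martingale argument is self-contained given Theorem \ref{thm:martingales}.)

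The main obstacle is essentially bookkeeping rather than mathematics: one must be careful that $\scrF_n$ as defined via shadows really coincides with the pullback $\sigma$-algebra from $\Isom([0,n],T)$ under the restriction map, so that Theorem \ref{thm:martingales} applies verbatim, and that $\scrF_\infty$ generates the Borel structure of $\partial T$ up to null sets (equivalently $\bigcap_n \Omega_n(\xi)=\{\xi\}$, which uses that $T$ has no leaves and hence geodesic rays are determined by their vertex sequence). Both points are routine for a locally finite regular tree. No delicate covering lemma is needed, so I expect the proof to be short — essentially a one-line invocation of Theorem \ref{thm:martingales} once the identification of the filtration is made explicit.
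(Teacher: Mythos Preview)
Your proposal is correct and matches the paper's approach exactly: the paper does not give a detailed proof but simply states that the lemma is a direct application of the Lebesgue Differentiation Theorem, and that one can equally well use the Martingale Convergence Theorem, which is precisely the route you spell out. One small slip: when invoking Theorem~\ref{thm:martingales} you should take $Y' = Y = [0,+\infty)$ rather than $Y' = \{0\}$, so that $\scrF_\infty$ is the full Borel $\sigma$-algebra (as you correctly argue in the next paragraph via $\bigcap_n \Omega_n(\xi) = \{\xi\}$) and hence $\EE(f \mid \scrF_\infty) = f$; with $Y' = \{0\}$ the limit would be the constant $\int f\,d\mu_{\partial T}^o$ instead.
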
   

In fact, using a classical trick (compare for example with \cite[p.98]{Folland}) one can prove the same convergence when averaging over a subset of $\Omega_n(\xi)$ which has a measure equal to a fixed (independent of $n$) proportion of $\Omega_n(\xi)$ (for the measure $\mu_{\partial T}^o$). Furthermore one can easily check that  the "annulus" $E_n(\xi):=\Omega_n(\xi)\setminus \Omega_{n+1}(\xi)$ has positive measure, and in fact takes a fixed proportion of the ball $\Omega_n(\xi)$. Hence the Lemma also holds when replacing the average of $f$ over $\Omega_n(\xi)$ with its average over $E_n(\xi)$.

 An important difficulty (and also a crucial point of the proof) in the building case is that the analogous limit that we get will not be a constant anymore, but a function on a smaller set.

\subsubsection{The detecting flow}

The final step in the proof of Proposition \ref{prop:tree}
is to relate the sequence $(\gamma_n)$ to the averages over $E_n(\xi)$. While this can also be done directly in the case of the tree, we found it easier in the building case to use yet another space, which we call the \emph{detecting flow}.

We define a space $M$, which we think of as a {\em model geodesic endowed with antennas}, or a \emph{comb}, by setting
\[ M=\left(\RR\times \{0\}\right) \cup \left(\ZZ\times [0,\infty)\right) \subset \RR^2, \]
which we view as the union of a geodesic ($\RR\times \{0\}$) with antennas at each vertex (the lines $\{n\}\times [0,\infty)$) (see Figure \ref{fig:detectingFlowTree} below).
We define the {\em detecting geodesic flow} on $T$ to be
\[ \JR=\mathrm{Isom}(M,T). \]

\begin{figure}[h]
\includegraphics[width=15cm]{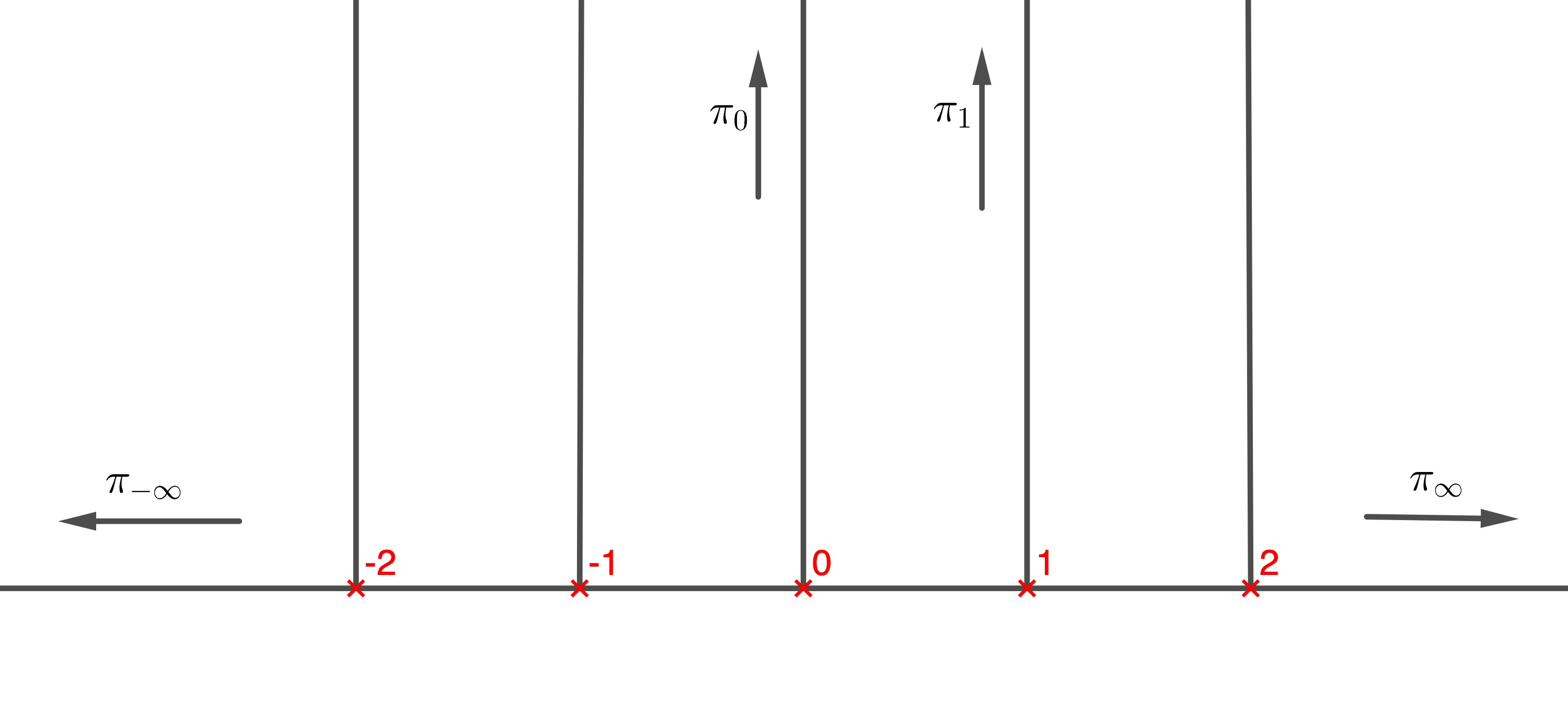}
\caption{The model space for the detecting flow on a tree}
\label{fig:detectingFlowTree}
\end{figure}

We endow $\JR$ with the prouniform measure $\mu_\JR$.
For every $n\in \ZZ\cup \{\pm\infty\}$ we define a map $\pi_n:\JR\to \partial T$ as the limit of the image of the ray $\{n\}\times [0,\infty)$ (these are the "antenna" maps).
On the other hand, we can also define $\pi_\infty,\pi_{-\infty}:\JR\to \partial T$ as the limit of the geodesic rays $[0,+\infty)\times \{0\}$ and $(-\infty,0]\times \{0\}$. 

We define the "shift" on $\RR^2$ by $S(x,y)=(x+1,y)$ and note it leaves $M$ invariant, hence it induces an automorphism of $\JR$, which commutes with the action of $\Gamma$. Furthermore we have easily that $\pi_n\circ S=\pi_{n+1}$. 

The maps $\pi_n$ (for $n\in \ZZ\cup\{\pm\infty\}$) define pullbacks $\pi_n^*:L^\infty(\partial T)\to L^\infty(\JR)$. (Note that $\pi_n:\JR\to \partial T$ is measure-class preserving). However, since our favorite contracting sequence $(\gamma_n)$ depends on a choice of points $\xi$ and $\eta$, we need to first proceed to a disintegration and fix the two endpoints.  More precisely, for a pair of distinct points $\xi,\eta\in\partial T$ and $o$ on the geodesic between $\xi$ and $\eta$, let $\mu_{\xi,\eta}^o$ be the restricted prouniform measure on the set $\JR_{\xi,\eta}^o\subset \JR$ consisting of all $j$ which send $0$ to $o$ and the horizontal  line in $M$ to the geodesic line $(\xi,\eta)$.

Note that $\pi_n:\JR_{\xi,\eta}^o\to \partial T$ is not well-defined anymore. However, we have:

\begin{lemma}
    For $n>0$, the measure $(\pi_n)_*\mu_{\xi,\eta}^o$ is the normalized restriction of $\mu_{\partial T}^o$ to $E_n(\xi)$. 
\end{lemma}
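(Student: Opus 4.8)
The plan is to unwind the definition of the detecting flow restricted to a fixed geodesic line, and match the antenna map $\pi_n$ against the martingale structure from \S\ref{sec:martingales} — or rather its tree analogue. First I would fix $\xi\neq\eta$ in $\partial T$ and $o$ on the geodesic $(\xi,\eta)$, and spell out what an element $j\in\JR_{\xi,\eta}^o$ records: the horizontal line of $M$ maps isometrically onto $(\xi,\eta)$ sending $0$ to $o$, so the $n$-th vertex $(n,0)$ of $M$ is sent to a well-defined vertex $v_n$ of $T$ on the geodesic $(\xi,\eta)$ (on the $\xi$-side for $n>0$). The remaining data of $j$ is, for each $m\in\ZZ$, an isometric embedding of the antenna $\{m\}\times[0,\infty)$ starting at $v_m$; since the antenna at $m$ and the horizontal line share only the point $(m,0)$ and must together be embedded isometrically, the ray $\{m\}\times[0,\infty)$ goes off to a boundary point of $T$ that is distinct from both directions of $(\xi,\eta)$ at $v_m$. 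Thus $\pi_n(j)$ is an endpoint $\zeta\in\partial T$ whose geodesic ray $[o\zeta)$ passes through $v_0,v_1,\dots,v_n$ (following $(\xi,\eta)$ towards $\xi$) and then departs from the line at $v_n$. Equivalently $\zeta\in\Omega_n(\xi)$ — its geodesic from $o$ agrees with $[o\xi)$ up to the $n$-th vertex — but $\zeta\notin\Omega_{n+1}(\xi)$, because the antenna at $n$ leaves the line at $v_n$ rather than continuing to $v_{n+1}$. Hence the image of $\pi_n$ on $\JR_{\xi,\eta}^o$ is exactly $E_n(\xi)=\Omega_n(\xi)\setminus\Omega_{n+1}(\xi)$; this identifies the support correctly.

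Next I would compute the pushforward measure. The restricted prouniform measure $\mu_{\xi,\eta}^o$ on $\JR_{\xi,\eta}^o$ is, by Corollary~\ref{cor:measurepreserving} (applied to the chain of subcomplexes of $M$ obtained by exhausting it by convex pieces), compatible with all restriction maps; in particular the data "embedding of the horizontal line $= (\xi,\eta)$" is fixed, and conditionally on the location $v_n$ (which here is deterministic) the antenna at $n$ is distributed as a uniform prouniform embedding of $[0,\infty)$ into $T$ starting at $v_n$ and avoiding the two line-directions — equivalently, as the restricted prouniform (harmonic) measure $\mu_{\partial T}^{v_n}$ conditioned on the first step leaving the line, i.e. conditioned on the complement of those two directions. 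I would then check that this conditioned harmonic measure from $v_n$, viewed inside $\partial T$ via the usual identification, is precisely the normalized restriction of $\mu_{\partial T}^o$ to $E_n(\xi)$: this is a direct consequence of the change-of-basepoint / multiplicativity formulas for prouniform measures (Lemma~\ref{lem:[XX']}, Equation~\eqref{eq:i*mu}, and Lemma~\ref{lem:desintegration_prouniform}), since $\mu_{\partial T}^o$ restricted to $\Omega_n(\xi)$ disintegrates as the harmonic measure from $v_n$ restricted to that shadow, and removing $\Omega_{n+1}(\xi)$ removes exactly the branch of the antenna that would continue along the line.

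Concretely, the argument I would write is: by Lemma~\ref{lem:desintegration_prouniform} with $Y_0$ a point $o$, $Y_1$ the segment $[0,n]$ of the horizontal line, and $Y_2 = M$, the measure $\mu_\JR^o$ disintegrates over the choice of the embedded segment $v_0,\dots,v_n$, and on the fiber where this segment is the initial segment of $(\xi,\eta)$, the $n$-th antenna map $\pi_n$ pushes the restricted prouniform measure forward to $\mu_{[0,\infty),\{n\}}$-type data at $v_n$; comparing with the disintegration of $\mu_{\partial T}^o$ along $\Delta\to\Delta$ shadows (exactly as in Lemma~\ref{lem:MartingaleEnm}) gives that this equals the normalized restriction of $\mu_{\partial T}^o$ to $\Omega_n(\xi)$ minus the sub-shadow $\Omega_{n+1}(\xi)$, i.e. to $E_n(\xi)$. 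The normalization constant is $\mu_{\partial T}^o(E_n(\xi))^{-1}$, which is positive as noted after Lemma~\ref{lem:treemart}.

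The main obstacle I anticipate is bookkeeping rather than conceptual: one must be careful that the antennas at the \emph{other} vertices $v_m$, $m\neq n$, and the two tails of the horizontal line are genuinely "integrated out" and do not constrain $\pi_n(j)$ — this is where one uses that $M$ is built so that distinct antennas meet the line only at single points, so the relevant subcomplex for computing $(\pi_n)_*$ is just "horizontal segment $[0,n]$ $\cup$ antenna at $n$", a convex piece of $M$, and the restriction map onto it is measure-preserving by Corollary~\ref{cor:measurepreserving}. Once that reduction is in place, the identification with $E_n(\xi)$ and the computation of the normalizing constant are immediate from the multiplicativity of the brackets $[\,\cdot\,]$ and the fact that, at each step away from $v_n$ along a ray, there are $q$ choices among the $q+1$ neighbors avoiding backtracking, while avoiding the line at $v_n$ leaves $q$ of those $q+1$ — giving the same geometric weights that define $\mu_{\partial T}^o$ on $E_n(\xi)$.
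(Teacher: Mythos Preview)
Your argument is correct and matches the paper's approach --- in fact the paper leaves this tree-case lemma unproved (the section is explicitly illustrative), and your reasoning parallels the proof given for the building analogue, Proposition~\ref{prop:pinac}: identify the image of $\pi_n$ on $\JR_{\xi,\eta}^o$ as exactly $E_n(\xi)$, then argue that the pushforward measure is a constant multiple of $\mu_{\partial T}^o$ there via the prouniform symmetry.

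Two small points to clean up. First, in your final paragraph the count is off by one: at $v_n$ the antenna must avoid \emph{both} neighbors $v_{n-1}$ and $v_{n+1}$ on the line, leaving $q-1$ (not $q$) of the $q+1$ directions; fortunately the normalized restriction of $\mu_{\partial T}^o$ to $E_n(\xi)$ has the \emph{same} first-step count $q-1$ (avoiding backtracking to $v_{n-1}$ and avoiding $v_{n+1}$ by definition of $E_n$), so the conclusion survives. Second, in your third paragraph the disintegration with $Y_0=\{o\}$ and $Y_1=[0,n]$ is not the right conditioning for $\mu_{\xi,\eta}^o$, which fixes the \emph{entire} horizontal line; the clean application of Lemma~\ref{lem:desintegration_prouniform} (and Corollary~\ref{cor:measurepreserving}) takes $Y_0$ to be the whole line $\RR\times\{0\}$ and $Y_1$ to be the convex subcomplex $(\RR\times\{0\})\cup(\{n\}\times[0,\infty))$ of $M$. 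With that choice your ``integrating out the other antennas'' step is exactly Corollary~\ref{cor:measurepreserving}, and the remaining fiber measure $\mu_{Y_1,Y_0}^{\alpha_0}$ is visibly the uniform measure on rays from $v_n$ avoiding $v_{n\pm1}$, which is what you want.
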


Now, in order to prove Proposition \ref{prop:tree}, we note that if $(\gamma_n)$ is a sequence as in Lemma \ref{lem:treesec}, then (for $n$ large enough) $\gamma_n$ acts as a translation of length say $N$ on a large segment between $\xi$ and $\eta$. Of course it is not correct that $\gamma_n \pi_0 (j)=\pi_N(j)$, as we do not know what happens on the "branches". Nevertheless the general idea is that this will be true on average on $j$.

It turns out that this is sufficient to finish the proof of Proposition \ref{prop:tree}. 
Using Lemma \ref{lem:treemart} we deduce that $\pi_n^* f$ converges to $f(\xi)$ in the weak-* topology of  $L^\infty(\JR_{\xi,\eta}^o)$. Indeed by density (Lemma \ref{lem:densityL1_gen}) it is sufficient to prove the convergence of $\langle (\pi_N)^* f,\phi \rangle$ to $\langle f(\xi),\varphi\rangle$ when $\varphi$ is the characteristic function of a given embedding of a finite subset  $F\subset M$. But then for $N$ large enough we get that $(\pi_N)_*\varphi$ is constant equal to 1, so that we are precisely reduced to the limit calculated in Lemma \ref{lem:treemart}.

To compare $\langle (\pi_n)^* f,\phi \rangle$ with $\langle (\pi_n)^* f,\varphi \rangle$ the idea is to construct from the embedding of $F$ a new embedding, of an infinite convex subset of $M$, for which the branch at the point $N$ is given by the image of the branch at $0$ of $F$ by $\gamma_n$. Then if $\varphi_0$ is the characteristic function of this embedding we check that $$\langle (\pi_N)^* f,\varphi_0 \rangle=\langle \gamma_n \pi_0^*f,\varphi\rangle$$

Note that having this for one fixed $o$ is not sufficient, as it only give weak-* convergence on $L^\infty(\Delta,(\pi_0)_*\mu_{\xi,\eta}^o)$. However since this is valid for every $o$ we get the convergence with respect to the right measure class.

\subsection{Singular flow and contracting sequences}

The first step in our proof is to construct sequences of elements of $\Gamma$ with a prescribed dynamic on the boundary. More precisely, our goal is to prove that there is a sequence of elements contracting almost all of $\Delta$ onto a fixed residue (Proposition \ref{prop:Poincare}). In fact, it will be later important to us to also prove that it is possible to do so while also acting as a given projectivity (Proposition \ref{prop:Poincare2}). 

\begin{proposition}\label{prop:Poincare}
For almost every $(u,v)\in \Delta_\pmop$, there exists a sequence $(\gamma_n)\in \Gamma^\NN$ such that for  every $C\in \Delta$ which is opposite $u$, the sequence $\gamma_n C$ converges to the chamber $\proj_v(\proj_u(C))$.
\end{proposition}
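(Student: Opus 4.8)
The plan is to transpose to the building the argument that proves Lemma~\ref{lem:treesec} for trees, using the singular flow $(\scrW,\zeta_\scrW)$ with its commuting actions of $\Gamma$ and of $S_\Upsilon\cong\ZZ$ (the latter acting by translation along the $\RR$-factor of $\Upsilon$) in place of the geodesic flow with its shift. First I would record a recurrence statement. Recall that $\pi\colon\scrW\to\Delta_\pmop$ is measure-class preserving, that $\zeta_\scrW$ disintegrates over $\pi$ into Haar measures on the fibres, and that, by cocompactness of $\Gamma$ (cf.\ \cite[\S6]{BCL}, and compare the tree case above), $\zeta_\scrW$ induces a \emph{finite}, $S_\Upsilon$-invariant measure on $\Gamma\backslash\scrW$; hence the transformation induced by $S_\Upsilon$ on this finite measure space is Poincaré recurrent. (Ergodicity of the $S_\Upsilon$-action, which is exactly what forces us to use $\zeta_\scrW$ rather than the prouniform measure, is needed only later, for Proposition~\ref{prop:Poincare2}.) Fix an increasing exhaustion $\Upsilon=\bigcup_n Z_n$ by finite convex subcomplexes, each having property $P_\Upsilon$. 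For $\zeta_\scrW$-a.e.\ $w$ and each $n$, the cylinder $\{w'\in\scrW:w'|_{Z_n}=w|_{Z_n}\}$ has positive $\zeta_\scrW$-measure (from the structure of $\zeta_\scrW$ over $\pi$), so Poincaré recurrence, applied for every $n$ and to both $S_\Upsilon$ and $S_\Upsilon^{-1}$, yields for $\zeta_\scrW$-a.e.\ $w$ a sequence $\gamma_n\in\Gamma$ and integers $N_n\to\infty$ such that $\gamma_n$ agrees on $Z_n$ with the translation of $I(u,v)=w(\Upsilon)$ by $N_n$ towards the $v$-end, where $(u,v):=\pi(w)$.

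The heart of the proof is then a geometric lemma: if $(\gamma_n)\subset\Aut(X)$ is a sequence which, for every finite convex $Z\subset\Upsilon$, eventually restricts on $Z$ to a translation of $I(u,v)$ towards $v$ by an amount tending to $\infty$, then $\gamma_n C\to\proj_v(\proj_u(C))$ in $\Delta$ for every chamber $C$ opposite $u$. Morally, translating $I(u,v)$ far towards $v$ destroys all information about $C$ except its \emph{transverse direction}, which is recorded by $\proj_u(C)\in\Res(u)$; under $\phi_u$ (Lemma~\ref{lem:bdTu}) this is an end of the panel tree $T_u$, which the canonical isomorphisms $T_u\cong T_{u,v}\cong T_v$, i.e.\ the perspectivity $[u;v]$, carry to an end of $T_v$, hence via $\phi_v$ to a chamber of $\Res(v)$ --- and that chamber is $\proj_v(\proj_u(C))$, using the classical fact that $[u;v]$ agrees with $\proj_v$ in restriction to $\Res(u)$ in the spherical $A_2$ building at infinity. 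To make this rigorous I would fix a base vertex $x\in I(u,v)$ and show, for each vertex $z$ of the sector $Q\bigl(x,\proj_v(\proj_u(C))\bigr)$, that $z\in Q(x,\gamma_n C)$ for all large $n$; for this one realizes $z$ together with a large convex piece of $I(u,v)$ and a long enough piece of a sector representing $C$ inside a single apartment, using Theorem~\ref{thm:isomappart} and Corollary~\ref{cor:root+alcove}, and then transports this configuration by $\gamma_n$, which on the relevant finite piece is a long translation towards $v$.

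Finally, a Fubini argument over the fibres of $\pi$ (as in the proof of Lemma~\ref{lem:treesec}) promotes the conclusion from $\zeta_\scrW$-a.e.\ $w$ to $m_\pm$-a.e.\ $(u,v)\in\Delta_\pmop$, since admitting a contracting sequence is a property of $(u,v)=\pi(w)$ alone; together with the geometric lemma this gives the proposition. I expect the recurrence step to be routine bookkeeping, and the real obstacle to be the geometric lemma, within which two points need care: checking that a long translation of $I(u,v)$ towards $v$ really pulls \emph{every} chamber $C$ opposite $u$ into ever smaller $\Omega_x(\cdot)$-neighbourhoods of one fixed chamber, and identifying that chamber precisely as $\proj_v(\proj_u(C))$ rather than merely as the image of $\proj_u(C)$ across the wall-tree. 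Both amount to elementary incidence geometry in the links combined with the apartment-existence results of \S\ref{sec:buildings}, but must be handled attentively with respect to the exact meaning of $C$ being opposite $u$ and to the topology on $\Delta$.
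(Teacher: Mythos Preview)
Your outline is essentially the paper's own strategy: a recurrence step on the singular flow producing, for a.e.\ endpoint pair $(u,v)$, elements of $\Gamma$ that act as arbitrarily long translations on any prescribed finite piece of $I(u,v)$ (this is Lemma~\ref{lem:construction_contracting}), followed by the geometric argument you sketch. The paper carries out the geometric step via two apartments, one containing $C$ and $u$, the other containing $\proj_u(C)$ and $v$, and the canonical isomorphism between them; your sketch is in the same spirit and the points you flag as delicate are exactly the ones that require care.

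The one genuine gap is in the recurrence step. You assert that cocompactness of $\Gamma$ makes $\zeta_\scrW$ descend to a \emph{finite} $S_\Upsilon$-invariant measure on $\Gamma\backslash\scrW$, and then invoke Poincar\'e recurrence. But the disintegration of $\zeta_\scrW$ over $\pi$ has fibres equal to Haar measure on the closed group $\tilde\Pi\supset\Pi$, and $\Pi$ is typically \emph{non-compact} (it acts $3$-transitively on $\partial T$; in the Bruhat--Tits case $\Pi=\PGL_2$ over the local field), so there is no reason for $\zeta_\scrW$ to be finite modulo $\Gamma$, and cocompactness on $X$ does not help. The paper does not claim finiteness: instead it invokes \emph{ergodicity} of $S_\Upsilon$ on $\Gamma\backslash\scrW$ \cite[Theorem~7.3]{BCL}, already in the proof of Lemma~\ref{lem:construction_contracting}, to obtain that for a.e.\ $\varphi$ the $S_\Upsilon$-orbit meets a fixed positive-measure cylinder at arbitrarily large times. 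So your parenthetical remark that ergodicity ``is needed only later, for Proposition~\ref{prop:Poincare2}'' is not accurate as stated. If you want to keep your cylinder-by-cylinder Poincar\'e argument (which is otherwise clean, and legitimately avoids the full strength of ergodicity), you must justify \emph{conservativity} of $S_\Upsilon$ on $(\Gamma\backslash\scrW,\zeta_\scrW)$ rather than finiteness; the simplest route is again to cite \cite{BCL}.
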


The main tool in the proof of this Proposition is the flow $\scrW$ constructed in \cite{BCL}, whose main properties we recalled in Section \ref{sec:flows}. In particular, $\scrW$ is equipped with a measure $\zeta_\scrW$.

\begin{lemma}
For every $(u,v)\in \Delta_\pmop$ and any finite subset $F\subset I(u,v)$, the set $E_F:=\{\phi\in \scrW \mid F\subset \Im(\phi)\}$ has positive $\zeta_\scrW$-measure.
\end{lemma}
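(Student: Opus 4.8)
For every $(u,v)\in \Delta_\pmop$ and every finite subset $F\subset I(u,v)$, the set $E_F=\{\phi\in\scrW\mid F\subset\Im(\phi)\}$ has positive $\zeta_\scrW$-measure.

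**Plan of proof.** The idea is to reduce positivity of $\zeta_\scrW(E_F)$ to the two structural properties of $\zeta_\scrW$ recalled in Section~\ref{sec:flows}: the projection $\pi\colon\scrW\to\Delta_\pmop$ is measure-class preserving, and the disintegration of $\zeta_\scrW$ along $\pi$ is the Haar measure on the closed group $\tilde\Pi<\Aut(T)$ (where $\pi^{-1}(u',v')$ is identified with $\tilde\Pi$ for each $(u',v')$). So it suffices to produce, on a set of $(u',v')\in\Delta_\pmop$ of positive $m_\pm$-measure, a subset of the fiber $\pi^{-1}(u',v')\cong\tilde\Pi$ of positive Haar measure consisting of embeddings whose image contains a translate of $F$ appropriate to $(u',v')$ — or more directly, to work at the fixed pair $(u,v)$ itself, since the fiber over $(u,v)$ carries full Haar measure and $(u,v)$ is not $\pi$-negligible in the relevant sense. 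Concretely: fix an embedding $\phi_0\in\scrW$ with $\pi(\phi_0)=(u,v)$ (this exists since $\scrW\to\Delta_\pmop$ is onto, $\scrW$ being a single equivalence class mapping onto all of $\Delta_\pmop$). Then $\Im(\phi_0)$ is a copy of $\Upsilon=T\times\RR$ inside $I(u,v)$, and $F$, being a finite subset of $I(u,v)$, lies in finitely many apartments, each of which is a copy of $\gamma\times\RR$ for a ray $\gamma$ in $T_{u,v}$; hence $F$ is contained in $\pi^{-1}(z)\cup(\text{neighbours})$ for finitely many vertices $z\in T_{u,v}$, i.e.\ $F$ lies in the image of \emph{some} element $\phi_1\in\pi^{-1}(u,v)$. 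Since any two elements of $\pi^{-1}(u,v)$ differ by an element of $\tilde\Pi$ acting on $T$, the set $\{\phi\in\pi^{-1}(u,v)\mid F\subset\Im(\phi)\}$ is of the form $\{g\cdot\phi_1\mid g\in U\}$ where $U\subset\tilde\Pi$ is the (closed, nonempty) set of $g$ fixing the finite subtree of $T_{u,v}$ spanned by $\pr(F)$ pointwise — and fixing a finite subtree is an open condition in the compact-open topology, so $U$ is an open subgroup-coset, hence has positive Haar measure in $\tilde\Pi$.

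**Assembling.** Having established that the fiber over $(u,v)$ meets $E_F$ in a set of positive conditional (Haar) measure, I would then invoke the disintegration $\zeta_\scrW=\int_{\Delta_\pmop}\Haar_{\pi^{-1}(u',v')}\,dm_\pm(u',v')$. A single fiber being $m_\pm$-null, one cannot conclude directly from the fiber over $(u,v)$ alone; instead I would observe that the event "$\Im(\phi)$ contains the finite configuration $F$" is, by the local-finiteness arguments of Section~\ref{sec:measure} and the homogeneity built into $\scrW$, locally constant along $\Delta_\pmop$ near $(u,v)$: for $(u',v')$ in a small neighbourhood, $I(u',v')$ contains an isometric copy $F'$ of the configuration $F$ (one can transport $F$ using that the intervals $I(u',v')$ are all isometric, cf.\ \S\ref{sub:proj}), and $\pi^{-1}(u',v')\cap E_{F'}$ again has positive Haar measure by the same open-condition argument, with \emph{uniform} lower bound (the Haar measure of the coset fixing a finite subtree of a fixed combinatorial type is a fixed positive number depending only on the type of $F$). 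Integrating this uniform positive lower bound over a positive-$m_\pm$-measure neighbourhood of $(u,v)$ gives $\zeta_\scrW(E_F)>0$. (If one only needs $E_F$ for the specific $F\subset I(u,v)$ in the statement, this last transport step can be phrased as: $E_F\supset\pi^{-1}(N)\cap(\text{fibrewise Haar-positive set})$ for $N$ a positive-measure neighbourhood, using that the fibrewise condition is preserved under the identifications $I(u',v')\cong\Upsilon$.)

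**Main obstacle.** The delicate point is the passage from "positive Haar measure in one fiber" to "positive $\zeta_\scrW$-measure", since individual fibers are null. Everything hinges on producing a \emph{uniform} fibrewise lower bound over a positive-measure set of base points $(u',v')$, and for that one must be careful that the identification $\pi^{-1}(u',v')\cong\tilde\Pi$ and the embedding of $F$ into $I(u',v')$ can be chosen to vary measurably (or even continuously) with $(u',v')$, so that the "coset fixing a finite subtree" has measurably-constant Haar measure. This is exactly the kind of $P_\Upsilon$-symmetric homogeneity established in \S\ref{sec:SoB} (Theorem~\ref{thm:Psymetric}) together with the compatibility of $\zeta_\scrW$ with all the natural maps in \cite[5.4(3)]{BCL}; modulo invoking those, the argument is routine. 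I expect the author's proof to be shorter, simply citing the disintegration of $\zeta_\scrW$ over $\tilde\Pi$ and noting that $\{g\in\tilde\Pi: gF'\subset\Im(\phi_0)\}$ is open and nonempty, hence of positive Haar measure, on a positive-measure set of fibers.
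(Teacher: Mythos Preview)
Your proposal rests on a misconception about the fibers of $\pi\colon\scrW\to\Delta_\pmop$. You treat $\Im(\phi)$ as a proper copy of $\Upsilon$ sitting inside $I(u',v')$, so that different $\phi$ in the same fiber $\pi^{-1}(u',v')$ could have different images, and hence you launch into a fibrewise Haar-measure argument about cosets fixing a finite subtree. But in fact $\Im(\phi)=I(\pi(\phi))$ for \emph{every} $\phi\in\scrW$: the model wall-tree $\Upsilon$ is isometric to the whole interval $I(u',v')$ (this is how $\Upsilon$ is defined in \S\ref{sub:proj}), so any isometric embedding $\phi\colon\Upsilon\to X$ with endpoints $(u',v')$ surjects onto $I(u',v')$. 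Consequently the condition ``$F\subset\Im(\phi)$'' depends only on $\pi(\phi)$, and
\[
E_F=\pi^{-1}\bigl(E'_F\bigr),\qquad E'_F:=\{(u',v')\in\Delta_\pmop\mid F\subset I(u',v')\}.
\]
Your entire fibrewise analysis collapses: the set you called $U$ is always either all of $\tilde\Pi$ or empty. The only content left is to show $m_\pm(E'_F)>0$.

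This is also where your transport step goes wrong. You argue that for $(u',v')$ near $(u,v)$ the interval $I(u',v')$ contains an \emph{isometric copy} $F'$ of $F$. But $E_F$ is defined with the specific finite set $F\subset X$, not with copies of it; you need $F$ itself to lie in $I(u',v')$. Your parenthetical attempt to patch this (``$E_F\supset\pi^{-1}(N)\cap(\text{fibrewise Haar-positive set})$'') is circular: the fibrewise set is the whole fiber exactly when $(u',v')\in E'_F$, so you are back to needing $m_\pm(E'_F)>0$ directly.

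The paper's proof handles this cleanly by passing to the \emph{prouniform} measure $\mu_{\tilde\scrW}$ on $\tilde\scrW=\Isom(\Upsilon,X)$ (not the measure $\zeta_\scrW$): by the very construction of prouniform measures in \S\ref{sec:measure}, any cylinder set determined by a finite configuration has positive measure, so $\tilde E_F=\{\phi\in\tilde\scrW\mid F\subset\Im(\phi)\}$ has $\mu_{\tilde\scrW}(\tilde E_F)>0$. Since $\pi\colon\tilde\scrW\to\Delta_\pmop$ is measure-class preserving (Corollary~\ref{cor:measurepreserving}) and $\pi(\tilde E_F)=E'_F$, one gets $m_\pm(E'_F)>0$, and hence $\zeta_\scrW(E_F)=\zeta_\scrW(\pi^{-1}(E'_F))>0$ because $\pi$ is also measure-class preserving for $\zeta_\scrW$.
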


\begin{proof}
Recall the map $\pi:\scrW\to \Delta_\pmop$, associating to an embedding in $\scrW$ its two endpoints. By construction of $\zeta_\scrW$ this map is measure-preserving. Also note that $F\subset \Im(\phi)$ is equivalent to $F\subset I(\pi(\phi))$, as $\Im(\phi)$ is in fact equal to $I(\pi(\phi))$ by construction of $\Upsilon$. 
Therefore we are led to proving that the set $E'_F=\{(u',v')\in \Delta_\pmop\mid F\subset I(u',v')\}$ has a positive $(\mu_-^o\times \mu_+^o)$-measure.

Now, using Theorem \ref{thm:Psymetric}, we can define the prouniform measure $\mu_{\tilde\scrW}$ on the set $\tilde\scrW =\Isom(\Upsilon,X)$. We can still define a map $\tilde\scrW\to \Delta_\pmop$ in the same way, and still denote it $\pi$. By Corollary \ref{cor:measurepreserving} we get that $\pi$ is measure-class preserving. On the other hand, by construction, it is clear that the set $\tilde E_F=\{\phi\in \tilde\scrW \mid F\subset \Im(\phi)\}$ has positive $\mu_{\tilde\scrW}$-measure. Hence $\pi(\tilde E_F)$ has positive $(\mu_-^o\times \mu_+^o)$-measure. But since $\pi(\tilde E_F)=E'_F$ this concludes the proof.


\end{proof}

\begin{lemma}\label{lem:construction_contracting}
For $m_\pm$-almost every $(u,v)\in \Delta_\pmop$, for every finite $F\subset I(u,v)$ and $N\in \NN$, there exists $n>N$ and $\gamma\in \Gamma$ such that $\gamma$ acts on $F$ as a translation of length $n$ in the direction of $u$.
\end{lemma}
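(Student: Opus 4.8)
### Proof Proposal

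The plan is to deduce Lemma~\ref{lem:construction_contracting} from an ergodicity/recurrence argument for the action of $S_\Upsilon$ on the quotient $\Gamma\backslash\scrW$, in complete analogy with the tree case sketched in Lemma~\ref{lem:treesec}. The key facts I would use are: (a) the previous lemma, which says that for every $(u,v)\in\Delta_\pmop$ and every finite $F\subset I(u,v)$ the set $E_F=\{\phi\in\scrW\mid F\subset\Im(\phi)\}$ has positive $\zeta_\scrW$-measure; (b) the fact (recalled in \S\ref{sec:flows}) that $\zeta_\scrW$ is ergodic under the action of $\Gamma\times S_\Upsilon$; and (c) cocompactness of $\Gamma$, which guarantees that the pushforward of $\zeta_\scrW$ to $\Gamma\backslash\scrW$ is (in the class of) a finite $S_\Upsilon$-invariant measure, hence $S_\Upsilon$ acts recurrently on it.

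First I would fix a finite subset $F_0$ of the model wall-tree $\Upsilon$ and consider the set $E_{F_0}\subset\scrW$ of embeddings whose image contains $F_0$; by (a) this has positive measure. By the Poincaré recurrence theorem applied to the $\ZZ$-action generated by $S_\Upsilon^{-1}$ on $\Gamma\backslash\scrW$ (with its finite invariant measure class), for a.e.\ $\phi\in E_{F_0}$ there are infinitely many $n$ for which the $S_\Upsilon$-translate $S_\Upsilon^{-n}\phi$ returns to $\Gamma\backslash E_{F_0}$; unwinding the quotient, this produces for each such $n$ an element $\gamma\in\Gamma$ with $\gamma\circ S_\Upsilon^{-n}\phi\in E_{F_0}$, i.e.\ $\gamma$ sends the image of $F_0$ shifted by $-n$ back into $\Im(\phi)$. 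Translating through the identification $\pi(\phi)=(u,v)$ and the fact that $S_\Upsilon$ acts as a translation by $1$ along the $\RR$-factor of $\Upsilon\cong T\times\RR$, this says precisely that $\gamma$ acts on the finite piece $\phi(F_0)\subset I(u,v)$ as a translation of length $n$ towards $u$. To upgrade "for a.e.\ $\phi$" to "for $m_\pm$-a.e.\ $(u,v)$ and \emph{every} finite $F$", I would: exhaust $I(u,v)$ by an increasing sequence of finite subsets $F_k$ (it suffices to treat a cofinal family, e.g.\ balls in the wall-tree), apply the above for each $F_k$ and use ergodicity of $\zeta_\scrW$ under $\Gamma\times S_\Upsilon$ to conclude that the conull set of good $\phi$ projects to a conull set of $(u,v)\in\Delta_\pmop$ under the measure-preserving map $\pi$; then intersect over $k\in\NN$ and over $N\in\NN$ (a countable intersection of conull sets) to get a single conull set of $(u,v)$ that works simultaneously for all $F$ and all $N$.

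The main subtlety I anticipate is bookkeeping the passage between the three descriptions of a point of $\scrW$: an embedding $\phi:\Upsilon\to X$, its endpoints $\pi(\phi)=(u,v)$, and the element of $\tilde\Pi$ labelling the fiber $\pi^{-1}(u,v)$. In particular one must check that $S_\Upsilon$-translation genuinely corresponds to translating the wall along $I(u,v)$ and fixes the pair $(u,v)$ (so recurrence in the quotient really yields $\gamma$ with the \emph{same} endpoints acting as a pure translation), and that the finite invariant measure on $\Gamma\backslash\scrW$ is genuinely finite — this is where cocompactness of $\Gamma$ and the structure of $\zeta_\scrW$ (Haar on the fibers of a compact group $\tilde\Pi$, base measure $m_\pm$ which is $\Gamma$-invariant up to the building's compact quotient) enter. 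The ergodicity statement (b) is quoted from \cite{BCL}, so the only real work is this translation of dynamical statements, plus the routine countable-intersection argument to make the conull set uniform in $F$ and $N$.
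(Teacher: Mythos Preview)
Your overall strategy---apply a recurrence/ergodicity argument to the singular flow $\scrW$ and then upgrade by countable intersection---is exactly the paper's. The gap is in the choice of the set to which you recur. You take $E_{F_0}$ to be ``embeddings whose image contains $F_0$'' with $F_0\subset\Upsilon$; this is not well-typed (the image lies in $X$), and if one interprets it as $E_F=\{\phi:F\subset\Im(\phi)\}$ from the previous lemma (with $F\subset X$), recurrence to $E_F$ is vacuous and does \emph{not} yield a translation. Indeed $\Im(\phi)=I(\pi(\phi))$ for every $\phi\in\scrW$, so $E_F$ depends only on $\pi(\phi)$ and is $S_\Upsilon$-invariant; returning to it only says $F\subset \gamma\cdot I(u,v)$, which places no constraint on how $\gamma$ moves the points of $F$.

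The paper's key refinement is to replace $E_F$ by the much smaller set $E_0=\{\phi:\phi_{|B}=\phi_{0|B}\}$, where $B=\phi_0^{-1}(F)\subset\Upsilon$ for one fixed $\phi_0$. This pins down the embedding \emph{pointwise} on $B$, not merely its image. Then for $\phi\in E_0$ with $\gamma\,\phi\, s^{n}\in E_0$ one gets $\gamma\phi(s^{n}b)=\phi_0(b)=\phi(b)$ for every $b\in B$, which is precisely the statement that $\gamma^{-1}$ acts on $F=\phi(B)$ as the translation of length $n$ along $I(u,v)$. The paper then shows the set $F_N=\{\varphi:\exists\,n\ge N,\ \gamma\in\Gamma,\ \gamma\varphi s^{n}\in E_0\}$ is conull, not by Poincar\'e recurrence, but directly from ergodicity of $\Gamma\times S_\Upsilon$ on $(\scrW,\zeta_\scrW)$ (this is what \cite{BCL} provides): $F_N$ is $\Gamma$-invariant and essentially $s$-invariant, hence null or conull, and it contains a translate of $E_0$. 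Your appeal to Poincar\'e recurrence would additionally require the quotient measure on $\Gamma\backslash\scrW$ to be finite, which is plausible from cocompactness but is neither stated nor needed in the paper.
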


\begin{proof}
Let $E_F$ be the set of $\phi\in\scrW$ such that $F\subset \Im(\phi)$.
Let $\phi_0\in E_F$, and $B=\phi_0^{-1}(F)$. Let $E_{0}=\{\phi\mid \phi_{|B}=\phi_{0|B}\}$. Since there are countably many possible embeddings of $B$ into $X$, up to changing $\phi_0$, we may and shall assume that $\zeta_{\scrW}(E_0)>0$. Let $E$ be the image of $E_{0}$ in $\scrW/\Gamma$. Note that $\scrW/\Gamma$ is equipped with a measure $\zeta_{\scrW/\Gamma}$ such that the quotient map $\scrW\to\scrW/\Gamma$ is measure-class preserving.

Recall that $\scrW$ is equipped with an action of a group $S_\Upsilon$ translating the $\RR$ part of $\Upsilon$, and that this action commutes with $\Gamma$, hence descends to an action on $\scrW/\Gamma$. Let $s\in S_\Upsilon$ be the generator, acting by translating the lines in the direction of the vertex of type $+$. 
For $N>0$, let $F_N=\{\varphi\in\scrW \mid \exists n\geq N\; \exists \gamma\in \Gamma \; \gamma\circ \varphi\circ s^n \in E_0\}$. Note that $F_N$ is $\Gamma$-invariant by construction ; we still denote by $F_N$ its image in $\scrW/\Gamma$. We claim that $F_N$ has full measure. Indeed, assume by contradiction that its complement $F_N^c$ has positive measure.
As the action of $S_\Upsilon$ on $\scrW/\Gamma$ is ergodic \cite[Theorem 7.3]{BCL}, there exists a $k>N$ such that $s^kF_N^c\cap E\neq \varnothing$. Therefore there exists $\varphi\in E$ and $\varphi'\in F_N^c$ with $\varphi' s^k = \varphi$. In other words, $\varphi' s^k \in E_0$: this is a contradiction, and therefore $F_N$ has full measure. Therefore the set $F=\bigcap_{N\in\NN} F_N$ has full measure in $\scrW$. The set of all $(u,v)\in\Delta_\pmop$ satisfying the condition of the Lemma is the image of $F$ by the map $\pi$.
Since this map is measure-class preserving, we get the result.

\end{proof}

\begin{proof}[Proof of Proposition \ref{prop:Poincare}]
Fix $(u,v)\in \Delta_\pmop$ such that Lemma \ref{lem:construction_contracting} holds for $(u,v)$. Let $(F_n)$ be an increasing sequence of finite sets exhausting $I(u,v)$. Fix $\gamma_n\in \Gamma$ such that $\gamma_n^{-1}$ translates $F_n$ by a length at least $n$ in the direction of $u$.

For $\mu^o_\Delta$-almost every $C$, the chamber $C$ is opposite to $u$. Fix such a $C$. Let $A_0$ be an apartment containing $C$ and $u$, and $A_1$ be an apartment containing $\proj_u C$ and $v$. Note that $A_0$ and $A_1$ both contain a sector pointing towards $\proj_u (C)$. Let $o\in A_0\cap A_1$ and $C'=\proj_v(\proj_u C)$. Let $z\in Q(o,C')$, note that $z\in A_1$. To prove the desired convergence, it is enough to prove that for every $n$ large enough we have $z\in Q(o,\gamma_n C)$, that is, $\gamma_n^{-1} z\in Q(\gamma_n^{-1} o,C)$.

Note that $A_1\subset I(u,v)$, so that $o,z\in I(u,v)$. Therefore for $n$ large enough we have $o,z\in F_n$, and $\gamma_n^{-1}$ will translate $o,z$ in $A_1$ towards $u$. For $n$ large enough it follows that $\gamma_n^{-1} z\in Q(o,\proj_u C)$, since $\proj_u C'=\proj_u C$ by definition. Let $\rho:A_0\to A_1$ be the unique isomorphism fixing $A_0\cap A_1$ pointwise. Since $z\in Q(o,C')$, and  $\gamma_n^{-1}$ translates $o$ and $z$ in $A_1$, we have $\gamma_n^{-1} z \in Q(\gamma^{-1}_n o, C')$. Therefore $\rho(\gamma_n^{-1} z)\in Q(\rho(\gamma_n^{-1}o),\rho(C'))$. But since $\gamma_n^{-1}o,\gamma_n^{-1}z\in Q(o,\proj_u C)$ we have $\rho(\gamma_n^{-1} z)=\gamma_n^{-1} z$ and $\rho(\gamma_n^{-1} o)=\gamma_n^{-1} o$. Also, since $C$ is the chamber of $A_0$ opposite $u$ and at closest distance from $\proj_u C$, and $C'$ is the chamber of $A_1$ satisfying the same condition, we get that $\rho(C)=C'$. Therefore, we obtain $\gamma_n^{-1} z\in Q(\gamma_n^{-1}o,C')$, which is the desired result.
\end{proof}

In fact, it is possible to obtain a much richer set of sequences, following any projectivity sequences.  Recall that $\Aut(\Upsilon)$ is a subgroup of the product $\Aut(T)\times \ZZ$. For every $(u,v)\in \Delta_\pmop$, choosing and embedding  in $\scrW$ from $\Upsilon$ to the interval $I(u,v)$ gives us  an action of $\Aut(\Upsilon)$ on $I(u,v)$, and in particular of $p_1^{-1}(\Pi)$ (where $\Pi$ is the projectivity group and $p_1$ the first projection). This action also defines actions of $\Pi$ on $\Res(u)$ and $\Res(v)$. 

\begin{proposition}\label{prop:Poincare2}
For every projectivity $p\in\Pi$, and almost every $(u,v)\in \Delta_\pm^\op$,  there exists a sequence $(\gamma_n)\in \Gamma^\NN$ such that for  every $C\in \Delta$ which is opposite $u$, the sequence $\gamma_n C$ converges to the chamber $p\circ \proj_v(\proj_u(C))$.
\end{proposition}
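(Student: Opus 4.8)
The plan is to mimic the proof of Proposition~\ref{prop:Poincare}, but to produce elements of $\Gamma$ that act on finite parts of $I(u,v)$ as a singular translation \emph{twisted by a lift of $p$}. Fix once and for all a lift $\tilde p\in p_T^{-1}(\Pi)\cap\Aut(\Upsilon)^0$ of $p$ (such a lift exists because $p_T$ restricted to $\Aut(\Upsilon)^0$ is still onto $\Aut(T)$, and any two such lifts differ by an element of $S_\Upsilon$, which will be immaterial), and let $s$ be the generator of $S_\Upsilon$ translating the $\RR$-factor of $\Upsilon$ towards the $+$-end. Once an embedding $\Upsilon\hookrightarrow I(u,v)$ in $\scrW$ is chosen, $\tilde p$ and $s$ become commuting automorphisms of $I(u,v)$: $s$ is a singular translation towards $v$, while $\tilde p$ induces on $\Res(v)$ (and on $\Res(u)$) exactly the projectivity $p$, by the very construction of the $\Pi$-action on residues in \S\ref{sub:proj}.

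The crux is a twisted version of Lemma~\ref{lem:construction_contracting}: for $m_\pm$-almost every $(u,v)\in\Delta_\pmop$, for every finite $F\subset I(u,v)$ and $N\in\NN$, there exist $n>N$ and $\gamma\in\Gamma$ acting on $F$ as $\tilde p\circ s^n$. Its proof copies that of Lemma~\ref{lem:construction_contracting} word for word, replacing the shift $\phi\mapsto\phi\circ s^n$ by the twisted shift $\phi\mapsto\phi\circ(\tilde p\, s^n)$ (the order is irrelevant since $s$ is central in $\Aut(\Upsilon)^0$). The one new ingredient — and precisely the reason the singular flow was equipped with the measure $\zeta_\scrW$ rather than with the prouniform measure $\mu_{\tilde\scrW}$ — is that $\scrW$ is stable under $p_T^{-1}(\Pi)$ and $\zeta_\scrW$ lies in a $p_T^{-1}(\Pi)$-invariant measure class, so that the \emph{fixed} transformation $\phi\mapsto\phi\circ\tilde p$ of $\scrW/\Gamma$ is non-singular; the iterated factor is still a power of the generator of the $S_\Upsilon$-action, which is ergodic on $\scrW/\Gamma$ by \cite[Theorem 7.3]{BCL}. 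Concretely, with $\phi_0\in\scrW$ such that $F\subset\Im(\phi_0)$, $B=\phi_0^{-1}(F)$ and $E_0=\{\phi\mid\phi|_B=\phi_0|_B\}$ (of positive measure after moving $\phi_0$ in its $\Gamma$-orbit if needed), the set $F_N=\{\phi\mid\exists n\ge N,\ \exists\gamma\in\Gamma:\ \gamma\circ\phi\circ(\tilde p\, s^n)\in E_0\}$ is $\Gamma$-invariant, and if its image in $\scrW/\Gamma$ had a positive-measure complement, then composing with the non-singular map $\phi\mapsto\phi\circ\tilde p$ and invoking ergodicity of $S_\Upsilon$ would force some $s^k$ with $k>N$ to carry that complement into the image of $E_0$ — a contradiction. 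Intersecting over $N$ and pushing forward along the measure-class-preserving map $\pi$ gives the statement.

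Granting this twisted Lemma, one concludes as in Proposition~\ref{prop:Poincare}. Fix a good $(u,v)$, an exhaustion $I(u,v)=\bigcup_n F_n$ by finite sets chosen to grow fast enough that the sets $(\tilde p\, s^n)(F_n)$ also exhaust $I(u,v)$, and pick $\gamma_n\in\Gamma$ acting on $F_n$ as $\tilde p\circ s^n$. For $C\in\Delta$ opposite $u$, put $C'=\proj_v(\proj_u C)$ and choose, as in Proposition~\ref{prop:Poincare}, an apartment $A_1$ with $\proj_u C$ and $v$ in its boundary — so $A_1\subset I(u,v)$ — a base point $o'\in A_1$ lying in an apartment through $C$ and $u$, and $z'\in Q(o',C')$; set $o=\tilde p(o')$ and $z=\tilde p(z')$, which lie in $I(u,v)$ and satisfy $z\in Q(o,p(C'))$. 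For $n$ large, $o,z\in(\tilde p\, s^n)(F_n)$, so $\gamma_n^{-1}$ coincides with $s^{-n}\tilde p^{-1}$ on $\{o,z\}$; hence $\gamma_n^{-1}z=s^{-n}z'$ and $\gamma_n^{-1}o=s^{-n}o'$, and the argument of Proposition~\ref{prop:Poincare} run with base point $o'$ gives $s^{-n}z'\in Q(s^{-n}o',C)$, i.e. $\gamma_n^{-1}z\in Q(\gamma_n^{-1}o,C)$. Letting $z$ range over $Q(o,p(C'))$ we obtain $\gamma_n C\to p\circ\proj_v(\proj_u C)$, first for almost every and then, exactly as in Proposition~\ref{prop:Poincare}, for every $C$ opposite $u$. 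The main obstacle is the twisted recurrence step of the second paragraph, which is the only place a genuinely new input is needed: the quasi-invariance of $\zeta_\scrW$ under $p_T^{-1}(\Pi)$ together with ergodicity of $S_\Upsilon$ on $\scrW/\Gamma$. The rest is bookkeeping — mainly verifying that $\tilde p$ really reads off as $p$ on $\Res(v)$ under the identifications of \S\ref{sub:proj} (the dependence on the chosen embedding $\Upsilon\hookrightarrow I(u,v)$ affects everything only up to conjugation in $\Pi$, consistently with the statement) and that the apartments and base points involved stay inside $I(u,v)$, so that $\tilde p$ may legitimately be applied to them.
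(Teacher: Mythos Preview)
Your proof is correct and follows essentially the same route as the paper: a twisted version of Lemma~\ref{lem:construction_contracting} (the paper's Lemma~\ref{lem:construction2}) followed by the geometric argument of Proposition~\ref{prop:Poincare} with the extra $p$-twist tracked through. The only organizational difference is in the recurrence step: the paper shows directly that the \emph{twisted target set} $E_p=\{\phi:\phi|_B=\phi_0|_B\circ p\}$ has positive $\zeta_\scrW$-measure (using that the fiber measures are Haar on $\tilde\Pi$) and then reruns the ergodicity argument to hit $E_p$, whereas you keep the original target $E_0$ and instead observe that $\phi\mapsto\phi\circ\tilde p$ is non-singular, which reduces your twisted $F_N$ to the untwisted one already handled in Lemma~\ref{lem:construction_contracting}; both arguments rest on the same feature of $\zeta_\scrW$.
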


 The main new ingredient in the proof of Proposition~\ref{prop:Poincare2} is the following lemma,  analogous to Lemma \ref{lem:construction_contracting}.

\begin{lemma}\label{lem:construction2}
Fix a projectivity $p\in \Pi$. 
 For $m_\pm$ every $(u,v)\in \Delta_\pmop$, for every finite $F\subset I(u,v)$ and $N\in \NN$, there exists $n>N$ and $\gamma\in \Gamma$ such that the restriction of $\gamma$  on $F$ induces a translation of length $n$ in the direction of $u$, and the projectivity $p$ on the $T_{u,v}$ factor.
\end{lemma}

\begin{proof}
Let $E_F$ be the set of $\phi\in\scrF$ such that $F\subset \Im(\phi)$.
As in Lemma \ref{lem:construction_contracting}, let $\phi_0\in E_F$, and $B=\phi_0^{-1}(F)$. Let $E_{0}=\{\phi\mid \phi_{|B}=\phi_{0|B}\}$. Up to changing $\phi_0$ we may and shall assume $\zeta_\scrW(B)>0$. 

Since $\tilde \Pi$ is closed in $\Aut(T)$, the set $\tilde \Pi_F^p$ of all $p'\in \tilde \Pi$ which have the same action as $p$ on $F$ is open, and therefore its $\tilde \Pi$-Haar measure is positive. Let $E_{p}=\{\phi\mid \phi_{|B} = \phi_{0|B}\circ p\}$. By construction, for almost every $(u,v)\in\Delta_\pmop$, the disintegration of $\zeta_\scrW$ along $\pi:\scrW\to \Delta_\pmop$ identifies to the Haar measure on $\tilde \Pi$. For $(u',v')\in \Delta_\pmop$, denote this measure by $\zeta^{u',v'}_\scrW$.

Now the set of pairs $(u',v')\in\Delta_\pmop$ such that $F\subset I(u',v')$ has positive $m_\pm$-measure. By the previous discussion, for  every such $(u',v')$, we have $\zeta^{u',v'}_\scrW(E_p)>0$. Integrating this over all pairs $(u',v')$ gives that $\zeta_{\scrW}(E_p)>0$.

The rest of the argument is exactly similar to Lemma \ref{lem:construction_contracting}, replacing $E_0$ by $E_p$.
\end{proof}

\begin{proof}[Proof of Proposition \ref{prop:Poincare2}]
The argument is almost the same as in the proof of Proposition \ref{prop:Poincare}. 
We start by applying Lemma \ref{lem:construction2} and get a sequence $(\gamma_n)$ such that $\gamma_n^{-1}$ translates $F_n$ in the direction of $u$ by a length $N>n$ and acts as $p^{-1}$ on the transverse tree.

Recall that $\Aut(\Upsilon)$ is a subgroup of the product $\Aut(T)\times \ZZ$. 
We know that $\gamma_n^{-1}$ acts on $F_n$ as the pair $(p^{-1},-N)$ for some $n>N$. As in the proof of Proposition \ref{prop:Poincare}, let $C$ be a chamber opposite $u$, let $A_0$ be an apartment containing $C$ and $u$. Let $C'=  (\proj_v(\proj_u(C))$ as above, and $C''=pC'$ (where $p\in \Pi$ acts on $\Res(v)$) and let $A_1$ be an apartment containing $\proj_u(C)$ and $C''$. Note that $A_1$ also contains $C'$ since it contains $v$ and $\proj_u(C)$. Let $o\in A_0\cap A_1$ and let $z\in Q(o,C'')$. 

 For $n$ large enough we have $o,z\in F_n$ so $\gamma_n^{-1}$ will act on $o,z$ as the pair $(p^{-1},-N)$. By similar arguments as before we obtain that $\gamma_n^{-1}z\in Q(\gamma_n^{-1} o,C')$, from which it follows (using again the isomorphism $\rho:A_0\to A_1$) that $\gamma_n^{-1}z\in Q(\gamma^{-1}_n o,C),$ and therefore that $z\in Q(o,\gamma_nC)$, which is the conclusion we wanted.

\end{proof}

\subsection{The detecting flow on the building}\label{sec:detecting}

Our next goal is to improve Proposition \ref{prop:Poincare2} into a convergence in weak-* topology for $L^\infty$ functions (Theorem \ref{thm:convTuvproj}). It turns out that this is not a direct argument: in order to do so, we are led to introduce a new flow (which we call \emph{the detecting flow}), and apply the same kind of recurrence arguments as in the proof of Proposition \ref{prop:Poincare2} to this new flow.

We start with a few notations. Start with $\Sigma$ the model apartment of type $\tld A_2$. In $\Sigma$, we choose a wall $\ell$ that we draw horizontally. We choose an origin of $\ell$ and index each vertex of $\ell$ by an integer accordingly (from left to right). For each $n\in \ZZ$, we denote by $\ell_n$ the line which intersects $\ell$ at the vertex $n$ and such that the oriented angle $(\ell,\ell_n)$ is equal to $\pi/3$. For each $n\in \ZZ$, we consider a half-plane $\alpha_n$ (with an $\tld A_2$ tessellation) that we glue on $\Sigma$ along $\ell_n$. 
We let $Y$ be the result of these gluings. 
Alternatively, we view $Y$ as the following subset of $\RR^3$:
\[ \{(x,y,z)\mid z=0 \} \cup \bigcup_{n\in \ZZ} \{(x,y,z)\mid \sqrt{3}(x-n)+y=0,~z\geq 0\}\]\index{$Y$ \hfill model space for the detecting flow|bb}
It is represented on Figure \ref{fig:detectingFlow} below.

\begin{figure}[h]

\includegraphics[width=15cm]{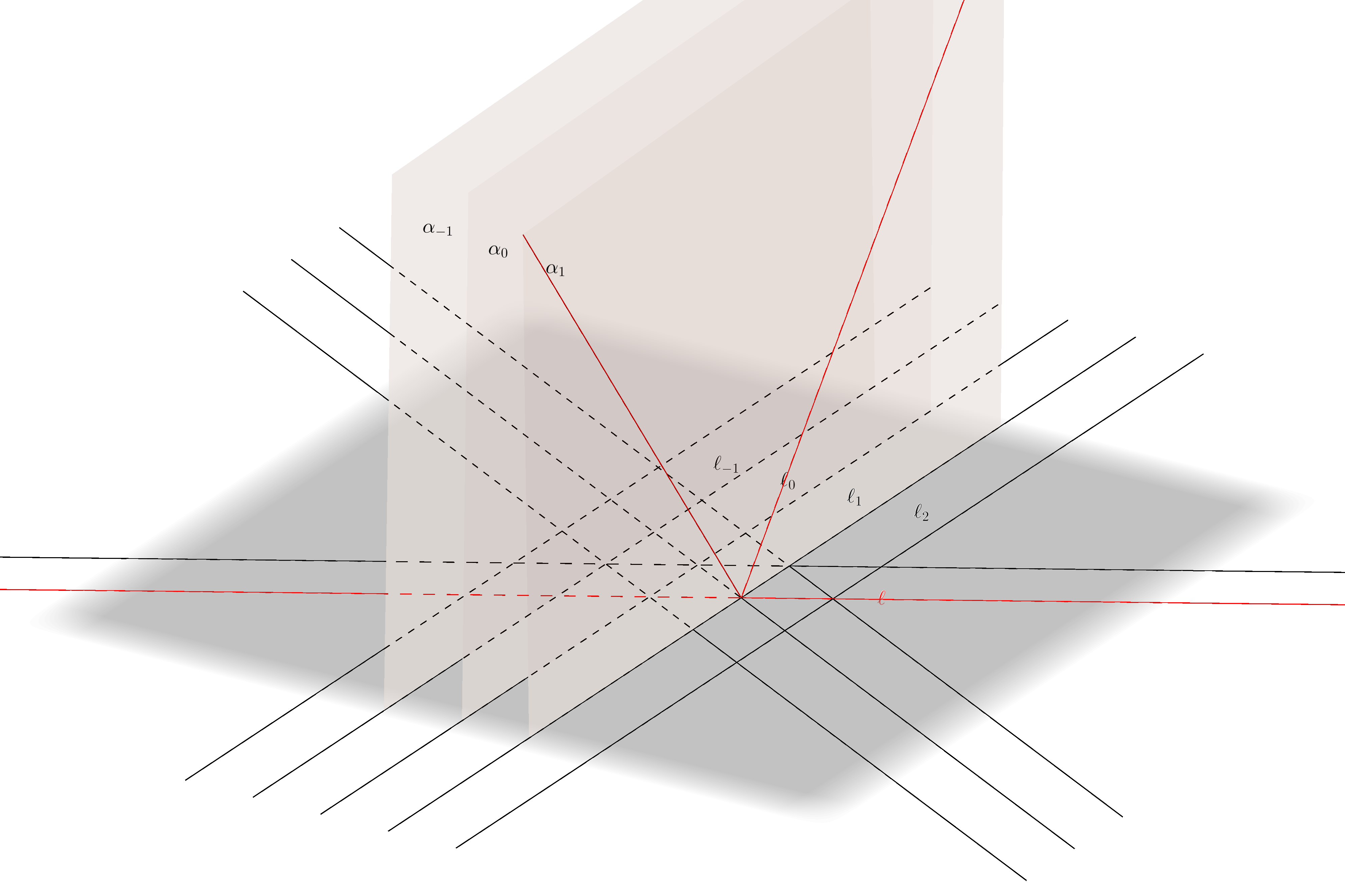}

\caption{The model space for the detecting flow}
\label{fig:detectingFlow}
\end{figure}

\begin{lemma}
The space $Y$ is a convex subset of a wall-tree in an $\tld A_2$ building.
\end{lemma}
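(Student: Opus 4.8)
The plan is to realize $Y$ explicitly as a convex subcomplex of the model wall-tree $\Upsilon$ of a $q$-regular $\tilde A_2$ building; any $q\geq 2$ works, in particular the thickness of the building $X$ under consideration, so the "$\tilde A_2$ building" in the statement can be taken to be $X$ itself. I will use from \S\ref{sec:buildings} that $\Upsilon$ carries a projection $\pi\colon\Upsilon\to T$ onto a $(q+1)$-regular tree $T$, that $\pi^{-1}(\ell^*)$ is an apartment ($\cong\Sigma$) for every bi-infinite geodesic line $\ell^*\subset T$, that $\pi^{-1}(w)=\{w\}\times\RR$ is a singular line for every vertex $w$ of $T$, and — from the crazy-diamond description (Proposition~\ref{prop:cvxWT} and the remark after it) together with the CAT(0) product structure $\Upsilon\cong T\times\RR$ — that $\pi^{-1}(S)$ is a \emph{convex} subcomplex of $\Upsilon$ for every connected subtree $S\subseteq T$ (a geodesic of $T\times\RR$ projects to a geodesic of $T$, which stays in $S$; and alcoves of $\Upsilon$ project to edges of $T$, so $\pi^{-1}(S)$ is a full subcomplex).

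\textbf{Building the subtree.} First I would fix a bi-infinite geodesic line $\ell^*\subset T$ and enumerate its vertices as $(w_n)_{n\in\ZZ}$. Since every vertex of $T$ has degree $q+1\geq 3$, at each $w_n$ there is an edge not on $\ell^*$; extend it to a geodesic ray $\gamma_n$ issuing from $w_n$ and remaining in the component of $T\setminus\{w_n\}$ determined by that edge. Then $\gamma_n\cap\ell^*=\{w_n\}$ and $\gamma_n\cap\gamma_m=\varnothing$ for $n\neq m$ (these components are pairwise disjoint and disjoint from $\ell^*\setminus\{w_n\}$). Put $S=\ell^*\cup\bigcup_{n\in\ZZ}\gamma_n$; this is a connected subtree of $T$, hence convex, so $Y':=\pi^{-1}(S)$ is a convex subcomplex of $\Upsilon$.

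\textbf{Matching $Y'$ with $Y$.} It remains to exhibit a simplicial isomorphism $Y'\cong Y$. The apartment $\pi^{-1}(\ell^*)\cong\Sigma$ contains the pairwise parallel singular lines $\ell_n':=\pi^{-1}(w_n)$; consecutive vertices of $\ell^*$ being at tree-distance $\tfrac{\sqrt3}{2}$, these are exactly the walls of one of the three parallel families of $\Sigma$, and after composing the identification $\pi^{-1}(\ell^*)\cong\Sigma$ with a suitable automorphism of $\Sigma$ we may assume this family is the one making angle $\pi/3$ with a fixed wall $\ell$ and crossing it at its integer vertices, i.e.\ the family $\{\ell_n\}$ of the definition of $Y$ (one checks that, with this spacing, these integer-point walls are \emph{all} the walls of the family). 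For each $n$, extend $\gamma_n$ to a bi-infinite line $\bar\gamma_n\subset T$; then $\pi^{-1}(\bar\gamma_n)$ is an apartment and $\ell_n'=\pi^{-1}(w_n)$ divides it into two half-apartments, one of which is $\pi^{-1}(\gamma_n)=\gamma_n\times\RR$. Thus $\pi^{-1}(\gamma_n)$ is a half-apartment with boundary wall $\ell_n'$, meeting $\pi^{-1}(\ell^*)$ in exactly $\ell_n'$ and meeting the other $\pi^{-1}(\gamma_m)$ trivially. Hence $Y'=\pi^{-1}(\ell^*)\cup\bigcup_n\pi^{-1}(\gamma_n)$ is a copy of $\Sigma$ together with a half-apartment glued along each $\ell_n$ — precisely the combinatorial description of $Y$. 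The chosen isomorphism $\pi^{-1}(\ell^*)\to\Sigma$ then extends over each $\alpha_n$ (a simplicial isomorphism of a wall extends to any half-apartment it bounds, and the extensions over the disjoint pieces $\pi^{-1}(\gamma_n)$ are independent), giving $Y'\cong Y$. Since $Y'$ is convex in $\Upsilon$, the lemma follows.

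\textbf{Main obstacle.} The one genuinely delicate point is the last paragraph: correctly matching the abstract tessellated object $Y$ (half-planes $\alpha_n$ attached along the walls $\ell_n$, which make angle $\pi/3$ with $\ell$ and pass through its integer vertices) with the wall structure of $\Upsilon=T\times\RR$ — concretely, verifying that the $\ell_n$ can be taken, up to $\Aut(\Sigma)$, to be the singular lines $\pi^{-1}(w_n)$ and that $\pi^{-1}(\gamma_n)$ really is a half-apartment attached along $\ell_n$ on a "fresh" side. The convexity of $\pi^{-1}(S)$, by contrast, is essentially formal once the product/crazy-diamond picture of \S\ref{sec:buildings} is in hand.
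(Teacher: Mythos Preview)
Your approach is essentially the same as the paper's: realize $Y$ as $\pi^{-1}(S)$ where $S\subset T$ is a ``comb'' (a bi-infinite line with a ray attached at each vertex), which is exactly what the paper does in a single sentence. Your write-up is considerably more careful than the paper's terse argument—in particular your verification that the fibres $\pi^{-1}(w_n)$ can be matched with the family $\{\ell_n\}$ and that $\pi^{-1}(\gamma_n)$ is a half-apartment glued along $\ell_n$—and this extra care is justified. One small caution: you argue convexity of $\pi^{-1}(S)$ via the CAT(0) product structure, whereas the convexity notion used in \S\ref{sec:measure} is the combinatorial one (isometric inclusion of $1$-skeleta together with fullness). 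These coincide here—given $p,q\in\pi^{-1}(S)$, extend $[\pi(p),\pi(q)]$ to a bi-infinite line in $T$; its preimage is an apartment, and the combinatorial convex hull of $p,q$ in that apartment is a parallelogram projecting onto $[\pi(p),\pi(q)]\subset S$—but it would be cleaner to say this directly rather than appeal to CAT(0) geodesics.
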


\begin{proof}
Fix two opposite vertices $(u',v')\in\Delta_\pmop$ and as before let $T_{u',v'}$ be the tree formed by the set of geodesics between $u'$ and $v'$. In $T_{u',v'}$ consider a "comb", that is, a graph isomorphic to the space $M$ as in Figure \ref{fig:detectingFlowTree} (or in other word, a geodesic line $g$ with attached geodesic rays starting from each vertex of $g$). The union of all geodesics of $X$ corresponding to points of $M$ is then isomorphic to $Y$.
\end{proof}

In particular, $Y$ is ind-$P_\Upsilon$, where $P_\Upsilon$ is the property of being isometric to a convex subset of a model wall tree $\Upsilon$, as defined in the beginning of \S\ref{sec:SoB}. 
Furthermore, the space $X$ is $P_\Upsilon$-symmetric, by Theorem \ref{thm:Psymetric}. Hence we can define the Radon prouniform measure  on $\Isom(Y,X)$.

\begin{definition}
The \emph{detecting flow} is the space $\IR=\Isom(Y,X)$, endowed with the prouniform measure, denoted $\mu_\IR$.
\end{definition}\index{$(\IR,\mu_\IR)$\hfill detecting flow with its prouniform measure|bb}

The space $\IR$ is endowed with an action of $\Aut(X)$ (by post-composition), and a 'shift' $S_Y:\IR\to \IR$ which acts on $Y$ by translating $\Sigma$ and sending $\alpha_n$ on $\alpha_{n+1}$.\index{$S_Y:\IR\to\IR$\hfill Shift map |bb}

Furthermore, let $C_+$ be the chamber at infinity of $\Sigma$, bounded by $\ell$ and one of the lines $\ell_i$, and $C_-$ be its opposite in $\Sigma$. Similarly let $C_n$ be the chamber of the half-plane $\alpha_n$ which is not bounded by $\ell_n$. We define $\pi_+:\IR\to \Delta$ (resp. $\pi_-$, $\pi_n$) by $\pi_+(f)=f(C_+)$ (resp. $\pi_-(f)=f(C_-)$, $\pi_n(f)=f(C_n)$).

Let $Y_k$ be the ball of radius $k$ in $Y$, and $\IR_k=\Isom(Y_k,X)$. Since $Y_k$ is finite, the space $\IR_k$ is equipped with the uniform measure $\mu_{Y_k}$. Let $r_k:\IR\to \IR_k$ be the restriction. We view $L^1(\IR_k,\mu_{Y_k})$ as a subset of $L^1(\IR,\mu_Y)$ by identifying it with its pullback by $(r_k)^*$. 
\index{$Y_k$\hfill ball of radius $k$ in $Y$|bb}
\index{$(\IR_k,\mu_{Y_k})$\hfill $\Isom(Y_k,X)$ with its prouniform measure|bb}

For $g\in\IR$, the line $g(\ell)\subset g(\Sigma)$ joins two points $(u,v)\in \Delta_{\pmop}$. This defines a map $p:\IR\to \Delta_{\pmop}$. The pushforward of $\mu_\IR^o$ by this map is a prouniform measure that we denote $\mu_{\pmop}^o$.

Using Lemma \ref{lem:desintegration_prouniform}, we get that for every $(u,v)\in \Delta_\pmop$, and $o\in I(u,v)$, we have a probability measure $\mu_{u,v}^o$ on $p^{-1}(u,v)$ which is the restricted prouniform measure on the set of embeddings with $\ell$ sent to the line passing through $o$ and joining $u$ to $v$, and these measures satisfy the disintegration formula
$$\mu_\IR^o = \int_{\Delta_\pmop} \mu_{u,v}^o d\mu_\pmop^o (u,v)$$

In the sequel, we will denote $p^{-1}(u,v)$ by $\IR^{u,v}$.
\index{$(\IR^{u,v},\mu_{u,v}^o$)\hfill subset of $\IR$ with fixed endpoints $(u,v)\in\Delta_\pmop$~~~~~~ \\ \hfill with restricted prouniform measure|bb}

Write $o,x_1,x_2,\dots$ (resp. $o,y_1,y_2,\dots$) for the consecutive vertices of the geodesic ray $[ou)$ (resp. $[ov)$).

\begin{definition}
A chamber at infinity $C$ is said to be \emph{$(u,v)$-generic} if it is opposed to both $u$ and $v$.
\end{definition}

\begin{lemma}\label{lem:sectorpi0}
Let $Q=Q(o,C)$ be a sector of $X$ with basepoint $o$, and assume that the alcove of $Q$ containing $o$ is opposite (in $\lk(o)$) to both $x_1$ and $y_1$. Then $C\in \pi_0(\IR_{u,v}^o)$.
\end{lemma}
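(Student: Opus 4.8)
The plan is to exhibit the required $f\in\IR_{u,v}^o$ by an explicit hand construction, the essential content being a local computation in $\lk(o)$.

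First I would set up the reduction. By definition $f\in\IR_{u,v}^o$ means $f\in\Isom(Y,X)$ with $f(0)=o$ and $f(\ell)$ equal to the singular line $W:=[ou)\cup[ov)$; and since $C_0$ is represented by the ``central'' sector $Q_0\subset\alpha_0$ based at $0$ — the one whose two boundary rays avoid $\ell_0$, its germ $e_0$ at $0$ being the middle of the three alcoves of $\alpha_0$ at $0$ — and since $\pi_0(f)$ is the class of the sector $f(Q_0)$, which is based at $o$, the condition $\pi_0(f)=C$ is exactly $f(Q_0)=Q:=Q(o,C)$. Now $\Sigma\cup\alpha_0$ is a convex, $\mathrm{ind}$-$P_\Upsilon$ subcomplex of $Y$, and $X$ is $P_\Upsilon$-symmetric (Theorem \ref{thm:Psymetric}); a straightforward limit argument using $P_\Upsilon$-symmetry shows the restriction $\Isom(Y,X)\to\Isom(\Sigma\cup\alpha_0,X)$ is onto, so it suffices to build one $g\in\Isom(\Sigma\cup\alpha_0,X)$ with $g(0)=o$, $g(\ell)=W$ and $g(Q_0)=Q$. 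Such a $g$ is the same as an apartment $A_0=g(\Sigma)$ through $o$ containing $W$, together with a half-plane $H=g(\alpha_0)$ with $o\in\partial H$, $Q\subset H$, and $Q$ equal to the central sector of $H$ at $o$, glued to $A_0$ along $\partial H=g(\ell_0)$ (one of the two walls of $A_0$ through $o$ other than $W$), with $H\cap A_0=\partial H$ so that $H$ ``sticks out'' of $A_0$ just as $\alpha_0$ does of $\Sigma$.

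The main obstacle is producing the germ of $g$ at $o$, i.e.\ the local picture in $\lk(o)$, which is the incidence graph of a projective plane of order $q$. I would write the germ $c_0$ of $Q$ at $o$ as a flag $(p,L)$ (point $p$ on line $L$), and the directions $\bar x_1,\bar y_1$ of $x_1\in[ou)$, $y_1\in[ov)$ as vertices of $\lk(o)$ of opposite types — say $\bar x_1=p_x$ a point and $\bar y_1=L_y$ a line (the other case is symmetric). Translating the three opposition hypotheses into non-incidences gives $p_x\notin L$, $p\notin L_y$, $p_x\notin L_y$ (the last because $\bar x_1$ is opposite $\bar y_1$), together with $p\in L$. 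I expect that setting $q_1:=L\cap L_y$, $q_2:=\overline{p_xp}\cap L_y$, $M:=\overline{p_xq_2}$ and $\hat A:=\triangle(p_x,q_1,q_2)$ does the job: elementary projective geometry — each non-equality used being one of these hypotheses — should give $q_1\ne q_2$, the points $p_x,q_1,q_2$ non-collinear, $p\notin\{p_x,q_1,q_2\}$, $p\in M$, and $L\notin\{L_y,M,\overline{p_xq_1}\}$. Then $\hat A$ is an apartment (hexagon) of $\lk(o)$ in which $p_x$ and $L_y$ are antipodal, $q_1$ is adjacent to $L_y$, and $M$ is the antipode of $q_1$; and $q_1-L-p-M$ is a geodesic of $\lk(o)$ of length $3$ with middle edge $c_0$ meeting $\hat A$ only in its endpoints. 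In other words $\hat A$ together with this geodesic is an isometrically embedded copy in $\lk(o)$ of the model link $\lk_{\Sigma\cup\alpha_0}(0)$ — a hexagon with a length-$3$ geodesic attached along an antipodal pair of vertices — carrying the germ of $\ell$ to $\{\bar x_1,\bar y_1\}$ and the germ $e_0$ of $Q_0$ to $c_0$. Checking that every pairwise distance matches the model is a finite verification, again using the three hypotheses for the strict inequalities; this is where I expect to spend most of the effort, and it is the only place the opposition hypotheses enter.

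Finally I would globalize the link data, which should be routine building combinatorics, the one delicate point being that the wall $g(\ell_0)$ must be literally the same wall of $X$ inside both $A_0$ and $H$, so I would not construct the two apartments independently. Rather: (i) apply Theorem \ref{thm:isomappart} to get an apartment $A_1\ni Q$ also containing the two alcoves at $o$ with germs $\{q_1,L\}$ and $\{p,M\}$ (this set is isometric to a subset of an apartment and has non-empty interior); then $\lk_{A_1}(o)$ contains $q_1-L-p-M$, so the wall $W_1\subset A_1$ through $o$ with germ $\{q_1,M\}$ bounds a half-plane $H\subset A_1$ for which $Q$ is the central sector at $o$. (ii) Apply Theorem \ref{thm:isomappart} again (or build in stages using Corollaries \ref{cor:sequenceappt} and \ref{cor:root+alcove}) to get an apartment $A_0$ containing $W\cup W_1$ and the hexagonal disk at $o$ spanning $\hat A$; then $\lk_{A_0}(o)=\hat A$ and $W,W_1$ are both walls of $A_0$. (iii) Since $p\notin\hat A$, the alcove $c_0$ is not an alcove of $A_0$ at $o$, so $Q\not\subset A_0$ and hence $H\cap A_0=W_1$. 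Now I would let $g|_\Sigma\colon\Sigma\to A_0$ be the isometry fixing the basepoint, sending $\ell$ to $W$ with the orientation in which the end indexing the antennas $\alpha_n$, $n>0$, goes to $v$, and $\ell_0$ to $W_1$ (exactly one of the two admissible isometries does this), and $g|_{\alpha_0}\colon\alpha_0\to H$ the isometry sending the basepoint to $o$, $\ell_0$ to $W_1$ compatibly, and $Q_0$ to $Q$. The computation of the previous paragraph forces these to agree on $\ell_0$, so they glue to the desired $g$, which then extends to $f\in\Isom(Y,X)=\IR$; then $f\in\IR_{u,v}^o$ and $\pi_0(f)=[f(Q_0)]=[Q]=C$, as required.
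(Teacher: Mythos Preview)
Your reduction to $\Sigma\cup\alpha_0$ via $P_\Upsilon$-symmetry is fine, and the projective-plane computation in $\lk(o)$ is correct and more explicit than anything the paper writes down. The problem is in the globalization, specifically step~(ii). Once you have built $A_1$ in step~(i), the wall $W_1=\partial H$ is a \emph{specific} bi-infinite geodesic of $X$, determined by $A_1$ and nothing else; there is no reason it should lie in any apartment that also contains the fixed line $W$ from $u$ to $v$. Theorem~\ref{thm:isomappart} does not apply to $W\cup W_1$ (it is neither convex nor does it have nonempty interior until you know it sits in an apartment), and the staged approach via Corollaries~\ref{cor:sequenceappt} and~\ref{cor:root+alcove} runs into the same obstruction: already the second vertex of $W_1$ away from $o$ is a choice made inside $A_1$, and the convex hull in $X$ of $W$ together with that vertex need not be flat. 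Local compatibility at $o$ (your hexagon $\hat A$) does not force global compatibility of the two infinite walls.

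The paper avoids this by building in the opposite order. It first produces an apartment $A\supset Q\cup[ou)$ by iterated use of Corollary~\ref{cor:root+alcove}, then takes $\alpha\subset A$ to be the half-apartment with $Q$ as its central sector, so that the gluing wall $\partial\alpha$ is \emph{by construction} already in an apartment with $[ou)$. A second iteration gives $A'\supset\alpha\cup[ov)$, and then the two ``outer'' halves $\alpha'$ and $\beta$ share the boundary $\partial\alpha$ and are distinct (this is where $\bar x_1$ opposite $\bar y_1$ is used), hence $\alpha'\cup\beta$ is an apartment containing $W$. In your language this is exactly $A_0$, and $H=\alpha$; the point is that $W_1=\partial\alpha$ was never chosen independently of $W$. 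Your argument can be repaired by swapping steps~(i) and~(ii): first build an apartment through $Q$ and $[ou)$, read off $W_1$ from \emph{that} apartment, and only then assemble $A_0$ from the remaining half together with $[ov)$.
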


A chamber $C$ such that $Q(o,C)$ satisfies the condition of Lemma \ref{lem:sectorpi0} will be said to be \emph{$(u,v)$-generic at $o$}.

\begin{remark}
    Since $\IR_{u,v}^o$ is the set of embeddings containing the geodesic line from $u$ to $v$ passing through $o$, this set does not change when choosing a different origin. In particular for $n>0$ we have $C\in\pi_0 (\IR_{u,v}^{x_n})$ if and only if $C$  is $(u,v)$-generic at $x_n$. Applying the shift $S_\Upsilon$, this implies that $C\in \pi_n(\IR_{u,v}^o)$ if and only if $C$ is $(u,v)$-generic at $x_n$.
\end{remark}

\begin{proof}
From Corollary \ref{cor:root+alcove} we know that if $\alpha$ is a half-apartment, and $c$ is an alcove intersecting $\partial\alpha$, then $c\cup \alpha$ is contained in an apartment. Using this fact, one can prove by induction that there exists an apartment $A_n$ containing $Q\cup [o,x_n]$. Therefore, by Corollary \ref{cor:sequenceappt} there exists an apartment $A$ containing $Q\cup [o,u)$.  Let $\alpha$ be the half-apartment of $A$ containing $Q$, bounded by a wall passing through $o$ but not adjacent to $Q$. Let $\alpha'$ be its complement in $A$, so that $\alpha'$ contains $[ou)$. 

Using again Corollary \ref{cor:root+alcove}, we see by induction that there exists an apartment $A'_n$ containing $\alpha\cup [o,y_n]$, and therefore there exists an apartment $A'$ containing $\alpha\cup [o,v)$. Let $\beta$ be the complement of $\alpha$ in $\alpha'$, so that $[o,v)\subset \beta$. Note that $\beta$ does not intersect $\alpha'$, since $x_1$ and $y_1$ are opposite in $\lk(o)$.
Since $\beta$ is a half-apartment bounded by $\partial \alpha'=\partial \alpha$, with empty intersection with $ \alpha'$, it follows that $\beta\cup \alpha'$ is also an apartment. 

Therefore $\alpha\cup \alpha'\cup \beta$ is isometric to a convex subspace $Y'$ of $Y$. Fixing such an isometry $f':Y'\to \alpha\cup \alpha'\cup \beta$, one can extend $f'$ to an isometric embedding $f':Y\to X$, which satisfies $Q=\pi_0(f)$.

\end{proof}

Recall the definition of $E_n(v)$ from \S\ref{sec:martingales} for $n>0$. We extend the definition to $n=0$ by defining $E_0(v)$ as the set of chambers which are $(u,v)$-generic at $0$.

\begin{proposition}\label{prop:pinac}
Let $(u,v)\in \Delta_\pm^\op$, and $o\in I(u,v)$.
The measure $(\pi_0)_*\mu_{u,v}^o$ is absolutely continuous with respect to $\mu_\Delta^o$. 
Furthermore 
for any $n\geq 0$ the measure $(\pi_n)_*\mu_{u,v}^o$ is equal to the normalization of the restriction of $\mu_\Delta^o$ to $E_n(v)$.
\end{proposition}

\begin{proof}[Proof of Proposition \ref{prop:pinac}]

Let $E=E_0(v)$ be the clopen subset of $\Delta$ of chambers which are $(u,v)$-generic at $o$. We first prove that $\mu^o$ and $(\pi_0)_*\mu_{u,v}^o$ are equivalent in restriction to $E$. In particular this proves the first part of the Proposition.

Recall that $\Lambda\subset\Sigma$ is a model sector (based at $0$), so that $\mu^o_\Delta$ is the prouniform measure on the set of embeddings $\Isom(\Lambda,X)$ which send $0$ to $o$. Let $c$ be the alcove of $\Lambda$ containgin $0$. Let $\Lambda'\subset \Lambda$ be a finite convex subcomplex containing $c$, and let $\alpha\in \Isom(\Lambda',X)$ sending $0$ to $o$. By definition, $\mu_\Delta^o(\Isom(\Lambda,X)^\alpha)$ is independent of $\alpha$, equal to a constant say $K$.  Now, view $\Lambda$ as the sector of $Y$ based at $0$ and defining $\pi_0$. Say that $\alpha$ is \emph{admissible} if $\alpha(c)$ is opposite to both $x_1$ and $y_1$.
By Lemma \ref{lem:sectorpi0}, we have $\pi_0(\Isom(Y,X)^\alpha\cap \IR_{u,v}^o)$ which is either empty or equal to $\pi_0(\Isom(Y,X)^\alpha)$, depending on whether $\alpha(c)$ is admissible or not. It follows from Corollary \ref{cor:measurepreserving}
  that $(\pi_0)_*\mu_{u,v}^o (\Isom(Y,X)^\alpha)$ is either equal to $0$ (if $\alpha$ is not admissible) or to some constant independent of $\alpha$ (if it is). Thus, we have 
  $(\pi_0)_*\mu_{u,v}^o (\Isom(Y,X)^\alpha)= K'$ where $K'$ is the proportion of admissible $\alpha$, amongst all the possible ones which send $0$ to $o$.

Since $K$ does not depend on $\Lambda'$, we deduce that $\frac{d(\pi_0)_*(\mu_{u,v}^o)}{d\mu^o_\Delta}(C) = 0$ if $C$ is not $(u,v)$-generic at $o$, and $\frac {K'}K$ if it is. This proves the claim.

Now let us prove the second statement. For $n=0$ this is what we just did. For $n>0$, we claim first, $E_n(v)$ is exactly the set of chambers which are $(u,v)$-generic at $x_n$. Indeed, if $C\in E_n(v)$, then $Q(o,C)$ contains the geodesic ray $[o x_n]$, and it follows that $C$ is opposite $v$ at $x_n$. The ray $[x_n \pr_+(C)]$ starts with a segment of the same type as $[x_n x_{n+1}]$, but which is not equal to this segment. Hence these two segments are opposite in $\lk(x_n)$, which proves that $C$ is $(u,v)$-generic at $x_n$. Conversely, if $C$ is $(u,v)$-generic at $x_n$, then the union of the rays $[o x_n]\cup[x_n \pr_+(C)) $ has an angle $\pi$ at $x_n$, so is still a geodesic ray. Hence $C\in \Omega_o(\lambda_n(v))$. Furthermore it is clear that $x_{n+1}\not\in Q(o,C)$, hence we do have $C\in E_n(v)$.

Therefore $(\pi_n)_*\mu$ is supported on $E_n(v)$. Using the same argument as for $\pi_0$, we see that $\frac{d(\pi_n)_*\mu_{u,v}^o}{d\mu_\Delta^o}$ is in fact constant (and $>0$) on $E_n(v)$. It follow that $(\pi_n)_*\mu_{u,v}^o$ is equal to a constant times the restriction of $\mu_{u,v}^o$ on $E_n(v)$ ; since it is a probability measure we get the result.

\end{proof}

Finally, we are interested in what happens when we change the basepoint $o$.

\begin{lemma}\label{lem:uvgeneric}
For any $(u,v)$-generic chamber $C$, there exists $o$ between $u$ and $v$ such that $C$ is $(u,v)$-generic at $o$.
\end{lemma}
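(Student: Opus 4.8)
The plan is to translate ``$(u,v)$-generic at $o$'' into a configuration that lives entirely inside a single wall-tree, where it reduces to a median computation in a tree.

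First I would unwind the definition. For $o\in I(u,v)$ the two singular rays $[ou)$ and $[ov)$ form the unique singular line $\ell_o$ through $o$, so their directions $x_1,y_1$ are the two opposite ends of $\ell_o$ inside the generalised triangle $\lk(o)$; the alcove of $Q(o,C)$ at $o$ is the chamber $c$ of $\lk(o)$ whose two vertices are the direction of $[o\pr_+(C))$ (of the same type as $y_1$) and the direction of $[o\pr_-(C))$ (of the same type as $x_1$). In a generalised triangle a point and a line are non-incident exactly when they are at distance $3$, and the angle at $o$ between two rays equals the distance of their directions in $\lk(o)$; hence ``$c$ opposite $x_1$'' means that $[ou)$ and $[o\pr_+(C))$ leave $o$ at angle $\pi$, i.e. that $[ou)\cup[o\pr_+(C))$ is a geodesic line, and likewise ``$c$ opposite $y_1$'' means $[ov)\cup[o\pr_-(C))$ is a geodesic line. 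So I must produce a vertex $o$ lying simultaneously on a singular line from $u$ to $v$, one from $u$ to $\pr_+(C)$, and one from $v$ to $\pr_-(C)$.

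Now think of $\partial X$ as the incidence geometry of a projective plane, with $\Delta_-$ its points and $\Delta_+$ its lines. Being opposite to $C$ means, for $u$, that the point $u$ is off the line $\pr_+(C)$, and, for $v$, that the point $\pr_-(C)$ is off the line $v$; in particular $u$ is opposite $\pr_+(C)$ and $v$ is opposite $\pr_-(C)$. Put $N$ for the line joining the points $u$ and $\pr_-(C)$, and $M$ for the point where the lines $\pr_+(C)$ and $v$ meet; a one-line check using $u\notin\pr_+(C)$ and $\pr_-(C)\notin v$ gives $M\notin N$, so $(M,N)$ is an opposite pair and the interval $I(M,N)$ is defined, with wall-tree $T:=T_{M,N}$ and $\partial T\cong\Res(N)\cong\{\text{points of }N\}$. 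The three points $u$, $\pr_-(C)$ and $P':=N\cap v$ lie on $N$ and are pairwise distinct, giving three distinct ends $\xi_u,\xi_-,\xi'$ of $T$. Using $I(M,N)\cong T\times\RR$ (the $\RR$-factor being the $M$--$N$ direction), the geodesic line of $T$ with ends corresponding to $P,P''\in N$ spans an apartment of $X$ whose boundary hexagon is the triangle $\{P,P'',M\}$. Applying this to the pairs $\{\xi_u,\xi_-\}$, $\{\xi_-,\xi'\}$, $\{\xi_u,\xi'\}$ produces apartments $A,B,A'$ with boundary triangles $\{u,\pr_-(C),M\}$, $\{\pr_-(C),P',M\}$, $\{u,P',M\}$ respectively; in each case the relevant pair is opposite at infinity (the side of the triangle opposite the vertex $u$ is the line $\overline{\pr_-(C)\,M}=\pr_+(C)$, the side opposite $\pr_-(C)$ in the second triangle is $\overline{P'M}=v$, and the side opposite $u$ in the third is again $\overline{P'M}=v$). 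Letting $c\in T$ be the median of $\xi_u,\xi_-,\xi'$, $c$ lies on each of the three geodesic lines above, so the corresponding wall $W:=\{c\}\times\RR$ --- a singular line from $M$ to $N$ --- lies in $A\cap B\cap A'$.

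Finally I would take any vertex $o\in W$. Since $W\subset A'$ and $u,v\in\partial A'$, we get $o\in A'\subset I(u,v)$. Since $W\subset A$ and $u,\pr_+(C)$ are opposite ends of the apartment $A$, the wall of $A$ through $o$ in that parallel family is exactly $[ou)\cup[o\pr_+(C))$, hence a geodesic line; and from $W\subset B$ the wall $[ov)\cup[o\pr_-(C))$ is a geodesic line. By the first step, $C$ is $(u,v)$-generic at $o$, which proves the lemma. The one genuinely delicate point is the middle paragraph: recognising that the three required singular lines can be forced through a common vertex, which becomes transparent only once one notices that $A$, $B$ and $A'$ all live inside the single wall-tree $I(M,N)$, collapsing the problem to the existence of the median of three ends of a tree. (Alternatively one can build $A,B,A'$ and their common wall by hand from half-apartments, using Corollaries~\ref{cor:sequenceappt} and~\ref{cor:root+alcove}, as in the proof of Lemma~\ref{lem:sectorpi0}.)
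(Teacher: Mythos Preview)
Your proof is correct and takes a genuinely different route from the paper's.

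The paper fixes an arbitrary singular line $\ell$ from $u$ to $v$, walks along it, and picks $o$ to be the last vertex $\ell_{i_0}$ for which $u$, $\ell_{i_0}$ and $C$ still lie in a common apartment. Oppositeness to $x_1$ is then automatic, and oppositeness to $y_1$ is forced by the maximality of $i_0$ via a short contradiction using Corollary~\ref{cor:root+alcove}. It is a local, one-line argument: one geodesic, one maximal index, one half-apartment flip.

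Your argument is global and structural. You reinterpret the condition as the simultaneous straightness of two singular lines $[ou)\cup[o\,\pr_+(C))$ and $[ov)\cup[o\,\pr_-(C))$, then manufacture the auxiliary opposite pair $(M,N)$ in the spherical building at infinity using the projective-plane incidences, and observe that all three apartments you need are slices of the single wall-tree $I(M,N)$. The existence of $o$ then collapses to the existence of the median of three ends in a tree. This makes the construction explicit and exposes why the statement is really a tree fact; it also meshes nicely with the wall-tree formalism used elsewhere in the paper. The trade-off is that you introduce several auxiliary boundary simplices and must verify the incidence checks ($M\notin N$, $\overline{\pr_-(C)\,M}=\pr_+(C)$, $\overline{P'M}=v$, pairwise distinctness of $u,\pr_-(C),P'$), whereas the paper's maximality argument needs essentially none of this bookkeeping. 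Note also that the paper's proof gives the slightly stronger conclusion that a suitable $o$ exists on \emph{every} singular line from $u$ to $v$, though this extra is not used downstream.
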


\begin{proof}
Let $\ell$ be a geodesic line between $u$ and $v$. Let us number the vertices of $\ell$ by $(\ell_i)_{i\in \ZZ}$ in such a way that $\lim_{i\to +\infty} \ell_i =v$. 
 There exists an apartment containing $u$ and $C$ and every $\ell_i$, for $i$ small enough. However, if for every $i\in \ZZ$ there existed an apartment containing $(u\ell_i]$ and $C$, then there would be an apartment containing $\ell$ and $C$. This is not possible as $C$ is opposite both $u$ and $v$.

It follows that there exists a maximal $i_0$ such that $u,\ell_{i_0}$ and $C$ are in a common apartment. We define $o=\ell_{i_0}$ and prove that $C$ is $(u,v)$-generic at $o$. By construction $o$ is in some apartment containing $u$ and $C$, and $u$ is opposite $C$, so the projection of $C$ and $u$ on $\lk(o)$ are opposite.

On the other hand, the geodesic ray $[o,v)$ is the line $\ell_{i_0},\ell_{i_0+1},\dots$. Let $y=\ell_{i_0+1}$ Let $c$ be the alcove which is the projection of $C$ on $\lk(o)$ and $x$ be the vertex of $c$ of the same type as $\ell_{i_0-1}$. We want to prove that $x$ is opposite $y$ in $\lk(o)$.

Assume not. Since $\lk(o)$ is of type $A_2$, and $x$ and $y$ have distinct type and are not opposite, they are adjacent.
Start with an apartment $A_0$ containing $u$, $o$ and $C$ (which exists by assumption). By construction, $x$ is also a vertex in $A_0$. Let $\hat h$ be the wall of $A_0$ containing $[ux)$, and $h$ be the half-space of $A_0$ containing $C$. Since $y$ is adjacent to $y$, we see that $y$ is a vertex which is contained in an alcove which has an edge in $\hat h$. Therefore by Corollary \ref{cor:root+alcove} there exists an apartment $A$ containing $h$ and $y$. In particular, $A$ contains $u,y $ and $C$. This contradicts the maximality of $i_0$.

\end{proof}

\begin{proposition}\label{prop:sumpi0}
The sum over all $o\in I(u,v)$ of the measure classes of $(\pi_0)_*\mu_{u,v}^o$ is equivalent to $\mu_\Delta^o$.
\end{proposition}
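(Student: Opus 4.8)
The statement should be read with the understanding that, by Proposition~\ref{prop:changebasepoint}, the measure class of $\mu_\Delta^{o}$ does not depend on $o$; write $[\mu_\Delta]$ for this common class. Since $I(u,v)^0$ is countable and each $(\pi_0)_*\mu_{u,v}^o$ is a probability measure, the sum of measure classes $\sum_{o\in I(u,v)^0}[(\pi_0)_*\mu_{u,v}^o]$ is represented by the finite measure $\nu:=\sum_{o}c_o\,(\pi_0)_*\mu_{u,v}^o$ for any weights $c_o>0$ with $\sum_o c_o<\infty$, and a Borel set $N\subseteq\Delta$ is $\nu$-null if and only if $(\pi_0)_*\mu_{u,v}^o(N)=0$ for every vertex $o\in I(u,v)$. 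The plan is to show $\nu\ll\mu_\Delta$ and $\mu_\Delta\ll\nu$.

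The inclusion $\nu\ll\mu_\Delta$ is immediate: if $\mu_\Delta(N)=0$ then $\mu_\Delta^o(N)=0$ for every vertex $o\in I(u,v)$ (same measure class), and $(\pi_0)_*\mu_{u,v}^o\ll\mu_\Delta^o$ by Proposition~\ref{prop:pinac}, so $(\pi_0)_*\mu_{u,v}^o(N)=0$ for all such $o$, hence $\nu(N)=0$.

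For the reverse inclusion, suppose $\nu(N)=0$, i.e.\ $(\pi_0)_*\mu_{u,v}^o(N)=0$ for every vertex $o\in I(u,v)$. For each such $o$ let $G_o\subseteq\Delta$ be the clopen set of chambers that are $(u,v)$-generic at $o$. The Radon--Nikodym computation carried out in the proof of Proposition~\ref{prop:pinac} shows that $(\pi_0)_*\mu_{u,v}^o$ and $\mu_\Delta^o$ are equivalent in restriction to $G_o$; hence $\mu_\Delta^o(N\cap G_o)=0$, and therefore $\mu_\Delta(N\cap G_o)=0$. Summing over the countable set $I(u,v)^0$ gives $\mu_\Delta\bigl(N\cap\bigcup_{o}G_o\bigr)=0$. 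By Lemma~\ref{lem:uvgeneric}, $\bigcup_{o}G_o$ contains every $(u,v)$-generic chamber, i.e.\ every chamber opposite both $u$ and $v$; and that set has full $\mu_\Delta$-measure (for a fixed vertex at infinity the chambers not opposite to it form a $\mu_\Delta$-null set, as recalled in the proof of Proposition~\ref{prop:Poincare}, and the same applies to $v$). Consequently $\mu_\Delta(\Delta\setminus\bigcup_o G_o)=0$, and combining the two displays yields $\mu_\Delta(N)=0$. This proves $\mu_\Delta\ll\nu$, and with the previous paragraph, $\nu\sim\mu_\Delta$, which is the assertion.

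The two substantive ingredients are Proposition~\ref{prop:pinac} (basepoint-by-basepoint absolute continuity, together with equivalence on the generic locus $G_o$) and Lemma~\ref{lem:uvgeneric} (the sets $G_o$, $o\in I(u,v)$, jointly cover the full-measure set of $(u,v)$-generic chambers); both are already in hand. The remaining points are bookkeeping: interpreting ``sum of measure classes'' as the class of a genuine $\sigma$-finite --- indeed finite --- measure, which is legitimate because the index set is countable and each term is a probability measure; and using that all the $\mu_\Delta^o$ ($o$ a vertex) lie in the single class $[\mu_\Delta]$, so that the countable-union step is valid. I do not expect a real obstacle; if anything, one should be slightly careful that Lemma~\ref{lem:uvgeneric} produces a basepoint $o$ that is a \emph{vertex} of $I(u,v)$ (which it does, being a vertex of a singular line between $u$ and $v$), so that $(\pi_0)_*\mu_{u,v}^o$ actually occurs in the sum.
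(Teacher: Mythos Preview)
Your proof is correct and follows essentially the same approach as the paper's: absolute continuity of each $(\pi_0)_*\mu_{u,v}^o$ with respect to $\mu_\Delta^o$ via Proposition~\ref{prop:pinac}, equivalence on the generic locus $G_o$, and then Lemma~\ref{lem:uvgeneric} to see that $\bigcup_o G_o$ covers the full-measure set of $(u,v)$-generic chambers. The paper's version is terser (three sentences), while you have spelled out the bookkeeping around the phrase ``sum of measure classes'' and the role of Proposition~\ref{prop:changebasepoint} more carefully, which is all to the good.
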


\begin{proof}
Let $\nu$ be a measure in the measure class defined by the sume of $(\pi_0)_*\mu_{u,v}^o$ 
 By Proposition \ref{prop:pinac} the measure $\nu$ is absolutely continuous with respect to $\mu^o_\Delta$. In fact, the same proposition shows that a subset $A\subset \Delta$ is of positive $\nu$-measure if and only if its intersection with $E_0(v)$ is of positive $\mu_\Delta^o$-measure for some $o$. But $E_0(v)$ is the set of chambers which are $(u,v)$-generic at $o$.
 
  But by Lemma \ref{lem:uvgeneric}, every $(u,v)$-generic chamber is $(u,v)$-generic at $o$ for some $o\in I(u,v)$. Assume that $A$ has $\mu^o_\Delta$-positive measure. Since the set of $(u,v)$-generic chambers is of full measure, by countability there exists an $o\in I(u,v)$ such that the set of $(u,v)$-generic chambers at $o$ has a positive measure intersection with $A$. This proves that $A$ has positive $\nu$-measure. Hence the measure class of $\nu$ is the same as the measure class of $\mu_\Delta^o$.
  
\end{proof}

\begin{remark}
The measure $ \sum_{o\in I(u,v)}(\pi_0)_*\mu_{u,v}^o$ is not locally finite. However, the measure class is well-defined.
\end{remark}

\subsection{Contracting properties} \label{subsec:CP}

As explained above, the goal of the detecting flow is to improve on Proposition \ref{prop:Poincare2} to a convergence for bounded measurable functions. This is what we finally do in this section: our goal is to prove Theorem \ref{thm:convTuvproj} below.


\begin{lemma}\label{cor:weak*conv-uv}
For every $f\in L^\infty(\Delta)$, for $\mu_\pmop$-almost every $(u,v)\in \Delta_\pmop$, for every $o$ between $u$ and $v$ and for every $\varphi\in L^1(\IR_{u,v}^o)$ we have
$$\int \varphi(y) f(\pi_n y) d\mu_{u,v}^o(y)\longrightarrow \int \varphi(y) \pi_\infty^* (f) d\mu_{u,v}^o(y)$$ 
\end{lemma}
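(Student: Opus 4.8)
The plan is to reduce the stated weak-$*$ convergence $f\circ\pi_n\rightharpoonup\pi_\infty^*f$ in $L^\infty(\IR_{u,v}^o)$ to the martingale statement of Lemma~\ref{lem:MartingaleEnm}, using Proposition~\ref{prop:pinac} to recognise the push-forwards $(\pi_n)_*\mu_{u,v}^o$ as normalised restrictions of $\mu_\Delta^o$, and the disintegration of $\mu_{u,v}^o$ along restriction-to-balls maps in order to localise the test function.

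Since $\|f\circ\pi_n\|_\infty\le\|f\|_\infty$ for all $n$, the family $(f\circ\pi_n)_n$ is bounded in $L^\infty(\IR_{u,v}^o)$, so it suffices to check $\langle f\circ\pi_n,\varphi\rangle\to\langle\pi_\infty^*f,\varphi\rangle$ on a dense subset of $L^1(\IR_{u,v}^o)$. Realising $\IR_{u,v}^o$ as $\Isom(Y,X)^\alpha$, where $\alpha$ sends the horizontal wall $\ell\subset Y$ (with its origin mapped to $o$) to the singular line from $u$ to $v$ through $o$, and using that $\ell$ is a convex $\mathrm{ind}$-$P_\Upsilon$ subcomplex of $Y$ with $Y=\bigcup_k Y_k$, Lemma~\ref{lem:densityL1_gen} shows that $\bigcup_k r_k^*L^1(\IR_k,\mu_{Y_k}^\alpha)$ is dense in $L^1(\IR_{u,v}^o)$. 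As $Y_k$ is finite and $X$ locally finite, the relevant fibre $\Isom(Y_k,X)^{\alpha|_{\ell\cap Y_k}}$ is finite and $L^1(\IR_k,\mu_{Y_k}^\alpha)$ is spanned by indicators of singletons, so it is enough to treat $\varphi=\mathbf 1_E$ with $E=r_k^{-1}(\bar y)$ a cylinder set, for $k$ arbitrarily large and $\bar y$ in that fibre.

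Fix such a $k$ (large, so that $Y_k\cap\Sigma$ contains an alcove) and $\bar y$; put $\IR^{\bar y}=\{g\in\IR_{u,v}^o:g|_{Y_k}=\bar y\}$ with its restricted prouniform measure $\mu^{\bar y}$. By Lemma~\ref{lem:desintegration_prouniform}, $\langle f\circ\pi_n,\mathbf 1_E\rangle=\mu_{u,v}^o(E)\int_{\IR^{\bar y}}f\circ\pi_n\,d\mu^{\bar y}$ and likewise for $\pi_\infty^*f$, so the claim reduces to $\int_{\IR^{\bar y}}f\circ\pi_n\,d\mu^{\bar y}\to\int_{\IR^{\bar y}}\pi_\infty^*f\,d\mu^{\bar y}$. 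Now $\bar y$ determines $g|_\Sigma$ for every $g\in\IR^{\bar y}$, since an isometric embedding of the model apartment is pinned down by its restriction to any alcove; I would then establish a relative form of Proposition~\ref{prop:pinac}: $\bar y$ determines a vertex $\xi_0$ and an integer $m\ge0$, \emph{independent of $n$}, such that for all $n$ large enough that $\alpha_n$ and the alcoves of $Y$ bordering $C_n$ avoid $Y_k$, the measure $(\pi_n)_*\mu^{\bar y}$ is the normalised restriction of $\mu_\Delta^o$ to $E_{m,n}(\xi_0)$ — proved just as Proposition~\ref{prop:pinac}, via the apartment-gluing argument of Lemma~\ref{lem:sectorpi0}, but carried out relative to the fixed half-apartments of $g(\Sigma)$ rather than a single geodesic. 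Granting this, Lemma~\ref{lem:MartingaleEnm} gives
\[
\int_{\IR^{\bar y}}f\circ\pi_n\,d\mu^{\bar y}=\frac{1}{\mu_\Delta^o(E_{m,n}(\xi_0))}\int_{E_{m,n}(\xi_0)}f\,d\mu_\Delta^o\ \xrightarrow[n\to\infty]{}\ \frac{1}{\mu_v^o(\Omega_{m,\infty}(\xi_0))}\int_{\Omega_{m,\infty}(\xi_0)}f\,d\mu_v^o,
\]
and, by the definition of $\pi_\infty$ (whose law on $\IR^{\bar y}$ is the normalised restriction of $\mu_v^o$ to the limiting shadow $\Omega_{m,\infty}(\xi_0)\subset\Res(v)$), the right-hand side equals $\int_{\IR^{\bar y}}\pi_\infty^*f\,d\mu^{\bar y}$, which closes the argument.

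The step I expect to be the main obstacle is exactly the relative version of Proposition~\ref{prop:pinac} invoked above: one must verify that conditioning on $g|_\Sigma$ cuts the $n$-th ``sphere'' $E_n(v)$ down to a set of the shape $E_{m,n}(\xi_0)$ with $m,\xi_0$ genuinely independent of $n$, and that these sets contract, in the appropriate sense, onto the shadow $\Omega_{m,\infty}(\xi_0)$ carrying the law of $\pi_\infty|_{\IR^{\bar y}}$. This is a combinatorial statement about how the half-plane $g(\alpha_n)$ is forced to sit relative to $[ov)$ once $g(\ell_n)\subset g(\Sigma)$ is fixed; I anticipate it is routine but delicate, and it is the point where the geometry set up in Lemmas~\ref{lem:sectorpi0}--\ref{lem:uvgeneric} really gets used.
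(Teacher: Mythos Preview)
Your overall architecture is the same as the paper's: reduce by density (Lemma~\ref{lem:densityL1_gen}) to the indicator $\varphi$ of a single cylinder $\IR^{\bar y}=\{g:g|_{Y_k}=\bar y\}$, identify $(\pi_n)_*$ of this indicator with a normalised restriction of $\mu_\Delta^o$ to a set of the form $E_{k,n}(\cdot)$, and then invoke Lemma~\ref{lem:MartingaleEnm}. The target ``relative Proposition~\ref{prop:pinac}'' you state is exactly what is needed, and once it is in hand the conclusion follows as you wrote.

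However, the justification you give for that relative statement contains a genuine error. You assert that ``$\bar y$ determines $g|_\Sigma$ for every $g\in\IR^{\bar y}$, since an isometric embedding of the model apartment is pinned down by its restriction to any alcove''. This is false in a thick building: an alcove of $X$ lies in uncountably many apartments, so knowing $g$ on $Y_k\cap\Sigma$ (even together with $g|_\ell$, which is fixed by the condition $g\in\IR_{u,v}^o$) does \emph{not} determine $g|_\Sigma$. Consequently your plan to run the gluing argument of Lemma~\ref{lem:sectorpi0} ``relative to the fixed half-apartments of $g(\Sigma)$'' cannot be carried out as stated, because those half-apartments are not fixed.

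What \emph{is} determined by $\bar y$ is much less, but it is enough: the vertex $\lambda_{k,0}(\xi)\in X$ (the image under $\bar y$ of a specific vertex of $Y_k$) is fixed, and since $\pr_+(\pi_\infty(g))=v$ for every $g\in\IR_{u,v}^o$, the entire ray $\{\lambda_{k,n}(\xi)\}_{n\ge 0}=[\lambda_{k,0}(\xi),v)$ is then determined by $\bar y$ alone, even though $\xi=\pi_\infty(g)$ itself is not. The paper exploits this via an order-$2$ automorphism $\tau$ of $Y$ (for each $n>k$) that swaps the half-plane $\alpha_n$ with the half of $\Sigma$ beyond $\ell_n$ while fixing $Y_k$ pointwise; since $\pi_n\circ\tau=\pi_\infty$, one reads off directly that $(\pi_n)_*\varphi(\eta)=1$ iff $\lambda_{k,n}(\xi)\in Q(o,\eta)$, i.e.\ iff $\eta\in\Omega_{k,n}(\xi)\cap E_n(v)=E_{k,n}(\xi)$ (Lemma~\ref{lem:Enmintersection}). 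This replaces your ``fixed $g(\Sigma)$'' argument and yields exactly the relative Proposition~\ref{prop:pinac} you were aiming for, with $m=k$.
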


\begin{proof}

    Since the norm of the function $y \mapsto f(\pi_n y) -
\pi_\infty^* (f)$ is bounded by $2\|f\|$, it is enough to prove the convergence with respect to a total subset of functions $\varphi$. Hence by Lemma \ref{lem:densityL1_gen}, it is sufficient to prove the Lemma for $\varphi$ in the pullback of $L^1(\IR_k)$, for every $k>0$. Fix such a $k$. By linearity it is also sufficient to prove the Lemma when $\varphi$ is the characteristic function of a fixed embedding $\iota_0\in \IR_k\cap \IR_{u,v}^o$. Fix $\iota\in \IR_{u,v}^o$ such that $\iota_{|\Upsilon_k}=\iota_0$ and let $\xi=\pi_\infty(\iota)$.

     Fix $n>k$.  Note that there exists an automorphism $\tau$ of order 2 of $Y$ which exchanges the two half-apartments bounded by $\ell_n$ not containing $0$ ; in particular $\pi_n\circ \tau=\pi_\infty$. Furthermore $\tau$ fixes pointwise the ball of radius $k$ in $Y$.

     Let $\eta\in E_n(v)$. Then $(\pi_n)_*\varphi(\eta)=1$ if and only if there exists an extension $\iota'\in \IR_{u,v}^o$ of $\iota_0$ such that $\pi_n(\iota')=\eta$. We claim that $(\pi_n)_*\varphi(\eta)=1$ if and only if $\lambda_{k,n}(\xi)\in Q(o,\eta)$. Indeed, if there is an extension $\iota'$ as above, then let $\xi'=\pi_\infty(\iota')$ (and note that $\pr_+(\xi')=v$). Since $\iota'$ is equal to $\iota$ in restriction to $Y_k$ (which contains $\lambda_{k,0}$) we have $\lambda_{k,0}(\xi')=\lambda_{k,0}(\xi)$, and the geodesic ray from $\lambda_{k,0}(\xi)$ to $u$ is contained in $Q(o,\xi')$. Applying $\tau$ we see that the initial segment of length $n$ of this ray is contained in $Q(o,\pi_n(\iota))$, so that indeed $\lambda_{k,n}(\xi)\in Q(o,\eta)$. Conversely, assume that $\lambda_{k,n}(\xi)\in Q(o,\eta)$. Then we have $Q(\lambda_{k,n}(\xi),\eta)\subset Q(o,\eta)$, and this sector is also contained in a half-apartment $\alpha$ which is bounded by the wall $\iota(\ell_n)$. Replacing the half-apartment of $\iota$ bounded by $\ell_n$ by $\alpha$, we still get an isometric embedding of $Y$ which is equal to $\alpha$ on $Y_k$.

     Using the claim, together with Proposition \ref{prop:pinac} and Lemma \ref{lem:Enmintersection} we get
     $$\int \varphi(y) f(\pi_n y) d\mu_{u,v}^o(y)=\frac{1}{\mu(E_n(v))}\int_{E_{k,n}(\xi)} f(\eta) d\mu_\Delta^o(\eta)$$

     Note also that for $k>0$ we have $\mu_\Delta^o(E_{k,n}(\xi))=\frac{1}{(q+1)q^{k-1}}\mu_\Delta^o(E_n(v))$. Hence by Lemma \ref{lem:MartingaleEnm} we get that 
     $$\lim_n\int \varphi(y) f(\pi_n y) d\mu_{u,v}^o(y)=\frac{1}{(q+1)q^{k-1}}\frac{1}{\mu_u^o(\Omega_{k,\infty}(\xi))} \int_{\Omega_{k,\infty}(\xi)}f(\eta)d\mu_u^o(\eta)$$

     On the other hand, we have $\mu_u^o(\Omega_k,\infty)(\xi))=\frac{1}{(q+1)q^{k-1}}$. Also, note that $(\pi_n)_*\varphi(\eta)=1$ if and only if $\eta\in \Omega_{k,\infty}(\xi)$. This implies that
     $$\int \varphi(y) \pi_\infty^* (f) d\mu_{u,v}^o(y) = \int_{\Omega_{k,\infty}(\xi)} f(\eta) d\mu_u^o(\eta). $$
     The result follows.
\end{proof}

(Note that the "almost sure" set of pairs $(u,v)$ for which Lemma \ref{cor:weak*conv-uv} hold
might depend on $f$.)

\begin{lemma}\label{lem:detectingfunction}
    Fix $(u,v)\in \Delta_\pm^\op$, $k>0$, $p$ a projectivity, $o\in I(u,v)$ and let $(\gamma_n)$ be a sequence of elements of $\Gamma$ provided by Lemma \ref{lem:construction2} (with $F$ the ball of radius $k$ around $o$), namely such that $\gamma_n$ acts as a composition of $p$ and a translation of length $N(n)>0$ in the direction of $u$ on the ball of radius $k$ around $o$. Let $o'$ be the translate of $\gamma_1(o)$ by $N(1)$ in the direction of $v$.

    Assume that $N(n)<N(n+1)$ for every $n\in\mathbf{N}$. 
    Fix $\iota\in \IR_{u,v}^{o}$. 
    Let $$E=\{\iota'\in \IR_{u,v}\mid \iota'(x)=\iota(x) \;\forall x\in B(0,k)\}
$$ 
and let $\varphi\in L^1(\IR_{u,v}^o)$ be the characteristic function of $E$.  Then there exists a $\varphi_0\in L^1(\IR_{u,v}^{o'})$ such that for every $n>0$ we have
    $(\pi_0)_* \gamma_n \varphi = (\pi_{N(n)})_*\varphi_0$.

    Furthermore $(\pi_\infty)_*\varphi_0=(\pi_\infty)_* \varphi \circ p^{-1}$.

\end{lemma}
\begin{proof}
    Let $\Sigma_k$ be the intersection of  $\iota(B(0,k))$ and the interval $I(u,v)$. Then $\Sigma_k$ is a ball of radius $k$ in an apartment. By construction all the images by $\gamma_n$ of $\Sigma_k$ are translates of one another in the direction of $u$, so there exists an apartment $\Sigma_0$ which contains them all. In particular $o'\in \Sigma_0$.   Let $D$ be the convex hull of $\gamma_n \Sigma_k$ and $u$ and $v$. By construction $D$ does not depend on $n$ and is the strip of width $k$ around the line from $u$ to $v$ containint $o'$ in $\Sigma_0$. 

    Now our aim is to define $\varphi_0$. In order to do so, we first construct a $\iota_0\in \IR_{u,v}^{o'}$ whose image will contain $\Sigma_0$ and all the $\gamma_n B(o,k)$. Recall that our model space $Y$ for the detecting flow is the union of the base apartment $\Sigma$ and a sequence of half-spaces $\alpha_i$. Note that for every $i$ $\Sigma\cup\alpha_i$ is convex, so that by Theorem \ref{thm:Psymetric} it is possible to define $\iota$ on each $\alpha_i$'s independently (as long as they agree on $\Sigma$). 

    We define first $\iota_0$ by taking $\iota_0(\Sigma)=\Sigma_0$, $\iota_0(0)=o'$, oriented in such a way that $\pi_\infty(\iota_0)\in \Res(u)$. Changing our choice of $\Sigma_0$ if needed, we can assume further  that $\pi_\infty(\iota_0) = p(\pi_\infty(\iota))$, where $p$ is the projectivity from the Lemma, acting on $\Res(u)$. Let $F$ be the intersection of $\iota(\alpha_0)$ with $B(o,k)$. By definition of $\gamma_n$ we have $\gamma_n (F \cup \Sigma_k)$ which intersects $\Sigma_0$ on $\Sigma_k$ ; hence $D\cup \gamma_n F$, and even $\Sigma\cup \gamma_n F$ is a convex subset of a model detecting flow and we can choose $\iota_0$ such that $\iota_0(\alpha_{N(n)})$ contains $\gamma_n F$ (still keeping $\iota_0(\Sigma)=\Sigma_0$). Since we assumed that the sequence $(N(n))_{n>0}$ is strictly increasing, we can do that independently for every $n$. 

    We extend $\iota_0$ arbitrarily. The set that we fixed to be in the image $\iota_0$ is the union of $D$ with all the $\gamma_nF$. We denote its preimage by $H$ and define $\varphi_0$ as the characteristic function of 
    
    $$ E_0=\{\iota'\in \IR_{u,v} \mid \iota'(x)=\iota_0(x) \;\forall x\in H\}$$
    
    Note that $\varphi_0\in L^1(\IR_{u,v})$, as $E_0$ has a positive and finite measure.

    Fix $n>0$ and denote $N=N(n)$. We first check that $(\pi_0)_*\gamma_n\varphi=(\pi_{N})_*\varphi_0$. Indeed, let $C\in \Delta$. We see that $(\pi_0)_*\gamma_n\varphi(C)=\gamma_n(\pi_0)_*\varphi(C)$ is equal to $1$ if and only if $C\in \gamma_n \pi_0(E)$, that is when the sector $Q(\gamma_n o,C)$ contains $\gamma_n F$. On the other hand, $(\pi_{N})_*\varphi_0(C)=1$ if and only if $C\in \pi_N(E_0$), that is when the sector $Q(x_N,C)$ contains $\iota_0(\alpha_N\cap B(x_N,k))$, which is equal to $\gamma_n F$ by construction of $\iota_0$. Hence we get the result.

    Finally let us check that $(\pi_\infty)_*\varphi_0=(\pi_\infty)\varphi\circ p^{-1}$. For the sake of notations, let $\omega=\pi_\infty(\iota)$ and $\omega_0=\pi_\infty(\iota_0)$. Let $\pi_u:X\to T_u$ be the natural projection. We identify the action of the projectivity group on $\Res(u)$, $T_{u,v}$ and $T_u$ in the natural way. Let $C\in\Res(u)$, we have that 
    $(\pi_{\infty})_*\varphi_0(C)=1$ if and only if $Q(o',C)$ contains $D_+:=Q(o',\omega_0)\cap D$, which is equivalent to $Q(o',p^{-1}(C))$ containing $Q(o,p^{-1}(\omega_0))\cap p^{-1}(D_+)$, or to $\pi_u(Q(o,p^{-1}(C))$ containing $p(\pi_u(D_+))$ (note that $p(\pi_u(o))=(\pi_u(o')$). But by definition of $\gamma_n$ we have that $p^{-1}(\pi_u(D_+)) = \pi_u(Q(o,\omega)\cap B(o,k))$. Hence we get that $(\pi_{\infty})_*\varphi_0(C)=1$ if and only if  $\pi_u(Q(o,p^{-1}(C)))$ contains $\pi_u(Q(o,\omega)\cap B(o,k))$, which means exactly that $p(C)\in \pi_\infty(E)$, that is, $(\pi_\infty)_*\varphi(C)=1$.

\end{proof}

This finally allows us to prove the main theorem of this section.

\begin{theorem}\label{thm:convTuvproj}
For every $f\in L^\infty(\Delta)$, for almost every $(u,v)\in \Delta_\pm^\op$, for every projectivity $p$ (acting on $\Res(v)$), 
there is a sequence $(\gamma_n)_{n\in\NN}$ of elements of $\Gamma$ such that the sequence $(\gamma_n.f)_{n\in \NN}$ converges (in the weak-* topology on $L^\infty(\Delta)$) to the  map $x\mapsto  f(p\circ \proj_v(\proj_u(x)))$.
\end{theorem}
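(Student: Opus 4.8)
The argument runs parallel to the one outlined in the tree case (Proposition~\ref{prop:tree}): Lemma~\ref{cor:weak*conv-uv} plays the role of Lemma~\ref{lem:treemart}, Proposition~\ref{prop:Poincare2} (through Lemma~\ref{lem:construction2}) plays the role of Lemma~\ref{lem:treesec}, and the detecting flow $\IR=\Isom(Y,X)$, with the maps $\pi_0$, $\pi_n$ and $\pi_\infty$, provides the bridge between the two. Fix $f\in L^\infty(\Delta)$. Since the projectivity group is countable, we may fix a conull subset $\Omega\subseteq\Delta_\pmop$ which, for every projectivity $p$, is contained both in the conull set furnished by Lemma~\ref{cor:weak*conv-uv} applied to $f$ and to $f\circ p$ (with $p$ acting on $\Res(v)$), and in the conull set furnished by Lemma~\ref{lem:construction2} for $p$. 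Fix $(u,v)\in\Omega$ and a projectivity $p$, let $(F_n)_n$ be an increasing exhaustion of $I(u,v)$ by finite sets, and use Lemma~\ref{lem:construction2} to pick $\gamma_n\in\Gamma$ such that $\gamma_n^{-1}$ acts on $F_n$ as the composition of the translation of length $N=N(n)>n$ towards $u$ with the projectivity $p^{-1}$; in particular $N(n)\to\infty$.

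By Proposition~\ref{prop:sumpi0} the class of $\mu_\Delta^o$ is the join, over $o'\in I(u,v)$, of the classes of $(\pi_0)_*\mu_{u,v}^{o'}$; hence it suffices to prove, for each fixed $o\in I(u,v)$, that $\gamma_n\cdot f$ converges weak-$*$ in $L^\infty\big(\Delta,(\pi_0)_*\mu_{u,v}^{o}\big)$ to the map $x\mapsto f\big(p\circ\proj_v(\proj_u(x))\big)$. Pulling back along $\pi_0\colon\IR_{u,v}^o\to\Delta$, and using the identity $\proj_v\circ\proj_u\circ\pi_0=\pi_\infty$ valid where $\pi_0$ is opposite $u$ (the building counterpart of the corresponding fact in the tree case; it follows from the geometry of the comb $Y$ and Proposition~\ref{prop:pinac}), the target function pulls back to $\pi_\infty^*(f\circ p)$. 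So we are reduced to proving that $\gamma_n^*\,\pi_0^*f\to\pi_\infty^*(f\circ p)$ in the weak-$*$ topology of $L^\infty(\IR_{u,v}^o)$.

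By weak-$*$ compactness it is enough to identify the limit of an arbitrary convergent subsequence, and by the density statement of Lemma~\ref{lem:densityL1_gen} and linearity it is enough to test against the characteristic function $\varphi$ of the set of embeddings which agree with a fixed $\iota_0$ on a finite ball $Y_k\subseteq Y$. For $n$ large we have $\iota_0(Y_k)\subseteq F_n$, and Lemma~\ref{lem:construction2} shows that $\gamma_n S_Y^{-N}$ acts on $\iota_0(Y_k)$ through the transverse action of $p$ alone (the two shifts cancelling); since there are only finitely many restrictions to $Y_k$, we pass to a further subsequence along which $\gamma_n S_Y^{-N}\iota_0$ has fixed restriction to $Y_k$, agreeing on the apartment $\Sigma\subseteq Y$ carrying the $u$–$v$ line with the image of $\iota_0$ under $p$; let $\varphi'$ be the corresponding characteristic function and $\iota_0'$ a representative, so that $\pi_\infty(\iota_0')=p\cdot\pi_\infty(\iota_0)$ with $p$ acting on $\Res(v)$. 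Using $\pi_0\circ\gamma_n=\gamma_n\circ\pi_0$, $\pi_0\circ S_Y^{N}=\pi_{N}$ and the adjunction of Lemma~\ref{lem:duality}, one computes, exactly as in the tree case,
\[
\langle\gamma_n^*\pi_0^*f,\varphi\rangle=\langle f,(\pi_0\gamma_n)_*\varphi\rangle=\langle f,(\pi_{N})_*\varphi'\rangle=\langle\pi_{N}^*f,\varphi'\rangle,
\]
and this converges, by Lemma~\ref{cor:weak*conv-uv} applied to $f$, to $\langle\pi_\infty^*f,\varphi'\rangle$. Since $(\pi_\infty)_*\varphi'$ is the push-forward of $(\pi_\infty)_*\varphi$ under $p$, this limit equals $\langle\pi_\infty^*(f\circ p),\varphi\rangle$, which is the desired convergence.

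The genuinely delicate point -- beyond the bookkeeping already present in the projectivity-free situation -- is the middle equality in the last display, i.e.\ the assertion that the action of $\gamma_n$ on the detecting flow, restricted to the finite data that matter for $\varphi$ and for $\pi_\infty$, really decomposes as the shift $S_Y^{N}$ followed by the transverse action of $p$, despite the fact that $p$ need not preserve the comb $Y$ globally; and, hand in hand with this, that $\pi_\infty$, which takes values in $\Res(v)$, intertwines this transverse action with the given action of $p$ on $\Res(v)$. The uniformity of $\Omega$ over all (countably many) projectivities is then a routine intersection.
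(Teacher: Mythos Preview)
Your proof is essentially identical to the paper's: reduction via Proposition~\ref{prop:sumpi0} to weak-$*$ convergence in $L^\infty(\IR_{u,v}^o)$, density from Lemma~\ref{lem:densityL1_gen}, the computation $\langle\gamma_n^*\pi_0^*f,\varphi\rangle=\langle\pi_N^*f,\varphi'\rangle$ using that $\gamma_n S_Y^{-N}$ acts as $p$ on the relevant finite piece, and the limit via Lemma~\ref{cor:weak*conv-uv}. One small caveat: the projectivity group is not countable in general (cf.\ the countable dense $P_0\subset P$ used in Theorem~\ref{thm:2minimal}), so your uniform conull $\Omega$ only handles countably many $p$ at once---but the paper's own proof is equally informal about this quantifier swap, and the applications only ever need a countable dense set of projectivities.
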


\begin{proof}
Let us fix $f\in L^\infty(\Delta)$. 
Fix $(u,v)$ in the intersection of the full measure sets provided by Corollary \ref{cor:weak*conv-uv} and Lemma \ref{lem:construction2}. Let $(\gamma_n)$ be the sequence provided by Lemma \ref{lem:construction2}. 

Note that if $g\in \IR_{u,v}$ then $\pi_\infty( g )= \proj_v \circ \proj_u \circ \pi_0 (g)$: this follows from the construction of the detecting flow. In particular by Proposition~\ref{prop:Poincare2} we have $(\gamma_n \pi_0(g))_{n\in \NN}$ which converges to $p\circ \pi_\infty(g)$.

We claim that the sequence $(\gamma_n^* \pi_0^* f)$ converges to $(p\circ \pi_\infty)^* f$ in the weak-* topology of $L^\infty (\IR_{u,v}^o)$, for every $o$ in $I(u,v)$. Granting the claim, this proves that $\gamma_n f$ weak-* converges to $x\mapsto  f(p\circ \proj_v(\proj_u(x)))$ in the weak-* topology of $L^\infty(\Delta,(\pi_0)_* \mu_{u,v})$. Since this is valid for every $o$, using  Proposition \ref{prop:sumpi0}
 the same convergence holds in $L^\infty(\Delta,\mu_{\Delta})$, which is the conclusion of the Theorem.

So it remains to prove the claim. Fix $o\in I(u,v)$. By compactness of balls in $L^\infty(\IR_{u,v}^o)$ (for the weak-* topology), it is enough to prove that every convergent subsequence of $(\gamma_n^* \pi_0^* f)$ converges to $(p\circ \pi_\infty) ^* f$. Therefore, we may and shall assume that the sequence $(\gamma_n^* \pi_0^* f)$ converges. In order to identify the limit, it is sufficient to identify the limit against a dense set of functions in $L^1(\IR_{u,v}^o)$. This dense set will be the union of  $ L^1(\IR_{u,v}^{o,k})$  for $k\geq 0$ (see Lemma \ref{lem:densityL1_gen}).  In fact, by linearity, it is enough to identify the limit for the characteristic function $\varphi$ of a fixed embedding $\iota\in \IR_{u,v}^{o,k}$.

So fix such a $k$, and $g\in \IR_{u,v}^{o,k}$. For $n$ large enough, we know that $\gamma_n$ acts as the composition of a translation of the ball of radius $k$ in $I(u,v)$ by some length $N$  with $p$, where $N=N(n)$ tends to $+\infty$ as $n$ increases. Now let $\varphi_0\in L^1(\IR_{u,v}^{o'})$ be the function provided by Lemma \ref{lem:detectingfunction}. Then we have

\begin{align*}
\langle \gamma_n^* \pi_0^* f,\varphi\rangle  & = \langle f,(\gamma_n  \pi_0)_*\varphi\rangle \\
&=\langle f,(\pi_N)_*\varphi_0\rangle\\
&=\langle \pi_N^*f,\varphi_0\rangle
\end{align*}

which converges to $\langle \pi_\infty^*f,\varphi_0\rangle$ by Lemma \ref{cor:weak*conv-uv}. By Lemma \ref{lem:detectingfunction}, this limit is also equal to $\langle (p\circ \pi_\infty^*)f,\varphi\rangle$. This concludes the proof.

\end{proof}

Let us emphasize the particular case when $p$ is the identity:

\begin{corollary}\label{cor:convTuv}
For every $f\in L^\infty(\Delta)$, for almost every $(u,v)\in \Delta_\pm^\op$, there is a sequence $(\gamma_n)_{n\in\NN}$ of elements of $\Gamma$ such that the sequence $(\gamma_n.f)_{n\in \NN}$ converges (in the weak-* topology on $L^\infty(\Delta)$) to the  map $x\mapsto  f(\proj_v(\proj_u(x)))$.
\end{corollary}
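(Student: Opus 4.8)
The plan is to deduce this immediately from Theorem~\ref{thm:convTuvproj}. First I would observe that the identity automorphism of $\Res(v)$ (equivalently, of the panel-tree $T_v$ via Lemma~\ref{lem:bdTu}) is a projectivity: it is the neutral element of the projectivity group, which is contained in its closure $\Pi$. Hence $p=\Id$ is an admissible choice of projectivity acting on $\Res(v)$ in the statement of Theorem~\ref{thm:convTuvproj}.

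Applying that theorem with $p=\Id$, for every $f\in L^\infty(\Delta)$ and almost every $(u,v)\in\Delta_\pmop$ we obtain a sequence $(\gamma_n)_{n\in\NN}$ of elements of $\Gamma$ such that $(\gamma_n.f)_{n\in\NN}$ converges, in the weak-* topology on $L^\infty(\Delta)$, to the map $x\mapsto f(p\circ\proj_v(\proj_u(x)))$. Since $\Id$ acts trivially on $\Res(v)$, one has $p\circ\proj_v\circ\proj_u=\proj_v\circ\proj_u$ as maps $\Delta\to\Res(v)$, so the limit is exactly $x\mapsto f(\proj_v(\proj_u(x)))$, which is the assertion of the Corollary.

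There is no real obstacle here: the corollary is a literal specialization of Theorem~\ref{thm:convTuvproj}, whose content in turn rests on Proposition~\ref{prop:Poincare2} (the construction of contracting sequences following a prescribed projectivity) and Lemma~\ref{cor:weak*conv-uv} (the weak-* convergence of the averages along the antenna maps of the detecting flow). The reason for singling out this case is that the untwisted ($p=\Id$) version is the one that feeds most directly into the proof of the Factor Theorem (Theorem~\ref{thm:factor}): there one wants to contract $\mu_\Delta$-almost all of $\Delta$ onto a residue $\Res(v)$ and then onto the chamber $\proj_v(\proj_u(C))$, and only afterwards exploit the $3$-transitivity of $\Pi$ (Lemma~\ref{lem:3transitive}) through the more general Theorem~\ref{thm:convTuvproj}.
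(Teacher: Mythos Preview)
Your proposal is correct and matches the paper's approach exactly: the corollary is stated right after Theorem~\ref{thm:convTuvproj} precisely as its specialization to $p=\Id$, with no further argument given.
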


\subsection{Dynamics on the boundary}

 Recall that an action of a group $G$ on a topological space $Z$ is \emph{topologically transitive} if for every nonempty open set $U,V\subset Z$, there exists $g\in G$ such that $gU\cap V\neq\varnothing$. If $Z$ is Polish, it is equivalent to the existence of a dense orbit in $Z$. The action is \emph{minimal} if every orbit is dense.
 
 We say that the action is \emph{topologically 2-minimal} if the orbit of every pair of distinct points is dense in $Z\times Z$.

The main interest of topological 2-minimality is the following.

\begin{lemma}
Assume that $G$ is a group acting topologically 2-minimally on a space $Z$. Let $p:Z\to Y$ be a continuous, $G$-equivariant surjective map. Then either $Y$ is reduced to a point or $p$ is bijective.
\end{lemma}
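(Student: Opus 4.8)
The plan is to argue by contradiction with the failure of injectivity. Suppose $p$ is not bijective, so there exist two distinct points $z_1 \neq z_2$ in $Z$ with $p(z_1) = p(z_2) =: y_0 \in Y$. I want to show that then $Y$ is a single point. First I would use topological $2$-minimality applied to the pair $(z_1, z_2)$: its $G$-orbit $\{(gz_1, gz_2) : g \in G\}$ is dense in $Z \times Z$. Now consider the continuous map $q := (p \times p) : Z \times Z \to Y \times Y$, and note that by $G$-equivariance of $p$ we have $q(gz_1, gz_2) = (g p(z_1), g p(z_2)) = (g y_0, g y_0)$, which always lies in the diagonal $\Delta_Y \subset Y \times Y$. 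Since $\Delta_Y$ is closed in $Y \times Y$ (here $Y$ is Hausdorff, being the continuous image under a reasonable quotient — or more carefully, since $p$ is a measurable factor map between standard Borel spaces, one works in that category; but the statement as phrased is topological, and $Z$ Polish with $p$ continuous forces $Y$ to be taken Hausdorff), the preimage $q^{-1}(\Delta_Y)$ is closed in $Z \times Z$ and contains the dense orbit of $(z_1,z_2)$, hence equals all of $Z \times Z$. Therefore $p(z) = p(z')$ for \emph{all} $z, z' \in Z$, i.e.\ $p$ is constant. Since $p$ is surjective, $Y$ is reduced to a point.

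The only subtlety I would flag is the Hausdorffness of $Y$, which is what makes $\Delta_Y$ closed and drives the argument; if one only knows $Y$ is a topological space, one should either assume it is Hausdorff (standard for such factor statements) or, in the measurable setting relevant to Theorem~\ref{thm:factor}, replace "closed" by the observation that two measurable maps agreeing on a conull (here: dense and invariant, hence by ergodicity-type reasoning conull) set agree everywhere up to null sets. In the context where this lemma is applied — $Z = \Delta$ with its compact metrizable topology and $p$ continuous equivariant — $Y$ is automatically compact Hausdorff as a continuous image, so there is no difficulty: the diagonal is genuinely closed.

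There is essentially no hard step here; the content is entirely in the hypothesis of topological $2$-minimality, which was established earlier. The proof is a three-line diagonal argument: dense orbit of $(z_1, z_2)$ lands in the closed diagonal-preimage, so the preimage is everything, so $p$ is constant, so $Y$ is a point. I would write it out in roughly that form, taking care only to state the Hausdorff/closedness point cleanly.
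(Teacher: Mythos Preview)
Your proof is correct and follows essentially the same diagonal argument as the paper: the paper assumes both that $Y$ is not a point and $p$ is not injective, picks a pair $(x_1,x_1')$ with the same image and a point $x_2$ with a different image, and uses 2-minimality to push $(x_1,x_1')$ toward $(x_1,x_2)$, obtaining a sequence in the diagonal of $Y\times Y$ converging off the diagonal. Your formulation via the closed preimage $q^{-1}(\Delta_Y)$ containing a dense orbit is just the contrapositive packaging of the same idea, and your remark on Hausdorffness is apt (the paper uses it implicitly when deriving the contradiction from the convergence).
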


\begin{proof}
Assume that $Y$ is not reduced to a point and that $p$ is not bijective. Then there exists $x_1\neq x'_1\in Z$ such that $p(x_1)=p(x'_1)=y_1$ and $x_2\in Z$ with $p(x_2)=y_2\neq y_1$.

By 2-transitivity, there exists a sequence $(g_n)$ of elements of $G$ such that $g_n(x_1,x'_1)$ converges to $(x_1,x_2)$. By the continuity of $p$, we see that $g_n (y_1,y_1)$ converges to $(y_1,y_2)$. This is a contradiction.
\end{proof}

\begin{lemma}\label{lem:projectivityis2minimal}
The action of the projectivity group on the residue of any panel of $\Delta$ is $2$-minimal.
\end{lemma}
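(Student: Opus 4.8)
The plan is to deduce $2$-minimality from the fact (Lemma \ref{lem:3transitive}) that the projectivity group $\Pi$ acts $3$-transitively — in fact sharply or at least $3$-transitively — on $\partial T_u \cong \Res(u)$. Recall from Lemma \ref{lem:bdTu} that $\Res(u)$ is identified, via the homeomorphism $\phi_u$, with the boundary $\partial_\infty T_u$ of a $(q+1)$-regular tree, which is a Cantor set (in particular perfect and with no isolated points). So it suffices to show: a group acting $3$-transitively (even just $2$-transitively with a little room) on a perfect Polish space $Z$ acts $2$-minimally, i.e. the orbit of any pair of \emph{distinct} points $(a,b)$, $a\neq b$, is dense in $Z\times Z$. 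First I would reduce to showing that this orbit is dense in the \emph{diagonal-free} part, i.e. that for any pair of distinct points $(c,d)$ with $c \neq d$ the orbit of $(a,b)$ accumulates on $(c,d)$; since $Z$ has no isolated points, the diagonal $\{(z,z)\}$ is in the closure of $\{(z,z'): z\neq z'\}$, and any point $(c,c)$ is then also a limit of group translates of $(a,b)$ provided the off-diagonal orbit is dense.

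The key step is the off-diagonal density. Fix distinct $a,b \in Z$ and a target pair of distinct points $(c,d)$, and fix basic open neighborhoods $c \in U$, $d \in V$ with $U \cap V = \varnothing$ (possible since $Z$ is Hausdorff and $c\neq d$). Since $Z$ is perfect, pick a third point $e \in U$ with $e \neq c$. By $3$-transitivity of $\Pi$ on $Z$, there is $g \in \Pi$ with $g(a) = c$ and $g(b) = d$; but I want more — I want to show the orbit actually accumulates, i.e. $g(a,b)$ can be pushed into $U\times V$ by a \emph{sequence}, and then $2$-minimality follows because $U,V$ were arbitrary basic neighborhoods. Actually $3$-transitivity already gives $g(a) = c \in U$ and $g(b) = d \in V$ exactly, so $g\cdot(a,b) \in U\times V$; since $U,V$ range over a basis of $Z\times Z$ avoiding the diagonal, the orbit of $(a,b)$ is dense in the off-diagonal part, and hence (by the perfectness argument above, using a $3$-transitivity trick to move $a,b$ to a pair of nearby distinct points converging to $(c,c)$) dense in all of $Z\times Z$. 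So the whole argument is: identify $\Res(u) \cong \partial_\infty T_u$ (Lemma \ref{lem:bdTu}), invoke $3$-transitivity of $\Pi$ (Lemma \ref{lem:3transitive}), observe $\partial_\infty T_u$ is perfect, and conclude.

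I expect the only genuinely delicate point to be the passage to the diagonal: $2$-minimality as defined requires the orbit of a pair of \emph{distinct} points to be dense in $Z\times Z$, including near diagonal points $(c,c)$. For this, given $\varepsilon$-neighborhoods, I would use that $\partial_\infty T_u$ is perfect to find two distinct points $c', c''$ both within $\varepsilon$ of $c$, then use $2$-transitivity of $\Pi$ to send $(a,b)$ to $(c',c'')$, which lies in the $\varepsilon$-ball of $(c,c)$ in $Z\times Z$. Everything else is immediate from the cited lemmas; there is essentially no computation, only the topological bookkeeping that $\partial_\infty T_u$ has no isolated points and that $3$-transitivity on a set of size $\geq 3$ implies $2$-transitivity with the freedom to choose the image of a third point.
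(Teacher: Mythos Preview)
Your proposal is correct and follows essentially the same approach as the paper: invoke $3$-transitivity (Lemma~\ref{lem:3transitive}) to get that the orbit of any pair of distinct points contains every off-diagonal pair, then use that $\Res(u)$ has no isolated points to approximate diagonal pairs by off-diagonal ones. The paper's proof is just a more compressed version of yours (it notes perfectness directly from thickness of $X$ rather than via the identification with $\partial_\infty T_u$, but this is cosmetic).
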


\begin{proof}
Let $P$ be the projectivity group, acting on the set $Z=\Res(u)$ for some panel $u$.
We have seen in Lemma \ref{lem:3transitive} that the action of $P$ on $Z$ is $3$-transitive. In particular $P$ acts transitively on the set of pairs of distinct points. Also note that (since $X$ is thick) there is no isolated point in $Z$. Hence, every $(x,x)\in Z\times Z$ can be approached by a sequence $(x_n,x)$ with $x_n\neq x$ for every $n$.
\end{proof}

%

\begin{theorem}\label{thm:2minimal}
The action of $\Gamma$ on $\Delta_+$ is topologically 2-minimal.
\end{theorem}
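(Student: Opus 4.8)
The plan is to deduce topological 2-minimality of the $\Gamma$-action on $\Delta_+$ from the contracting sequences produced in Corollary \ref{cor:convTuv} (or Theorem \ref{thm:convTuvproj}) together with the 2-minimality of the projectivity group established in Lemma \ref{lem:projectivityis2minimal}. Concretely, I want to show that given two distinct points $u_1,u_2\in\Delta_+$ and two target points $w_1,w_2\in\Delta_+$ (with $w_1\neq w_2$), I can find $\gamma\in\Gamma$ moving $(u_1,u_2)$ close to $(w_1,w_2)$. The key point is to transfer the weak-$*$/measure-theoretic contraction statements into genuinely topological statements.

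The first step is to recast the contracting sequences topologically. Fix $(u,v)\in\Delta_\pmop$ in the full-measure set of Corollary \ref{cor:convTuv}, and let $(\gamma_n)$ be the associated sequence. I claim that $\gamma_n\cdot C \to \proj_v(\proj_u(C))$ for every $C$ opposite $u$; this is literally Proposition \ref{prop:Poincare}, and its image under $\pr_+$ says $\gamma_n\cdot \pr_+(C)$ converges in $\Delta_+$ to $\pr_+(\proj_v(\proj_u(C)))$. As $C$ ranges over chambers opposite $u$, the point $\pr_+(C)$ ranges over a dense (in fact co-null, hence dense since the bad set is closed with empty interior by thickness) subset of $\Delta_+$, and the point $\pr_+(\proj_v(\proj_u(C)))$ ranges over the single residue-boundary determined by $v$ — more precisely over $\phi_v$-image points, i.e. over a ``line'' in the spherical building at infinity through the vertex $\pr_-$-dual to $v$. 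So a single contracting sequence collapses almost all of $\Delta_+$ onto a 1-parameter family. To get full 2-minimality I then need to (a) vary the pair $(u,v)$, and (b) use that the projectivity group, which by Theorem \ref{thm:convTuvproj} is also approximable by elements of $\Gamma$, already acts 2-minimally on each residue.

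The second, main step is the bootstrapping argument. Start with distinct $u_1,u_2\in\Delta_+$. Pick a generic $(u,v)\in\Delta_\pmop$; using a contracting sequence $(\gamma_n)$ of Corollary \ref{cor:convTuv} and genericity of $u_1,u_2$ (they are opposite to $u$, after perturbing $u$ slightly if necessary), $\gamma_n(u_1,u_2)$ converges to a pair of points both lying in the residue $\Res(v')$ for the appropriate $v'\in\Delta_-$ — but a priori they could collapse to the same point. To avoid this I instead use Theorem \ref{thm:convTuvproj} with a nontrivial projectivity $p$: the limit of $\gamma_n\cdot\pr_+(C)$ becomes $\pr_+(p\circ\proj_v(\proj_u(C)))$, and since $\proj_v\circ\proj_u$ restricted to the two chambers through $u_1$ resp. $u_2$ gives two (generically distinct) points of $\Res(v)$, and $p$ ranges over a 3-transitive group, I can steer the limit pair to be an arbitrary pair of distinct points of $\phi_v(\Res(v))$. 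Finally, since any two points $w_1,w_2\in\Delta_+$ are jointly opposite some common $v'\in\Delta_-$ (basic fact on $A_2$ spherical buildings — two vertices of the same type lie on a common apartment), I can arrange $v$ so that $w_1,w_2\in\phi_v(\Res(v))$, and conclude. Passing to a diagonal sequence over the relevant neighborhoods then gives a single $\gamma$ with $\gamma(u_1,u_2)$ as close as desired to $(w_1,w_2)$, which is topological 2-minimality.

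The hard part will be the bookkeeping around genericity and the translation from measure-class statements to topological ones: Corollary \ref{cor:convTuv} gives weak-$*$ convergence of $\gamma_n\cdot f$, while Proposition \ref{prop:Poincare} gives honest pointwise convergence of chambers opposite $u$ — I must be careful to use the pointwise statement for the 2-minimality argument, and to check that the ``bad'' sets (non-opposite pairs, non-generic chambers) are closed with empty interior so that density is preserved under intersection with them. A secondary subtlety is ensuring the full-measure set of good pairs $(u,v)$ in Theorem \ref{thm:convTuvproj} is large enough that, for prescribed $w_1,w_2$, some good $v$ with $w_1,w_2\in\phi_v(\Res(v))$ exists; this follows since the set of such $v$ is open and nonempty, hence not null.
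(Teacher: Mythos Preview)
Your overall strategy---use the pointwise contracting sequences of Proposition~\ref{prop:Poincare2} together with the 2-minimality of the projectivity group---is exactly the paper's approach, and you are right that the relevant input is Proposition~\ref{prop:Poincare2} rather than the weak-$*$ statements. But there is a genuine gap in the one-step scheme you propose.

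The error is in the sentence ``a single contracting sequence collapses almost all of $\Delta_+$ onto a 1-parameter family.'' It does not. For $(u,v)\in\Delta_\pmop$ (so $u\in\Delta_-$, $v\in\Delta_+$), the limit $\proj_v\circ\proj_u(C)$ lies in $\Res(v)=\pr_+^{-1}(v)$, and therefore $\pr_+(\gamma_n C)\to v$ for \emph{every} chamber $C$ opposite $u$. At the $\Delta_+$ level the contraction is completely degenerate: both $u_1$ and $u_2$ are sent to the single point $v$, and no projectivity on $\Res(v)$ can undo that, since $\pr_+$ is constant on $\Res(v)$. Your ``1-parameter family'' is really the image of $\Res(v)$ under $\pr_-$, not $\pr_+$; the notation $\phi_v(\Res(v))$ lives in $\partial_\infty T_v$, not in $\Delta_+$.

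The paper repairs this by working at the chamber level and using \emph{two} contractions of alternating type. Starting from $x_1\neq x_2\in\Delta_+$, pick $u_0\in\Delta_-$ adjacent to both and set $c_i\in\Res(u_0)$ with $\pr_+(c_i)=x_i$. A first contraction along $(u_0,v)$ pushes $(c_1,c_2)$ into $\Res(v)^2$ (distinct points there, since perspectivities are bijections). A second contraction in the reverse direction, along $(v,u)$ for $u\in\Delta_-$ adjacent to the targets $y_1,y_2$, together with an arbitrary projectivity, pushes into $\Res(u)^2$; now $\pr_+$ on $\Res(u)$ ranges over the whole line of $\Delta_+$-vertices adjacent to $u$, and Lemma~\ref{lem:projectivityis2minimal} lets you hit $(y_1,y_2)$. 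The remaining genericity issues are handled just as you anticipated: one first uses minimality of $\Gamma$ on $\Delta_-$ to move $u_0$ so that the slice $\{v:(u_0,v)\in E\}$ has full measure, and then argues that the closure of $\Gamma\cdot(x_1,x_2)$ contains a set of full $(\mu_+^o)^2$-measure, hence is dense because $\mu_+^o$ has full support.
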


\begin{proof}

Let $P_0\subset P$ be a countable dense subgroup of the projectivity group $P$. Let $E$ be the subset of full measure of $\Delta_\pmop$ provided by Proposition \ref{prop:Poincare2} which works for every $\varphi\in P_0$.

Let $x_1\neq x_2\in\Delta_+$.  We prove that the closure of $\Gamma\cdot (x_1,x_2)$ contains a full measure subset of $\Delta_+^2$, hence is dense since $\mu_+^o$ is fully supported. Let $u_0$ be the vertex of type $-$ adjacent to both $x_1$ and $x_2$. Since the action of $\Gamma$ on $\Delta_-$ is minimal, the closure of $\Gamma u_0$ is equal to $\Delta_-$. In particular, replacing $(x_1,x_2)$ by an element in the closure of the orbit of $(x_1,x_2)$, we may and shall assume that the set of $v\in \Delta_+$ such that $(u_0,v)\in E$ is of full measure. In the sequel, for every $(u,v)\in \Delta_\pmop$, we define $T_u^v=\proj_v\circ \proj_u$, and $T_v^u = \proj_u\circ \proj_v$.\index{$T_u^v$\hfill $\proj_v\circ\proj_u$|bb}

 Let $y_1,y_2\in\Delta_+$. Let $u$ be the (or any if $y_1=y_2$) vertex of type $-$ which is adjacent both to $y_1$ and $y_2$. Almost surely, the set of $v\in \Delta_+$ such that $(u,v)\in E$ is of full measure. Let $v$ be such that $(u,v)\in E$ and $(u_0,v)\in E$. Let $c_1,c_2$ be the chambers of $\Res(u_0)$ adjacent to $x_1$ and $x_2$ respectively, and $d_1,d_2$ be the chambers of $\Res(u)$ adjacent to $y_1,y_2$ respectively. 
 Then by construction of $E$ there exists a sequence $(\gamma_n)$ such that $\gamma_n(c_1,c_2)$ converges to $(c'_1,c'_2)=(T_{u_0}^v c_1,T_{u_0}^v c_2)\in \Res(v)^2$. Similarly, for any $\varphi \in P_0$, there exists yet another sequence $(\gamma'_n)$ such that $\gamma_n'(c'_1,c'_2)$ converges to  $(d'_1,d'_2)=(\varphi\circ T_v^u (c'_1),\varphi\circ T_v^u (c'_2))$. Hence the closure of $\Gamma (c_1,c_2)$ contains the $P$-orbit of $(d'_1,d'_2)$. By Lemma \ref{lem:projectivityis2minimal}, it follows that this closure contains all of $\Res(u)^2$, and therefore the pair $(d_1,d_2)$. We have proven that $\overline{ \Gamma\cdot (x_1,x_2)}$ contains $(y_1,y_2)$.

\end{proof}

\begin{remark}
    The corresponding statement is not true for example if $X$ is a $\tilde C_2$ building. Indeed if $u,v$ are two vertices in the boundary of a $X$ at Tits distance $\pi/2$ then they are of the same type, say $+$, but the orbit of $(u,v)$ cannot be dense in $\Delta_+\times \Delta_+$ as it will be impossible to approach a pair of opposite vertices.    
\end{remark}

Of course the same argument holds for $\Delta_-$ instead of $\Delta_+$. Therefore, we get :

\begin{corollary}\label{cor:FactorTop}
There is no non-trivial $\Gamma$-equivariant \emph{topological} factor of $\Delta_+$ and of $\Delta_-$.
\end{corollary}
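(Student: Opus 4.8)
The plan is to read off this corollary directly from Theorem \ref{thm:2minimal} together with the elementary structural lemma stated just before Lemma \ref{lem:projectivityis2minimal}, which asserts that a topologically $2$-minimal action of a group $G$ on a space $Z$ admits no continuous $G$-equivariant surjection $Z\to Y$ other than constant maps and bijections.

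First I would recall the set-up. A $\Gamma$-equivariant topological factor of $\Delta_+$ is a compact Hausdorff $\Gamma$-space $Y$ together with a continuous surjective $\Gamma$-map $p\colon\Delta_+\to Y$; it is trivial if $Y$ is reduced to a point or if $p$ is a $\Gamma$-equivariant homeomorphism. Note that $\Delta_+$ is compact, being the continuous image of the compact space $\Delta$ under $\pr_+$, and that $\Gamma$ acts on it by homeomorphisms; the same holds for $\Delta_-$. By Theorem \ref{thm:2minimal} the $\Gamma$-action on $\Delta_+$ is topologically $2$-minimal, and by the remark following that theorem so is the action on $\Delta_-$.

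Now, given any topological factor $p\colon\Delta_+\to Y$, the structural lemma applied to the $2$-minimal action on $\Delta_+$ yields that either $Y$ is a point, or $p$ is a bijection. In the second case $p$ is a continuous bijection from the compact space $\Delta_+$ onto the Hausdorff space $Y$, hence a homeomorphism; being $\Gamma$-equivariant it is then an isomorphism of $\Gamma$-spaces, so the factor is trivial. Thus every $\Gamma$-equivariant topological factor of $\Delta_+$ is trivial, and the identical argument, using the $\Delta_-$-analogue of Theorem \ref{thm:2minimal}, settles $\Delta_-$. There is essentially no obstacle here: the statement is a formal consequence of $2$-minimality, the only routine point being that the target of a topological factor is taken compact Hausdorff, so that the ``continuous bijection out of a compact space'' argument applies.
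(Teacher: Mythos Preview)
Your proof is correct and follows exactly the route the paper intends: the corollary is stated immediately after Theorem~\ref{thm:2minimal} with no separate proof, since it is a direct application of that theorem together with the structural lemma on $2$-minimal actions. Your added remark that a continuous bijection from a compact space to a Hausdorff space is a homeomorphism makes explicit the only detail the paper leaves implicit.
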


\section{Subalgebras of $L^\infty(\Delta)$}

\subsection{Four algebras}

The map $\pr_+:\Delta\to \Delta_+$ which associates to a chamber its vertex of type $+$ is measure-class preserving, hence gives an inclusion of algebras $\pr_+^*:L^\infty(\Delta_+)\to L^\infty(\Delta)$. In the sequel, we identify $L^\infty(\Delta_+)$ with $\pr_+^*(L^\infty(\Delta_+))$. In other words, we can view a function on $\Delta_+$ as a function on $\Delta$ which is constant on every residue of type $+$.
 Similarly, we identify $L^\infty(\Delta_-)$to a subalgebra of $L^\infty(\Delta)$ via the map $\pr_-^*$.

The following two lemmas only use the formalism of prouniform measures, and are valid in any 2-dimensional affine building.

\begin{lemma}\label{lem:join}
The algebras $L^\infty(\Delta_+)$ and $L^\infty(\Delta_-)$ generate a weak-* dense subalgebra of $L^\infty(\Delta)$.
\end{lemma}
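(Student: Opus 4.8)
The plan is to show that the algebra $\mathcal{A}$ generated by $L^\infty(\Delta_+)$ and $L^\infty(\Delta_-)$ separates points of $\Delta$ in a measure-theoretic sense, and then invoke a standard density criterion. First I would recall that, as stated in Section~\ref{sec:buildings}, the bipartite graph structure on $\Delta_- \cup \Delta_+$ with edge set $\Delta$ means that a chamber $C \in \Delta$ is determined by the pair $(\pr_-(C), \pr_+(C))$: a chamber is precisely an edge of the spherical building, hence is uniquely recovered from its two endpoints. Thus the map $(\pr_-, \pr_+) : \Delta \to \Delta_- \times \Delta_+$ is injective, and its image is the set of incident pairs (which is closed). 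Since all three measures in play are Radon on compact metrizable spaces and the maps are continuous and measure-class preserving, $(\pr_-,\pr_+)$ is a measure-space isomorphism from $(\Delta, \mu_\Delta^o)$ onto its image with the restricted product class.

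Next I would translate the algebra statement through this identification. Under $(\pr_-,\pr_+)$, the subalgebra generated by $L^\infty(\Delta_+)$ and $L^\infty(\Delta_-)$ corresponds to the subalgebra of $L^\infty$ of the image generated by functions pulled back from the two coordinate projections, i.e. (the restriction to the incidence locus of) the algebraic tensor product $L^\infty(\Delta_-) \otimes L^\infty(\Delta_+)$. So the claim reduces to: the algebraic tensor product $L^\infty(\Delta_-) \odot L^\infty(\Delta_+)$ is weak-* dense in $L^\infty(\Delta_- \times \Delta_+)$ (and then restrict to the incidence locus). This is a classical fact: finite sums $\sum_i f_i \otimes g_i$ are weak-* dense in $L^\infty$ of a product probability space. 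One clean way is to note that simple functions of the form $\mathbb{1}_{A \times B}$ with $A, B$ Borel are weak-* dense, and each such is $\mathbb{1}_A \otimes \mathbb{1}_B \in L^\infty(\Delta_-) \odot L^\infty(\Delta_+)$; the measurable rectangles generate the product $\sigma$-algebra, so their indicators' linear span is dense in $L^1$, hence $L^\infty$ of it is weak-* dense in $L^\infty$ of the product.

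The one point requiring a little care is the passage to the incidence locus: $\Delta$ is not all of $\Delta_- \times \Delta_+$, it is the (measure-theoretically full? no) graph of incidence, which is a proper closed subset. So I should argue that restriction $L^\infty(\Delta_- \times \Delta_+) \to L^\infty(\Delta)$ (via the inclusion of the incidence locus, equipped with the measure $(\pr_-,\pr_+)_* \mu_\Delta^o$) is a weak-*-to-weak-* continuous surjective $*$-homomorphism — surjectivity because any bounded measurable function on a measurable subset extends by zero — and that it maps the dense subalgebra onto the image of $\mathcal{A}$. Weak-* density is preserved under such quotient maps (the adjoint is the inclusion $L^1(\Delta) \hookrightarrow L^1(\Delta_- \times \Delta_+)$ by extension by zero, which is isometric, so a functional vanishing on the image of $\mathcal{A}$ pulls back to one vanishing on the dense subalgebra upstairs, hence is zero). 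This yields that $\mathcal{A}$ is weak-* dense in $L^\infty(\Delta)$.

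I expect the main (very mild) obstacle to be purely bookkeeping: making sure the identification of $\Delta$ with the incidence locus inside $\Delta_- \times \Delta_+$ is compatible with the chosen measure classes — i.e. that $(\pr_-,\pr_+)_* \mu_\Delta^o$ is indeed equivalent to the natural class on that locus and that $\mu_\pm^o$ is the pushforward under the coordinate maps — but all of this is already recorded in Section~\ref{sec:buildings} (the maps $\pr_\pm$ are continuous, equivariant, measure-class preserving, and $\mu_\pm^o = (\pr_\pm)_*\mu_\Delta^o$ by definition). There is no genuine analytic difficulty: once the combinatorial fact "a chamber equals the pair of its two endpoints" is in hand, the statement is the elementary density of $L^\infty \odot L^\infty$ in $L^\infty$ of a product, transported along an isomorphism and a quotient.
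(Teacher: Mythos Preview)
Your core idea is correct and gives a genuinely different, more conceptual route than the paper: since the spherical building at infinity is a bipartite graph with edge set $\Delta$, the map $(\pr_-,\pr_+):\Delta\to\Delta_-\times\Delta_+$ is a continuous injection between compact metrizable spaces, hence a Borel isomorphism onto its image; therefore the Borel $\sigma$-algebra of $\Delta$ is generated by sets of the form $\pr_-^{-1}(A)\cap\pr_+^{-1}(B)$, whose indicators lie in the algebra $\mathcal A$ generated by $\pr_-^*L^\infty(\Delta_-)$ and $\pr_+^*L^\infty(\Delta_+)$. Since a weak-$*$ closed unital subalgebra of $L^\infty$ is $L^\infty$ of a sub-$\sigma$-algebra, this forces $\overline{\mathcal A}=L^\infty(\Delta)$.

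However, your packaging via a ``restriction homomorphism'' $L^\infty(\Delta_-\times\Delta_+)\to L^\infty(\Delta)$ has a real gap: the incidence locus is a $\mu_-^o\times\mu_+^o$--null set (for a fixed $u\in\Delta_-$ the set of $v\in\Delta_+$ incident to $u$ is the boundary of a tree sitting as a proper closed subset of $\Delta_+$, of measure zero; this is the building analogue of ``lines through a fixed point form a $\mathbb P^1$ in the dual $\mathbb P^2$''). Consequently there is no well-defined restriction of $L^\infty$-classes to this locus, and your ``adjoint is extension by zero $L^1(\Delta)\hookrightarrow L^1(\Delta_-\times\Delta_+)$'' is the zero map. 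The fix is exactly the $\sigma$-algebra argument above: bypass the product measure entirely and work with Borel generators on $\Delta$ itself.

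By contrast, the paper argues concretely with the basis of shadows: for $z\in X$ it factors $\Omega_o(z)=\Omega_o(z_+)\cap\Omega_{z_+}(z)$ where $z_+,z_-$ are the other corners of the parallelogram $\Conv(o,z)$, and checks that $\mathbb 1_{\Omega_o(z_+)}\in\pr_+^*L^\infty(\Delta_+)$ (and symmetrically), so each $\mathbb 1_{\Omega_o(z)}$ lies in $\mathcal A$. Your approach is shorter and type-independent once the injectivity of $(\pr_-,\pr_+)$ is noted; the paper's approach is more hands-on and exhibits explicitly which geometric sets witness the generation.
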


\begin{proof}
It suffices to show that for every $z\in X$, the characteristic function of $\Omega_o(z)$ is contained in the algebra $A$ generated by $L^\infty(\Delta_+)$ and $L^\infty(\Delta_-)$, as these functions generate a dense subalgebra of $L^\infty(\Delta)$.

So let us fix such a $z$ (as well as an origin $o\in X$), and let $f_{o,z}$ be the characteristic function of $\Omega_o(z)$. The convex hull $\Conv(o,z)$ is a (possibly degenerate) parallelogram whose other vertices will be denoted $z_+$ and $z_-$, where $[oz_+]$ (resp. $[oz_-]$) is the initial segment of a singular ray of type $+$ (resp. of type $-$).

Let $\Omega_o^+(z)$ be the projection of $\Omega_o(z)$ by the natural map $\pr_+:\Delta\to\Delta_+$. Then we claim that $\pr_+^{-1}(\Omega_o^+(z))=\Omega_o(z^+)$. Indeed, if $C\in \Omega_o(z)$ then $\Conv(o,z)\subset Q(o,C)$ so that $z_+\in Q(o,C)$, and therefore $C\in \Omega_o(z_+)$. This proves that $\pr_+^{-1}(\pr_+(\Omega_o(z))\subset \Omega_o(z_+)$. Conversely, assume that $C\in \Omega_o(z_+)$. Let $u=\pi_+(C)$, so that $[oz_+]$ is an initial segment of $[ou)$. Using repeatedly Corollary \ref{cor:root+alcove} one can prove that there exists an apartment $A$ containing $[ou)$ and $z$, so that there is a chamber $C'$ in $A$ containing $[ou)$ and $z$. Hence $C'\in \Omega_o^+(z)$, and since $\pr_+(C)=\pr_+(C')=u$ we get that $C\in \pr_+^{-1}(\Omega_o^+(z))$.

It follows that $f_{o,z^+}\in A$. Similar arguments apply to prove that $f_{o,z^-}\in A$. More generally since $o$ is arbitrary we get that if $x,y$ are on a singular ray then $f_{x,y}\in A$. 

Now we claim that $\Omega_o(z)=\Omega_o(z_+)\cap \Omega_{z_+}(z)$. Indeed, if $C\in \Omega_o(z)$ then $Q(o,C)$ contains $\Conv(o,z)$ and therefore $z_+$. Conversely if $C\in \Omega_o(z_+)\cap \Omega_{z_+}(z)$ then $z_+\in Q(o,C)$ and therefore $Q(z_+,C)\subset Q(o,C)$, and since $z\in Q(z_+,C)$ by assumption we conclude that $z\in Q(o,C)$.

From this claim it follows that $f_{o,z}=f_{o,z^+}\cdot f_{z^+,z}$. Since $f_{o,z^+}\in A$ and $f_{z^+,z}\in A$ we have $f_{o,z}\in A$. This concludes the proof.

\end{proof}

\begin{lemma}\label{lem:meet}
The intersection $L^\infty(\Delta_+)\cap L^\infty(\Delta_-)$ is reduced to the constant functions.
\end{lemma}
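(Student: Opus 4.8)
Unravelling the two inclusions: write $f=\pr_+^*g=\pr_-^*h$ with $g\in L^\infty(\Delta_+)$ and $h\in L^\infty(\Delta_-)$; it suffices to prove that $g$ is $\mu_+^o$-essentially constant. From $\pr_+^*g=\pr_-^*h$ we get $g(\pr_+C)=h(\pr_-C)$ for $\mu_\Delta^o$-a.e.\ chamber $C$. Disintegrating $\mu_\Delta^o$ along $\pr_-$ (Lemma~\ref{lem:desintegration_prouniform}) and using that $\pr_+$ restricts to a bijection from $\Res(u)$ onto the set of vertices of $\Delta_+$ incident to $u$ (a chamber is an edge of the incidence graph, determined by its endpoints), this says: there is a conull set $G_-\subseteq\Delta_-$ such that for every $u\in G_-$ one has $g(v)=h(u)$ for $\rho_u$-a.e.\ $v\in\Delta_+$, where $\rho_u:=(\pr_+)_*\bigl(\mu_\Delta^o|_{\Res(u)}\bigr)$. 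Note also that $\int_{\Delta_-}\rho_u\,d\mu_-^o(u)=(\pr_+)_*\mu_\Delta^o=\mu_+^o$.

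For $u\in G_-$ let $c(u):=h(u)$ be the $\rho_u$-essential value of $g$ along the line $u$. The plan is to show that $c$ is $\mu_-^o$-essentially constant, say equal to $c_0$; then by the displayed identity $g=c_0$ $\mu_+^o$-a.e., finishing the proof. To compare $c(u)$ and $c(u')$ for two lines I would use that the spherical building $\Delta$ is of type $A_2$, hence a projective plane: any two distinct $u,u'\in\Delta_-$ are incident to a unique common vertex $u\wedge u'\in\Delta_+$ (this is also what makes the incidence graph have diameter $3$). If $u,u'\in G_-$ and if $g$ takes its $\rho_u$-essential value $c(u)$ at the point $u\wedge u'$ and likewise its $\rho_{u'}$-essential value $c(u')$ there, then $c(u)=c(u')$.

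Thus it suffices to know that for $(\mu_-^o)^{\otimes2}$-a.e.\ pair $(u,u')$ the point $u\wedge u'$ lies in the relevant conull sets, and by Fubini this reduces to the following \emph{absolute continuity} statement: for $\mu_-^o$-a.e.\ $u$, the pushforward of $\mu_-^o$ under $u'\mapsto u\wedge u'$ is absolutely continuous with respect to $\rho_u$. Granting it, $c(u)=c(u')$ for $(\mu_-^o)^{\otimes2}$-a.e.\ $(u,u')$, and since $c$ is bounded this forces $c$ to be $\mu_-^o$-a.e.\ constant, as wanted. (Equivalently, the whole Lemma is the ergodicity of the measured equivalence relation on $(\Delta,\mu_\Delta^o)$ generated by the partitions into $+$-residues and into $-$-residues.) I expect this absolute continuity statement to be the main obstacle.

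To establish it I would invoke the $P_\Upsilon$-symmetricity of $X$ (Theorem~\ref{thm:Psymetric}). The configuration consisting of two lines $u,u'$, their common point $u\wedge u'$, and a pair of chambers making this configuration rigid is isometric to a fixed finite convex subcomplex of a wall-tree, realised for instance inside an apartment containing two opposite chambers whose $-$-vertices are $u$ and $u'$ (opposite pairs having full measure in $\Delta\times\Delta$, as in \cite{BCL}); hence the corresponding space of embeddings carries a prouniform measure, and the restriction maps to the spaces of embeddings of its subcomplexes are measure-class preserving (Corollary~\ref{cor:measurepreserving}). Comparing the projection of this space onto the coordinate $(u,u\wedge u')$ — whose image measure lies in the class of the incidence measure on $\Delta_-\times\Delta_+$, with conditional $\rho_u$ given $u$ — with its projection onto $(u,u')$ — whose image measure lies in the class of $\mu_-^o\otimes\mu_-^o$ — yields exactly the required absolute continuity. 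The delicate points are to pin down the appropriate finite subcomplex and to verify that the configuration in question has full measure.
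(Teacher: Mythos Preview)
Your strategy is sound in outline, but the key technical step---the absolute continuity of $(u'\mapsto u\wedge u')_*\mu_-^o$ with respect to $\rho_u$---is left as a sketch, and the proposed route via $P_\Upsilon$-symmetricity is not convincing as stated. The configuration you describe (two $-$-rays and the $+$-ray between them, based at $o$) is a pair of adjacent sectors inside a single apartment; it is not obviously realised as a convex subcomplex of a wall-tree in the sense of \S\ref{sec:SoB}, and the crazy-diamond classification (Proposition~\ref{prop:cvxWT}) constrains those subcomplexes rather rigidly. So the prouniform machinery does not apply directly, and you would still need a separate argument to control the measure class of the meet map. This is not fatal---the claim is true and provable by other means---but what you have written is not yet a proof.

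The paper's argument bypasses this difficulty entirely, and is worth comparing. Instead of working with vertices and the meet operation, it works at the level of chambers using the Cartan flow $\scrF=\Isom(\Sigma,X)$. One fixes in the model apartment $\Sigma$ a short gallery $C_+\sim_+ C'\sim_- C''\sim_+ C_-$ with $C_+$ and $C_-$ opposite. For any $\varphi\in\scrF$, the hypothesis $f\in\pr_+^*L^\infty(\Delta_+)$ gives $f(\varphi(C_+))=f(\varphi(C'))$ and $f(\varphi(C_-))=f(\varphi(C''))$, while $f\in\pr_-^*L^\infty(\Delta_-)$ gives $f(\varphi(C'))=f(\varphi(C''))$ for a.e.\ $\varphi$; hence $f(\varphi(C_+))=f(\varphi(C_-))$ for a.e.\ $\varphi$. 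Since $\varphi\mapsto(\varphi(C_+),\varphi(C_-))$ sends the class of $\mu_\scrF$ to that of $\mu_\Delta^o\otimes\mu_\Delta^o$ (this is already in \cite{BCL}), one concludes $f(C_1)=f(C_2)$ for a.e.\ pair. The absolute continuity statement you need is thus replaced by a single known fact about the Cartan flow, and the whole proof is a few lines.
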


\begin{proof}
Choose model positive chamber $C_+\subset \Sigma$, and let $C_-=-C_+$. Recall first that the map $\scrF\to \Delta_{\op}$ associating to $\varphi\in \scrF$ the pair $(\varphi(C_+),\varphi(C_-))$ sends the  measure-class of $\mu_\scrF$ to the one of $\mu_\Delta^o\times\mu_\Delta^o$  \cite[Theorem 6.21]{BCL}. Let $C'$ be the chamber of $\Sigma$ which is $+$-adjacent to $C_+$, $C''$ the chamber of $\Sigma$ which is $+$-adjacent to $C_-$ ; note that $C'$ and $C''$ are $-$-adjacent and denote by $u_0$ their common vertex. Similarly, the map $\varphi\to \varphi(C')$ (resp. $\varphi\to \varphi(C'')$) sends the measure class of $\mu_\scrF$ to the one of $\mu_\Delta$.

Now let $f\in L^\infty(\Delta_+)\cap L^\infty(\Delta_-)$. Up to changing $f$ on a zero measure subset, we can assume that $f=\pr_+^* g$ for some $g\in L^\infty(\Delta_+)$.  In other words $f$ has the same value on every pair of $+$-adjacent chambers. 
By the previous remark, it suffices to prove that for almost every $\varphi\in \scrF$ we have $f(\varphi(C_+)) = f(\varphi(C_-))$. 

Note first that by assumption we have, for every $\varphi\in\scrF$, $f(\varphi(C_+))=f(\varphi(C')) $ and $f(\varphi(C_-))= f(\varphi(C''))$, so it suffices to prove that $f(\varphi(C'))=f(\varphi(C''))$ for almost every $\varphi\in \scrF$. Now since $f\in L^\infty(\Delta_-)$ we get that $f$ coincides almost everywhere with $\pr_-^*h$ for some $h\in L^\infty(\Delta_-)$, on a set of full measure which we denote $E$. Since $E$ is of full measure, the set $E'$ of $\varphi\in \scrF$ such that $\varphi(C')\in E$ and $\varphi(C'')\in E$ is of full measure in $\scrF$. For every $\varphi\in E'$ we get that $f(\varphi(C'))=h(u_0) = f(\varphi(C''))$, which proves that $f(\varphi(C_+))=f(\varphi(C_-))$.

We have proven that for $(\mu_\Delta\times\mu_\Delta)$-almost every pair of chambers $(C_1,C_2)$ we have $f(C_1)=f(C_2)$. This proves that $f$ is essentially constant.


\end{proof}

Our goal in the rest of the section is  to prove Theorem~\ref{thm:factor}, which we recall here, in a different form (see \cite[Theorem 3.3.4]{AnantharamanPopa} for the equivalence between the two statements).

\begin{theorem}\label{thm:subalgebras}
Any $\Gamma$-invariant, weakly-* closed subalgebra of $L^\infty(\Delta)$ is either 
\begin{itemize}
\item reduced to the constants,
\item $L^\infty(\Delta_-)$,
\item $L^\infty(\Delta_+)$,
\item or $L^\infty(\Delta)$.
\end{itemize}
\end{theorem}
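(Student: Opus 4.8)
The plan is to let $B \subseteq L^\infty(\Delta)$ be a $\Gamma$-invariant, weak-$*$ closed subalgebra and analyze it through its intersections and join with the two distinguished subalgebras $L^\infty(\Delta_+)$ and $L^\infty(\Delta_-)$. The key dynamical input is Theorem \ref{thm:convTuvproj}: for $f \in L^\infty(\Delta)$, for a.e.\ $(u,v) \in \Delta_\pmop$ and every projectivity $p$, there is a sequence $(\gamma_n)$ with $\gamma_n \cdot f \to f \circ p \circ \proj_v \circ \proj_u$ weak-$*$. Since $B$ is $\Gamma$-invariant and weak-$*$ closed, whenever $f \in B$ we get $f \circ p \circ \proj_v \circ \proj_u \in B$ for a.e.\ $(u,v)$ and every $p$ in (a countable dense subgroup of) $\Pi$.

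First I would treat the "collapsing" step: suppose $f \in B$ is not already (a.e.) equal to a function pulled back from $\Delta_+$ via $\pr_+$. I claim that then $B \supseteq L^\infty(\Delta_-)$, and symmetrically with the roles of $+$ and $-$ swapped. The idea: the function $g_{u,v} := f \circ p \circ \proj_v \circ \proj_u$ factors through $\proj_u : \Delta \to \Res(u)$, so it is determined by its restriction to the residue $\Res(u)$, on which $\Pi$ acts $3$-transitively (Lemma \ref{lem:3transitive}, via Lemma \ref{lem:bdTu}). If $f$ restricted to a.e.\ residue $\Res(v)$ is non-constant, then letting $p$ range over $\Pi$ and using $3$-transitivity of the $\Pi$-action on $\Res(v) \cong \partial T$, the functions $f \circ p \circ \proj_v \circ \proj_u$ generate, upon varying $u$ and taking the weak-$*$ closed algebra, all of $L^\infty(\Delta_-)$: indeed one can separate points of $\Delta_-$ by first separating points of a single residue $\Res(u)$ (which are parametrized by $\Delta_-$ adjacency, since edges of the $A_2$-building graph are chambers) and then using minimality of the $\Gamma$-action on $\Delta_-$ (Theorem \ref{thm:2minimal}) to move around. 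Conversely, if $f$ restricted to a.e.\ residue $\Res(v)$ is a.e.\ constant, then $f$ is (a.e.) pulled back from $\pr_+ : \Delta \to \Delta_+$, i.e.\ $f \in L^\infty(\Delta_+)$. This dichotomy, applied to every $f \in B$, shows: either $B \subseteq L^\infty(\Delta_+)$, or $B \supseteq L^\infty(\Delta_-)$; and symmetrically either $B \subseteq L^\infty(\Delta_-)$ or $B \supseteq L^\infty(\Delta_+)$.

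Next I would combine the four cases. If $B \subseteq L^\infty(\Delta_+)$ and $B \subseteq L^\infty(\Delta_-)$, then $B \subseteq L^\infty(\Delta_+) \cap L^\infty(\Delta_-)$, which is just the constants by Lemma \ref{lem:meet}. If $B \supseteq L^\infty(\Delta_+)$ and $B \supseteq L^\infty(\Delta_-)$, then $B$ contains the weak-$*$ closed algebra generated by both, which is all of $L^\infty(\Delta)$ by Lemma \ref{lem:join}, so $B = L^\infty(\Delta)$. The remaining mixed cases give $B \subseteq L^\infty(\Delta_+)$ together with $B \supseteq L^\infty(\Delta_+)$ (hence $B = L^\infty(\Delta_+)$), or symmetrically $B = L^\infty(\Delta_-)$. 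So in all cases $B \in \{\text{constants}, L^\infty(\Delta_-), L^\infty(\Delta_+), L^\infty(\Delta)\}$, which is the claim. The last thing to check is that when $B \subseteq L^\infty(\Delta_+)$, it actually equals all of $L^\infty(\Delta_+)$ unless it is constant — here I would again invoke $3$-transitivity of $\Pi$ on residues plus minimality of $\Gamma$ on $\Delta_+$ (or rather $2$-minimality, Theorem \ref{thm:2minimal}, and Corollary \ref{cor:FactorTop}) to say: a non-constant $\Gamma$-invariant weak-$*$ closed subalgebra of $L^\infty(\Delta_+)$ must separate points of $\Delta_+$ up to a null set, hence by the $2$-minimality it is everything; this is essentially the statement that $\Delta_+$ has no non-trivial $\Gamma$-equivariant measurable factor, which I would extract from the topological $2$-minimality by a density/regularity argument (a non-trivial factor would give a non-trivial $\Gamma$-equivariant measurable partition, contradicting ergodicity of $\Gamma$ on $\Delta_+ \times \Delta_+$ that follows from $2$-minimality together with a full support argument).

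The main obstacle I anticipate is the passage from the weak-$*$ convergence statement of Theorem \ref{thm:convTuvproj} — which holds for a single $f$ and a.e.\ $(u,v)$, with the null set depending on $f$ — to a clean statement about the algebra $B$: one needs to handle the measure-theoretic bookkeeping carefully, e.g.\ restricting to a countable weak-$*$ dense subset of $B$ and a countable dense subgroup $P_0 \subseteq \Pi$ to get a single full-measure set of $(u,v)$ that works simultaneously, then upgrading back to all of $B$ by weak-$*$ density and to all of $\Pi$ by continuity of the $\Pi$-action on $\Res(v)$. The second delicate point is showing that a non-constant $\Gamma$-invariant weak-$*$ closed subalgebra of $L^\infty(\Delta_\pm)$ is everything — i.e.\ that $\Gamma \curvearrowright \Delta_\pm$ has no non-trivial measurable factors — which is the measurable analogue of Corollary \ref{cor:FactorTop}; I expect this requires combining $2$-minimality with ergodicity of $\Gamma$ on $\Delta_\pm$ and a Lebesgue-density-type argument, or else deducing it as a consequence of the main argument applied "within" $L^\infty(\Delta_\pm)$.
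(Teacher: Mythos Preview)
Your proposal is correct and follows essentially the same route as the paper: the dichotomy ``either $B\subseteq L^\infty(\Delta_+)$ or $B\supseteq L^\infty(\Delta_-)$'' is exactly Proposition~\ref{prop:radial}, and your four-case combination using Lemmas~\ref{lem:meet} and~\ref{lem:join} is verbatim the paper's deduction of Theorem~\ref{thm:subalgebras} from that proposition.

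Two remarks. First, your ``last thing to check'' and second anticipated obstacle are not needed: once you have both dichotomies, the mixed case $B\subseteq L^\infty(\Delta_+)$ and $B\not\subseteq L^\infty(\Delta_-)$ already forces $B\supseteq L^\infty(\Delta_+)$ by the symmetric dichotomy, so $B=L^\infty(\Delta_+)$ with no further work; you never need a separate ``no measurable factors of $\Delta_\pm$'' statement. Second, for the step where the radial algebras $R_u$ (over a $\Gamma$-orbit of $u$'s) generate $L^\infty(\Delta_-)$, the paper (Proposition~\ref{rugenerate}) does something slightly cleaner than your point-separation sketch: it multiplies two suitably chosen radial functions to produce a genuinely \emph{continuous} non-constant function on $\Delta_-$, and then invokes only the \emph{topological} factor statement (Corollary~\ref{cor:FactorTop}) rather than anything measurable. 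This sidesteps the density/regularity issues you were anticipating.
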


\subsection{Radial functions}

Let $u$ be a vertex of $\Delta$ of type $-$. Recall that we denote $\proj_u:\Delta\to\Res(u)$ the projection on the residue of $u$. It sends the measure $\mu_\Delta^o$ onto some measure $(\proj_u)_*\mu_\Delta^o$, which we will denote $\mu_u ^o$. 

\begin{lemma}\label{lem:projmp}
The measure $\mu_u^o$ is in fact the restricted prouniform measure on the set  embeddings $\Lambda\to X$ which send the wall of $\Lambda$ of type $-$ to the geodesic ray $[ou)$.
\end{lemma}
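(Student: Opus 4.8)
The plan is to identify $\Res(u)$ with a space of sector‑embeddings and to deduce the equality of the two measures from (i) the combinatorial description of the projection $\proj_u$ in the spherical building $\Delta_-\sqcup\Delta_+$, together with a null‑set estimate, and (ii) the functoriality of prouniform measures (Corollary~\ref{cor:measurepreserving}, Lemma~\ref{lem:desintegration_prouniform}) applied to a suitable auxiliary model complex. Write $\Lambda$ for the model sector and $\ell_-\subset\Lambda$ for its wall of type $-$. Sending each class of sectors to its representative based at $o$ identifies $\Res(u)$ with $\Isom(\Lambda,X)^{[ou)}:=\{\beta\in\Isom(\Lambda,X)\mid\beta(0)=o,\ \beta|_{\ell_-}=[ou)\}$; write $\tilde\mu_u^o$ for the restricted prouniform measure on this set, which via this identification is the exact $\Delta_-$‑analogue of the measure $\mu_v^o$ of \S\ref{sec:martingales}. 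The lemma asserts that $\mu_u^o=(\proj_u)_*\mu_\Delta^o$ equals $\tilde\mu_u^o$; both are Radon probability measures on the compact space $\Res(u)$.

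The first step is to compute $\proj_u$. In the spherical building at infinity a chamber is a flag and $\proj_u$ is the projection onto the pencil at the vertex $u$; the classical distance computation in a generalized triangle shows that whenever $\pr_-(C)\neq u$ and the type‑$+$ vertex $\pr_+(C)$ is not incident to $u$, one has $\proj_u(C)=(u,\overline{u\,\pr_-(C)})$, the chamber of $\Res(u)$ through $u$ and the unique type‑$+$ vertex at infinity $\overline{u\,\pr_-(C)}$ incident to both $u$ and $\pr_-(C)$. Both conditions hold for $\mu_\Delta^o$‑almost every $C$: the first because $\pr_-^{-1}(u)$ is $\mu_\Delta^o$‑null (as $\mu_-^o$ is non‑atomic, $X$ being thick), and the second by disintegrating $\mu_\Delta^o$ along $\pr_-$ into the restricted prouniform measures $\tilde\mu_{u'}^o$ on $\Res(u')$ and observing that, for $u'\neq u$, the chambers of $\Res(u')$ whose type‑$+$ vertex is incident to $u$ reduce to the single flag through $u'$ and $\overline{uu'}$, hence form a $\tilde\mu_{u'}^o$‑null set. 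Consequently $\proj_u$ agrees $\mu_\Delta^o$‑a.e. with $\psi_u\circ\pr_-$, where $\psi_u(u'):=(u,\overline{uu'})$, and therefore $\mu_u^o=(\proj_u)_*\mu_\Delta^o=(\psi_u)_*\mu_-^o$.

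It remains to identify $(\psi_u)_*\mu_-^o$ with $\tilde\mu_u^o$. Here I would use the $120^\circ$ wedge $W:=\Conv(R_0\cup R_1)\subset\Sigma$, where $R_0,R_1$ are the two singular rays of type $-$ based at $0$ bounding it and $R_+$ is the type‑$+$ ray bisecting it; $W$ is convex in $\Sigma$, hence in a model wall‑tree $\Upsilon$, so $W$, its "half" $\Lambda'$ (walls $R_0,R_+$), its other half $\Lambda''$ (walls $R_1,R_+$), the rays $R_0,R_1$, and the bent line $R_0\cup R_1$ (an increasing union of segments, convex in $W$) are all ind‑$P_\Upsilon$; by Theorem~\ref{thm:Psymetric} we may form $\rho:=\mu_{W,R_0}^{[ou)}$ on $\Isom(W,X)^{[ou)}$. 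For $f\in\Isom(W,X)^{[ou)}$, Theorem~\ref{thm:isomappart} places $f(W)$ in an apartment, so $f(R_1)$ is a singular ray $[ou')$, $f(R_+)=[o\,\overline{uu'})$, $f(\Lambda')=Q(o,(u,\overline{uu'}))=Q(o,\psi_u(u'))$ and $f(\Lambda'')=Q(o,(u',\overline{uu'}))$; in particular $\proj_u(f(\Lambda''))=f(\Lambda')$. By Corollary~\ref{cor:measurepreserving}, $(f\mapsto f(\Lambda'))_*\rho=\tilde\mu_u^o$, while $(f\mapsto f(R_1))_*\rho$ is the $R_1$‑restriction of $\mu_{R_0\cup R_1,R_0}^{[ou)}$; disintegrating $\mu_{R_0\cup R_1}^o$ along the $R_0$‑restriction (Lemma~\ref{lem:desintegration_prouniform}) and using that $P_\Upsilon$‑symmetry makes the number of extensions of a partial embedding independent of that embedding, this $R_1$‑restriction is independent of the conditioning $[ou)$, hence equals $(f\mapsto f(R_1))_*\mu_{R_0\cup R_1}^o=\mu_-^o$. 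Combining, $\tilde\mu_u^o=(f\mapsto f(\Lambda'))_*\rho=(\psi_u)_*\big((f\mapsto f(R_1))_*\rho\big)=(\psi_u)_*\mu_-^o=\mu_u^o$.

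The main obstacle is two‑fold. First is the combinatorial step — establishing the formula for $\proj_u$ and, above all, the null‑set estimate that $\{C:\pr_+(C)\text{ incident to }u\}$ is $\mu_\Delta^o$‑negligible, which is exactly what reduces $\proj_u$ to a function of $\pr_-$ alone. Second is the "conditional independence" assertion that restricting the wedge measure $\rho$ to the ray $R_1$ is insensitive to the conditioning on $R_0$; this is the only place where the symmetry hypothesis (Theorem~\ref{thm:Psymetric}) is genuinely used rather than mere bookkeeping with Corollary~\ref{cor:measurepreserving}, and some care with the type conventions on embeddings is needed there.
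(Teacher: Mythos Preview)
Your first step --- reducing $\proj_u$ to $\psi_u\circ\pr_-$ almost everywhere via the combinatorics of the spherical building and a null-set estimate --- is correct and clean. The gap is in the second step, and it is not merely a matter of care with type conventions: the ``conditional independence'' assertion is actually false.

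$P_\Upsilon$-symmetry guarantees that the \emph{cardinality} of the set of extensions of a given embedding of $R_0$ to an embedding of $R_0\cup R_1$ (or of $W$) is independent of the embedding; it says nothing about the \emph{set itself}, hence nothing about the resulting distribution on $R_1$. Concretely: for $f\in\Isom(W,X)^{[ou)}$ the image $f(W)$ lies in an apartment as a genuine $120^\circ$ wedge (Theorem~\ref{thm:isomappart} plus rigidity of wedges in an apartment), so the first edge of $f(R_1)$ lies at link-distance exactly $2$ from the first edge $ox_1$ of $[ou)$ in $\lk(o)$; in particular the first vertex of $f(R_1)$ is never $x_1$. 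Hence the endpoint $u'$ of $f(R_1)$ always satisfies: the ray $[ou')$ does not begin with the edge $ox_1$. But the set $\{u'\in\Delta_-:\text{the first edge of }[ou')\text{ is }ox_1\}$ has positive $\mu_-^o$-measure (it is a cylinder), so $(f\mapsto u')_*\rho\neq\mu_-^o$. The obstruction is geometric: two type~$-$ rays from $o$ can share their first edge, and such pairs are exactly what the wedge construction forbids; this happens on a set of positive measure for \emph{every} choice of $[ou)$, and the excluded set moves with $[ou)$. No type-aware refinement of the prouniform formalism repairs this, and the paper's own Remark following the definition of $\mu_\pm^o$ already warns that these are not prouniform measures in the sense of \S\ref{sec:measure}.

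The paper's proof proceeds by a completely different route that avoids any auxiliary wedge: it verifies the defining property of the restricted prouniform measure directly, showing that $\mu_\Delta^o\bigl(\proj_u^{-1}(\Omega_o(z))\bigr)$ depends only on $\sigma(o,z)$. This is done by exhausting $\Delta$ as $\bigcup_{x\in[ou)}\Delta^{x,u}$ (chambers lying in a common apartment with $x$ and $u$), computing the contribution of each stratum via the prouniform measure $\mu_\scrF^x$ on the Cartan flow, and transferring between basepoints $x$ and $o$ with the explicit Radon--Nikodym formula of Proposition~\ref{prop:changebasepoint}. The decomposition $\proj_u=\psi_u\circ\pr_-$ is never used.
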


\begin{proof}
Fix $\lambda\in \Lambda$.
For $z\in V_\lambda(o)$, recall that $\Omega_o(z)$ is the set of $C\in\Delta$ such that $z\in Q(x,C)$. By definition of the prouniform measure, it suffices to prove that $\mu_u^o(\Omega_o(z))=\mu_\Delta^o(\proj_u^{-1}(\Omega_o(z))$ depends only on $\lambda$ and not on $z$ (as long as $\Omega_o(z)\cap \Res(u)\neq \varnothing$).

For $x\in X$, let $\Delta^{x,u}$ be the set of $C\in \Delta$ such that $C,o$ and $u$ are in an apartment. Note that for any $C\in \Delta$ there is an apartment containing $C$ and $u$, and therefore there exists an $x\in [ou)$ such that $C\in \Delta^{x,u}$. Therefore we can write $\Delta$ as an increasing union $\bigcup_{x\in [ou)} \Delta^{x,u}$. Hence it suffices to prove that  $\mu_\Delta^o(\proj_u^{-1}(\Omega_o(z))\cap \Delta^{x,u})$ depends only on $\lambda$, as long as $\Omega_o(z)\cap \Res(u)\neq\varnothing$.

 First we claim that $\mu_\Delta^o(\proj_u^{-1}(\Omega_o(z))\cap \Delta^{o,u})$ does not depend on $z$, assuming $\Omega_o(z)\cap \Res(u)\neq \varnothing$. To prove the claim, let us denote as before $\scrF^o$ the subset of $\scrF$ consisting of maps sending $0$ to $o$. Then the natural map $\scrF^o$ to $\Delta$, associating to an apartment the chamber at infinity which is the image of $\Lambda$, sends the measure $\mu_\scrF^o$ to $\mu_\Delta^o$, by Corollary \ref{cor:measurepreserving}.

Furthermore since $\mu_\Delta^o$-almost every $C$ is opposite $u$, the $W$-distance between $C$ and $\proj_u(C)$ is almost surely constant, say equal to $w_1$. Hence we have $C\in \Delta^{o,u} \cap \proj_u^{-1}(\Omega_o(z))$ if and only if there exists an embedding $\phi:\Sigma\to X$ such that $\phi(\ell_-)=[o,u)$ (where $\ell_-$ is the ray of type $-$ starting from $0\in\Sigma$ and opposite  $\Lambda$), $\phi(w_1\lambda)=z$ and $\phi(\Lambda)=Q(o,C)$.
Hence $$\mu_\Delta^o(\proj_u^{-1}(\Omega_o(z))\cap \Delta^{o,u})
 = \mu_\scrF^o(\{\phi:\Sigma \to X \mid  \phi(\ell_-)=[ou) \textrm{ and } z\in \phi(\Lambda) \})$$ which only depends on the isometry class of $\Conv([ou)\cup\{z\})$ by construction of the prouniform measure $\mu_\scrF$. As we assumed $\Omega_o(z)\cap \Res(u)\neq \varnothing$ we get that this convex hull depends only on $\lambda$, as it is isometric to the convex hull in $\Sigma$ of $\lambda$ and $\ell_-$.

For $\lambda,\nu\in \Sigma$ (identified to the vector space $\RR^2$), write $\nu>\lambda$ if $\nu-\lambda\in \Lambda$, and $\nu\gg \lambda$ if $\nu>w.\lambda$, for all $w\in W_0$ (where $W_0$ is the stabilizer of $0$ in $\Aut(\Sigma)$).

 Let $\nu>\lambda$. Then $\Omega_o(z)$ is a union of $\Omega_o(z')$ over all the $z'\in V_\nu(o)$ such that $z\in \Conv(o,z')$. The number of such $z'$ depends only on $\lambda$ and $\nu$. Therefore it suffices to prove that $\mu_\Delta^o(\proj_u^{-1}(\Omega_o(z'))\cap \Delta^{x,u})$ depends only on $\nu$, and we can choose $\nu$ to be arbitrarily large.

Fix $x\in [o,u)$.
Now take $\nu\gg \sigma(o,x)$, and $z$ such that $\Omega_o(z)\cap \Res(u)\neq \varnothing$. By \cite[Theorem 3.6]{Parkinson} we get that $\Omega_x(z) = \Omega_o(z)$. In particular, if $\omega\in\Omega_x(z)$, then $h_\omega(x,o)=\sigma(x,z)-\sigma(o,z)=-\sigma(o,x)$, which does not depend on $z$. Hence by Proposition \ref{prop:changebasepoint}, the Radon-Nikodym derivative $\frac{d\mu_\Delta^x}{d\mu_\Delta^o}$ is constant on $\Omega_o(z)$ and is independent of $z$.
  It follows that $\mu_\Delta^o(\proj_u^{-1}(\Omega_o(z))\cap \Delta^{x,u})$ is equal, up to a constant, to $\mu_\Delta^x(\proj_u^{-1}(\Omega_x(z))\cap \Delta^{x,u})$
which is independent of $\lambda$ by our previous argument. 

\end{proof}

\begin{definition}
For every $u\in \Delta_+$, we denote $R_u = \proj_u^*(L^\infty(\Res(u),\mu_u^o))\subset L^\infty(\Delta,\mu_\Delta^o)$. Functions in $R_u$ are called 
\emph{$u$-radial}. In other words, a function 
 $f$ is $u$-radial if $f(x)=f(x')$ for every $x,x'$ such that $\proj_u(x)=\proj_u(x')$.

\end{definition}

If $f$ is radial, then we see that $f$ is constant on every residue of type $-$ except $u$. Since $\mu(\Res(u))=0$, it follows that $f$ is also a function in $L^\infty(\Delta_-)$. In other words, we have $R_u\subset L^\infty(\Delta_-)$. Note that by construction, the projection $\proj_u$ induces an isomorphism from $R_u$ to $L^\infty(\Res(u),\mu_u^o)$.

\begin{proposition}\label{rugenerate}
Let $E$ be a $\Gamma$-invariant subset of $\Delta_-$. Then the weak-* closure of the algebra generated by all the algebras $R_u$, where $u\in E$, is $L^\infty(\Delta_-)$.
\end{proposition}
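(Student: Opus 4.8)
The plan is to translate the statement into a separation-of-points property of the boundary incidence geometry. Recall that $\Delta_-\cup\Delta_+$, with $\Delta$ as the set of chambers, is a spherical building of type $A_2$, i.e.\ a projective plane with (say) point set $\Delta_-$ and line set $\Delta_+$; for distinct $v,v'\in\Delta_-$ I write $\overline{vv'}\in\Delta_+$ for the unique line through them. The first observation is that for $u\in\Delta_-$ and $v\in\Delta_-\setminus\{u\}$ the projection $\proj_u$ is constant on the whole residue $\Res(v)$, equal to the chamber of $\Res(u)$ determined by the line $\overline{uv}$; call this chamber $\psi_u(v)$. Consequently a $u$-radial function is, viewed in $L^\infty(\Delta_-)$, exactly a function of the form $g\circ\psi_u$ with $g\in L^\infty(\Res(u),\mu_u^o)$, so that $R_u=\psi_u^*L^\infty(\Res(u))$ inside $L^\infty(\Delta_-,\mu_-^o)$; this is essentially a restatement of the definition of $R_u$ (together with Lemma~\ref{lem:projmp}, which guarantees that $(\psi_u)_*\mu_-^o$ lies in the measure class of $\mu_u^o$).

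Next I would set up the bookkeeping. Let $\mathcal A$ be the weak-$*$ closed subalgebra of $L^\infty(\Delta_-)$ generated by $\{R_u:u\in E\}$; then $\mathcal A=L^\infty(\mathcal B)$ for a sub-$\sigma$-algebra $\mathcal B$, and $\mathcal B=\sigma(\psi_u:u\in E)$ modulo $\mu_-^o$-null sets. Since $L^1(\Delta_-,\mu_-^o)$ is separable, $\mathcal B$ is countably generated, hence $\mathcal B=\sigma(\psi_u:u\in E_0)$ mod null for some countable $E_0\subseteq E$; enlarging $E_0$ by a countable dense subset of $E$ (possible as $\Delta_-$ is second countable), I may also assume $E_0$ is dense in $\Delta_-$ --- here I use that $E$ is nonempty and $\Gamma$-invariant, and that $\Gamma$ acts minimally on $\Delta_-$ (Theorem~\ref{thm:2minimal}), so $E$ is dense. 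It then suffices to show that $\Psi=(\psi_u)_{u\in E_0}\colon\Delta_-\to\prod_{u\in E_0}\Res(u)$ is injective off a null set: $\mathcal B$ is then a countably generated sub-$\sigma$-algebra of the standard Borel space $\Delta_-$ separating the points of a conull set, hence equals the full Borel $\sigma$-algebra mod null, and therefore $\mathcal A=L^\infty(\Delta_-)$.

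The geometric heart is then short. Discarding the countable (hence $\mu_-^o$-null, $\mu_-^o$ being non-atomic) set $E_0$, suppose $v\neq v'$ lie in $\Delta_-\setminus E_0$ with $\psi_u(v)=\psi_u(v')$ for all $u\in E_0$. For such $u$ one has $u\neq v,v'$, and $\psi_u(v)=\psi_u(v')$ says exactly $\overline{uv}=\overline{uv'}$, i.e.\ $u$ is incident to the line $\overline{vv'}$. So $E_0$ would be contained in the set of points of $\Delta_-$ incident to $\overline{vv'}$; but this set equals $\pr_-(\pr_+^{-1}(\overline{vv'}))$, a continuous image of a compact set, hence closed, and it is proper because $X$ is thick (not every point of the boundary plane lies on a single line). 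This contradicts the density of $E_0$, so $\Psi$ is injective on $\Delta_-\setminus E_0$, which completes the argument. The step I expect to require the most care is not this geometric core but the soft measure-theoretic reduction of the second paragraph: identifying $R_u$ with $\psi_u^*L^\infty(\Res(u))$, and replacing $E$ by a countable dense $E_0$ that still generates $\mathcal B$.
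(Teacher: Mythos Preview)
Your proof is correct and takes a genuinely different route from the paper's. The paper argues by first manufacturing a non-constant \emph{continuous} function in the algebra: it picks three points $u_1,u_2,u_3\in E$, lifts suitable continuous functions on $\Res(u_1)$ and $\Res(u_2)$ to radial functions $f'_1\in R_{u_1}$, $f'_2\in R_{u_2}$ (each continuous on $\Delta_-$ away from its base point and vanishing near the other's), and observes that the product $f'_1f'_2$ is continuous on all of $\Delta_-$ and nonzero at $u_3$. It then invokes Corollary~\ref{cor:FactorTop} --- the absence of nontrivial $\Gamma$-equivariant topological factors of $\Delta_-$, which rests on the topological $2$-minimality Theorem~\ref{thm:2minimal} --- to conclude that the algebra contains $C(\Delta_-)$ and hence, by weak-$*$ density, all of $L^\infty(\Delta_-)$.

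Your argument bypasses Corollary~\ref{cor:FactorTop} entirely. You use $\Gamma$-invariance only to obtain density of $E$ via minimality (a much weaker and more elementary fact than $2$-minimality), and then finish with a direct separation argument in the projective plane at infinity together with standard descriptive set theory (countably generated sub-$\sigma$-algebras, Lusin--Souslin). This has two pleasant consequences. First, your proof actually establishes the stronger statement that the conclusion holds whenever $E\subset\Delta_-$ is not contained in a single line of the boundary projective plane --- no $\Gamma$-invariance is needed beyond that. Second, since Corollary~\ref{cor:FactorTop} and Theorem~\ref{thm:2minimal} are not used elsewhere in the paper, your approach would allow those results to be dropped from the logical chain leading to the main theorem. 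The paper's route, on the other hand, is perhaps more in keeping with the overall philosophy of exploiting the $\Gamma$-dynamics, and the $2$-minimality statement is of independent interest.
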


\begin{proof}
Let $A$ be the algebra generated by all the algebras $R_u$, where $u\in E$. Since $\gamma R_u=R_{\gamma u}$, the algebra $A$ is $\Gamma$-invariant. 

We claim first that $A$ contains a non-constant function $f_0$ which is continuous on $\Delta_-$. Granting the claim, we see by Corollary \ref{cor:FactorTop} that $A$ contains $C(\Delta_-)$, which itself is dense in $L^\infty(\Delta_-)$ for the weak-* topology.

Now let us prove the claim. Consider a continuous function $f$ on $\Res(u)$, and define $f':\Delta\to\CC$ by $f'(x)=f(\proj_u(x))$. Obviously, $f'$ is a function in $R_u$. Furthermore, we see that $f'$ is continuous at every point of $\Delta_-$ except $u$.

Since $E$ is $\Gamma$-invariant, and the action of $\Gamma$ is non-elementary, there are three pairwise distinct points $u_1, u_2, u_3\in E$. Let $f_1$ be a continuous function on $\Res(u_1)$ which is zero on a neighborhood of $\proj_{u_1}(u_2)$, but non zero on a neighborhood of $\proj_{u_1}(u_3)$, and consider $f'_1\in R_{u_1}$ as above. Then $f'_1$ is a function which is zero on a neighborhood of $u_1$, and which is continuous everywhere except at $u_2$. Now let $f_2$ be a continuous function on $\Res(u_2)$ which is zero on a neighborhood of $\proj_{u_2}(u_1)$ and non-zero on a neighborhood of $\proj_{u_2}(u_3)$, and consider its lift $f'_2$. 

Let $f=f'_1 f'_2$. Then $f$ is continuous everywhere except maybe on a neighborhood of $u_1$ and $u_2$. But it is also zero on a neighborhood of $u_1$ and $u_2$. So in fact $f$ is continuous everywhere. Furthermore $f(u_3)\neq 0$, so that $f$ is non constant, proving the claim.

\end{proof}

Let $A$ be a $\Gamma$-invariant closed subalgebra of $L^\infty(\Delta)$. The following Proposition is the heart of the proof of Theorem \ref{thm:subalgebras}.

\begin{proposition}\label{prop:radial}
Assume that $A$ contains a function which is not in $L^\infty(\Delta_+)$. Then $A$ contains $L^\infty(\Delta_-)$.
\end{proposition}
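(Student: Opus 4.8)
The plan is to start from a function $f \in A$ that is not $\pr_+$-measurable, and exploit the contracting sequences of Theorem \ref{thm:convTuvproj} (and Corollary \ref{cor:convTuv}) to manufacture, inside $A$, a nonconstant $u$-radial function for a suitable vertex $u \in \Delta_-$. Once $A$ contains one nonconstant element of some $R_u$, then since $A$ is $\Gamma$-invariant and weak-$*$ closed, it contains the $\Gamma$-orbit of that function; applying Proposition \ref{rugenerate} with $E$ the $\Gamma$-orbit of $u$ (which is $\Gamma$-invariant, and the action on $\Delta_-$ is minimal so this orbit is all we need) gives $A \supset L^\infty(\Delta_-)$. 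So the whole game is: produce a nonconstant function in $A \cap R_u$ for some $u$.

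First I would record what Corollary \ref{cor:convTuv} and Theorem \ref{thm:convTuvproj} give us: for almost every $(u,v) \in \Delta_\pmop$, and every projectivity $p$ acting on $\Res(v)$, there is a sequence $(\gamma_n)$ in $\Gamma$ with $\gamma_n \cdot f \to f \circ (p \circ \proj_v \circ \proj_u)$ weak-$*$ in $L^\infty(\Delta)$. Since $A$ is weak-$*$ closed and $\Gamma$-invariant, every such limit lies in $A$. Taking $p = \mathrm{id}$, we get that $f_{u,v} := f \circ \proj_v \circ \proj_u \in A$ for a.e. $(u,v)$. Note $f_{u,v}$ factors through $\proj_u : \Delta \to \Res(u)$ — wait, more precisely it factors through $\proj_v \circ \proj_u$, hence in particular it is $u$-radial \emph{and} its value only depends on the image in $\Res(v)$; in any case $f_{u,v} \in R_u$. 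The crucial point is to show that for suitable $f$ (not $\pr_+$-measurable) and suitable $(u,v)$, the function $f_{u,v}$ is \emph{nonconstant}. If $f_{u,v}$ were constant for a.e. $(u,v)$, I would argue back that $f$ itself must be $\pr_+$-measurable, contradicting the hypothesis: indeed $f_{u,v}(C) = f(\proj_v \proj_u C)$ being constant in $C$ for a.e. $(u,v)$ forces $f$ to be a.e. constant on each $\Res(v)$ — one disintegrates $\mu_\Delta$ over the map to $\Delta_+$, using that $\proj_v \circ \proj_u$ for varying $u$ sweeps out $\Res(v)$ with the right measure class (this is where Lemma \ref{lem:projmp} and the fact that $\mu_u^o$ is a disintegration of $\mu_\Delta^o$ enter), and concludes $f \in L^\infty(\Delta_+)$.

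More carefully, the argument I would write runs as follows. Fix $f \in A \setminus L^\infty(\Delta_+)$. Suppose for contradiction that for a.e. $(u,v) \in \Delta_\pmop$ the function $f_{u,v} = f \circ \proj_v \circ \proj_u$ is essentially constant, equal to some $c(u,v)$. Using Corollary \ref{cor:convTuv} with a further projectivity (or directly Theorem \ref{thm:convTuvproj}), one also sees $f \circ \proj_u$ — the limit of $\gamma_n f$ along a sequence contracting everything onto $\Res(u)$, which one extracts from the same recurrence mechanism — lies in $A$; in fact it is cleaner to observe that $f \circ \proj_v \circ \proj_u$ restricted to $\Res(v)$, pushed forward, identifies with a function on $\Res(v)$ via the isomorphism $R_v \cong L^\infty(\Res(v),\mu_v^o)$, and its being constant says exactly that $f$ has constant conditional expectation along the fiber $\Res(v)$ composed with $\proj_u$. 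Letting $u$ vary over the (full measure set of) opposites of $v$, the projections $\proj_u : \Res(v) \to \Res(v)$... — here I need the sharper statement that as $u$ ranges over vertices opposite to a fixed $v$, the maps $\proj_v \circ \proj_u$ realize enough of $\mathrm{Aut}(\Res(v))$-like behavior to conclude $f$ is constant on $\Res(v)$; this is precisely the role of the perspectivities and the 3-transitivity of $\Pi$ (Lemma \ref{lem:3transitive}), and it is the step I expect to be the main obstacle. Granting it, $f$ is $\pr_+$-measurable, a contradiction. Hence there exists $(u,v)$ with $f_{u,v} \in A \cap R_u$ nonconstant, and Proposition \ref{rugenerate} applied to $E = \Gamma \cdot u$ finishes the proof.

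The main obstacle, then, is the passage from "$f \circ \proj_v \circ \proj_u$ constant for a.e. $(u,v)$" to "$f$ is $\pr_+$-measurable". The clean way is contrapositive and avoids it: assume $f$ is not $\pr_+$-measurable; then there is positive-measure set of $v \in \Delta_+$ on which the conditional expectation $\mathbb E(f \mid \scrF_\infty)$ differs from $f$, i.e. $f$ is genuinely nonconstant on $\Res(v)$ for such $v$; for such $v$ and a.e. opposite $u$, $\proj_v \circ \proj_u : \Delta \to \Res(v)$ is surjective onto $\Res(v)$ with $(\proj_v \circ \proj_u)_* \mu_\Delta^o$ in the class of $\mu_v^o$ (again Lemma \ref{lem:projmp} plus measure-class preservation of $\proj_u$ and of the perspectivity $T_u^v$), so $f_{u,v} = f \circ \proj_v \circ \proj_u$ is nonconstant and lies in $A \cap R_u$. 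I would isolate this measure-theoretic surjectivity/nonconstancy claim as a short lemma before the proof, proving it from Lemma \ref{lem:projmp}, Proposition \ref{prop:changebasepoint}, and the fact that $\pr_-\times\pr_+ : \Delta_\op \to \Delta_\pmop$ and the perspectivities are measure-class preserving. With that lemma in hand the proof of Proposition \ref{prop:radial} is two lines: extract $f_{u,v} \in A \cap R_u$ nonconstant via Corollary \ref{cor:convTuv}, then invoke Proposition \ref{rugenerate}.
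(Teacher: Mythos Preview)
Your overall strategy matches the paper's: produce a nonconstant $u$-radial function in $A$ and then appeal to Proposition~\ref{rugenerate}. Your argument for nonconstancy of $f \circ T_u^v$ is essentially the paper's (disintegrate over $\pr_+$, find $v$ where $f_{|\Res(v)}$ is nonconstant, use Lemma~\ref{lem:projmp} and measure-preservation of the perspectivity).

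However, there is a genuine gap in your closing step. Proposition~\ref{rugenerate} asserts that the algebra generated by the \emph{full} algebras $R_u$, for $u$ ranging over a $\Gamma$-invariant set $E$, is weak-$*$ dense in $L^\infty(\Delta_-)$. Having a single nonconstant element $g \in A \cap R_u$, together with its $\Gamma$-translates $\gamma g \in R_{\gamma u}$, does \emph{not} let you invoke this: you need $A \supset R_u$, not merely that $A \cap R_u$ contains a nonconstant function. Your first paragraph and your summary ``extract $f_{u,v}$ nonconstant, then invoke Proposition~\ref{rugenerate}'' skip precisely this passage.

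The missing ingredient is one you already recorded but then set aside: Theorem~\ref{thm:convTuvproj} with a \emph{general} projectivity $p \in \Pi$, not just $p=\mathrm{id}$. For a.e.\ $(u,v)$ and \emph{every} $p$, the function $f \circ p \circ T_u^v$ lies in $A \cap R_u$; as $p$ ranges over $\Pi$ these functions sweep out exactly the $\Pi$-orbit of $g = f \circ T_u^v$ (transport the $\Pi$-action from $\Res(v)$ to $\Res(u)$ via the perspectivity $[u;v]$). Hence the weak-$*$ closed subalgebra of $R_u$ they generate is $\Pi$-invariant and nontrivial, so by Lemma~\ref{lem:SubalgebraRu} it equals $R_u$. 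Now $A \supset R_u$, hence $A \supset R_{\gamma u}$ for all $\gamma$, and Proposition~\ref{rugenerate} applies legitimately. The step you flagged as the ``main obstacle'' (nonconstancy) is in fact the routine part; using the projectivity group to upgrade from one radial function to the whole of $R_u$ is where the real work lies.
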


\begin{proof}[Proof of Theorem \ref{thm:subalgebras} from Proposition \ref{prop:radial}]

Assume that $A$ contains a non-constant function. Then by Lemma \ref{lem:meet}, there is some function in $A$ which is either not in $L^\infty(\Delta_+)$ or not in $L^\infty(\Delta_-)$. Assume the former (the other case being similar). Then Proposition \ref{prop:radial} implies that $A$ contains $L^\infty(\Delta_-)$. Now either $A=L^\infty(\Delta_-)$, and we are done, or $A$ contains another function which is not in $L^\infty(\Delta_-)$. In that case a similar argument proves that $A$ contains also $L^\infty(\Delta_+)$. Hence by Lemma \ref{lem:join} it follows that $A$ is equal to $L^\infty(\Delta)$.
\end{proof}

In order to understand subalgebras of $R_u$, recall the definition of the group $\Pi$ from Section \ref{sub:proj}. 
Note that the action of the group $\Pi$ on $\Res(u)$ defines an action on the algebra $R_u$, via the isomorphism $R_u\simeq L^\infty(\Res(u),\mu_u^o)$. As we have seen in Lemma~\ref{lem:3transitive}, $\Pi$ is 3-transitive on $\Res(u)$.

\begin{lemma}\label{lem:SubalgebraRu}
Any $\Pi$-invariant, weakly-* closed subalgebra of $R_u$ is either $R_u$ or reduced to the constant functions.
\end{lemma}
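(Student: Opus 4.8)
The plan is to work in the dual language of measurable factors. Write $Z=\Res(u)$ and $\mu=\mu_u^o$, so that $R_u\cong L^\infty(Z,\mu)$ and $\Pi$ acts on $(Z,\mu)$ preserving the measure class. A weak-$*$ closed $\Pi$-invariant subalgebra $B\subseteq L^\infty(Z,\mu)$ is an abelian von Neumann algebra, so (Mackey point realization) it has the form $B=p^*L^\infty(Z',\nu)$ for a $\Pi$-equivariant, measure-class preserving factor $p\colon(Z,\mu)\to(Z',\nu)$. Hence it suffices to show that such a $p$ is either essentially injective (whence $B=R_u$) or has $Z'$ essentially a single point (whence $B$ is reduced to the constants).

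To detect which case occurs I would use the relatively independent self-joining of $\mu$ over $p$: disintegrating $\mu=\int_{Z'}\mu_{z'}\,d\nu(z')$ with $\mu_{z'}$ supported on $p^{-1}(z')$, set $\lambda=\int_{Z'}\mu_{z'}\otimes\mu_{z'}\,d\nu(z')\in\Prob(Z\times Z)$. This $\lambda$ is carried by the relation $R=\{(z,w):p(z)=p(w)\}$, it contains the diagonal in its support, both of its marginals are $\mu$, and — because $p$ is $\Pi$-equivariant — it is $\Pi$-quasi-invariant. The key observation is that $\lambda$ is concentrated on the diagonal precisely when the fibres $p^{-1}(z')$ are $\nu$-a.e.\ singletons, i.e.\ precisely when $p$ is essentially injective.

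Next I would exploit that $\Pi$ acts $2$-transitively on $Z$ (a consequence of the $3$-transitivity of Lemma~\ref{lem:3transitive}). Two preliminary points: (i) $\mu$ is non-atomic, because its set of atoms is a $\Pi$-invariant Borel subset of the uncountable set $Z$ on which $\Pi$ is transitive, hence empty; (ii) $(Z\times Z)$ minus its diagonal is a single $\Pi$-orbit, hence a homogeneous space of the locally compact second countable group $\Pi$, so it carries a unique quasi-invariant measure class, represented — thanks to (i) — by the restriction of $\mu\otimes\mu$. Writing $\lambda=\lambda_{\mathrm{diag}}+\lambda_{\mathrm{off}}$ according to the diagonal and its complement, we land in one of two cases. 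If $\lambda_{\mathrm{off}}=0$, then $\lambda$ lives on the diagonal; since $(\mu_{z'}\otimes\mu_{z'})(\mathrm{diag})=\sum_z\mu_{z'}(\{z\})^2\le 1$ with equality only for a Dirac mass, integrating forces $\mu_{z'}$ to be a point mass for $\nu$-a.e.\ $z'$, the resulting measurable section inverts $p$ modulo null sets, and $B=R_u$. If $\lambda_{\mathrm{off}}\neq0$, then $\lambda_{\mathrm{off}}$ is equivalent to $\mu\otimes\mu$ off the diagonal; as $\lambda_{\mathrm{off}}$ is carried by $R$, we get that $\mu\otimes\mu$ is concentrated on $R$, and pushing forward by $p\times p$ this says $\nu\otimes\nu$ is concentrated on the diagonal of $Z'$, i.e.\ $\sum_{z'}\nu(\{z'\})^2=1$; together with $\sum_{z'}\nu(\{z'\})\le1$ this forces $\nu$ to be a single atom of full mass, so $Z'$ is essentially a point and $B$ is the constants.

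The only input that is not routine bookkeeping with disintegrations — and hence the main obstacle to pin down carefully — is step (ii): identifying $(Z\times Z)\setminus\mathrm{diag}$ with the homogeneous space $\Pi/\Pi_{(\xi,\eta)}$ and invoking uniqueness of the quasi-invariant measure class on it, so that the self-joining $\lambda$ is pinched into one of the two extreme shapes. Note that only $2$-transitivity of the $\Pi$-action is actually used.
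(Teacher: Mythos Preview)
Your argument is correct, and it is a genuinely different route from the one the paper takes.

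The paper's proof is a three-line structural argument: since $\Pi$ acts transitively on $\Res(u)$, one identifies $\Res(u)$ with a homogeneous space $\Pi/L$; the $2$-transitivity (the paper cites $3$-transitivity but only uses $2$) forces $L$ to be a maximal closed subgroup of $\Pi$; and then one invokes the Mackey-type fact that every measurable $\Pi$-factor of $\Pi/L$ is of the form $\Pi/L'$ for some closed $L'$ with $L\le L'\le\Pi$. Maximality of $L$ leaves only $L'=L$ or $L'=\Pi$.

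Your approach avoids that last structural fact and instead works directly with the relatively independent self-joining $\lambda$, using uniqueness of the quasi-invariant measure class on the single off-diagonal orbit to force $\lambda$ into one of two extreme shapes. What your route buys is that it does not appeal to the (nontrivial) classification of measurable factors of a homogeneous space; the only black box you need is the uniqueness of the Haar class on $\Pi/H$, which is lighter. What the paper's route buys is brevity and a clearer conceptual picture: once you know factors of $\Pi/L$ are homogeneous, maximality of $L$ ends the discussion immediately. Both proofs ultimately rest on $2$-transitivity, as you correctly note, and both silently use that $\Pi$ is a locally compact second countable group (as a closed subgroup of $\Aut(T)$) and that $\mu_u^o$, being the visual measure on $\partial T_u$, is $\Pi$-quasi-invariant. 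One point you might want to make explicit in your step (ii) is the quasi-invariance of $\lambda$ itself: this follows from the equivariance of $p$ via the transformation rule for disintegrations, but it is the hinge of the whole argument and deserves a sentence.
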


\begin{proof}
Since $\Pi$ is transitive on $\Res(u)$, we can identify $\Res(u)$ to $\Pi/L$, where $L$ is the stabilizer of a point. Since $\Pi$ acts $3-$transitively on $\Res(u)$, $L$ is a maximal subgroup of $\Pi$.

Now any measurable $\Pi$-factor of $\Pi/L$ is of the form $\Pi/L'$ where $L<L'<\Pi$. It follows that either $L'=L$ or $L'=\Pi$.
\end{proof}

We also define the map $T_u^v:\Delta\to \Res(v)$ by $T_u^v=\proj_v\circ \proj_u$. Recall from Corollary \ref{cor:convTuv} that for every $f\in L^\infty(\Delta)$, for almost every $(u,v)$  there exists a sequence $\gamma_n$ such that $\gamma_n f$ converges to $f\circ T_u^v$ (in weak-* topology), which is $v$-radial.

\begin{proof}[Proof of Proposition \ref{prop:radial}]
Recall that $A$ is an algebra which contains a non-constant function $f$  in $L^\infty(\Delta_+)$. Since $A$ is closed and $\Gamma$-invariant, it contains almost all the functions $f\circ T_u^v$ by Corollary \ref{cor:convTuv}. Now, since $f$ is non-constant, there is a positive measure set of $v\in \Delta_+$ such that $f_{|\Res(v)}$ is not $\mu_v^o$-essentially constant on $Res(v)$ (as $\mu_v^o$ is the disintegration of $\mu_\Delta^o$ by Lemma \ref{lem:desintegration_prouniform}). Fix $u,v$ such that $v$ is in this set and such that the function $g=f\circ T_u^v$ is in $A$. Since the map $\proj_u:\Delta\to \Res(u)$ is measure-preserving by Lemma~\ref{lem:projmp}, and the map $\proj_v:(\Res(u),\mu_u^o)\to (\Res(v),\mu_v^o)$ is also measure-preserving (as it is induced by an automorphism of the wall-tree $T_{u,v}$, and using Corollary \ref{cor:measurepreserving}), we see that $g$ is also non-constant. Furthemore it is clear that if $\proj_u(C)=\proj_u(C')$ then $g(C)=g(C')$, thus $g\in R_u$.
 Furthermore, using Theorem \ref{thm:convTuvproj}, we see that $A\cap R_u$ contains the algebra generated  by the $\Pi$-orbit of $g$. By Lemma \ref{lem:SubalgebraRu} it follows that $A\cap R_u=A$. Since $A$ is is $\Gamma$-invariant, it also contains $R_{\gamma u}$ for every $\gamma\in \Gamma$. We conclude with Proposition \ref{rugenerate}.

\end{proof}

\subsection{Conclusion}

\begin{lemma}\label{lem:fixedpointae}
Let $g\in \Aut(X)$ which fixes almost every point of $\Delta_-$ (resp. $\Delta_+$). Then $g$ is the identity.
\end{lemma}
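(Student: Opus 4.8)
The plan is to show that if $g \in \Aut(X)$ fixes $\mu_-^o$-almost every point of $\Delta_-$, then $g$ acts trivially on all of $\Delta_-$, and then deduce that $g$ acts trivially on $X$ itself. The first step is purely topological: the fixed-point set $\Fix(g) = \{u \in \Delta_- : gu = u\}$ is closed in $\Delta_-$ (since $g$ acts by homeomorphisms and $\Delta_-$ is metrizable, hence Hausdorff), and by hypothesis it has full $\mu_-^o$-measure. Since $\mu_-^o$ has full support on $\Delta_-$ (this was used repeatedly, e.g. in the proof of Theorem~\ref{thm:2minimal}), a closed set of full measure must be everything. Hence $g$ fixes $\Delta_-$ pointwise; similarly in the $\Delta_+$ case $g$ fixes $\Delta_+$ pointwise.

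Next I would promote ``$g$ fixes every vertex at infinity of type $-$'' to ``$g$ fixes every vertex at infinity'', then to ``$g = \Id_X$''. For this, fix any vertex $x \in X$ and any $u \in \Delta_-$; the singular ray $[xu)$ is the unique ray of type $-$ from $x$ pointing to $u$, and $g$ sends it to $[g(x)\,u)$. If I can show $g$ fixes some vertex $x_0$, then $g$ fixes $[x_0 u)$ for every $u \in \Delta_-$, and the union of these rays as $u$ ranges over $\Delta_-$ covers all of $X$ (every vertex lies on some singular ray of type $-$ emanating from $x_0$ — or at least every vertex is a bounded distance from such rays; more robustly, one uses that two opposite vertices $u \in \Delta_-$, $v \in \Delta_+$ together with the rays $[x_0 u)$, $[x_0 v)$ span a sector, and sectors based at $x_0$ cover $X$). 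To find a fixed vertex: pick $(u,v) \in \Delta_\pmop$; the interval $I(u,v)$ is $g$-invariant (as $g$ fixes $u$ and $v$), and $g$ acts on the wall-tree $T_{u,v}$; since $g$ fixes every point of $\partial_\infty T_{u,v} \cong \Res$-type boundary, and an automorphism of a tree fixing at least three boundary points (the tree is $(q+1)$-regular with $q > 1$, so $\partial_\infty T_{u,v}$ is infinite) must fix a point or an edge of the tree, hence $g$ fixes a singular line $\ell$ in $I(u,v)$; then $g$ acts on $\ell \cong \RR$ fixing both endpoints, so $g|_\ell = \Id$, giving a fixed vertex $x_0$.

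Alternatively — and this may be cleaner — once $g$ fixes $\Delta_-$ pointwise I would argue directly via the measure class on $\Delta$. The measure $\mu_\Delta^o$ is in the class of $m_\op$, and the projection $\pr_- : \Delta \to \Delta_-$ together with $\proj_u : \Delta \to \Res(u)$ lets one analyze the $g$-action fiberwise. But the tree argument above is self-contained, so I would go with that. The one point requiring a little care is the claim that $\partial_\infty T_{u,v}$ being fixed pointwise by $g$ forces $g$ to fix a point or inversion-edge of $T_{u,v}$: this is standard for tree automorphisms (a hyperbolic tree automorphism fixes exactly two boundary points, its axis endpoints; an elliptic one fixing three or more boundary points fixes the whole convex hull, which is all of $T_{u,v}$), so in fact $g$ acts trivially on $T_{u,v}$, hence on $I(u,v)$, and since every vertex of $X$ lies in some interval $I(u,v)$ (any two opposite chambers at infinity containing a given vertex in a common apartment exist by the basic facts recalled in \S\ref{sec:buildings}), we conclude $g = \Id_X$.

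\begin{proof}
Assume $g$ fixes $\mu_-^o$-almost every point of $\Delta_-$ (the case of $\Delta_+$ is symmetric). The set $\Fix(g) = \{u \in \Delta_- : gu = u\}$ is closed, since $g$ acts on $\Delta_-$ by homeomorphisms of a Hausdorff space. By hypothesis $\Fix(g)$ has full $\mu_-^o$-measure; since $\mu_-^o$ has full support, $\Fix(g) = \Delta_-$, i.e.\ $g$ fixes every vertex at infinity of type $-$.

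Now pick any pair $(u,v) \in \Delta_\pmop$. As $g$ fixes $u$ and $v$, it preserves the interval $I(u,v)$, hence acts on the wall-tree $T_{u,v}$ and on its boundary $\partial_\infty T_{u,v} \cong \Res(u)$. Via $\phi_u$ (Lemma~\ref{lem:bdTu}), the action of $g$ on $\partial_\infty T_{u,v}$ is the restriction of its action on $\Delta_-$ near $u$, hence is trivial on a dense (indeed co-null, hence dense since the measure has full support) subset, and therefore trivial on all of $\partial_\infty T_{u,v}$. Since $T_{u,v}$ is a $(q+1)$-regular tree with $q > 1$, its boundary is infinite; a tree automorphism fixing at least three boundary points fixes the convex hull of those points, and here that convex hull is all of $T_{u,v}$. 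Thus $g$ acts trivially on $T_{u,v}$, and in particular on each singular line $\ell$ between $u$ and $v$: fixing both endpoints of $\ell \cong \RR$ forces $g|_\ell = \Id$. Since $I(u,v)$ is the union of these lines, $g$ fixes $I(u,v)$ pointwise.

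Finally, let $x$ be any vertex of $X$. Choose a chamber $C \in \Delta$ with $x$ in an apartment $A$ representing $C$, and let $(u,v) \in \Delta_\pmop$ with $u, v$ in the boundary of $A$; then $x \in A \subset I(u,v)$. By the previous paragraph $g$ fixes $x$. Hence $g = \Id_X$.
\end{proof}
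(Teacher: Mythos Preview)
Your first step---upgrading ``fixes $\mu_-^o$-a.e.\ point'' to ``fixes every point of $\Delta_-$'' via the closed fixed-point set and full support of $\mu_-^o$---is clean and correct. From there, however, the argument has real gaps.

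First, when you pick $(u,v)\in\Delta_\pmop$ and write ``as $g$ fixes $u$ and $v$'', you have not justified that $g$ fixes $v\in\Delta_+$. This is easily repaired (any two $\Delta_-$-vertices adjacent to $v$ determine $v$ uniquely, by the projective-plane structure of the spherical building at infinity), but it must be said. Second, your identification ``the action of $g$ on $\partial_\infty T_{u,v}$ is the restriction of its action on $\Delta_-$ near $u$'' is wrong: for $u\in\Delta_-$, Lemma~\ref{lem:bdTu} gives $\partial_\infty T_u\cong\Res(u)=\pr_-^{-1}(u)$, which via $\pr_+$ corresponds to the $\Delta_+$-vertices adjacent to $u$, not to points of $\Delta_-$. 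You should instead use $\partial_\infty T_{u,v}\cong\partial_\infty T_v\cong\Res(v)$; once $g$ fixes $v$, this identifies with the $\Delta_-$-vertices adjacent to $v$, which are indeed all fixed.

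Third, and most seriously, the sentence ``fixing both endpoints of $\ell\cong\RR$ forces $g|_\ell=\Id$'' is false: an isometry of a line fixing both ends at infinity is merely a translation. At this point you only know that $g$ acts on each $I(u,v)$ as a translation along the $\RR$-factor. To kill this translation, observe that any apartment $A$ lies in $I(u_i,v_i)$ for each of its three pairs of opposite boundary vertices; the three corresponding wall-directions in $A\cong\RR^2$ are distinct, so the only vector lying in all three one-dimensional directions is zero, and $g|_A=\Id$.

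The paper's proof takes a different route and never passes through wall-trees: from two fixed points $x_1,x_2\in\Delta_-$ it extracts the unique $\Delta_+$-vertex $\ell_1$ adjacent to both (hence fixed), picks a fixed $x_3\in\Delta_-$ opposite $\ell_1$, obtains a fixed apartment of $\Delta$, and deduces that $g$ fixes the boundary of almost every apartment of $X$; it then rules out hyperbolicity, so $g$ fixes almost every apartment pointwise, and finishes since every vertex lies in a positive-measure set of apartments. Your approach, once the translation issue is handled, gives a somewhat more geometric alternative that avoids the ``almost every apartment'' measure argument.
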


\begin{proof}
Let $x_1,x_2\in \Delta_-$ be fixed by $g$. Let $\ell_1\in \Delta_+$ be the vertex of $\Delta$ adjacent to both, it is also fixed by $g$. The set of all $x\in \Delta_-$ opposite $\ell_1$ is of full measure in $\Delta_-$, so there is an $x_3$ opposite $\ell_1$ which is fixed by $g$. In particular $g$ fixes pointwise the appartment of $\Delta$ containing $x_1,x_2$ and $x_3$. Therefore $g$ fixes almost every element of $\Delta$, and in fact the boundary of almost every apartment.

It is clear that $g$ cannot be an hyperbolic isometry of $X$. Therefore, if $A$ is an apartment whose boundary is fixed by $g$, then $g$ cannot act as a nontrivial translation of $A$. Therefore it fixes $A$ pointwise. 

We have proven that almost every apartment of $X$ is fixed pointwise by $g$. Let $x\in X$. The set of apartments containing $x$ is of positive measure, therefore it contains some apartment which is fixed pointwise by $g$. Hence $x$ is fixed by $g$. Therefore $g$ acts trivially on $X$.
\end{proof}

Finally we are able to prove the Normal Subgroup Theorem (Theorem~\ref{mainthm}):

\begin{theorem}
Let $\Gamma$ be a uniform lattice of an $\tilde A_2$ building, and $N$ be a normal subgroup of $\Gamma$. Then either $N=\{1\}$ or $N$ is of finite index in $\Gamma$.
\end{theorem}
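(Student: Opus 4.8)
The plan is to follow Margulis' strategy exactly as outlined in the introduction: given a non-trivial normal subgroup $N$ of $\Gamma$, I would show that the quotient $\Gamma/N$ is simultaneously a group with Kazhdan's property (T) and an amenable group, and hence finite, since a countable discrete group that is amenable and Kazhdan is finite.

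That $\Gamma/N$ has property (T) is immediate: $\Gamma$ itself has property (T) --- this is classical for cocompact lattices of $\tilde A_2$-buildings, e.g.\ by \.{Z}uk's criterion applied to a link, which is the incidence graph of a projective plane of order $q\geq 2$ --- and property (T) passes to quotients. So the whole content is to prove that $\Gamma/N$ is amenable.

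For that I would first examine the subalgebra $L^\infty(\Delta)^N$ of $N$-invariant functions inside $L^\infty(\Delta)$. Since $N$ is normal in $\Gamma$, conjugation shows this subalgebra is $\Gamma$-invariant, and it is plainly weak-$*$ closed, so Theorem~\ref{thm:subalgebras} forces it to be reduced to the constants, or to equal $\pr_-^*L^\infty(\Delta_-)$, $\pr_+^*L^\infty(\Delta_+)$, or $L^\infty(\Delta)$. I would rule out the last three cases under the hypothesis $N\neq\{1\}$. If $L^\infty(\Delta)^N\supseteq \pr_+^*L^\infty(\Delta_+)$, then for each $n\in N$ every function $h=\pr_+^*f$ satisfies $n\cdot h = h$, which (taking $f$ separating points of $\Delta_+$) means $\pr_+(n^{-1}C)=\pr_+(C)$ for $\mu_\Delta^o$-a.e.\ $C$; since $\pr_+$ is equivariant for type-rotating automorphisms and no type-swapping automorphism can preserve the fibres of $\pr_+$ on a positive-measure set, $n$ must be type-rotating and fix $\mu_+^o$-a.e.\ vertex of $\Delta_+$, whence $n=\mathrm{id}$ by Lemma~\ref{lem:fixedpointae}; the case of $\Delta_-$ is symmetric, and $L^\infty(\Delta)^N=L^\infty(\Delta)$ means $N$ acts trivially on $\Delta$, a fortiori on $\Delta_+$, so again $N=\{1\}$. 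As $N\neq\{1\}$, we conclude $L^\infty(\Delta)^N$ is reduced to the constants, i.e.\ $N$ acts ergodically on $(\Delta,\mu_\Delta)$.

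Finally I would run the standard affine fixed-point argument. Let $\Gamma/N$ act continuously and affinely on a compact convex subset $K$ of a locally convex topological vector space; composing with $\Gamma\twoheadrightarrow\Gamma/N$ gives an affine $\Gamma$-action on $K$ whose kernel contains $N$. Because $\Delta$ is an amenable $\Gamma$-space, the constant field over $\Delta$ with fibre $K$ and diagonal $\Gamma$-action admits a $\Gamma$-equivariant measurable section, i.e.\ a measurable map $\psi\colon\Delta\to K$ with $\psi(\gamma\omega)=\gamma\psi(\omega)$ for all $\gamma\in\Gamma$ and a.e.\ $\omega$. Since $N$ acts trivially on $K$, $\psi$ is $N$-invariant, hence essentially constant by ergodicity of $N$ on $\Delta$, say $\psi\equiv k_0$; then $\gamma k_0=\psi(\gamma\omega)=k_0$ for all $\gamma$, so $k_0$ is a $\Gamma$-fixed point and therefore a $\Gamma/N$-fixed point. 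Thus $\Gamma/N$ is amenable, which together with property (T) finishes the proof. Given everything that precedes, there is no substantial new difficulty here: the real work is contained in Theorem~\ref{thm:subalgebras} and in the amenability of the $\Gamma$-action on $\Delta$, and the only point requiring genuine care in this last step is the case analysis above --- ruling out the three non-trivial algebras via Lemma~\ref{lem:fixedpointae} while keeping track of which elements of $N$ are type-rotating.
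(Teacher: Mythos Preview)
Your proof is correct and follows essentially the same route as the paper: property~(T) for $\Gamma/N$, amenability of the $\Gamma$-action on $\Delta$, the factor theorem (Theorem~\ref{thm:subalgebras}), and Lemma~\ref{lem:fixedpointae} to rule out the non-trivial subalgebras. The only difference is cosmetic ordering: the paper argues by contraposition, assuming $\Gamma/N$ non-amenable, picking a compact convex $K$ with no fixed point, and applying the factor theorem to the pullback algebra $\varphi^*(L^\infty(K))$, whereas you apply the factor theorem directly to $L^\infty(\Delta)^N$ and deduce ergodicity of $N$ before running the fixed-point argument.
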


\begin{proof}
By \cite{CartwrightMlotkowski}, the group $\Gamma$, hence the quotient $\Gamma/N$, has property (T). Therefore, it suffices to prove that if $N$ is non-trivial then $\Gamma/N$ is amenable, or equivalently that if $\Gamma/N$ is not amenable then $N=\{1\}$.

So assume that $\Gamma/N$ is non-amenable. It follows that there exists a compact convex $\Gamma/N$-space $K$ with no fixed point. We can see this space as a $\Gamma$-space on which $N$ acts trivially.

Now by \cite{BoundaryAmenability} the action of $\Gamma$ on $\Delta$ is amenable. This implies that there is a $\Gamma$-equivariant map $\varphi:\Delta\to K$. We equip $K$ with the pushforward measure $\phi_+\mu_\Delta^o$. Let $A=\varphi^*(L^\infty(K))$: this is a $\Gamma$-invariant subalgebra of $L^\infty(\Delta)$, on which $N$ acts trivially. Since $\Gamma$ has no fixed point on $K$, the algebra $A$ is not reduced to the constant functions, hence by Theorem \ref{thm:subalgebras} it is either $L^\infty(\Delta_-)$, $L^\infty(\Delta_+)$ or $L^\infty(\Delta)$. But since $N$ acts trivially on $K$, it also acts trivially on $A$. Therefore for every $g\in N$, $g$ fixes almost every point of either $\Delta_-$, $\Delta_+$ (or even almost every point of $\Delta$). It follows that $N=\{1\}$ by Lemma \ref{lem:fixedpointae}.
\end{proof}

\indexprologue{The following is a list of notations which are used multiple times, starting on section \S\ref{sec:buildings}.
Notations which only appear locally are not listed here.}
\printindex
\bibliographystyle{alpha}
\bibliography{biblio}

\end{document}